\documentclass[12pt]{amsart}
\usepackage{amsmath,amsthm,amsfonts,amssymb,amscd,euscript}
\usepackage{color}
\usepackage{graphics}
\usepackage{tikz}
\input{xy}
\xyoption{all}
% Line spacing -----------------------------------------------------------
\newlength{\defbaselineskip}
\setlength{\defbaselineskip}{\baselineskip}

%----------------------------------------------------------------------------

\setlength{\parindent}{.3 in} \setlength{\textwidth}{6.5 in}
\setlength{\topmargin} {-.2 in} \setlength{\evensidemargin}{0 in}
\setlength{\oddsidemargin}{0 in} \setlength{\footskip}{.3 in}
\setlength{\headheight}{.3 in} \setlength{\textheight}{8.7 in}
\setlength{\parskip}{.1 in}

\theoremstyle{plain}
\newtheorem{theorem}{Theorem}[section]

\newtheorem{proposition}[theorem]{Proposition}
\newtheorem{corollary}[theorem]{Corollary}
\newtheorem{lemma}[theorem]{Lemma}

\theoremstyle{definition}
\newtheorem{definition}[theorem]{Definition}

\newtheorem{remark}[theorem]{Remark}
\newtheorem{example}[theorem]{Example}

\newtheorem{conjecture}[theorem]{Conjecture}

\newcommand{\ZZ}{\mathbb{Z}}

\newcommand{\QQ}{\mathbb{Q}}

\newcommand{\gchoose}[2]{\left[\begin{array}{c}{#1 } \\{#2} \end{array}\right]  }
\newcommand{\gchooseround}[2]{{{#1 } \choose {#2}}}

\newcommand{\boxs}[1]
{ \multiput(#1)(20,0){2}
 {\line(0,20){20}}
\multiput(#1)(0,20){2}
 {\line(20,0){20}}
}

\newcommand{\exoneprime}{x_{1;t_2'}}

\begin{document}

\title[Positivity for rank 3 cluster algebras]{Positivity for cluster algebras of rank 3}
\author{Kyungyong Lee}\thanks{Communicated by H. Nakajima. Received August 28, 2012.}
\thanks{Revised December 11, 2012, January 29, 2013, February 16, 2013.}
\thanks{ 2010 {\it Mathematics Subject Classificaiton.} 13F60}
\thanks{ {\it Key works and phrases.} cluster algebra, positivity conjecture.}
\thanks{{The first author is  partially
   supported by the NSF grant DMS-0901367.}
   \thanks{The second author is partially
   supported by the NSF grant DMS-1001637.}}
   \thanks{Kyungyong Lee, Department of Mathematics,
Wayne State University,
656 West Kirby,
Detroit, MI 48202, USA, {klee@math.wayne.edu; }}
%\email{klee@math.wayne.edu; schiffler@math.uconn.edu}
\author{Ralf Schiffler}
\thanks{Ralf Schiffler, Department of Mathematics,
196 Auditorium Road,
University of Connecticut, U-3009
Storrs, CT 06269-3009,
USA, schiffler@math.uconn.edu}

%\subjclass[2010]{13F60}
%\date{}
\dedicatory{Dedicated to Robert Lazarsfeld on the occasion of his sixtieth birthday}

%\keywords{cluster algebra, positivity conjecture}
\maketitle
\begin{abstract}  We prove the positivity conjecture for  skew-symmetric coefficient-free cluster algebras of rank 3.
\end{abstract}

\section{Introduction}

Cluster algebras have been introduced by Fomin and Zelevinsky in \cite{FZ} in the context of total positivity and canonical bases in Lie theory. Since then cluster algebras have been shown to be related to various fields in mathematics including representation theory of finite dimensional algebras, Teichm\"uller theory, Poisson geometry, combinatorics, Lie theory, tropical geometry and mathematical physics.

A cluster algebra is a subalgebra of a field of rational functions in $n$ variables $x_1,x_2,\ldots,x_n$, given by specifying a set of generators, the so-called \emph{cluster variables}. These generators are constructed in a recursive way, starting from the initial variables $x_1,x_2,\ldots,x_n$, by a procedure called \emph{mutation}, which is determined by the choice of a skew symmetric $n\times n$ integer matrix $B$ or, equivalently, by a quiver $Q$.
Although each mutation is an elementary operation, it is very difficult to compute cluster variables in general, because of the recursive character of the construction.

Finding explicit computable direct formulas for the cluster variables is one of the main open problems in the theory of cluster algebras and has been studied by many mathematicians. In 2002, Fomin and Zelevinsky showed that every cluster variable is a Laurent polynomial in the initial variables $x_1,x_2,\ldots, x_n$, and they conjectured that this Laurent polynomial has positive coefficients \cite{FZ}.

 This \emph{positivity conjecture} has been proved in the following special cases
 \begin{itemize}
 \item \emph{Acyclic cluster algebras.} These are cluster algebras given by a quiver that is mutation equivalent to a quiver without oriented cycles. In this case, positivity has been shown in \cite{KQ} building on \cite{BZ,HL,N,Q} using monoidal categorifications of quantum cluster algebras and  perverse sheaves over graded quiver varieties. If the initial seed itself is acyclic, the conjecture has also been shown in \cite{Ef} using Donaldson-Thomas theory.
\item \emph{Cluster algebras from surfaces.}  In this case, positivity has been shown in \cite{MSW} building on \cite{S2,ST,S3}, using the fact that each cluster variable in such a cluster algebra corresponds to a curve in an oriented Riemann surface and the Laurent expansion of the cluster variable is determined by the crossing pattern of the curve with a fixed triangulation of the surface \cite{FG,FST}. The construction and the proof of the positivity conjecture has been generalized to non skew-symmetric cluster algebras from orbifolds in \cite{FeShTu}.
\end{itemize}

Our approach in this paper is different. We prove positivity almost exclusively by elementary algebraic computation. The advantage of this approach is that we do not need to restrict to a special type of cluster algebras but can work in the setting of an arbitrary  cluster algebra. The drawback of our approach is that because of the sheer complexity of the computation, we need to restrict ourselves in this paper to the rank three. The rank three is crucial since it is the smallest rank in which non-acyclic cluster algebras exist.
Our main result is the following.

\begin{theorem} The positivity conjecture holds in every skew-symmetric coefficient-free cluster algebra of rank 3.
\end{theorem}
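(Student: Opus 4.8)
\textbf{Reductions.} The plan is first to observe that, since the seeds of a cluster algebra form a single mutation class, the positivity conjecture for our family is equivalent to the following: for every skew-symmetric $3\times3$ integer matrix $B$, each cluster variable of the coefficient-free cluster algebra with initial seed $(\{x_1,x_2,x_3\},B)$ is a Laurent polynomial in $x_1,x_2,x_3$ with nonnegative coefficients. Indeed, an arbitrary cluster $\mathbf z$ is the cluster of some seed $(\mathbf z,B_{\mathbf z})$ with $B_{\mathbf z}$ again skew-symmetric of size $3$, and expanding an arbitrary cluster variable in the cluster $\mathbf z$ amounts to expanding a cluster variable of the cluster algebra with initial seed $(\mathbf z,B_{\mathbf z})$ in its own initial cluster. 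So we may fix $B$ and try to prove positivity in $\{x_1,x_2,x_3\}$. Every cluster variable outside $\{x_1,x_2,x_3\}$ is of the form $z_{\mathbf i}$ for some admissible sequence $\mathbf i=(i_1,\dots,i_k)$ in $\{1,2,3\}$ with $i_j\neq i_{j+1}$, where $z_{\mathbf i}$ is the $i_k$-th cluster variable of $\mu_{i_k}\circ\cdots\circ\mu_{i_1}(\{x_1,x_2,x_3\})$. The obvious symmetries of the problem --- permuting the three indices and replacing $B$ by $-B$ --- cut down the list of matrices $B$ one must treat; and when $B$ is mutation equivalent to an acyclic matrix, positivity is already available from the results cited in the Introduction, so the genuinely new case is the non-acyclic one, in which $B$ may be taken to be an oriented $3$-cycle with multiplicities $a\le b\le c$, $a,b,c\ge1$ (the argument, however, will be uniform).

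\textbf{An explicit positive formula.} Building on the ``maximal Dyck path / compatible pair'' description of rank-$2$ cluster variables (the greedy basis of Lee--Schiffler--Thomas), I would attach to each admissible $\mathbf i$ an explicit Laurent polynomial $X_{\mathbf i}\in\ZZ[x_1^{\pm1},x_2^{\pm1},x_3^{\pm1}]$, written as a sum over a combinatorial family of lattice-path configurations in a rectangle whose dimensions are controlled by $a,b,c$ and by the ``sign history'' of the exchange matrix along $\mathbf i$, so that every coefficient of $X_{\mathbf i}$ is visibly a nonnegative integer. The theorem will follow once $X_{\mathbf i}=z_{\mathbf i}$ is proved for all $\mathbf i$, and I would prove this by induction on $k=|\mathbf i|$. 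The small cases $k\le2$, together with the three initial variables, are checked by hand. For the inductive step put $\mathbf i'=(i_1,\dots,i_{k-1})$; the exchange relation that produces $z_{\mathbf i}$ has the form
\[
 z_{\mathbf i}\cdot z_{\mathbf i'}\;=\;M_{+}+M_{-},
\]
where $M_{+}$ and $M_{-}$ are monomials in the two cluster variables of $\mu_{i_{k-1}}\circ\cdots\circ\mu_{i_1}(\{x_1,x_2,x_3\})$ other than $z_{\mathbf i'}$, with exponents read off from the mutated exchange matrix. By induction those two variables and $z_{\mathbf i'}$ are positive Laurent polynomials, equal to the corresponding $X$'s, so the whole inductive step reduces to the combinatorial identity $X_{\mathbf i'}\cdot X_{\mathbf i}=M_{+}+M_{-}$.

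\textbf{The main obstacle.} The crux --- and the reason the method currently stops at rank $3$ --- is exactly this identity $X_{\mathbf i'}\cdot X_{\mathbf i}=M_{+}+M_{-}$. Unlike the rank-$2$ situation, the monomials $M_{\pm}$ involve all three variables, and their exponents depend sensitively on how the entries of the exchange matrix change sign along $\mathbf i$, so the Dyck-path combinatorics has to be substantially enriched (interacting pairs of paths, or paths carrying an additional label that records the third mutation direction). Concretely, one must (i) partition the monomials of the positive Laurent polynomial $M_{+}+M_{-}$ into ``$X_{\mathbf i'}$-blocks'', each equal to $X_{\mathbf i'}$ times a single Laurent monomial, by an explicit bijective rule on the underlying combinatorial objects; and (ii) verify that the resulting quotient is precisely the prescribed sum $X_{\mathbf i}$, including the bookkeeping of denominator vectors. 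Both are elementary in principle but very intricate, and a division into cases according to the sign patterns of the mutated matrices appears unavoidable; this is the ``sheer complexity of the computation'' alluded to above, and I expect it to be by far the hardest part. Once $X_{\mathbf i}=z_{\mathbf i}$ is established for every $\mathbf i$, each cluster variable is a manifestly nonnegative combination of Laurent monomials, and, combined with the reduction in the first step, the theorem follows.
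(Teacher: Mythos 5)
Your opening reductions (fixing an initial seed, exploiting the symmetries, and quoting the acyclic case) are sound and agree with what the paper does. But the heart of your proposal is a placeholder rather than an argument: you never define the candidate Laurent polynomials $X_{\mathbf i}$, and you explicitly defer the identity $X_{\mathbf i'}\cdot X_{\mathbf i}=M_++M_-$ as ``by far the hardest part.'' That identity \emph{is} the theorem; without the combinatorial family of configurations and the verification of the exchange relation, nothing has been proved. There is also a structural reason to doubt that your induction on single mutations, with a one-rectangle lattice-path model, can be pushed through: a single exchange relation expresses the \emph{product} of two unknown cluster variables as a binomial in two others, so at each step you would need a multiplicative identity between two conjectural formulas, and no closed positive formula of the kind you describe is known for general non-acyclic rank-3 variables (their denominator vectors are genuinely three-dimensional and are not governed by a single $a_1\times a_2$ Dyck path).

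The paper circumvents exactly this obstacle by never writing a global formula for a rank-3 cluster variable. It factors a minimal mutation sequence into maximal blocks each of which mutates only two vertices, and inside each block it uses two already-established rank-2 expansions (the Dyck-path formula and a ``mixed'' formula parametrized by integer tuples $\tau_0,\dots,\tau_{n-2}$). The inductive statement (Theorem \ref{mainthm11102011_00}) is not ``cluster variable equals explicit formula'' but rather ``the expansion at the intermediate seed $t_j$ is a nonnegative combination of monomials in $x_{f;t_j}$, $\widetilde{x_{d;t_j}}$, $\widetilde{\widetilde{x_{e;t_j}}}$, $x_{d;t_j}$, $x_{e;t_j}$ in which only $x_{f;t_j}$ may occur with negative exponent.'' The delicate point, which your sketch does not anticipate, is what happens to those negative powers of the third variable when one passes to the next block: the paper shows (Lemmas \ref{mainthm11102011_01} and \ref{lem theta3}, resting on the divisibility statement of Theorem \ref{thm01312012}) that the terms carrying $x_{f}^{-\theta}$ sum to $\widetilde{\widetilde{x_{f}}}^{\,\theta}$ times a nonnegative expression, i.e.\ the negative powers are absorbed into positive powers of a doubly-mutated cluster variable. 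That absorption mechanism is the new idea of the proof, and it is absent from your proposal. As it stands, your write-up is a plausible research plan, not a proof.
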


Our argument provides a method for the computation of the Laurent expansions of cluster variables, and we include some examples of explicit calculation.
We point out that
direct formulas for the Laurent polynomials have been obtained in several special cases. The most general results are the following:
\begin{itemize}
\item a formula involving the Euler-Poincar\'e characteristic of quiver Grassmannians obtained in \cite{FK,DWZ} using categorification and generalizing results in \cite{CC,CK1}. While this formula shows a very interesting connection between cluster algebras and geometry, it is of limited computational use, since the Euler-Poincar\'e characteristics of quiver Grassmannians are hard to compute. In particular, this formula does not show positivity. On the other hand, the positivity result in this paper proves the positivity of the Euler-Poincar\'e characteristics of the quiver Grassmannians involved.
\item an elementary combinatorial formula for cluster algebras from surfaces given in \cite{MSW}.
\item a formula for cluster variables corresponding to string modules as a product of $2\times 2$ matrices obtained in \cite{ADSS}, generalizing a result in \cite{ARS}.
\end{itemize}

The main tools of the proof are modified versions of two formulas for the rank two case, one obtained by the first author in \cite{L} and the other obtained by both authors in \cite{LS}. These formulas allow for the computation of  the Laurent expansions of a given cluster variable with respect to any seed which is close enough to the variable in the sense that there is a sequence of mutations in only two vertices which links seed and variable. The general result then follows by inductive reasoning.

 If the cluster algebra is not skew-symmetric, it is shown in \cite{R,LLZ} that (an adaptation of)  the second rank two formula still holds.
 We therefore expect that our argument can be generalized to prove the positivity conjecture for non skew-symmetric cluster algebras of rank 3.

The article is organized as follows. We start by recalling some definitions and results from the theory of cluster algebras in section \ref{sect 2}. In section \ref{sect rank 2}, we present several formulas for the rank 2 case when considered inside a cluster algebra of rank 3. We use each of these formulas in the proof of the positivity conjecture for rank 3 in  section \ref{sect 3}.
An example is given in section \ref{sect example}.

\noindent \emph{Acknowledgements.} We are grateful to  Andrei Zelevinksy and anonymous referees for their valuable suggestions.  Shortly after the original version of this paper \cite{LS3arXiv} became available, a new parametrization of our rank 2 formula has been given in \cite{LLZ}. Following the referee's suggestion, we use this new parametrization here to present our result in this final version.

%%%%%%%%%%%%%%%%%%%%%%%%%%%%%%%%%%%%%%%%%%%%%%%%%%
%%%%%%%%%%%%%%%%%%%%%%%%%%%%%%%%%%%%%%%%%%%%%%%%%%
%%%%%%%%%%%%%%%%%%%%%%%%%%%%%%%%%%%%%%%%%%%%%%%%%%
%%%%%%%%%%%%%%%%%%%%%%%%%%%%%%%%%%%%%%%%%%%%%%%%%%

\section{Cluster algebras}\label{sect 2}
In this section, we review some notions from the theory of cluster algebras.
\subsection{Definition and Laurent phenomenon}\label{sect cluster algebras}
We begin by reviewing the definition of cluster algebra,
first introduced by Fomin and Zelevinsky in \cite{FZ}.
Our definition follows the exposition in \cite{FZ4}.

To define  a cluster algebra~$\mathcal{A}$ we must first fix its
ground ring.
Let $(\mathbb{P},\oplus, \cdot)$ be a \emph{semifield}, i.e.,
an abelian multiplicative group endowed with a binary operation of
\emph{(auxiliary) addition}~$\oplus$ which is commutative, associative, and
distributive with respect to the multiplication in~$\mathbb{P}$.
The group ring~$\ZZ\mathbb{P}$ will be
used as a \emph{ground ring} for~$\mathcal{A}$.

As an \emph{ambient field} for
$\mathcal{A}$, we take a field $\mathcal{F}$
isomorphic to the field of rational functions in $n$ independent
variables (here $n$ is the \emph{rank} of~$\mathcal{A}$),
with coefficients in~$\QQ \mathbb{P}$.
Note that the definition of $\mathcal{F}$ does not involve
the auxiliary addition
in~$\mathbb{P}$.

\begin{definition}
\label{def:seed}
A \emph{labeled seed} in~$\mathcal{F}$ is
a triple $(\mathbf{x}, \mathbf{y}, B)$, where
\begin{itemize}
\item
$\mathbf{x} = (x_1, \dots, x_n)$ is an $n$-tuple
from $\mathcal{F}$
forming a \emph{free generating set} over $\QQ \mathbb{P}$,
\item
$\mathbf{y} = (y_1, \dots, y_n)$ is an $n$-tuple
from $\mathbb{P}$, and
\item
$B = (b_{ij})$ is an $n\!\times\! n$ integer matrix
which is \emph{skew-symmetrizable}.
\end{itemize}
That is, $x_1, \dots, x_n$
are algebraically independent over~$\QQ \mathbb{P}$, and
$\mathcal{F} = \QQ \mathbb{P}(x_1, \dots, x_n)$.
We refer to~$\mathbf{x}$ as the (labeled)
\emph{cluster} of a labeled seed $(\mathbf{x}, \mathbf{y}, B)$,
to the tuple~$\mathbf{y}$ as the \emph{coefficient tuple}, and to the
matrix~$B$ as the \emph{exchange matrix}.
\end{definition}

We  use the notation
$[x]_+ = \max(x,0)$,
$[1,n]=\{1, \dots, n\}$, and
\begin{align*}
\textup{sgn}(x) &=
\begin{cases}
-1 & \text{if $x<0$;}\\
0  & \text{if $x=0$;}\\
 1 & \text{if $x>0$.}
\end{cases}
\end{align*}

\begin{definition}
\label{def:seed-mutation}
Let $(\mathbf{x}, \mathbf{y}, B)$ be a labeled seed in $\mathcal{F}$,
and let $k \in [1,n]$.
The \emph{seed mutation} $\mu_k$ in direction~$k$ transforms
$(\mathbf{x}, \mathbf{y}, B)$ into the labeled seed
$\mu_k(\mathbf{x}, \mathbf{y}, B)=(\mathbf{x}', \mathbf{y}', B')$ defined as follows:
\begin{itemize}
\item
The entries of $B'=(b'_{ij})$ are given by
\begin{equation}
\label{eq:matrix-mutation}
b'_{ij} =
\begin{cases}
-b_{ij} & \text{if $i=k$ or $j=k$;} \\[.05in]
b_{ij} + \textup{sgn}(b_{ik}) \ [b_{ik}b_{kj}]_+
 & \text{otherwise.}
\end{cases}
\end{equation}
\item
The coefficient tuple $\mathbf{y}'=(y_1',\dots,y_n')$ is given by
\begin{equation}
\label{eq:y-mutation}
y'_j =
\begin{cases}
y_k^{-1} & \text{if $j = k$};\\[.05in]
y_j y_k^{[b_{kj}]_+}
(y_k \oplus 1)^{- b_{kj}} & \text{if $j \neq k$}.
\end{cases}
\end{equation}
\item
The cluster $\mathbf{x}'=(x_1',\dots,x_n')$ is given by
$x_j'=x_j$ for $j\neq k$,
whereas $x'_k \in \mathcal{F}$ is determined
by the \emph{exchange relation}
\begin{equation}
\label{exchange relation}
x'_k = \frac
{y_k \ \prod x_i^{[b_{ik}]_+}
+ \ \prod x_i^{[-b_{ik}]_+}}{(y_k \oplus 1) x_k} \, .
\end{equation}
\end{itemize}
\end{definition}

We say that two exchange matrices $B$ and $B'$ are {\it mutation-equivalent}
if one can get from $B$ to $B'$ by a sequence of mutations.
\begin{definition}
\label{def:patterns}
Consider the \emph{$n$-regular tree}~$\mathbb{T}_n$
whose edges are labeled by the numbers $1, \dots, n$,
so that the $n$ edges emanating from each vertex receive
different labels.
A \emph{cluster pattern}  is an assignment
of a labeled seed $\Sigma_t=(\mathbf{x}_t, \mathbf{y}_t, B_t)$
to every vertex $t \in \mathbb{T}_n$, such that the seeds assigned to the
endpoints of any edge $t {k\over\quad} t'$ are obtained from each
other by the seed mutation in direction~$k$.
The components of $\Sigma_t$ are written as:
\begin{equation}
\label{eq:seed-labeling}
\mathbf{x}_t = (x_{1;t}\,,\dots,x_{n;t})\,,\quad
\mathbf{y}_t = (y_{1;t}\,,\dots,y_{n;t})\,,\quad
B_t = (b^t_{ij})\,.
\end{equation}
\end{definition}

Clearly, a cluster pattern  is uniquely determined
by an arbitrary  seed.

\begin{definition}
\label{def:cluster-algebra}
Given a cluster pattern, we denote
\begin{equation}
\label{eq:cluster-variables}
\mathcal{X}
= \bigcup_{t \in\mathbb{T}_n} \mathbf{x}_t
= \{ x_{i,t}\,:\, t \in \mathbb{T}_n\,,\ 1\leq i\leq n \} \ ,
\end{equation}
the union of clusters of all the seeds in the pattern.
The elements $x_{i,t}\in \mathcal{X}$ are called \emph{cluster variables}.
The
\emph{cluster algebra} $\mathcal{A}$ associated with a
given pattern is the $\ZZ \mathbb{P}$-subalgebra of the ambient field $\mathcal{F}$
generated by all cluster variables: $\mathcal{A} = \ZZ \mathbb{P}[\mathcal{X}]$.
We denote $\mathcal{A} = \mathcal{A}(\mathbf{x}, \mathbf{y}, B)$, where
$(\mathbf{x},\mathbf{y},B)$
is any seed in the underlying cluster pattern.
\end{definition}

The cluster algebra is called \emph{skew-symmetric} if the matrix $B$ is skew-symmetric. In this case, it is often convenient to represent the $n\times n$ matrix $B$ by a quiver $Q_B$ with vertices $1,2,\ldots, n$ and $[b_{ij}]_+$ arrows from vertex $i$ to vertex $j$.

If $\mathbb{P}=1$ then the cluster algebra is said to be \emph{coefficient-free}.

The main result in this paper is on coefficient-free cluster algebras. However, we need cluster algebras with coefficients in section \ref{sect rank 2}.

In \cite{FZ}, Fomin and Zelevinsky proved the remarkable {\it Laurent phenomenon} and posed the following {\it positivity conjecture}.
\begin{theorem} [Laurent Phenomenon]
\label{Laurent}
 For any cluster algebra $\mathcal{A}$ and any seed $\Sigma_t$, each cluster variable $x$  is a Laurent polynomial over $\mathbb{ZP}$ in the cluster variables from $\mathbf{x}_t = (x_{1;t} , . . . , x_{n;t} )$.\end{theorem}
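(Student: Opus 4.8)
This is the Laurent phenomenon of \cite{FZ}, so the plan is to reproduce Fomin--Zelevinsky's inductive argument. Since a cluster pattern is determined by the seed $\Sigma_{t_0}$ of the statement and every cluster variable of $\mathcal{A}$ has the form $x_{\ell;t_m}$ for some $\ell\in[1,n]$ and some non-backtracking path $t_0,t_1,\dots,t_m$ in $\mathbb{T}_n$ (say the edge joining $t_{p-1}$ and $t_p$ carries the label $j_p$), it suffices to show that every such $x_{\ell;t_m}$ lies in $L_0:=\ZZ\mathbb{P}[x_{1;t_0}^{\pm1},\dots,x_{n;t_0}^{\pm1}]$. I would induct on $m$. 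The case $m=0$ is trivial, and for $m=1$ the exchange relation \eqref{exchange relation} writes $x_{j_1;t_1}$ as $(y_{j_1}\oplus 1)^{-1}x_{j_1;t_0}^{-1}$ times a polynomial in $\mathbf{x}_{t_0}$ over $\ZZ\mathbb{P}$; since $\mathbb{P}$ is a group, $y_{j_1}\oplus 1$ is a unit in $\ZZ\mathbb{P}$, so $x_{j_1;t_1}\in L_0$, while $x_{\ell;t_1}=x_{\ell;t_0}$ for $\ell\neq j_1$. It is harmless to assume at the outset that $\mathbb{P}$ is free abelian, so that the subrings of $\ZZ\mathbb{P}$ entering any finite computation are Laurent polynomial rings, hence unique factorization domains; this is the setting of \cite{FZ}.

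For the inductive step, $m\ge 2$: first note $j_1\neq j_2$, because two consecutive edges of a non-backtracking path in $\mathbb{T}_n$ carry distinct labels. For $i\in[1,n]$ let $P_i:=y_i\prod_k x_{k;t_0}^{[b^{t_0}_{ki}]_+}+\prod_k x_{k;t_0}^{[-b^{t_0}_{ki}]_+}\in\ZZ\mathbb{P}[\mathbf{x}_{t_0}]$ be the numerator of the exchange relation of $\Sigma_{t_0}$ in direction $i$; since $b^{t_0}_{ii}=0$ it does not involve $x_{i;t_0}$, and being (a coefficient times a monomial) plus a monomial with disjoint support, it is divisible by no $x_{k;t_0}$. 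Applying the inductive hypothesis to the shorter path $t_1,\dots,t_m$ gives $x_{\ell;t_m}\in\ZZ\mathbb{P}[\mathbf{x}_{t_1}^{\pm1}]$; since $\mathbf{x}_{t_1}$ agrees with $\mathbf{x}_{t_0}$ except that $x_{j_1;t_1}=(y_{j_1}\oplus 1)^{-1}x_{j_1;t_0}^{-1}P_{j_1}$, substitution gives
\[
x_{\ell;t_m}\ \in\ L_0\big[P_{j_1}^{-1}\big].
\]
The crux is the \emph{Caterpillar Lemma} of \cite{FZ}: by running the induction along a second route that pivots the path through the direction $j_2$ — which requires carving out a carefully chosen finite subtree of $\mathbb{T}_n$ (a ``caterpillar'': a path-shaped spine recording the two ends of our path, with pendant vertices recording the relevant exchange data) and re-applying the inductive hypothesis to the shorter paths it contains — one obtains a second membership
\[
x_{\ell;t_m}\ \in\ L_0\big[Q^{-1}\big]
\]
for a suitable exchange polynomial $Q$ which the construction arranges to be coprime to $P_{j_1}$ in the unique factorization domain $\ZZ\mathbb{P}[\mathbf{x}_{t_0}]$. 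Granting this, write $x_{\ell;t_m}$ in lowest terms over $\ZZ\mathbb{P}[\mathbf{x}_{t_0}]$: the first inclusion shows its denominator divides $x_{1;t_0}^{N}\cdots x_{n;t_0}^{N}P_{j_1}^{N}$ for some $N$, the second shows it divides $x_{1;t_0}^{N}\cdots x_{n;t_0}^{N}Q^{N}$, and $\gcd(P_{j_1},Q)=1$ together with the fact that $P_{j_1},Q$ are divisible by no $x_{k;t_0}$ then forces the denominator to be a monomial in the $x_{k;t_0}$, i.e.\ $x_{\ell;t_m}\in L_0$. This closes the induction.

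The main obstacle is exactly the Caterpillar Lemma: both the combinatorial bookkeeping that reduces the claim to one clean inductive statement about a caterpillar-shaped tree, and the coprimality input (really an irreducibility-type statement about binomial exchange polynomials) that powers the ``two coprime denominators'' step. It is here that the precise form of matrix mutation \eqref{eq:matrix-mutation} — in particular the correction term $\textup{sgn}(b_{ik})[b_{ik}b_{kj}]_+$, which governs how the exchange polynomials of adjacent seeds are related — must be used in an essential way. By contrast, the presence of semifield coefficients is not a genuine difficulty: every element of $\mathbb{P}$ is a unit in $\ZZ\mathbb{P}$, so all divisions appearing in \eqref{exchange relation} and \eqref{eq:y-mutation} remain inside $\ZZ\mathbb{P}$, and one may in any case reduce to the case of a free semifield first.
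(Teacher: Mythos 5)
The paper does not prove this theorem at all: it is quoted verbatim from \cite{FZ} as a known result, so there is no in-paper argument to compare against. Your proposal is therefore best judged as an attempted reconstruction of the original Fomin--Zelevinsky proof, and as such it gets the architecture right: induction on the distance from $\Sigma_{t_0}$ in $\mathbb{T}_n$, the first inclusion $x_{\ell;t_m}\in L_0[P_{j_1}^{-1}]$ obtained by substituting the exchange relation into the inductive hypothesis at $t_1$, and the ``two coprime denominators'' endgame in the UFD $\ZZ\mathbb{P}[\mathbf{x}_{t_0}]$ (your observations that $P_i$ involves no $x_{i;t_0}$ and is divisible by no $x_{k;t_0}$, and that $y_{j_1}\oplus 1$ is a unit, are correct and are exactly the auxiliary facts the real proof needs).

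The genuine gap is that the entire mathematical content of the theorem is concentrated in the step you label ``the Caterpillar Lemma'' and then do not prove. You never construct the second localization: you do not say what $Q$ is, why $x_{\ell;t_m}\in L_0[Q^{-1}]$, or why $\gcd(P_{j_1},Q)=1$. In \cite{FZ} this is not a second application of the same induction but a strictly stronger statement (Laurent-ness \emph{together with} pairwise coprimality of cluster variables in adjacent clusters and coprimality of consecutive exchange polynomials) proved by a simultaneous induction over the caterpillar tree; the coprimality part is where the correction term $\textup{sgn}(b_{ik})[b_{ik}b_{kj}]_+$ in \eqref{eq:matrix-mutation} is actually used, via an explicit computation showing that the exchange polynomial at $t_2$ in direction $j_1$, reduced modulo $P_{j_2}$, stays invertible. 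Without supplying that lemma the ``coprime denominators'' step has nothing to act on, so what you have is a correct proof outline rather than a proof. (A smaller issue: your proposed reduction to a free semifield should not invoke Theorem \ref{thm X}, whose proof itself presupposes the Laurent phenomenon; \cite{FZ} instead works directly with $\ZZ\mathbb{P}$ for a general semifield, using that the multiplicative group of a semifield is torsion-free.)
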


\begin{conjecture}[Positivity Conjecture] For any cluster algebra $\mathcal{A}$, any seed $\Sigma$, and any cluster variable $x$, the Laurent polynomial  has coefficients which are nonnegative integer linear combinations of elements in $\mathbb{P}$.
	\end{conjecture}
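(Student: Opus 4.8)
The plan is to reduce the conjecture to its most essential case and then bring global structural tools to bear, since the elementary rank-$3$ computation of the present paper cannot survive the passage to higher rank. The first reduction eliminates the coefficient semifield $\mathbb{P}$. By the Laurent Phenomenon each cluster variable $x_{\ell;t}$ is a Laurent polynomial over $\ZZ\mathbb{P}$ in $\mathbf{x}_t$; I would invoke the separation-of-additions formula of Fomin--Zelevinsky, which expresses this Laurent polynomial as
\[
x_{\ell;t} \;=\; \frac{X_{\ell;t}\big(x_1,\dots,x_n;\,\hat y_1,\dots,\hat y_n\big)}{\,F_{\ell;t}\big|_{\mathbb{P}}(y_1,\dots,y_n)\,},\qquad \hat y_j=y_j\prod_i x_i^{b_{ij}},
\]
where $X_{\ell;t}$ is the Laurent polynomial of the same cluster variable computed in the cluster algebra with \emph{principal} coefficients, $F_{\ell;t}=X_{\ell;t}|_{x=1}$ is its $F$-polynomial, and $F_{\ell;t}|_{\mathbb{P}}$ denotes the evaluation of $F_{\ell;t}$ in $(\mathbb{P},\oplus,\cdot)$, hence a single element of $\mathbb{P}$. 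If $X_{\ell;t}$ has nonnegative integer coefficients, then so does $F_{\ell;t}$ (specialize $x=1$), the substitution $y_j\mapsto\hat y_j$ only multiplies monomials by the positive element $y_j\in\mathbb{P}$ and by monomials in the $x_i$, and dividing by the positive semifield element $F_{\ell;t}|_{\mathbb{P}}$ keeps every coefficient a nonnegative $\ZZ$-combination of elements of $\mathbb{P}$. Thus general positivity follows from positivity in the principal-coefficient (in particular coefficient-free) case, and I am reduced to controlling the $X$-polynomials.

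The second reduction concerns the exchange matrix. For $B$ skew-symmetric one may argue directly; for $B$ merely skew-symmetrizable I would attempt an unfolding, realizing $B$ as the matrix of the induced mutation on the fixed-point lattice of a skew-symmetric $\widetilde B$ carrying a compatible group action, so that positivity of the $X$-polynomials downstairs is inherited from the skew-symmetric datum upstairs. The weak point is that not every skew-symmetrizable $B$ is known to admit such an unfolding, so this route cannot be expected to cover all cases; a robust proof must instead treat skew-symmetrizable data intrinsically, which is the principal reason I would ultimately favor a scattering-diagram formulation (below), as it never uses skew-symmetry.

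For the remaining core --- positivity of $X_{\ell;t}$ for all ranks --- I see two viable strategies. The categorical one expresses each cluster variable through a generalized Caldero--Chapoton character, writing $X_{\ell;t}$ as a generating function whose coefficients are Euler characteristics $\chi(\mathrm{Gr}_{\mathbf e}(M))$ of quiver Grassmannians attached to a representation $M$; positivity then becomes the geometric assertion that these Euler characteristics are nonnegative, which one would try to secure by exhibiting cell decompositions or polynomial point-counts with nonnegative coefficients. The geometric one, which I consider more decisive because it treats all ranks, all skew-symmetrizable $B$, and all coefficients uniformly, builds the scattering diagram associated to the fixed data, defines theta functions via broken lines, identifies every cluster monomial with a theta function, and reads off its Laurent expansion in any cluster as a sum over broken lines. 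Each broken line contributes a monomial whose coefficient is a product of wall-function coefficients, so the entire expansion is manifestly a nonnegative combination once one knows the wall functions themselves have nonnegative coefficients.

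The main obstacle is precisely that last input, the \emph{positivity of the scattering diagram}: the assertion that every wall function arising in the consistent completion of the initial walls has nonnegative coefficients. This does not follow from any finite computation --- it is a global statement about the wall-crossing automorphisms and their consistency, and it is the true home of the difficulty that the rank-$3$ argument of this paper sidesteps by brute force. Establishing it (equivalently, nonnegativity of the relevant Euler characteristics in the categorical approach) is the crux: once the wall functions are known to be positive, the broken-line expansion, the identification of cluster monomials with theta functions, and the two reductions above combine to yield the positivity conjecture in full generality.
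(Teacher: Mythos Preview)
The statement you are addressing is labeled a \emph{conjecture} in the paper, and the paper does not prove it; the paper's main theorem is the special case of skew-symmetric, coefficient-free algebras of rank~$3$, established by an explicit elementary computation. So there is no ``paper's own proof'' to compare against, and your task was ill-posed from the start in that sense.

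What you have written is not a proof but a research outline, and you are candid about this: you reduce the conjecture first to principal coefficients via separation of additions (this reduction is correct and is exactly Theorem~\ref{thm X} of the paper), then to the skew-symmetric case via unfolding (correctly flagging that unfolding is not known to exist in general), and finally to either nonnegativity of Euler characteristics of quiver Grassmannians or to positivity of the wall functions in the cluster scattering diagram. You then stop, declaring this last step ``the crux'' and ``the main obstacle.'' That is an accurate assessment, but it means your proposal has a genuine gap by your own account: you have traded one hard open statement for another of comparable depth, without resolving either. Neither the nonnegativity of the relevant Euler characteristics nor the positivity of the consistent scattering diagram is available as a black box at the level of this paper; each requires substantial independent work (the latter was eventually carried out by Gross--Hacking--Keel--Kontsevich, and the skew-symmetric case was also settled by an extension of the present authors' elementary methods, but both postdate this paper).

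In short: your reductions are sound and your strategic instincts are good --- the scattering-diagram route did ultimately succeed --- but what you have submitted is a map of the territory, not a traversal of it. A proof would need to supply the missing ingredient you yourself isolate.
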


\subsection{Cluster algebras with principal coefficients}\label{sect principal coefficients}
 One important choice for $\mathbb{P}$ is the tropical semifield; in this case we say that the corresponding cluster algebra is of geometric type.

\begin{definition}[Tropical semifield]
Let $\text{Trop}(u_1,\cdots,u_m)$ be an abelian group (written multiplicatively) freely generated by the $u_j$. We define $\oplus$ in $\text{Trop}(u_1,\cdots,u_m)$ by
$$
\prod_j u_j^{a_j} \oplus \prod_j u_j^{b_j} = \prod_j u_j^{\min(a_j,b_j)},
$$and call $(\text{Trop}(u_1,\cdots,u_m), \oplus, \cdot)$ a tropical semifield.
\end{definition}

\begin{remark}\label{rectangular}
In cluster algebras whose ground ring is
$\mathrm{Trop}(u_1,\dots, u_{m})$ (the tropical semifield), it is convenient to replace the
matrix $B$ by an $(n+m)\times n$ matrix $\tilde B=(b_{ij})$ whose upper part
is the $n\times n$ matrix $B$ and whose lower part is an $m\times
n$ matrix that encodes the coefficient tuple via
\begin{equation}\label{eq 20}
y_k = \prod_{i=1}^{m} u_i^{b_{(n+i)k}}.
\end{equation}
Then the mutation of the coefficient tuple in equation (\ref{eq:y-mutation})
is determined by the mutation
of the matrix $\tilde B$ in equation (\ref{eq:matrix-mutation}) and the formula (\ref{eq 20}); and the
exchange relation (\ref{exchange relation}) becomes
\begin{equation}\label{geometric exchange}
 x_k'=x_k^{-1} \left( \prod_{i=1}^n x_i^{[b_{ik}]_+}
\prod_{i=1}^{m} u_i^{[b_{(n+i)k}]_+}
+\prod_{i=1}^n x_i^{[-b_{ik}]_+}
\prod_{i=1}^{m} u_i^{[-b_{(n+i)k}]_+}
\right).
\end{equation}
\end{remark}

Recall from \cite{FZ4} that a
cluster algebra~$\mathcal{A}$  is said to have
\emph{principal coefficients at a vertex~$t$} if
$\mathbb{P}= \textup{Trop}(y_1, \dots, y_n)$ and
$\mathbf{y}_{t}= (y_1, \dots, y_n)$.

\begin{definition}\label{def X}
Let~$\mathcal{A}$ be the cluster algebra with principal coefficients at
$t$, defined by the initial seed
$(\mathbf{x}_{t}\,,\mathbf{y}_{t}\,B_t)$ with
\[\mathbf{x}_{t} = (x_1, \dots, x_n), \quad
\mathbf{y}_{t} = (y_1, \dots, y_n).
\]
\begin{enumerate}
\item Let
$X_{\ell;t} $
be the
Laurent expansion of the cluster variable $x_{\ell;t}$ in $x_1, \dots, x_n, y_1, \dots, y_n$.
\item The
\emph{F-polynomial} of the cluster variable $x_{\ell,t}$ is defined as
 $F_{\ell;t}= X_{\ell;t}(1, \dots, 1; y_1, \dots, y_n).$
\item The \emph{$g$-vector} $\mathbf{g}_{\ell;t}$ of the cluster variable $x_{\ell,t}$ is defined as the degree vector of the monomial $X_{\ell;t}(x_1, \dots, x_n; 0, \dots, 0).$
\end{enumerate}
\end{definition}

The following theorem shows that expansion formulas in principal coefficients can be used to compute expansions in arbitrary coefficient systems.
\begin{theorem}\cite[Theorem 3.7]{FZ4}
\label{thm X}
Let $\mathcal{A}$ be a cluster algebra over an arbitrary semifield $\mathbb{P}$
with initial seed
$$((x_1, \dots, x_n), (\hat y_1, \dots, \hat y_n), B).$$
Then the cluster variables in~$\mathcal{A}$ can be expressed as follows:
\begin{equation}
\label{eq:xjt-reduction-principal}
x_{\ell;t} = \frac{X_{\ell;t} (x_1, \dots, x_n;\hat y_1, \dots,\hat y_n)}
{F_{\ell;t}|_\mathbb{P} (\hat y_1, \dots, \hat y_n)} \, .
\end{equation}
where ${F_{\ell;t}|_\mathbb{P} (\hat y_1, \dots, \hat y_n)}$ is the $F$-polynomial evaluated at $\hat y_1, \dots, \hat y_n$ inside the semifield $\mathbb{P}$.
\end{theorem}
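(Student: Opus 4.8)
The plan is to deduce the formula by comparing $\mathcal{A}$ with the principal–coefficient cluster algebra of Definition~\ref{def X}, whose Laurent expansions $X_{\ell;t}(x_1,\dots,x_n;y_1,\dots,y_n)$ and $F$–polynomials $F_{\ell;t}$ appear on the right–hand side. Both algebras are attached to the same labelled tree $\mathbb{T}_n$ and, since matrix mutation does not see the coefficients, carry the same exchange matrices $B_t=(b^t_{ij})$ along it; write $t_0$ for the initial vertex. I would prove
\[
x_{\ell;t}=\frac{X_{\ell;t}(x_1,\dots,x_n;\hat y_1,\dots,\hat y_n)}{F_{\ell;t}|_{\mathbb{P}}(\hat y_1,\dots,\hat y_n)}
\]
by induction on the distance $d(t_0,t)$ in $\mathbb{T}_n$. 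The base case $t=t_0$ is immediate, since $X_{\ell;t_0}=x_\ell$ and $F_{\ell;t_0}=1$.

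For the inductive step take an edge $t\xrightarrow{k}t'$ with $d(t_0,t')=d(t_0,t)+1$; only the $k$–th cluster variable changes, so it suffices to compare the two exchange relations. Introduce, in any cluster algebra, the elements $\hat y_{j;t}:=y_{j;t}\prod_i x_{i;t}^{b^t_{ij}}$ (so $\hat y_{j;t}\in\mathcal{F}$ in $\mathcal{A}$, and similarly $\hat Y_{j;t}:=Y_{j;t}\prod_i X_{i;t}^{b^t_{ij}}$ in the principal–coefficient algebra). Factoring $\prod_i x_{i;t}^{[-b^t_{ik}]_+}$ out of the numerator of the exchange relation (\ref{exchange relation}) and using $[a]_+-[-a]_+=a$ rewrites it as
\[
x_{k;t'}=\frac{\prod_i x_{i;t}^{[-b^t_{ik}]_+}}{x_{k;t}}\cdot\frac{1+\hat y_{k;t}}{1\oplus y_{k;t}},
\qquad
X_{k;t'}=\frac{\prod_i X_{i;t}^{[-b^t_{ik}]_+}}{X_{k;t}}\cdot\frac{1+\hat Y_{k;t}}{1\oplus Y_{k;t}},
\]
where $\oplus$ is the addition of $\mathbb{P}$ in the first identity and tropical addition in the second. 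Let $\sigma$ denote the substitution $x_i\mapsto x_i$, $y_j\mapsto\hat y_j$, well defined on every $X_{\ell;t}$ and on every $\hat Y_{j;t}$. Applying $\sigma$ to the second identity and feeding in the inductive hypothesis $\sigma(X_{j;t})=x_{j;t}\,F_{j;t}|_{\mathbb{P}}(\hat y)$, the first factor reproduces the first factor of the $x_{k;t'}$–relation up to a factor in $\mathbb{P}$, and it remains to identify the image of the second factor and to check that the total correction equals $F_{k;t'}|_{\mathbb{P}}(\hat y)$.

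This is handled by two auxiliary facts established once and for all. First, the \emph{universal $Y$–dynamics}: along $t\xrightarrow{k}t'$ the elements $\hat y_{j;t}$ of any cluster algebra obey $\hat y_{k;t'}=\hat y_{k;t}^{-1}$ and $\hat y_{j;t'}=\hat y_{j;t}\,\hat y_{k;t}^{[b^t_{kj}]_+}(1+\hat y_{k;t})^{-b^t_{kj}}$ for $j\ne k$, with the \emph{ordinary} sum $1+\hat y_{k;t}$; this is a direct computation from (\ref{exchange relation}) and (\ref{eq:matrix-mutation}) — the only genuine calculation in the proof — and it governs the $\hat y_{j;t}$ of $\mathcal{A}$ and the $\hat Y_{j;t}$ of the principal–coefficient algebra alike. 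Since $\sigma(\hat Y_{j;t_0})=\hat y_{j;t_0}$ and $\sigma$ intertwines the two recursions, $\sigma(\hat Y_{j;t})=\hat y_{j;t}$ for all $t$, which matches the numerators $1+\hat Y_{k;t}$ and $1+\hat y_{k;t}$. Second, the \emph{$F$–polynomial recursion}, obtained by specialising $x_i=1$ in the $X_{k;t'}$–identity above,
\[
F_{k;t'}\,F_{k;t}\,(1\oplus Y_{k;t})=\Big(\prod_i F_{i;t}^{[-b^t_{ik}]_+}\Big)\Big(1+Y_{k;t}\prod_i F_{i;t}^{b^t_{ik}}\Big),
\]
a valid identity among (Laurent) subtraction–free rational functions in $y_1,\dots,y_n$. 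I would push it forward along the unique semifield homomorphism $\pi$ from the universal semifield $\mathbb{Q}_{\mathrm{sf}}(y_1,\dots,y_n)$ to $\mathbb{P}$ with $y_j\mapsto\hat y_j$: it carries each $F_{\ell;s}$ to $F_{\ell;s}|_{\mathbb{P}}(\hat y)$ and, invoking the universal $Y$–dynamics a second time, carries $Y_{k;t}\prod_i F_{i;t}^{b^t_{ik}}$ to the coefficient $y_{k;t}$ of $\mathcal{A}$ (both arising from $\hat y_j$, resp.\ $y_j$, by the same $Y$–mutation recursion). The resulting identity in $\mathbb{P}$ is precisely what is needed to conclude $x_{k;t'}=\sigma(X_{k;t'})/F_{k;t'}|_{\mathbb{P}}(\hat y)$, completing the induction.

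The step I expect to demand the most care is the bookkeeping around the several notions of addition — $\oplus$ in $\mathbb{P}$, tropical $\oplus$, the formal addition of $\mathbb{Q}_{\mathrm{sf}}$, and ordinary $+$ in $\mathcal{F}$ — together with the point that the two auxiliary identities, once proved as identities of subtraction–free rational functions (equivalently, of rational functions over $\mathbb{Q}$), transport verbatim to an arbitrary semifield $\mathbb{P}$ via the universal property of $\mathbb{Q}_{\mathrm{sf}}$. The only part that is not pure bookkeeping is the universal $Y$–dynamics computation.
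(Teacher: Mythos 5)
The paper offers no proof of this statement; it is imported verbatim as \cite[Theorem 3.7]{FZ4}, so there is nothing internal to compare against. Your sketch is, in substance, a correct reconstruction of Fomin and Zelevinsky's own argument: their Propositions 3.9 and 3.13 are exactly your ``universal $Y$-dynamics'' and the identification $\pi\bigl(Y_{k;t}\prod_i F_{i;t}^{b^t_{ik}}\bigr)=y_{k;t}$, and separating the two additions via the $F$-polynomial recursion pushed through the universal semifield is precisely how they close the induction. The one point to clean up is notational but, if read literally, fatal: in the theorem as stated here $\hat y_j$ denotes the \emph{initial coefficient tuple}, an element of $\mathbb{P}$, and that is what $\sigma$ and $\pi$ must substitute for $y_j$ and what $F_{\ell;t}|_{\mathbb{P}}$ is evaluated at, whereas your $\hat y_{j;t_0}=y_j\prod_i x_i^{b_{ij}}$ is a different element of $\mathcal{F}$, generally not in $\mathbb{P}$, and substituting it for $y_j$ in $X_{\ell;t}$ would make the claimed identity false already for $X_3=(y_1+x_2^r)/x_1$ in rank $2$; with $\hat y_j\in\mathbb{P}$ throughout and $\hat y_{j;t}$ reserved for the auxiliary field elements, the induction closes as you describe.
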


\section{Rank 2 considerations}\label{sect rank 2}
In this section, we use the rank 2 formula from \cite{LS} (in the parametrization of \cite{LLZ}) to compute in a non-acyclic cluster algebra of rank three the  Laurent expansions of those cluster variables which are obtained from the initial cluster by a mutation sequence involving only two vertices.

\subsection{Rank 2 formula} We start by recalling  from \cite{LLZ} the formula for the Laurent expansion of an arbitrary cluster variable in the cluster algebra of rank 2 given by the initial quiver with $r$ arrows
\[ \xymatrix{1\ar^r[rr] &&2}\]
 where $r\ge 2$ is a positive integer. Then the cluster variables $x_n$ in this cluster algebra are defined by  the following recursion:
$$ x_{n+1} =(x_n^r +1)/{x_{n-1}} \text{ for any integer }n.
$$
Let $\{c_n^{[r]}\}_{n\in\mathbb{Z}}$ be the sequence  defined by the recurrence relation $$c_n^{[r]}=rc_{n-1}^{[r]} -c_{n-2}^{[r]},$$ with the initial condition $c^{[r]}_1=0$, $c^{[r]}_2=1$. The $c_n^{[r]}$ are Chebyshev polynomials.
For example, if $r=2$ then $c^{[r]}_n=n-1$;
if $r=3$, the sequence $c_n^{[r]}$ takes the following values:
\[\ldots,-3,-1,0,1,3,8,21,55,144,\ldots\]
The pair of absolute values of the integers $(c_{n-1}^{[r]},c^{[r]}_{n-2})$ is the degree of the denominator of the cluster variable $x_n$.

If the value of $r$ is clear from the context, we usually write $c_n$ instead of $c_n^{[r]}$.
\begin{lemma}
 \label{lem cn} Let $n\geq 3$. We have
 $c_{n-1}c_{n+k-3} - c_{n+k-2}c_{n-2}=c_k$ for $k\in \mathbb{Z}$. In particular, we have
 $c_{n-1}^2 -c_nc_{n-2}=1$.
\end{lemma}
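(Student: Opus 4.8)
The plan is to fix $n\ge 3$, view the left-hand side as a function of $k$, and observe that it satisfies the same three-term recurrence as $(c_k)_{k\in\mathbb{Z}}$; the identity then follows from a two-point comparison.

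First I would dispatch the auxiliary (``in particular'') identity $c_{m-1}^2-c_mc_{m-2}=1$ by induction on $m$. The recurrence with the initial conditions gives $c_0=-1$, so the case $m=2$ reads $c_1^2-c_2c_0=0-(1)(-1)=1$. For the inductive step, using $c_{m-2}=rc_{m-1}-c_m$ and $c_{m+1}=rc_m-c_{m-1}$ one computes
\[
c_m^2-c_{m+1}c_{m-1}=c_m^2-rc_mc_{m-1}+c_{m-1}^2=c_{m-1}^2-c_m(rc_{m-1}-c_m)=c_{m-1}^2-c_mc_{m-2}=1 ,
\]
and the analogous computation runs in the backward direction, so the identity holds for all $m\in\mathbb{Z}$; specializing $m=n$ gives the second assertion of the lemma.

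For the main identity, fix $n\ge 3$ and put $\phi(k)=c_{n-1}c_{n+k-3}-c_{n-2}c_{n+k-2}$. As functions of $k$, both $c_{n+k-3}$ and $c_{n+k-2}$ satisfy the linear recurrence $a_{k+1}=ra_k-a_{k-1}$, hence so does the linear combination $\phi(k)$, and of course $(c_k)_{k\in\mathbb{Z}}$ satisfies the same recurrence. Since this recurrence can be run in either direction ($a_{k-1}=ra_k-a_{k+1}$), a solution is determined by any two consecutive values, so it suffices to check two base cases: $\phi(1)=c_{n-1}c_{n-2}-c_{n-2}c_{n-1}=0=c_1$, and $\phi(2)=c_{n-1}^2-c_{n-2}c_n=1=c_2$ by the auxiliary identity with $m=n$. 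Therefore $\phi(k)=c_k$ for all $k\in\mathbb{Z}$, which is the claim.

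There is no genuinely hard step; the only computation of substance is the base case $\phi(2)=1$, which is exactly why the Cassini-type identity is proved first, and the only mild subtlety is that $k$ ranges over all of $\mathbb{Z}$, handled uniformly by the ``same recurrence, two consecutive values'' principle. (Alternatively, one can argue via the matrix $M=\left(\begin{smallmatrix} r & -1\\ 1 & 0\end{smallmatrix}\right)$: a two-directional induction from $M^0=I$ shows $M^{j}=\left(\begin{smallmatrix} c_{j+2} & -c_{j+1}\\ c_{j+1} & -c_{j}\end{smallmatrix}\right)$ for all $j\in\mathbb{Z}$; since $\det M=1$ the matrix $M^{n-2}$ is invertible over $\mathbb{Z}$ with the expected entries — which incidentally recovers $c_{n-1}^2-c_nc_{n-2}=1$ — and comparing the $(1,2)$-entries in the identity $M^{-(n-2)}M^{\,n+k-3}=M^{\,k-1}$ yields the lemma directly.)
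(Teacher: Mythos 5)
Your proof is correct, and it takes a genuinely different route from the paper's. The paper fixes $k$ and proves the identity by a single induction on $n$ (base case $n=3$, where it reduces to $c_2c_k-c_{k+1}c_1=c_k$), rewriting $c_{n+k-2}c_{n-2}$ via the recurrence and the inductive hypothesis for $n-1$; the Cassini-type identity $c_{n-1}^2-c_nc_{n-2}=1$ is then just the special case $k=2$. You reverse the logical order: you first establish the Cassini identity by its own induction (your base case $c_1^2-c_2c_0=1$ and telescoping step are both correct, since $c_0=-1$), and then, with $n$ fixed, you observe that $\phi(k)=c_{n-1}c_{n+k-3}-c_{n-2}c_{n+k-2}$ and $c_k$ both satisfy the second-order recurrence $a_{k+1}=ra_k-a_{k-1}$, so agreement at the two consecutive values $k=1$ (trivially $0$) and $k=2$ (exactly Cassini) forces $\phi(k)=c_k$ for all $k\in\mathbb{Z}$. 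Both arguments are elementary and short; the paper's handles both assertions in one induction, while your determinacy-by-two-consecutive-values argument makes the extension to negative $k$ completely transparent, and your parenthetical matrix formulation (with $\det M=1$ and the correct formula $M^{j}$ in terms of $c_{j+2},c_{j+1},c_j$) gives a slightly more conceptual explanation of where the Cassini identity comes from.
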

\begin{proof}
 The result holds for $n=3$. Suppose that $n\ge 4$, then
 \[\begin{array}{rcl}
c_{n+k-2}c_{n-2} &=& rc_{n+k-3}c_{n-2}-c_{n+k-4}c_{n-2} \\
&\stackrel{*}{=}&  rc_{n+k-3}c_{n-2}-(c_k+c_{n+k-3}c_{n-3})\\
&=&  c_{n+k-3}(rc_{n-2} -c_{n-3}) -c_k\\
&=& c_{n+k-3}c_{n-1} -c_k,
\end{array}\] where equation $*$ holds by induction.
\end{proof}

Let $(a_1, a_2)$ be a pair of nonnegative integers.
A \emph{Dyck path} of type $a_1\times a_2$ is a lattice path
from $(0, 0)$  to $(a_1,a_2)$ that
%proceeds by NORTH or EAST steps and
never goes above the main diagonal joining $(0,0)$ and $(a_1,a_2)$.
Among the Dyck paths of a given type $a_1\times a_2$, there is a (unique) \emph{maximal} one denoted by
$\mathcal{D} = \mathcal{D}^{a_1\times a_2}$.
It is defined by the property that any lattice point strictly above $\mathcal{D}$ is also strictly above the main diagonal.

Let $\mathcal{D}=\mathcal{D}^{a_1\times a_2}$.  Let $\mathcal{D}_1=\{u_1,\dots,u_{a_1}\}$ be the set of horizontal edges of $\mathcal{D}$ indexed from left to right, and $\mathcal{D}_2=\{v_1,\dots, v_{a_2}\}$ the set of vertical edges of $\mathcal{D}$ indexed from bottom to top.
%We say that a horizontal edge is of height $j$ if the $y$-coordinate of every point on the edge is $j$.
%For any $i\in\mathbb{Z}$, we let $u_i=u_r$ if $i\equiv r$ (mod $a_1$) and $1\le r\le a_1$. Similarly, for $j\in\mathbb{Z}$, we define $v_j=v_r$ if $j\equiv r$ (mod $a_2$) and $1\le r\le a_2$.
Given any points $A$ and $B$ on $\mathcal{D}$, let $AB$ be the subpath starting from $A$, and going in the Northeast direction until it reaches $B$ (if we reach $(a_1,a_2)$ first, we continue from $(0,0)$). By convention, if $A=B$, then $AA$ is the subpath that starts from $A$, then passes $(a_1,a_2)$ and ends at $A$. If we represent a subpath of $\mathcal{D}$ by its set of edges, then for $A=(i,j)$ and $B=(i',j')$, we have
$$
AB=
\begin{cases}
\{u_k, v_\ell: i < k \leq i', j < \ell \leq j'\}, \quad\textrm{if $B$ is to the Northeast of $A$};\\
\mathcal{D} - \{u_k, v_\ell: i' < k \leq i, j' < \ell \leq j\}, \quad\textrm{otherwise}.
\end{cases}
$$
We denote by $(AB)_1$ the set of horizontal edges in $AB$, and by $(AB)_2$ the set of vertical edges in $AB$.
Also let $AB^\circ$ denote the set of lattice points on the subpath $AB$ excluding the endpoints $A$ and $B$ (here $(0,0)$ and $(a_1,a_2)$ are regarded as the same point).

Here is an example for $(a_1,a_2)=(6,4)$.

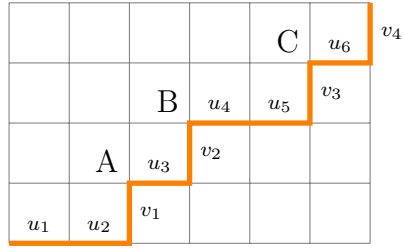
\begin{figure}[h]
\begin{tikzpicture}[scale=.8]
\draw[step=1,color=gray] (0,0) grid (6,4);
\draw[line width=2,color=orange] (0,0)--(2,0)--(2,1)--(3,1)--(3,2)--(5,2)--(5,3)--(6,3)--(6,4);
%
%\draw (0,0) node[anchor=north east]  {\tiny$0$};
\draw (0.5,0) node[anchor=south]  {\tiny$u_1$};
\draw (1.5,0) node[anchor=south]  {\tiny$u_2$};
\draw (2.5,1) node[anchor=south]  {\tiny$u_3$};
\draw (3.5,2) node[anchor=south]  {\tiny$u_4$};
\draw (4.5,2) node[anchor=south]  {\tiny$u_5$};
\draw (5.5,3) node[anchor=south]  {\tiny$u_6$};
\draw (6,3.5) node[anchor=west]  {\tiny$v_4$};
\draw (5,2.5) node[anchor=west]  {\tiny$v_3$};
\draw (3,1.5) node[anchor=west]  {\tiny$v_2$};
\draw (2,.5) node[anchor=west]  {\tiny$v_1$};
\draw (2,1) node[anchor=south east] {A};
\draw (3,2) node[anchor=south east] {B};
\draw (5,3) node[anchor=south east] {C};
\end{tikzpicture}
\caption{A maximal Dyck path.}
\label{fig:Dyck-path}
\end{figure}

%The maximal Dyck path   $\mathcal{D}^{6,4}$ is the orange path.
Let $A=(2,1)$, $B=(3,2)$ and $C=(5,3)$. Then
$$
%\aligned
%&
(AB)_1=\{u_3\}, \,\, (AB)_2=\{v_2\}, \,\, %AB=(AB)_1\cup(AB)_2\\
%&
(BA)_1=\{u_4,u_5,u_6,u_1,u_2\}, \, \, (BA)_2=\{v_3,v_4,v_1\} \ . %, \quad BA=(BA)_1\cup(BA)_2\\
%\endaligned
$$
The point $C$ is in $BA^\circ$ but not in $AB^\circ$. The subpath $AA$ has length 10 (not 0).

\begin{definition}
\label{df:compatible}
For $S_1\subseteq \mathcal{D}_1$, $S_2\subseteq \mathcal{D}_2$, we say that the pair $(S_1,S_2)$ is compatible if for every $u\in S_1$ and $v\in S_2$, denoting by $E$ the left endpoint of $u$ and $F$ the upper endpoint of $v$, there exists a lattice point $A\in EF^\circ$ such that
\begin{equation}
\label{0407df:comp}
|(AF)_1|=r|(AF)_2\cap S_2|\textrm{\; or\; }|(EA)_2|=r|(EA)_1\cap S_1|.\end{equation}
\end{definition}

With all this terminology in place we are ready to present the combinatorial expression for greedy elements.  The following Theorem has been proved in \cite{LS3arXiv,LLZ}.

\begin{theorem}
\label{th:greedy-combinatorial}
For $n\geq 3$, we have
\begin{equation}
\label{eq:greedy-Dyck-expression}
x_n = x_1^{-c_{n-1}}x_2^{-c_{n-2}}\sum_{(S_1,S_2)}x_1^{r|S_2|}x_2^{r|S_1|}
\end{equation}
and
\begin{equation}
\label{eq:greedy-Dyck-expression2}
x_{3-n} = x_2^{-c_{n-1}}x_1^{-c_{n-2}}\sum_{(S_1,S_2)}x_2^{r|S_2|}x_1^{r|S_1|},
\end{equation}
where the sum is over all compatible pairs $(S_1,S_2)$ in $\mathcal{D}^{c_{n-1}\times c_{n-2}}$.
\end{theorem}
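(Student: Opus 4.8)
The plan is to prove both formulas by induction on $n$, using the rank~2 recursion $x_{n+1} = (x_n^r+1)/x_{n-1}$ together with the combinatorics of maximal Dyck paths. The base case $n=3$ should be checked directly: here $\mathcal{D}^{c_2\times c_1}=\mathcal{D}^{1\times 0}$ consists of a single horizontal edge and no vertical edges, the only compatible pair is $(\emptyset,\emptyset)$, and the right-hand side of \eqref{eq:greedy-Dyck-expression} reduces to $x_1^{-1}x_2^{0}\cdot 1 = (x_2^r+1)/x_1$ only if we also observe that $x_3=(x_2^r+1)/x_1$; so in fact for $n=3$ one must enlarge the Dyck path to $\mathcal{D}^{c_2\times c_1}$ with the convention $c_2=1, c_1=0$ and verify the two compatible pairs $(\emptyset,\emptyset)$ and $(\{u_1\},\emptyset)$ give exactly $x_1^{-1}(1 + x_2^r)$. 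A parallel check handles \eqref{eq:greedy-Dyck-expression2} via the symmetry $x_1 \leftrightarrow x_2$ in the definition of compatibility.

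For the inductive step I would fix the notation $y_n$ for the right-hand side of \eqref{eq:greedy-Dyck-expression} and show $y_{n+1} = (y_n^r+1)/y_{n-1}$, i.e. $y_{n-1}\,y_{n+1} = y_n^r + 1$; since $y_3 = x_3$ and $y_4 = x_4$ (the latter also checked by hand), this forces $y_n = x_n$ for all $n\ge 3$. Clearing denominators, the identity to prove becomes a polynomial identity in $x_1,x_2$ obtained by expanding products of sums over compatible pairs in $\mathcal{D}^{c_{n-2}\times c_{n-3}}$ and $\mathcal{D}^{c_n\times c_{n-1}}$ against the $r$-th power of a sum over compatible pairs in $\mathcal{D}^{c_{n-1}\times c_{n-2}}$, where the exponent bookkeeping uses Lemma~\ref{lem cn} (e.g. $c_{n-2}+c_n = r c_{n-1}$ and $c_{n-1}^2 - c_n c_{n-2} = 1$) to match the $x_1,x_2$ degrees on the two sides. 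Concretely, one wants a degree-preserving bijection between, on the one side, the set of compatible pairs on the larger Dyck path $\mathcal{D}^{c_n\times c_{n-1}}$ together with those on $\mathcal{D}^{c_{n-2}\times c_{n-3}}$, and on the other side, $r$-tuples of compatible pairs on $\mathcal{D}^{c_{n-1}\times c_{n-2}}$, modulo the single extra ``$+1$'' term. The geometric input is that a maximal Dyck path of type $c_n\times c_{n-1}$ decomposes, via the continued-fraction structure of the slope $c_{n-1}/c_n$, into $r$ scaled copies of the maximal Dyck path of type $c_{n-1}\times c_{n-2}$ glued along a copy of type $c_{n-2}\times c_{n-3}$; the compatibility condition \eqref{0407df:comp}, being local to pairs of edges and expressed through the ratio condition $|(AF)_1| = r|(AF)_2\cap S_2|$, should respect this decomposition.

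The main obstacle I expect is precisely verifying that the compatibility condition is compatible with this self-similar decomposition of Dyck paths — that is, showing a pair $(S_1,S_2)$ on $\mathcal{D}^{c_n\times c_{n-1}}$ is compatible if and only if its restrictions to the $r$ sub-Dyck-paths are each compatible (plus a condition governing the "seam" edges where the pieces are glued). This is delicate because the defining condition quantifies over \emph{all} lattice points $A$ on a subpath $EF^\circ$ that may straddle several pieces of the decomposition, so one has to argue that a witness point $A$ can always be chosen inside a single piece, or else track how the running counts $|(AF)_2\cap S_2|$ transform under the gluing. A clean way to organize this is to set up a ``transfer'' argument: parametrize compatible pairs by the positions of their maximal ``blocks'' and show the generating function over compatible pairs satisfies the same three-term relation as the $c_n$'s. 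I would also keep in mind the alternative route of deducing Theorem~\ref{th:greedy-combinatorial} from the already-established greedy-basis recursion of Lee--Li--Zelevinsky, reducing the claim to the purely combinatorial identity that the Dyck-path sum equals the greedy coefficient $c_{p,q}$; since the excerpt states this theorem is proved in \cite{LS3arXiv,LLZ}, citing that combinatorial identity and giving the inductive verification above as the self-contained argument is the safest presentation.
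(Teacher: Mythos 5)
The paper itself does not prove this theorem: it is imported verbatim from \cite{LS3arXiv,LLZ} with a one-line citation, so there is no in-paper argument to match your sketch against. Judged on its own terms, your proposal sets up the right frame — the base case $n=3$ with the two compatible pairs $(\emptyset,\emptyset)$ and $(\{u_1\},\emptyset)$ in $\mathcal{D}^{1\times 0}$ is correct, and reducing the induction to the identity $y_{n-1}y_{n+1}=y_n^r+1$ is the natural strategy — but the entire mathematical content of the theorem is concentrated in the step you label ``the main obstacle I expect,'' and that step is asserted, not proved. The claim that a compatible pair on $\mathcal{D}^{c_n\times c_{n-1}}$ is determined by compatible restrictions to $r$ self-similar sub-paths is exactly what is hard: the condition in Definition \ref{df:compatible} quantifies over witness points $A$ ranging over subpaths $EF^\circ$ that wrap around the endpoint and straddle any proposed decomposition, and the counts $|(AF)_2\cap S_2|$, $|(EA)_1\cap S_1|$ are global. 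No published proof of this formula proceeds by such a naive restriction argument; \cite{LLZ} instead verifies that the Dyck-path sum satisfies the defining recursion of greedy elements (a $\min$ of two binomial sums), and the earlier route in \cite{LS,LS3arXiv} passes through an entirely different closed formula (the binomial-coefficient ``mixed'' formula of the type appearing in Theorem \ref{mainthm2}) and proves the two agree. So as written your argument has a genuine gap, not a routine verification left to the reader.

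Two smaller points. First, the degree bookkeeping you invoke should use $c_{n-2}+c_n=rc_{n-1}$ (Lemma \ref{lem cn} with $k$ suitably chosen), which means the product side involves $\mathcal{D}^{c_n\times c_{n-1}}$ \emph{together with} $\mathcal{D}^{c_{n-2}\times c_{n-3}}$ matching $r$ copies of $\mathcal{D}^{c_{n-1}\times c_{n-2}}$; your phrase ``glued along a copy'' conflates this with a decomposition of the single large path, and the two are not the same combinatorial statement. Second, your fallback — citing \cite{LS3arXiv,LLZ} for the combinatorial identity — is in fact precisely what the paper does, and in the context of this paper that is the correct and complete ``proof.'' If you intend the self-contained induction to stand on its own, the bijection (or generating-function identity) between compatible pairs under the Dyck-path recursion must actually be constructed; otherwise state the theorem as quoted from the references.
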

\begin{remark}
For $n=2$, the formula is consistent if we impose the additional convention that $\mathcal{D}^{c_1\times c_0}$ is the empty set.
\end{remark}

\begin{example}\label{mainexmp} Let $r=3$ and $n=5$.  Then
$\mathcal{D}^{8\times 3}$ is the following path.
$$\begin{array}{cc}
\hspace{16pt} \begin{picture}(140,60)
\boxs{0,0}\boxs{20,0}\boxs{40,0}\boxs{60,0}\boxs{80,0}\boxs{100,0}\boxs{120,0}\boxs{140,0}
\boxs{0,20}\boxs{20,20}\boxs{40,20}\boxs{60,20}\boxs{80,20}\boxs{100,20}\boxs{120,20}\boxs{140,20}
\boxs{0,40}\boxs{20,40}\boxs{40,40}\boxs{60,40}\boxs{80,40}\boxs{100,40}\boxs{120,40}\boxs{140,40}
%\linethickness{1pt}\put(10,0){\line(5,2){150}}
\linethickness{3pt}\put(0,0){\line(1,0){60}}
\linethickness{3pt}\put(60,0){\line(0,1){20}}
\linethickness{3pt}\put(60,20){\line(1,0){60}}
\linethickness{3pt}\put(120,20){\line(0,1){20}}
\linethickness{3pt}\put(120,40){\line(1,0){40}}
\linethickness{3pt}\put(160,40){\line(0,1){20}}
\linethickness{3pt}{\line(1,0){6}}
\end{picture}
\end{array}$$
The illustrations
below show the possible configurations for compatible pairs in
$\mathcal{D}^{8\times 3}$. If the edge $u_i$ is marked
$\begin{picture}(25,10)\linethickness{3pt}\put(2,2){\line(1,0){20}}\color{white}\put(9,2){\line(1,0){6}}
\end{picture}$, then $u_i$ can occur in $S_1$.

$$
\begin{array}{cc}
\hspace{16pt} \begin{picture}(100,60)
\boxs{0,0}\boxs{20,0}\boxs{40,0}\boxs{60,0}\boxs{80,0}\boxs{100,0}\boxs{120,0}\boxs{140,0}
\boxs{0,20}\boxs{20,20}\boxs{40,20}\boxs{60,20}\boxs{80,20}\boxs{100,20}\boxs{120,20}\boxs{140,20}
\boxs{0,40}\boxs{20,40}\boxs{40,40}\boxs{60,40}\boxs{80,40}\boxs{100,40}\boxs{120,40}\boxs{140,40}
%\linethickness{1pt}\put(10,0){\line(5,2){150}}
\linethickness{3pt}\put(0,0){\line(1,0){60}}
%\linethickness{3pt}\put(60,0){\line(0,1){20}}
\linethickness{3pt}\put(60,20){\line(1,0){60}}
%\linethickness{3pt}\put(120,20){\line(0,1){20}}
\linethickness{3pt}\put(120,40){\line(1,0){40}}
%\linethickness{3pt}\put(160,40){\line(0,1){20}}
\color{white}\put(67,20){\line(1,0){6}}\linethickness{3pt}\color{white}\put(107,20){\line(1,0){6}}\color{white}\put(7,0){\line(1,0){6}}\color{white}\put(27,0){\line(1,0){6}}\color{white}\put(87,20){\line(1,0){6}}\color{white}\put(147,40){\line(1,0){6}}
\linethickness{3pt}\color{white}\put(47,0){\line(1,0){6}}\color{white}\put(127,40){\line(1,0){6}}
\end{picture}
&\,\,\,\,\,\,\,\,\,\,\,\,\,\,\,\,\,\,\,\,\,\,\,\,\,\,\,\,\,\,\,\,\,\,\,\,\,\,\,\,\,\,\,\,\,\,\,\,\,\,\,\,\,\,\,\,\,\,\,\,\,\,\,\,\,\,\,\,\,\,\,\,\,\,\,\,\,\,\,\,\,\begin{picture}(300,60)\put(-80,35){$\tiny{\sum_{\begin{array}{l}S_1\subset
\{u_1,\cdots,u_{8} \}, S_2=\emptyset
\end{array}}x_1^{r|S_2|}x_2^{r|S_1|}}$}\put(-80,15){$=(1+x_2^3)^8$}\end{picture}
\\
\hspace{16pt} \begin{picture}(100,60)
\boxs{0,0}\boxs{20,0}\boxs{40,0}\boxs{60,0}\boxs{80,0}\boxs{100,0}\boxs{120,0}\boxs{140,0}
\boxs{0,20}\boxs{20,20}\boxs{40,20}\boxs{60,20}\boxs{80,20}\boxs{100,20}\boxs{120,20}\boxs{140,20}
\boxs{0,40}\boxs{20,40}\boxs{40,40}\boxs{60,40}\boxs{80,40}\boxs{100,40}\boxs{120,40}\boxs{140,40}
%\linethickness{1pt}\put(10,0){\line(5,2){150}}
\linethickness{3pt}\put(60,20){\line(1,0){60}}
%\linethickness{3pt}\put(120,20){\line(0,1){20}}
\linethickness{3pt}\put(120,40){\line(1,0){40}}
%\linethickness{3pt}\put(160,40){\line(0,1){20}}
\color{white}\put(67,20){\line(1,0){6}}\linethickness{3pt}\color{white}\put(107,20){\line(1,0){6}}\color{white}\color{white}\put(87,20){\line(1,0){6}}\color{white}\put(147,40){\line(1,0){6}}
\linethickness{3pt}\color{white}\color{white}\put(127,40){\line(1,0){6}}
\linethickness{3pt}\color{blue}\put(60,0){\line(0,1){20}}
\end{picture}
&\,\,\,\,\,\,\,\,\,\,\,\,\,\,\,\,\,\,\,\,\,\,\,\,\,\,\,\,\,\,\,\,\,\,\,\,\,\,\,\,\,\,\,\,\,\,\,\,\,\,\,\,\,\,\,\,\,\,\,\,\,\,\,\,\,\,\,\,\,\,\,\,\,\,\,\,\,\,\,\,\,\begin{picture}(300,60)\put(-80,35){$\tiny{\sum_{\begin{array}{l}S_1\subset
\{u_4,\cdots,u_{8} \}, S_2=\{v_1\}
\end{array}}x_1^{r|S_2|}x_2^{r|S_1|}}$}\put(-80,15){$=
x_1^3(1+x_2^3)^5$}\end{picture}\\
\hspace{16pt} \begin{picture}(100,60)
\boxs{0,0}\boxs{20,0}\boxs{40,0}\boxs{60,0}\boxs{80,0}\boxs{100,0}\boxs{120,0}\boxs{140,0}
\boxs{0,20}\boxs{20,20}\boxs{40,20}\boxs{60,20}\boxs{80,20}\boxs{100,20}\boxs{120,20}\boxs{140,20}
\boxs{0,40}\boxs{20,40}\boxs{40,40}\boxs{60,40}\boxs{80,40}\boxs{100,40}\boxs{120,40}\boxs{140,40}
%\linethickness{1pt}\put(10,0){\line(5,2){150}}
%\linethickness{3pt}\put(60,20){\line(1,0){60}}
%\linethickness{3pt}\put(120,20){\line(0,1){20}}
\linethickness{3pt}\put(120,40){\line(1,0){40}}
%\linethickness{3pt}\put(160,40){\line(0,1){20}}
\color{white}\put(147,40){\line(1,0){6}}
\linethickness{3pt}\color{white}\color{white}\put(127,40){\line(1,0){6}}
\linethickness{3pt}\color{blue}\put(60,0){\line(0,1){20}}\put(120,20){\line(0,1){20}}
\end{picture}
&\,\,\,\,\,\,\,\,\,\,\,\,\,\,\,\,\,\,\,\,\,\,\,\,\,\,\,\,\,\,\,\,\,\,\,\,\,\,\,\,\,\,\,\,\,\,\,\,\,\,\,\,\,\,\,\,\,\,\,\,\,\,\,\,\,\,\,\,\,\,\,\,\,\,\,\,\,\,\,\,\,\begin{picture}(300,60)\put(-80,35){$\tiny{\sum_{\begin{array}{l}S_1\subset\{u_7,u_8\},
S_2=\{v_1,v_2\} \end{array}}x_1^{r|S_2|}x_2^{r|S_1|}}$}\put(-80,15){$=
x_1^6(1+x_2^3)^2$}\end{picture}\\
\hspace{16pt} \begin{picture}(100,60)
\boxs{0,0}\boxs{20,0}\boxs{40,0}\boxs{60,0}\boxs{80,0}\boxs{100,0}\boxs{120,0}\boxs{140,0}
\boxs{0,20}\boxs{20,20}\boxs{40,20}\boxs{60,20}\boxs{80,20}\boxs{100,20}\boxs{120,20}\boxs{140,20}
\boxs{0,40}\boxs{20,40}\boxs{40,40}\boxs{60,40}\boxs{80,40}\boxs{100,40}\boxs{120,40}\boxs{140,40}
\linethickness{3pt}\color{blue}\put(60,0){\line(0,1){20}}\put(120,20){\line(0,1){20}}\put(160,40){\line(0,1){20}}
\end{picture}
&\,\,\,\,\,\,\,\,\,\,\,\,\,\,\,\,\,\,\,\,\,\,\,\,\,\,\,\,\,\,\,\,\,\,\,\,\,\,\,\,\,\,\,\,\,\,\,\,\,\,\,\,\,\,\,\,\,\,\,\,\,\,\,\,\,\,\,\,\,\,\,\,\,\,\,\,\,\,\,\,\,\begin{picture}(300,60)\put(-80,35){$\tiny{\sum_{\begin{array}{l}S_1=\emptyset,
S_2=\{v_1,v_2,v_3\}
\end{array}}x_1^{r|S_2|}x_2^{r|S_1|}}$}\put(-80,15){$=
x_1^9$}\end{picture}\\
\hspace{16pt} \begin{picture}(100,60)
\boxs{0,0}\boxs{20,0}\boxs{40,0}\boxs{60,0}\boxs{80,0}\boxs{100,0}\boxs{120,0}\boxs{140,0}
\boxs{0,20}\boxs{20,20}\boxs{40,20}\boxs{60,20}\boxs{80,20}\boxs{100,20}\boxs{120,20}\boxs{140,20}
\boxs{0,40}\boxs{20,40}\boxs{40,40}\boxs{60,40}\boxs{80,40}\boxs{100,40}\boxs{120,40}\boxs{140,40}
%\linethickness{1pt}\put(10,0){\line(5,2){150}}
\linethickness{3pt}\put(0,0){\line(1,0){60}}
%\linethickness{3pt}\put(60,0){\line(0,1){20}}
%\linethickness{3pt}\put(60,20){\line(1,0){60}}
\linethickness{3pt}\put(120,40){\line(1,0){40}}
%\linethickness{3pt}\put(160,40){\line(0,1){20}}
\linethickness{3pt}\color{white}\put(7,0){\line(1,0){6}}\color{white}\put(27,0){\line(1,0){6}}\color{white}\put(147,40){\line(1,0){6}}
\linethickness{3pt}\color{white}\put(47,0){\line(1,0){6}}\color{white}\put(127,40){\line(1,0){6}}
\linethickness{3pt}\color{blue}\put(120,20){\line(0,1){20}}
\end{picture}
&\,\,\,\,\,\,\,\,\,\,\,\,\,\,\,\,\,\,\,\,\,\,\,\,\,\,\,\,\,\,\,\,\,\,\,\,\,\,\,\,\,\,\,\,\,\,\,\,\,\,\,\,\,\,\,\,\,\,\,\,\,\,\,\,\,\,\,\,\,\,\,\,\,\,\,\,\,\,\,\,\,\begin{picture}(300,60)\put(-80,35){$\tiny{\sum_{\begin{array}{l}S_1\subset
\{u_1,u_2,u_{3},u_{7},u_{8} \}, S_2=\{v_2\}
\end{array}}x_1^{r|S_2|}x_2^{r|S_1|}}$}\put(-80,15){$=
x_1^3(1+x_2^3)^5$}\end{picture}\\
\hspace{16pt} \begin{picture}(100,60)
\boxs{0,0}\boxs{20,0}\boxs{40,0}\boxs{60,0}\boxs{80,0}\boxs{100,0}\boxs{120,0}\boxs{140,0}
\boxs{0,20}\boxs{20,20}\boxs{40,20}\boxs{60,20}\boxs{80,20}\boxs{100,20}\boxs{120,20}\boxs{140,20}
\boxs{0,40}\boxs{20,40}\boxs{40,40}\boxs{60,40}\boxs{80,40}\boxs{100,40}\boxs{120,40}\boxs{140,40}
%\linethickness{1pt}\put(10,0){\line(5,2){150}}
\linethickness{3pt}\put(0,0){\line(1,0){40}}
\linethickness{3pt}\color{white}\put(7,0){\line(1,0){6}}\color{white}\put(27,0){\line(1,0){6}}
\linethickness{3pt}\color{blue}\put(120,20){\line(0,1){20}}\put(160,40){\line(0,1){20}}
\end{picture}
&\,\,\,\,\,\,\,\,\,\,\,\,\,\,\,\,\,\,\,\,\,\,\,\,\,\,\,\,\,\,\,\,\,\,\,\,\,\,\,\,\,\,\,\,\,\,\,\,\,\,\,\,\,\,\,\,\,\,\,\,\,\,\,\,\,\,\,\,\,\,\,\,\,\,\,\,\,\,\,\,\,\begin{picture}(300,60)\put(-80,35){$\tiny{\sum_{\begin{array}{l}S_1\subset
\{u_1,u_2\}, S_2=\{v_2,v_3\}
\end{array}}x_1^{r|S_2|}x_2^{r|S_1|}}$}\put(-80,15){$=
x_1^6(1+x_2^3)^2$}\end{picture}\\
\hspace{16pt} \begin{picture}(100,60)
\boxs{0,0}\boxs{20,0}\boxs{40,0}\boxs{60,0}\boxs{80,0}\boxs{100,0}\boxs{120,0}\boxs{140,0}
\boxs{0,20}\boxs{20,20}\boxs{40,20}\boxs{60,20}\boxs{80,20}\boxs{100,20}\boxs{120,20}\boxs{140,20}
\boxs{0,40}\boxs{20,40}\boxs{40,40}\boxs{60,40}\boxs{80,40}\boxs{100,40}\boxs{120,40}\boxs{140,40}
%\linethickness{1pt}\put(10,0){\line(5,2){150}}
\linethickness{3pt}\put(0,0){\line(1,0){60}}
%\linethickness{3pt}\put(60,0){\line(0,1){20}}
\linethickness{3pt}\put(60,20){\line(1,0){40}}
\color{white}\put(67,20){\line(1,0){6}}\linethickness{3pt}\color{white}\put(7,0){\line(1,0){6}}\color{white}\put(27,0){\line(1,0){6}}\color{white}\put(87,20){\line(1,0){6}}
\linethickness{3pt}\color{white}\put(47,0){\line(1,0){6}}
\linethickness{3pt}\color{blue}\put(160,40){\line(0,1){20}}
\end{picture}
&\,\,\,\,\,\,\,\,\,\,\,\,\,\,\,\,\,\,\,\,\,\,\,\,\,\,\,\,\,\,\,\,\,\,\,\,\,\,\,\,\,\,\,\,\,\,\,\,\,\,\,\,\,\,\,\,\,\,\,\,\,\,\,\,\,\,\,\,\,\,\,\,\,\,\,\,\,\,\,\,\,\begin{picture}(300,60)\put(-80,35){$\tiny{\sum_{\begin{array}{l}S_1\subset
\{u_1,u_2,u_3,u_4,u_5\}, S_2=\{v_3\}
\end{array}}x_1^{r|S_2|}x_2^{r|S_1|}}$}\put(-80,15){$=
x_1^3(1+x_2^3)^5$}\end{picture}\\
\hspace{16pt} \begin{picture}(100,60)
\boxs{0,0}\boxs{20,0}\boxs{40,0}\boxs{60,0}\boxs{80,0}\boxs{100,0}\boxs{120,0}\boxs{140,0}
\boxs{0,20}\boxs{20,20}\boxs{40,20}\boxs{60,20}\boxs{80,20}\boxs{100,20}\boxs{120,20}\boxs{140,20}
\boxs{0,40}\boxs{20,40}\boxs{40,40}\boxs{60,40}\boxs{80,40}\boxs{100,40}\boxs{120,40}\boxs{140,40}
\linethickness{3pt}\put(60,20){\line(1,0){40}}
\color{white}\put(67,20){\line(1,0){6}}\linethickness{3pt}\color{white}\put(87,20){\line(1,0){6}}
\linethickness{3pt}\color{blue}\put(160,40){\line(0,1){20}}\put(60,0){\line(0,1){20}}
\end{picture}
&\,\,\,\,\,\,\,\,\,\,\,\,\,\,\,\,\,\,\,\,\,\,\,\,\,\,\,\,\,\,\,\,\,\,\,\,\,\,\,\,\,\,\,\,\,\,\,\,\,\,\,\,\,\,\,\,\,\,\,\,\,\,\,\,\,\,\,\,\,\,\,\,\,\,\,\,\,\,\,\,\,\begin{picture}(300,60)\put(-80,35){$\tiny{\sum_{\begin{array}{l}S_1\subset
\{u_4,u_5\}, S_2=\{v_1,v_3\}
\end{array}}x_1^{r|S_2|}x_2^{r|S_1|}}$}\put(-80,15){$=
x_1^6(1+x_2^3)^2$}\end{picture}\end{array}
$$Adding the above 8 polynomials together gives
\begin{equation}\label{8eqtoge}\aligned  &{x_2}^{24}+8 {x_2}^{21}+3
{x_1}^{3} {x_2}^{15}+28
     {x_2}^{18}+15 {x_1}^{3} {x_2}^{12}+56 {x_2}^{15}+3
     {x_1}^{6} {x_2}^{6}+30 {x_1}^{3} {x_2}^{9}+70
     {x_2}^{12}\\&+{x_1}^{9}+6 {x_1}^{6} {x_2}^{3}+30
     {x_1}^{3} {x_2}^{6}+56 {x_2}^{9}+3 {x_1}^{6}+15
     {x_1}^{3} {x_2}^{3}+28 {x_2}^{6}+3 {x_1}^{3}+8
     {x_2}^{3}+1.\endaligned\end{equation}
     Then $x_5$ is obtained by dividing (\ref{8eqtoge}) by $x_1^8 x_2^3$.
\end{example}

The following Corollary can be adapted from results of \cite{LS}.
Let $g_\ell$ be the $g$-vector and let $F_\ell$ be the $F$-polynomial of $x_\ell$, for all integers $\ell$.
{Then $g_3=(-1,r), \,g_0=(0,-1),\,F_3=y_1+1$ and $F_0=y_2+1$, and all other cases are described in the following result.}
\begin{corollary}\label{01022012} Let $n\ge 3$. Then
\[ \begin{array}{lrclcccrcl}
&g_n&=&(-c_{n-1},c_n)& \quad &,&\quad& g_{3-n}&=&(-c_{n-2},c_{n-3}) ,\\
and \quad \\
& F_n&=& \sum_{(S_1,S_2)}y_1^{c_{n-1}-|S_1|}y_2^{|S_2|}&&,& &
F_{3-n}&=&\sum_{(S_1,S_2)}y_1^{c_{n-2}-|S_2|}y_2^{|S_1|},
\end{array}\]
where the sum is over all compatible pairs $(S_1,S_2)$ in $\mathcal{D}^{c_{n-1}\times c_{n-2}}$.
\end{corollary}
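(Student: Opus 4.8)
The plan is to deduce Corollary \ref{01022012} from the combinatorial formula for $x_n$ in Theorem \ref{th:greedy-combinatorial} by passing to the cluster algebra with principal coefficients and then extracting $F$-polynomials and $g$-vectors from Definition \ref{def X}. The key observation is that the greedy element formula (\ref{eq:greedy-Dyck-expression}) is written in the coefficient-free rank $2$ algebra, and the rank $2$ formula with principal coefficients — which is exactly the content of \cite{LS} that is being cited — is obtained by inserting a power of $y_1$ on each horizontal-edge factor and a power of $y_2$ on each vertical-edge factor, so that the monomial attached to a compatible pair $(S_1,S_2)$ in $\mathcal{D}^{c_{n-1}\times c_{n-2}}$ carries the weight $y_1^{c_{n-1}-|S_1|}y_2^{|S_2|}$. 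Concretely, I would record (as the cited adaptation of \cite{LS}) that in principal coefficients
\[
X_n = x_1^{-c_{n-1}}x_2^{-c_{n-2}}\sum_{(S_1,S_2)} y_1^{\,c_{n-1}-|S_1|}\,y_2^{\,|S_2|}\,x_1^{r|S_2|}\,x_2^{r|S_1|},
\]
and then the three parts of the corollary fall out by specialization.

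First I would prove the $F$-polynomial statement: by Definition \ref{def X}(2), $F_n = X_n(1,\dots,1;y_1,y_2)$, which upon setting $x_1=x_2=1$ in the displayed expression for $X_n$ immediately yields $F_n=\sum_{(S_1,S_2)} y_1^{c_{n-1}-|S_1|}y_2^{|S_2|}$, the claimed formula; the formula for $F_{3-n}$ follows symmetrically from (\ref{eq:greedy-Dyck-expression2}) after the corresponding insertion of coefficients, with the roles of $y_1,y_2$ and of the two edge-sets swapped, giving the exponent $c_{n-2}-|S_2|$ on $y_1$. Next I would compute the $g$-vectors: by Definition \ref{def X}(3), $\mathbf g_n$ is the degree of $X_n(x_1,x_2;0,0)$, and setting $y_1=y_2=0$ kills every term except the one where the exponents of $y_1$ and $y_2$ both vanish, i.e. $|S_1|=c_{n-1}$ and $|S_2|=0$ — this is the unique compatible pair $(\mathcal D_1,\emptyset)$ (all horizontal edges, no vertical edges), which is visibly compatible since the second alternative in (\ref{0407df:comp}) is vacuous. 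That surviving monomial is $x_1^{-c_{n-1}}x_2^{-c_{n-2}}\cdot x_2^{r c_{n-1}} = x_1^{-c_{n-1}}x_2^{\,r c_{n-1}-c_{n-2}}$, and by the Chebyshev recursion $r c_{n-1}-c_{n-2}=c_n$, so $\mathbf g_n=(-c_{n-1},c_n)$; symmetrically $\mathbf g_{3-n}=(-c_{n-2},c_{n-3})$. The small cases $g_3,g_0,F_3,F_0$ stated just before the corollary are checked directly from the exchange relations (with the convention that $\mathcal D^{c_1\times c_0}$ is empty, so the sum has a single term).

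The main obstacle is not any of these specializations, which are routine, but rather justifying cleanly that the principal-coefficient version of the greedy formula — the insertion of $y_1^{c_{n-1}-|S_1|}y_2^{|S_2|}$ — is the correct one, i.e. genuinely the statement "adapted from \cite{LS}." I would handle this either by citing the relevant theorem of \cite{LS}/\cite{LLZ} in the form already quoted in the excerpt and observing that the $x$-part matches (\ref{eq:greedy-Dyck-expression}) term by term, or, if a self-contained argument is wanted, by invoking Theorem \ref{thm X}: one knows $x_n$ in the coefficient-free algebra, one knows the $g$-vector and $F$-polynomial determine $X_n$ via the separation-of-additions formula (\ref{eq:xjt-reduction-principal}), and a short consistency check (degrees of the Newton polytope, plus the known fact that $F_n$ has constant term $1$, realized by the pair $(\mathcal D_1,\emptyset)$, and leading term realized by $(\emptyset,\mathcal D_2)$) pins down the exponents of $y_1$ and $y_2$ on each term uniquely as $c_{n-1}-|S_1|$ and $|S_2|$. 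Either way the combinatorial heavy lifting has already been done in Theorem \ref{th:greedy-combinatorial}, and what remains is bookkeeping of exponents together with one application of Lemma \ref{lem cn}'s underlying recursion $c_n=rc_{n-1}-c_{n-2}$.
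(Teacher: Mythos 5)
Your proposal is correct, and in fact it supplies more detail than the paper does: the paper gives no argument for Corollary \ref{01022012} beyond the remark that it ``can be adapted from results of \cite{LS}.'' Your specializations are exactly right --- setting $x_1=x_2=1$ in the principal-coefficient expansion yields the stated $F$-polynomials, and setting $y_1=y_2=0$ isolates the unique (vacuously compatible) pair $(\mathcal{D}_1,\emptyset)$ (resp.\ $(\emptyset,\mathcal{D}_2)$), after which the recursion $c_n=rc_{n-1}-c_{n-2}$ gives the $g$-vectors. One caution about the order of results: in the paper the principal-coefficient formula for $X_n$ is Corollary \ref{cor X}, which is \emph{deduced from} Corollary \ref{01022012} together with Theorem \ref{th:greedy-combinatorial}; so if you take that $X_n$ formula as your starting point you must source it directly from \cite{LS}/\cite{LLZ} rather than from Corollary \ref{cor X}, or the argument becomes circular. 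Your fallback --- determining the $y$-exponents from the coefficient-free formula of Theorem \ref{th:greedy-combinatorial} via the $\hat y$-homogeneity of $X_n$ and the fact that $F_n$ has constant term $1$ --- sidesteps this and is the cleaner self-contained route.
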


Recall from Definition \ref{def X} that
 $X_n$ is the Laurent polynomial in $x_1,x_2,y_1,y_2$ corresponding to the expansion of $x_n$ in the cluster algebra with principal coefficients in the seed $((x_1,x_2),(y_1,y_2),Q)$.
\begin{corollary}\label{cor X}Let $n\ge 3$. Then
 \[X_n=  x_1^{-c_{n-1}} x_2^{-c_{n-2}}\sum_{(S_1,S_2)}x_1^{r|S_2|}x_2^{r|S_1|}y_1^{c_{n-1}-|S_1|}y_2^{|S_2|}.\]
\end{corollary}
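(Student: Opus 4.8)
The plan is to combine the three statements already available in the excerpt: the combinatorial expression for the Laurent expansion of $x_n$ in the coefficient-free case (Theorem \ref{th:greedy-combinatorial}), the $g$-vector and $F$-polynomial formulas of Corollary \ref{01022012}, and the general reconstruction of the principal-coefficient expansion $X_n$ from the $g$-vector and $F$-polynomial. The key input is the standard separation-of-additions identity: for a cluster algebra with principal coefficients, $X_n$ is homogeneous with respect to the $\mathbb{Z}^2$-grading $\deg(x_i)=e_i$, $\deg(y_j)=-\mathbf{b}_j$ (the $j$-th column of $B$), with $\deg(X_n)=\mathbf{g}_n$, and moreover $X_n(x_1,x_2;y_1,y_2)=x^{\mathbf{g}_n}\,F_n(\hat y_1,\hat y_2)$ where $\hat y_j = y_j \prod_i x_i^{b_{ij}}$ (this is \cite[Proposition 3.6 and Corollary 6.3]{FZ4}, or can simply be quoted as a known fact about principal coefficients). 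For our quiver $1 \xrightarrow{r} 2$ we have $\hat y_1 = y_1 x_2^{-r}$ and $\hat y_2 = y_2 x_1^{r}$ — I would double-check the sign conventions against Remark \ref{rectangular} before committing, since the direction of the arrow determines whether $b_{12}=r$ or $b_{12}=-r$, and this is exactly where an off-by-sign error would creep in.

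First I would write out $X_n = x_1^{g_n^{(1)}} x_2^{g_n^{(2)}} F_n(\hat y_1, \hat y_2)$ and substitute $\mathbf{g}_n = (-c_{n-1}, c_n)$ from Corollary \ref{01022012}. Then I would substitute the formula $F_n = \sum_{(S_1,S_2)} y_1^{c_{n-1}-|S_1|} y_2^{|S_2|}$, replacing each $y_1$ by $\hat y_1$ and each $y_2$ by $\hat y_2$. A single monomial $y_1^{c_{n-1}-|S_1|}y_2^{|S_2|}$ becomes
\[
(y_1 x_2^{-r})^{c_{n-1}-|S_1|}(y_2 x_1^{r})^{|S_2|}
= y_1^{c_{n-1}-|S_1|} y_2^{|S_2|} x_1^{r|S_2|} x_2^{-r(c_{n-1}-|S_1|)}.
\]
Multiplying by $x_1^{-c_{n-1}}x_2^{c_n}$ and collecting exponents: the $x_1$-exponent is $-c_{n-1}+r|S_2|$, and the $x_2$-exponent is $c_n - r c_{n-1} + r|S_1|$. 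The only nontrivial point is to verify $c_n - r c_{n-1} = -c_{n-2}$, which is precisely the defining recurrence $c_n = r c_{n-1} - c_{n-2}$ rearranged. This yields exactly $X_n = x_1^{-c_{n-1}} x_2^{-c_{n-2}} \sum_{(S_1,S_2)} x_1^{r|S_2|} x_2^{r|S_1|} y_1^{c_{n-1}-|S_1|} y_2^{|S_2|}$, as claimed, and as an internal consistency check one sees that setting $y_1=y_2=1$ recovers formula \eqref{eq:greedy-Dyck-expression} of Theorem \ref{th:greedy-combinatorial}.

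The main obstacle I anticipate is not in the algebra — which is a one-line exponent bookkeeping using the $c_n$-recurrence — but in justifying the reconstruction $X_n = x^{\mathbf{g}_n} F_n(\hat y_1,\hat y_2)$ cleanly within the conventions of this paper, since Corollary \ref{01022012} is itself only asserted to be "adapted from results of \cite{LS}" and the $g$-vector/$F$-polynomial dictionary of \cite{FZ4} must be invoked with matching sign conventions. The safest route is to treat Corollary \ref{01022012} as given (as the excerpt permits) and simply cite \cite[Corollary 6.3]{FZ4} for the identity $X_{\ell;t}(x;y) = x^{\mathbf{g}_{\ell;t}} F_{\ell;t}(\hat y)$; then the proof is genuinely just the substitution and the recurrence identity, and no new idea is required. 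If one wanted a self-contained argument one could instead verify directly from the exchange relation that the displayed Laurent polynomial satisfies the principal-coefficient mutation recursion and has the right initial terms, but that is strictly more work and I would not pursue it here.
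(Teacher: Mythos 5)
Your proposal is correct and follows essentially the same route as the paper, whose proof is simply the one-line assertion that the corollary ``follows directly from Theorem \ref{th:greedy-combinatorial} and Corollary \ref{01022012}''; you supply the standard bridge, namely the separation formula $X_n = x^{\mathbf{g}_n}F_n(\hat y_1,\hat y_2)$ from \cite{FZ4}, and your sign conventions ($b_{12}=r$, hence $\hat y_1=y_1x_2^{-r}$, $\hat y_2=y_2x_1^{r}$) and the exponent bookkeeping via $c_n=rc_{n-1}-c_{n-2}$ all check out (e.g.\ for $n=3$ one recovers $X_3=(y_1+x_2^r)/x_1$).
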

\begin{proof}
 This follows directly from Theorem \ref{th:greedy-combinatorial} and Corollary \ref{01022012}.
\end{proof}

\subsection{A preliminary lemma} Let $\mathcal{A}(Q)$ be a
coefficient-free cluster algebra of rank 3 with initial quiver $Q$ equal to
\[\xymatrix{1\ar[rr]^r&&2\ar[ld]^t\\&3\ar[lu]^s}\]
 where $r,s,t$ denote the number of arrows.

Let $x_n$ be the cluster variable  obtained from the  seed $((x_1,x_2,z_3),Q)$  by the sequence of $n-2$ mutations 1,2,1,2,1,\ldots

From Theorem \ref{thm X}, we have
\[x_n=\frac{X_n(x_1,x_2,\hat y_1,\hat y_2)}{F_n|_{\hat{\mathbb{P}}}(\hat y_1,\hat y_2)},\]
where $X_n$ is as in Corollary \ref{cor X}, $(\hat y_1,\hat y_2)=(z_3^s,z_3^{-t})$,
and the denominator is the $F$-polynomial evaluated in $\hat y$ and with tropical addition. Thus, by Corollary \ref{01022012}, the denominator is
$\sum^\oplus_\beta z_3^{s(c_{n-1}-|S_1|)-t|S_2|}$, which is equal to $z_3^{m}$ where $m$ is the smallest power occurring in this sum.
Therefore we get
%\begin{theorem}\label{thm 33}
\begin{equation}\label{eq thm33}x_n = x_1^{-c_{n-1}} x_2^{-c_{n-2}}\sum_{(S_1,S_2)}x_1^{r|S_2|}x_2^{r|S_1|} z_3^{s(c_{n-1}-|S_1|)-t|S_2|-m} \end{equation}
where $m=\min _{(S_1,S_2)} (s(c_{n-1}-|S_1|)-t|S_2|)$.
%\end{theorem}

We shall need a precise value for $m$. As a first step, we determine which compatible pair $(S_1,S_2)$ in $\mathcal{D}^{c_{n-1}\times c_{n-2}}$  can realize the minimum $m$.

\begin{lemma}\label{12312011}
Let $s$ and $t$ be nonzero integers such that there are nonzero arrows between all pairs of the three vertices in any seed between the initial and terminal seeds inclusive.  Then there is a unique compatible pair $(S_1,S_2)$ in $\mathcal{D}^{c_{n-1}\times c_{n-2}}$ which achieves $s(c_{n-1}-|S_1|)-t|S_2|=m$. Such $(S_1,S_2)$ is either $(\mathcal{D}_1,\emptyset)$, $(\emptyset, \mathcal{D}_2)$, or $(\emptyset, \emptyset)$. Moreover, if $s$ and $t$ are positive, then such $(S_1,S_2)$ is either $(\mathcal{D}_1,\emptyset)$ or $(\emptyset, \mathcal{D}_2)$.
\end{lemma}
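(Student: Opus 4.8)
The quantity to be minimized is $f(S_1,S_2) = s(c_{n-1}-|S_1|) - t|S_2|$ over all compatible pairs $(S_1,S_2)$ in the maximal Dyck path $\mathcal D = \mathcal D^{c_{n-1}\times c_{n-2}}$. The plan is to reduce the problem to understanding the range of the pair $(|S_1|,|S_2|)$ as $(S_1,S_2)$ varies over compatible pairs, together with the sign information on $s$ and $t$ coming from the non-degeneracy hypothesis. First I would record the elementary fact that $(\mathcal D_1,\emptyset)$ and $(\emptyset,\mathcal D_2)$ are always compatible (the compatibility condition in Definition \ref{df:compatible} is vacuous when one of $S_1,S_2$ is empty), and that $(\emptyset,\emptyset)$ is compatible; these are the three candidate minimizers. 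For $(\mathcal D_1,\emptyset)$ the value is $0$; for $(\emptyset,\mathcal D_2)$ it is $s\,c_{n-1} - t\,c_{n-2}$; for $(\emptyset,\emptyset)$ it is $s\,c_{n-1}$. The crux is to show no other compatible pair can do as well, and to pin down which of the three wins.

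The main structural input I would use is a monotonicity/extremality property of compatible pairs: if $(S_1,S_2)$ is compatible and both $S_1,S_2$ are nonempty, then removing any element from $S_2$ (or from $S_1$) preserves compatibility, and more importantly one cannot have $|S_1|$ close to $c_{n-1}$ and $|S_2|$ close to $c_{n-2}$ simultaneously — the compatibility constraint (\ref{0407df:comp}) forces a genuine trade-off governed by the Dyck-path geometry with slope $c_{n-2}/c_{n-1}$ and the parameter $r$. Concretely I expect to prove: for any compatible pair, $f(S_1,S_2) \ge \min\{0,\ s\,c_{n-1}-t\,c_{n-2},\ s\,c_{n-1}\}$, with equality only at one of the three listed pairs, by case analysis on the signs of $s$ and $t$. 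When $s,t$ are both positive, the term $s\,c_{n-1}$ exceeds both $0$ and, after using Lemma \ref{lem cn} or direct comparison, is never the minimum, so the minimizer is $(\mathcal D_1,\emptyset)$ or $(\emptyset,\mathcal D_2)$; this gives the last sentence. The uniqueness claim requires showing the three candidate values $0$, $s\,c_{n-1}-t\,c_{n-2}$, $s\,c_{n-1}$ cannot coincide under the non-degeneracy hypothesis on $s,t$ — here I would translate "nonzero arrows between all pairs of vertices in every intermediate seed" into explicit inequalities on $s,t$ (via the mutation rule (\ref{eq:matrix-mutation})) and check that equality among the candidates would force one of those arrow-counts to vanish.

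The technical heart, and the step I expect to be the main obstacle, is the claim that every compatible pair with $S_1 \ne \mathcal D_1$ and $(S_1,S_2)\ne(\emptyset,\mathcal D_2),(\emptyset,\emptyset)$ has $f(S_1,S_2)$ strictly larger than the minimum of the three candidates. This is where one must really exploit the combinatorics of Definition \ref{df:compatible}: a compatible pair is constrained so that the "deficiency" $c_{n-1}-|S_1|$ and the size $|S_2|$ are linked, and I would extract from (\ref{0407df:comp}) a bound of the shape $c_{n-1}-|S_1| \ge \phi(|S_2|)$ (and symmetrically) where $\phi$ is determined by how the maximal Dyck path sits relative to the diagonal. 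Establishing the right such inequality — sharp enough that the only equality cases are the extreme pairs — is the delicate part; I would likely argue by looking at the left endpoints of horizontal edges not in $S_1$ and the upper endpoints of vertical edges in $S_2$ and invoking the definition of "maximal" Dyck path to locate the intermediate lattice point $A$ forced by compatibility. Once that inequality is in hand, the sign analysis on $s,t$ and the non-degeneracy hypothesis finish the proof mechanically.
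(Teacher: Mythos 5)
Your plan reduces the lemma to a convex-geometry statement: that the set of points $(|S_1|,|S_2|)$ realized by compatible pairs is contained in the triangle with vertices $(0,0)$, $(c_{n-1},0)$, $(0,c_{n-2})$, meeting the slanted edge only at its two corners, after which the sign analysis on $s,t$ and the non-degeneracy hypothesis would indeed finish the argument. The problem is that this containment (equivalently, $c_{n-2}|S_1|+c_{n-1}|S_2|\le c_{n-1}c_{n-2}$ for every compatible pair, with equality only for $(\mathcal{D}_1,\emptyset)$ and $(\emptyset,\mathcal{D}_2)$) is precisely the content you would need to extract from Definition~\ref{df:compatible}, and you explicitly leave it unproved, calling it ``the delicate part.'' That is not a routine verification: it is essentially the statement that the Newton polygon of the greedy element is the expected triangle, a nontrivial fact about compatible pairs. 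As written the proposal is therefore a plan with its technical heart missing. A secondary unjustified claim is that compatibility is preserved under removing elements of $S_1$ or $S_2$; since condition~(\ref{0407df:comp}) is an \emph{equality} involving $|(AF)_2\cap S_2|$ and $|(EA)_1\cap S_1|$, shrinking the sets can destroy the witnessing point $A$, and closure under subsets does not follow from the definition without an argument.

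The paper avoids all of this by a different route: induction on $n$ along the mutation sequence. One observes that the minimizing pair is exactly the term of $x_n$ carrying the minimal power of $z_3$, and that the substitution realizing the next mutation ($x_1\mapsto x_2$, $x_2\mapsto (x_2^r+z_3^{\pm t})/x_1$, sign depending on $\mathrm{sgn}(t)$) only introduces nonnegative powers of $z_3$; hence the $z_3$-minimal term of $x_{n+1}$ must come from the image of the $z_3$-minimal term of $x_n$, and a one-line expansion of that image identifies the new minimizer as again one of $(\mathcal{D}_1,\emptyset)$, $(\emptyset,\mathcal{D}_2)$, $(\emptyset,\emptyset)$, uniquely. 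This sidesteps any direct analysis of the compatibility condition. If you want to salvage your direct approach, you would need to prove the triangle containment (it can be deduced from the results of \cite{LLZ} on greedy elements), but that is a genuinely different and heavier input than anything currently in your write-up.
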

\begin{proof}
We use induction on $n$. Consider first the case $n=3$. Since $\mathcal {D}^{c_2\times c_1}=\mathcal{D}^{1\times 0}$,  there are exactly two compatible pairs $(\emptyset,\mathcal{D}_2)=(\emptyset,\emptyset)$ and $(\mathcal{D}_1,\emptyset)$. Thus $s(c_{n-1}-|S_1|)-t|S_2|= -s|S_1| $ achieves its minimum in $(S_1,S_2)=(\mathcal{D}_1,\emptyset)$ if $s$ is positive, and in $(S_1,S_2)=(\emptyset,\mathcal{D}_2)$ if $s$ is negative.

Suppose now that $n\ge3$.
Consider the expression given  in  equation (\ref{eq thm33}).
%\begin{equation}\label{01012012eq1} x_n = x_1^{-c_{n-1}} x_2^{-c_{n-2}}\sum_{\mathbf{\beta}\in\mathcal{F}(\mathcal{D}_n)}x_1^{r|\mathbf{\beta}|_1}x_2^{r(c_{n-1}-|\mathbf{\beta}|_2)} z_3^{s|\beta|_2-t|\beta|_1-m} \end{equation}
By induction, there is a unique $(\bar S_1,\bar S_2)$ in $\mathcal{D}^{c_{n-1}\times c_{n-2}}$  such that ${s(c_{n-1}-|S_1|)-t|S_2|}=m$.

Suppose first that  $(\bar S_1,\bar S_2)=(\mathcal{D}_1,\emptyset)$. Then the term in (\ref{eq thm33})  that is not divisible by $z_3$ is \begin{equation}\label{01022012eq1}x_1^{-c_{n-1}} x_2^{-c_{n-2}} x_2^{rc_{n-1}} = x_1^{-c_{n-1}} x_2^{c_{n}}.\end{equation}    If $t>0$ then $x_{n+1}$ is obtained from $x_n$ by substituting $$\left\{\begin{array}{l} x_1 \mapsto x_2 \\ x_2 \mapsto \frac{x_2^r + z_3^{t}}{x_1}.  \end{array}\right.$$
So (\ref{01022012eq1}) becomes  $$x_2^{-c_{n-1}} \left(\frac{x_2^r + z_3^t}{x_1}\right)^{c_{n}}=x_1^{-c_n}x_2^{-c_{n-1}}x_1^{0}x_2^{rc_n} z_3^0+\textup{terms divisible by $z_3$}.$$
Thus the term in the expression for $x_{n+1} $ that is not divisible by $z_3$ corresponds to
a compatible pair $(S_1,S_2)$ in $\mathcal{D}^{c_{n}\times c_{n-1}}$ with $|S_1|=c_{n}$, thus
$(S_1,S_2)=(\mathcal{D}_1,\emptyset)$.

On the other hand, if $t<0$ then we substitute $$\left\{\begin{array}{l} x_1 \mapsto x_2 \\ x_2 \mapsto \frac{x_2^r  z_3^{-t} + 1}{x_1},  \end{array}\right.$$ so (\ref{01022012eq1}) becomes  $$x_2^{-c_{n-1}} \left(\frac{x_2^r z_3^{-t}+1}{x_1}\right)^{c_{n}}=x_1^{-c_n}x_2^{-c_{n-1}}x_1^{0}x_2^0z_3^0 +\textup{terms divisible by $z_3$} .$$ Thus the term in the expression for $x_{n+1} $ that is not divisible by $z_3$ corresponds to
a compatible pair  $(S_1,S_2)$ in $\mathcal{D}^{c_{n}\times c_{n-1}}$ with $|S_1|=0$ and $|S_2|=0$, thus
$(S_1,S_2)=(\emptyset, \emptyset)$.

Next suppose that $(\bar S_1,\bar S_2)=(\emptyset, \emptyset)$. Then $c_{n-1}-|\bar S_1|=c_{n-1}$, $|\bar S_2|=0$, $m=sc_{n-1}$ and $t<0$ since $(\emptyset, \emptyset)$ realizes the minimum. Then
$$\aligned \sum_{(S_1,S_2) : |S_1|=0}  x_1^{-c_{n-1}}x_2^{-c_{n-2}}x_1^{r|S_2|}x_2^{r|S_1|}z_3^{s(c_{n-1} - |S_1|)-t|S_2| -m} &=\sum_{(S_1,S_2) : |S_1|=0}  x_1^{-c_{n-1}}x_2^{-c_{n-2}}x_1^{r|S_2|}z_3^{-t|S_2|}\\ &= x_1^{-c_{n-1}}\left(\frac{x_1^r z_3^{-t}+1}{x_2}\right)^{c_{n-2}},\endaligned$$
where the last identity holds because the condition $|S_1|=0$ means that  every subset $S_2$ of $\{1,2,\ldots,c_{n-2}\}$ is compatible with $S_1$.
%, there is a unique choice of $\beta$ such that $|\beta|_2=c_{n-1}$ and the colored subpaths in $\beta$ correspond to the maximal intervals of the subset $I$ and, in particular, $|\beta|_1=|I|$.

Applying the map $$\left\{\begin{array}{l} x_1 \mapsto x_2 \\ x_2 \mapsto \frac{x_2^r  z_3^{-t} + 1}{x_1}  \end{array}\right.$$ yields
$x_2^{-c_{n-1}} x_1^{c_{n-2}}= x_1^{-c_{n}}x_2^{-c_{n-1}}  x_1^{rc_{n-1}} x_2^{0}$, which corresponds to a compatible pair $(S_1,S_2)$ in $\mathcal{D}^{c_{n}\times c_{n-1}}$ with $|S_1|=0$ and $|S_2|=c_{n-1}$, thus
 $(S_1,S_2)=(\emptyset,\mathcal{D}_2)$.

Finally, if  $(\bar S_1,\bar S_2)=(\emptyset, \mathcal{D}_2)$, then $|\bar S_1|=0$, $|\bar S_2|=c_{n-2}$, $m=sc_{n-1}-tc_{n-2}$ and $t$ must be positive  since $(\emptyset, \mathcal{D}_2)$ realizes the minimum.
Then $$\aligned \sum_{(S_1,S_2) : |S_1|=0}  x_1^{-c_{n-1}}x_2^{-c_{n-2}}x_1^{r|S_2|}x_2^{r|S_1|}z_3^{s(c_{n-1} - |S_1|)-t|S_2| -m} &=\sum_{(S_1,S_2) : |S_1|=0}   x_1^{-c_{n-1}}x_2^{-c_{n-2}}x_1^{r|S_2|}z_3^{tc_{n-2}-t|S_2|}\\ &= x_1^{-c_{n-1}}\left(\frac{x_1^r+z_3^t}{x_2}\right)^{c_{n-2}}\endaligned$$
and applying the map $$\left\{\begin{array}{l} x_1 \mapsto x_2 \\ x_2 \mapsto \frac{x_2^r + z_3^{t}}{x_1}  \end{array}\right.$$
yields
$x_2^{-c_{n-1}} x_1^{c_{n-2}}= x_1^{-c_{n}}x_2^{-c_{n-1}}  x_1^{rc_{n-1}} x_2^{0}$, which corresponds to a compatible pair $(S_1,S_2)$ in $\mathcal{D}^{c_{n}\times c_{n-1}}$ with $|S_1|=0$ and $|S_2|=c_{n-1}$, thus
 $(S_1,S_2)=(\emptyset,\mathcal{D}_2)$.
\end{proof}

\subsection{Rank 2 inside rank 3: Dyck path formula} Let $\mathcal{A}(Q)$ be a
 {\em non-acyclic }
coefficient-free cluster algebra of rank 3 with initial quiver $Q$ equal to
\[\xymatrix{1\ar[rr]^r&&2\ar[ld]^t\\&3\ar[lu]^s}\]
 where $r,s,t$ denote the number of arrows.
Now suppose that $r,s$ and $t$ are such that the cluster algebra $\mathcal{A}(Q)$ is non-acyclic. Then we can show that $m$ is always zero.
\begin{corollary}\label{12312011a}
If the cluster algebra is non-acyclic then there is a unique compatible pair $(S_1,S_2)$ in $\mathcal{D}^{c_{n-1}\times c_{n-2}}$ which achieves $s(c_{n-1}-|S_1|)-t|S_2|=m$. Moreover $(S_1,S_2)=(\mathcal{D}_1,\emptyset)$ and $m=0$.
\end{corollary}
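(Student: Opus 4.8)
The plan is to combine Lemma~\ref{12312011} with the observation that, for a non-acyclic cluster algebra, every quiver encountered along the mutation path stays an oriented $3$-cycle, so that the exponent of $z_3$ in each successive exchange relation is positive.

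\textbf{Reduction.} Since $r,s,t$ are positive, Lemma~\ref{12312011} applies, and the minimum $m$ is attained at a unique compatible pair, which is $(\mathcal{D}_1,\emptyset)$ or $(\emptyset,\mathcal{D}_2)$. On these two pairs the quantity $s(c_{n-1}-|S_1|)-t|S_2|$ takes the values $0$ and $sc_{n-1}-tc_{n-2}$ respectively, so $m=\min\{0,\,sc_{n-1}-tc_{n-2}\}$. Hence it suffices to prove that $sc_{n-1}-tc_{n-2}>0$ for every $n\ge 3$: then $(\mathcal{D}_1,\emptyset)$ is the unique minimizer and $m=0$, which is the assertion.

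\textbf{Key step.} Non-acyclicity of $\mathcal{A}(Q)$ means that no quiver mutation-equivalent to $Q$ is acyclic. A mutation of an oriented $3$-cycle is again an oriented $3$-cycle, or else acyclic (an elementary case check of how the unchanged edge is modified). Therefore, since no quiver met along the path $\mu_1,\mu_2,\mu_1,\dots$ is acyclic, induction starting from the oriented $3$-cycle $Q_0=Q$ shows that every quiver $Q_j$ obtained by the first $j$ of these mutations is an oriented $3$-cycle; in particular its three arrow multiplicities are all $\ge 1$. The exchange relation producing $x_{j+3}$ (obtained by mutating $Q_j$ at the vertex $v_j\in\{1,2\}$ prescribed by the path) then has the form $x_{j+3}\,x_{j+1}=x_{j+2}^{\,r}+z_3^{\,e_j}$, where $e_j$ is precisely the multiplicity in $Q_j$ of the edge joining vertex $3$ to $v_j$; so $e_j\ge 1$. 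Tracking how the arrow multiplicities transform along the alternating path gives $e_0=s$, $e_1=rs-t$, and $e_{j+1}=re_j-e_{j-1}$. Since, by the defining recursion $c_n=rc_{n-1}-c_{n-2}$, the quantity $sc_{n-1}-tc_{n-2}$ obeys the same recursion in $n$ and equals $s$ for $n=3$ and $rs-t$ for $n=4$, we conclude $sc_{n-1}-tc_{n-2}=e_{n-3}\ge 1>0$ for all $n\ge 3$, finishing the proof.

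\textbf{Expected obstacle.} The only delicate part is the bookkeeping in the key step: one must follow the alternating mutations $\mu_1,\mu_2,\dots$ to determine which edge's multiplicity reappears as the exponent of $z_3$ in the next exchange relation, and then check that the recursion satisfied by the $e_j$ (with the stated initial data) coincides exactly with the Chebyshev-type recursion satisfied by $sc_{n-1}-tc_{n-2}$; a single misplaced index or sign would break the comparison. An alternative, more computational route avoids this: the classification of non-acyclic $3$-vertex quivers yields $r,s,t\ge 2$ and $r^2+s^2+t^2\le rst+4$, hence $rst\ge s^2+t^2$; then $sc_{n-1}-tc_{n-2}>0$ because $c_{n-2}/c_{n-1}$ is, for every $n$, strictly below its limiting value $\tfrac12\bigl(r-\sqrt{r^2-4}\bigr)$, the smaller root of $\lambda^2-r\lambda+1$, which in turn is $\le s/t$ since $rst\ge s^2+t^2$.
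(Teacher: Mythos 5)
Your proposal is correct, but it is organized differently from the paper's proof, and the comparison is instructive. The paper handles the base cases $n=3,4$ by direct mutation (observing that the unique term not divisible by $z_3$ in $x_4$ corresponds to $(\mathcal{D}_1,\emptyset)$) and then simply reruns the induction step from the \emph{proof} of Lemma~\ref{12312011}, noting that non-acyclicity forces the relevant arrow multiplicity $t$ to be positive at every stage, so one never leaves the $(\mathcal{D}_1,\emptyset)$ branch. You instead use Lemma~\ref{12312011} only as a black box (the minimizer is $(\mathcal{D}_1,\emptyset)$ or $(\emptyset,\mathcal{D}_2)$, with values $0$ and $sc_{n-1}-tc_{n-2}$), reducing everything to the single inequality $sc_{n-1}-tc_{n-2}>0$, which you then prove by showing that this quantity satisfies the Chebyshev recursion with initial data $s$ and $rs-t$ and coincides with the (necessarily positive) arrow multiplicities appearing along the alternating mutation path. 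That inequality, together with the bookkeeping you flag as the delicate point, is precisely Lemma~\ref{lem nonacyclic}(2)--(3) of the paper, proved there in the next section by the same induction on the mutation path; so your argument is sound and could be shortened to ``apply Lemma~\ref{12312011} and then Lemma~\ref{lem nonacyclic}(3).'' What your route buys is a cleaner separation of the combinatorial uniqueness statement from the purely numerical positivity statement, at the cost of front-loading a fact the paper only establishes later; what the paper's route buys is that it needs nothing beyond the machinery already set up inside the proof of Lemma~\ref{12312011}. One small presentational caveat: the hypothesis of Lemma~\ref{12312011} requires nonzero arrows between all pairs of vertices in \emph{every} seed along the path, not just the initial one, so strictly speaking your ``Key step'' (every quiver met is an oriented $3$-cycle) should be stated before you invoke the lemma in the ``Reduction,'' rather than after.
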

\begin{proof}
We use induction on $n$.
For $n=3$ and $n=4$, computing $x_n$ directly by mutation yields
\[x_3= (x_2^r+z_3^s)/x_1,\qquad x_4=(x_3^r+z_3^{rs-t})/x_2 ,\]
and hence the term in the expression for $x_4$ that is not divisible by $z_3$ is $x_2^{r^2}x_1^{-r} x_2^{-1}$ which corresponds to a compatible pair $(S_1,S_2)$ in $\mathcal{D}^{c_{n-1}\times c_{n-2}}$  such that $|S_2|=c_{n-1}-|S_1|=0$, thus $(S_1,S_2)=(\mathcal{D}_1,\emptyset)$.

Now the result follows by the same argument as in the proof of Lemma \ref{12312011} using the fact that $t>0$.
\end{proof}

We have proved the following theorem.
\begin{theorem}\label{thm 33} For $n\ge 3$
\[x_n = x_1^{-c_{n-1}} x_2^{-c_{n-2}}\sum_{(S_1,S_2)}x_1^{r|S_2|}x_2^{r|S_1|} z_3^{s(c_{n-1}-|S_1|)-t|S_2|},  \]
where the sum is over all compatible pairs $(S_1,S_2)$ in $\mathcal{D}^{c_{n-1}\times c_{n-2}}$.
 \end{theorem}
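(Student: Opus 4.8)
The plan is short: Theorem \ref{thm 33} is nothing more than the expansion formula (\ref{eq thm33}) specialized to the value $m=0$. First I would recall the provenance of (\ref{eq thm33}). Applying Theorem \ref{thm X} to the seed $((x_1,x_2,z_3),Q)$ expresses $x_n$ as $X_n(x_1,x_2,\hat y_1,\hat y_2)/F_n|_{\hat{\mathbb{P}}}(\hat y_1,\hat y_2)$, where the hat-coefficients $(\hat y_1,\hat y_2)=(z_3^{s},z_3^{-t})$ are read off from the initial quiver $Q$. Substituting the Dyck-path expression for $X_n$ from Corollary \ref{cor X} for the numerator and the $F$-polynomial formula from Corollary \ref{01022012} for the denominator, the tropical evaluation of $F_n$ collapses the auxiliary sum $\sum^{\oplus} z_3^{s(c_{n-1}-|S_1|)-t|S_2|}$ to the single monomial $z_3^{m}$ with
\[ m = \min_{(S_1,S_2)} \bigl( s(c_{n-1}-|S_1|) - t|S_2| \bigr), \]
the minimum ranging over all compatible pairs in $\mathcal{D}^{c_{n-1}\times c_{n-2}}$; this is exactly (\ref{eq thm33}).

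The second and final step is to invoke Corollary \ref{12312011a}: since $\mathcal{A}(Q)$ is non-acyclic, the minimum $m$ is attained (uniquely) at the compatible pair $(S_1,S_2)=(\mathcal{D}_1,\emptyset)$. For that pair $|S_1|=|\mathcal{D}_1|=c_{n-1}$ and $|S_2|=0$, so $s(c_{n-1}-|S_1|)-t|S_2| = s(c_{n-1}-c_{n-1}) = 0$, whence $m=0$. Putting $m=0$ into (\ref{eq thm33}) gives precisely the stated formula, and the proof is complete.

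All the genuine work therefore sits in Corollary \ref{12312011a}, which rests on Lemma \ref{12312011}; since both are already established above I may simply cite them. That is also where the only real obstacle lies (and has already been surmounted): one argues by induction along the mutation sequence $1,2,1,2,\dots$, tracking the unique monomial of $x_n$ not divisible by $z_3$ and checking that it always corresponds to $(\mathcal{D}_1,\emptyset)$ as one passes from $\mathcal{D}^{c_{n-1}\times c_{n-2}}$ to $\mathcal{D}^{c_n\times c_{n-1}}$. The decisive structural input is that non-acyclicity of $Q$ is inherited by every intermediate seed, which forces each exchange step to have the form $x_2\mapsto (x_2^{r}+z_3^{t})/x_1$ with $t>0$; this is what pins $m$ to $0$ rather than to the value $sc_{n-1}-tc_{n-2}$ or $sc_{n-1}$ permitted by the general Lemma \ref{12312011}.
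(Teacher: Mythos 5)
Your proposal is correct and follows exactly the paper's own route: equation (\ref{eq thm33}) is derived from Theorem \ref{thm X} together with Corollaries \ref{cor X} and \ref{01022012}, and then Corollary \ref{12312011a} pins the tropical minimum to $m=0$ via the pair $(\mathcal{D}_1,\emptyset)$. The paper indeed states the theorem as an immediate consequence of that corollary, so citing it is precisely what is intended.
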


\begin{remark}
 It follows from results of \cite{R} and \cite{LLZ} that an adapted version of this theorem still holds if the cluster algebra is not skew-symmetric.
\end{remark}

%Denote by $\mathcal{F}(\mathcal{D}_n)^q =\mathcal{F}(\mathcal{D}_n)\times\cdots\times \mathcal{F}(\mathcal{D}_n)$ the $q$-fold  power of $\mathcal{F}(\mathcal{D}_n)$. For the elements $\beta=(\beta_1,\ldots,\beta_q)\in\mathcal{F}(\mathcal{D}_n)^q$, we define $|\beta|_1= \sum_i |\beta_i|_1$ and $|\beta|_2= \sum_i |\beta_i|_2$. Using Theorem~\ref{thm 33}, we see that Laurent expansions of powers of $x_n$ are parametrized by powers of the set $\mathcal{F}(\mathcal{D}_n)$.
%\begin{corollary}\label{corpower}
% For any nonnegative integer $q$, we have
%\[x_n^q = x_1^{-qc_{n-1}} x_2^{-qc_{n-2}}\sum_{(S_1,S_2)}x_1^{r|S_2|}x_2^{r|S_1|} z_3^{s(c_{n-1}-|S_1|)-t|S_2|},  \]
%where the sum is over all compatible pairs $(S_1,S_2)$ in $\mathcal{D}^{[qc_{n-1}]_+\times [qc_{n-2}]_+}$.
%\end{corollary}
%\begin{proof}
% Let ${\beta\in\mathcal{F}(\mathcal{D}_n)^q}$ and let
%$ {\beta_{q-1}\in\mathcal{F}(\mathcal{D}_n)^{q-1}}$, $
% {\beta_q\in\mathcal{F}(\mathcal{D}_n)}$
% be such that $\beta=(\beta_{q-1},\beta_q)$.
% Then
% \[
%\begin{array}
% {rclrcl}
% |\beta|_1 &=& |\beta_{q-1}|_1 + |\beta_q|_1 \qquad, \qquad
% |\beta|_2 &=& |\beta_{q-1}|_2 + |\beta_q|_2. \\
% \end{array}
% \]
 %Now the result follows by induction on $q$.
% \end{proof}

Our next goal is to describe cluster monomials of the form $x_{n+1}^px_n^q$ with $p,q\ge 0$. In order to simplify the notation we define
$A_i=p c_{i+1} +q c_{i}$. The following Lemma is a straightforward consequence of Lemma \ref{lem cn}.

\begin{lemma}\label{negone}
For any $i$, we have
\begin{itemize}
\item [(a)] $ A_i=rA_{i-1}-A_{i-2},$
\item [(b)] $A_{i}^2 - A_{i+1}A_{i-1}= p^2 + q^2 + rpq.$\qed
\end{itemize}
\end{lemma}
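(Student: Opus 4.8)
The plan is to deduce both parts directly from Lemma \ref{lem cn}, since $A_i = p c_{i+1} + q c_i$ is a linear combination of shifted Chebyshev terms and the recurrence in Lemma \ref{lem cn} is linear. For part (a), I would simply expand: $r A_{i-1} - A_{i-2} = r(p c_i + q c_{i-1}) - (p c_{i-1} + q c_{i-2}) = p(r c_i - c_{i-1}) + q(r c_{i-1} - c_{i-2}) = p c_{i+1} + q c_i = A_i$, using the defining recurrence $c_{n} = r c_{n-1} - c_{n-2}$ twice. This is immediate and needs no appeal to Lemma \ref{lem cn} beyond the recurrence itself.

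For part (b), the idea is to expand $A_i^2 - A_{i+1} A_{i-1}$ into a bilinear form in the $c$'s and collect. Writing everything out,
\[
A_i^2 - A_{i+1}A_{i-1} = (pc_{i+1}+qc_i)^2 - (pc_{i+2}+qc_{i+1})(pc_i+qc_{i-1}),
\]
which equals $p^2(c_{i+1}^2 - c_{i+2}c_i) + q^2(c_i^2 - c_{i+1}c_{i-1}) + pq(2c_{i+1}c_i - c_{i+2}c_{i-1} - c_{i+1}c_i)$, i.e. $p^2(c_{i+1}^2-c_{i+2}c_i) + q^2(c_i^2-c_{i+1}c_{i-1}) + pq(c_{i+1}c_i - c_{i+2}c_{i-1})$. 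Now I invoke Lemma \ref{lem cn}: the special case $c_{n-1}^2 - c_n c_{n-2} = 1$ (applied with appropriate index shifts) handles the $p^2$ and $q^2$ coefficients, giving $1$ each; and the general identity $c_{n-1}c_{n+k-3} - c_{n+k-2}c_{n-2} = c_k$ with the right choice of $n$ and $k$ evaluates the cross term $c_{i+1}c_i - c_{i+2}c_{i-1}$ to $c_3 = r$ (taking, e.g., $n = i+2$, $k = 3$, so $c_{n-1}c_{n+k-3} - c_{n+k-2}c_{n-2} = c_{i+1}c_{i+2} - c_{i+3}c_i = c_3$—one must pick indices so that the expression matches $c_{i+1}c_i - c_{i+2}c_{i-1}$; concretely $n=i+1$, $k=3$ gives $c_i c_{i+1} - c_{i+2}c_{i-1} = c_3 = r$). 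Assembling the three pieces yields $p^2 + q^2 + rpq$.

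The only genuine subtlety—and the one place I would be careful—is bookkeeping the index shifts when applying Lemma \ref{lem cn}, since that lemma is stated for $n \ge 3$ and $k \in \mathbb{Z}$, and I want the identities to hold for \emph{all} $i$. Because the Chebyshev recurrence $c_n = rc_{n-1} - c_{n-2}$ is valid for all $n \in \mathbb{Z}$ and the resulting polynomial identities in $r$ are identities among finitely many consecutive terms, they persist for every $i$ by the same two-term-recurrence argument (or by noting that both sides of (b) are fixed under $i \mapsto i$ shifts via part (a), so it suffices to check one value of $i$, e.g. $i$ with $c_i = 0$, $c_{i+1} = 1$, where the left side becomes $q^2 - p \cdot 0 \cdot \text{(something)}$... more cleanly, at $c_{i-1}=-1, c_i=0, c_{i+1}=1, c_{i+2}=r$ the left side is $p^2 - (pr+q)(-p) \cdot$ wait). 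To avoid any such hand-check I would just do the direct expansion above and cite Lemma \ref{lem cn} termwise; this is routine and the "\qed" already in the statement signals the authors regard it as such, so no obstacle of substance remains.
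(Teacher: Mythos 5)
Your proof is correct and matches the paper's intent exactly: the paper states Lemma \ref{negone} with no proof, calling it a straightforward consequence of Lemma \ref{lem cn}, and your direct expansion---the Chebyshev recurrence for part (a), and the identities $c_{n-1}^2-c_nc_{n-2}=1$ and $c_ic_{i+1}-c_{i+2}c_{i-1}=c_3=r$ (Lemma \ref{lem cn} with $n=i+1$, $k=3$) for part (b)---is precisely that computation. You should delete the garbled aside at the end; the cleanest way to handle the ``for any $i$'' claim is to note that part (a) makes $A_i^2-A_{i+1}A_{i-1}$ independent of $i$ (since $A_{i-1}=rA_i-A_{i+1}$ gives $A_i^2-A_{i+1}A_{i-1}=A_i^2-rA_iA_{i+1}+A_{i+1}^2$, symmetric under $i\mapsto i+1$) and then evaluate at $i=1$, where $A_0=-q$, $A_1=p$, $A_2=rp+q$ yield $p^2-(rp+q)(-q)=p^2+q^2+rpq$.
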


\begin{theorem}\label{mainthm12052011}
\[x_{n+1}^p x_n^q = x_1^{-A_{n-1}} x_2^{-A_{n-2}}\sum_{(S_1,S_2)}x_1^{r|S_2|}x_2^{r|S_1|} z_3^{s(c_{n-1}-|S_1|)-t|S_2|}, \]
where the sum is over all $(S_1=\cup_{i=1}^{p+q}\, S_{1}^i,S_2=\cup_{i=1}^{p+q}\, S_{2}^i)$ such that  
$$(S_{1}^i,S_{2}^i) \textup{ is a compatible pair in }\left\{\begin{array}{ll}\mathcal{D}^{c_{n-1}\times c_{n-2}}  &\textup{ if $1\le i \le q$}; \\  
 \mathcal{D}^{c_{n}\times c_{n-1}} &\textup{ if $q+1\le i\le p+q$.}
\end{array}\right.$$
\end{theorem}
\begin{proof} This follows immediately from Theorem \ref{thm 33}. 
%The formula without the factor $z_3$ is obtained by applying  \cite[Theorem 1.11]{LLZ} to the cluster monomial $x_{n+1}^p x_n^q$. Inserting powers of $z_3$ as coefficients is straightforward as above. 
%$\mathcal{D}^{A_{n-1}\times A_{n-2}}$ is obtained by concatenating $p$ copies of $\mathcal{D}^{c_{n}\times c_{n-1}}$ and $q$ copies of $\mathcal{D}^{c_{n-1}\times c_{n-2}}$ in a suitable order. It is not hard to check that compatibility in each copy does not affect compatibility in other copies.
\end{proof}
\begin{remark}
 It can be shown that the summation on the right hand side in Theorem \ref{mainthm12052011} can be taken over all compatible pairs in $\mathcal{D}^{A_{n-1}\times A_{n-2}}$ instead, without changing the sum, see \cite[Theorem 1.11]{LLZ}.
  \end{remark}

\subsection{Rank 2 inside rank 3: Mixed formula}
We keep the setup of the previous subsection. In particular, $\mathcal{A}(Q)$ is non-acyclic. We present another formula for the Laurent expansion of the cluster monomial $x_{n+1}^px_n^q$, which is parametrized by a certain sequence of integers $\tau_0,\tau_1,\ldots,\tau_{n-2}$. This formula is a generalization of a formula given in \cite[Theorem 2.1]{L}. Combining it with the formula of Theorem~\ref{mainthm12052011} yields the mixed formula of Theorem \ref{mainthm12062011} below, which is a key ingredient for the proof of the positivity conjecture in section \ref{sect 3}.

%\begin{definition}\label{modifiedbinomialcoeff}
For arbitrary (possibly negative) integers $A, B$, we define the modified binomial coefficient as follows.
$$\left[\begin{array}{c}{A } \\{B} \end{array}\right] := \left\{ \begin{array}{ll}  \prod_{i=0}^{A-B-1} \frac{A-i}{A-B-i}, & \text{ if }A > B\\ \, & \,  \\   1, & \text{ if }A=B \\ \, & \, \\  0, & \text{ if }A<B.  \end{array}  \right.$$
%\end{definition}

If $A\geq 0$ then $\left[\begin{array}{c}{A } \\{B} \end{array}\right]=\gchoose{A}{A-B}$ is just the usual binomial coefficient. In particular  $\left[\begin{array}{c}{A } \\{B} \end{array}\right]= 0$ if $A\ge0$ and $B<0$.

For a sequence of integers $(\tau_j)$ (respectively $(\tau'_j)$), we define a sequence  of weighted partial sums $(s_i)$ (respectively $(s_i'))$ as follows:
$$ \begin{array}{rcccl}
s_0=0,&\quad &s_i=\sum_{j=0}^{i-1} c_{i-j+1} \tau_j = c_{i+1}\tau_0+c_i\tau_1+\cdots+c_2\tau_{i-1};\\ \\
s'_0=0,&\quad &s'_i=\sum_{j=0}^{i-1} c_{i-j+1} \tau_j' = c_{i+1}\tau'_0+c_i\tau'_1+\cdots+c_2\tau'_{i-1}.\end{array}
$$
For example, $s_1=c_2\tau_0=\tau_0$, $s_2= c_3\tau_0+c_2\tau_1=r\tau_0+\tau_1$.

\begin{lemma}
 \label{lem sn}
 $s_n=r s_{n-1} -s_{n-2} +\tau_{n-1}$.
\end{lemma}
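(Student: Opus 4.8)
The plan is to prove the identity by a direct computation: expand the three weighted partial sums $s_n$, $s_{n-1}$, $s_{n-2}$ from their definitions, and then compare the coefficient of each $\tau_j$ on the two sides of the claimed recurrence, using only the defining relation $c_m = r c_{m-1} - c_{m-2}$ of the Chebyshev polynomials together with the initial values $c_1 = 0$, $c_2 = 1$.

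First I would write out
\[
s_n = \sum_{j=0}^{n-1} c_{n-j+1}\,\tau_j, \qquad
r\,s_{n-1} = \sum_{j=0}^{n-2} r\,c_{n-j}\,\tau_j, \qquad
s_{n-2} = \sum_{j=0}^{n-3} c_{n-1-j}\,\tau_j .
\]
Then for every index $j$ with $0 \le j \le n-2$, the coefficient of $\tau_j$ in $r\,s_{n-1} - s_{n-2}$ is $r\,c_{n-j} - c_{n-1-j}$ (with the convention that $c_{n-1-j}$ denotes $c_1 = 0$ when $j = n-2$), and by the recurrence this equals $c_{n-j+1}$, i.e.\ exactly the coefficient of $\tau_j$ in $s_n$. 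The only term of $s_n$ not accounted for in this way is the one with $j = n-1$, whose coefficient is $c_{n-(n-1)+1} = c_2 = 1$ and which appears in neither $r\,s_{n-1}$ nor $s_{n-2}$. Collecting terms yields $s_n = r\,s_{n-1} - s_{n-2} + \tau_{n-1}$.

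The only subtlety worth flagging is the boundary index $j = n-2$: since the sum defining $s_{n-2}$ stops at $j = n-3$, one should check separately that the coefficient of $\tau_{n-2}$ comes out right, which it does because $c_3 = r c_2 - c_1$. One can sidestep any case distinction by observing that extending each sum $\sum_{j=0}^{i-1}$ to $\sum_{j=0}^{i}$ changes nothing, since the new term carries the coefficient $c_1 = 0$; with this reading the termwise application of the recurrence is completely uniform. I do not anticipate any real difficulty here: this is a routine bookkeeping lemma, and the identical argument (replacing $\tau_j$ by $\tau_j'$) gives the analogous recurrence $s_n' = r s_{n-1}' - s_{n-2}' + \tau_{n-1}'$ for the primed sequence.
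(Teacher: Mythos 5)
Your proof is correct and is essentially the same computation as the paper's: both expand the sums, apply the recurrence $c_{m}=rc_{m-1}-c_{m-2}$ termwise, and handle the boundary indices via $c_1=0$ (the paper also invokes $c_0=-1$ because it runs the recurrence through the $j=n-1$ term, whereas you read off its coefficient $c_2=1$ directly — an immaterial difference in bookkeeping). No issues.
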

\begin{proof} Since $c_{n-j+1}=rc_{n-j}-c_{n-j-1}$, we see that $s_n$ is equal to
 $$\begin{array}{rcl}\displaystyle\sum_{j=0}^{n-1} (rc_{n-j}-c_{n-j-1})\tau_j
 & =&\displaystyle
r\left( \sum_{j=0}^{n-2} c_{n-j}\tau_j \right) +rc_{1}\tau_{n-1}
-\left(\sum_{j=0}^{n-3} c_{n-j-1}\tau_j\right) - c_{1} \tau_{n-2} -c_0 \tau_{n-1}
\\ &=&
 rs_{n-1}-s_{n-2}+\tau_{n-1},\end{array}$$
 where the last identity holds because $c_1=0, c_0=-1$.
\end{proof}
\begin{definition}\label{20120121}
Let $\mathcal{L}( \tau_0,\tau_1,\cdots,\tau_{n-2} ) $
denote the set of all $( \tau'_0,\tau'_1,\cdots,\tau'_{n-2} )\in \mathbb{Z}^{n-1} $ satisfying the conditions
$$\begin{array}{ll} \textup{(1)} & 0\leq\tau'_i\leq \tau_i \text{ for }0\leq i\leq n-3, \\ \\
\textup{(2)}&   s'_{n-2}=k c_{n-1}\textup{ and }
     s'_{n-1} =k c_{n} \text{ for some integer }0\leq k\leq p.\end{array}
$$
%
%$$\left\{  ( \tau'_0,\tau'_1,\cdots,\tau'_{n-2} ) \in \mathbb{Z}^{n-1} \,\left|\,\begin{array}{l} 0\leq\tau'_i\leq \tau_i \text{ for }0\leq i\leq n-3, \\ \\
%    \left(\sum_{j=0}^{n-3} c_{n-j-1} \tau'_j \right)=k c_{n-1},\\ \\
%     \left(\sum_{j=0}^{n-2} c_{n-j} \tau'_j \right) =k c_{n} \text{ for some integer }0\leq k\leq p\end{array} \right. \right\}.
%$$
\end{definition}

We define a partial order on $\mathcal{L}( \tau_0,\tau_1,\cdots,\tau_{n-2} )$ by
$$( \tau'_0,\tau'_1,\cdots,\tau'_{n-2} ) \leq_{\mathcal{L}} ( \tau''_0,\tau''_1,\cdots,\tau''_{n-2} )\text{ if and only if }\tau'_i\leq \tau''_i \text{ for }0\leq i\leq n-3.
$$
Then let $
\mathcal{L}_{\max}( \tau_0,\tau_1,\cdots,\tau_{n-2} )$ be the set of the maximal elements of $
\mathcal{L}( \tau_0,\tau_1,\cdots,\tau_{n-2} )$ with respect to $\leq_{\mathcal{L}}$.

\bigskip

We are ready to state the main result of this subsection.
\begin{theorem}\label{mainthm2} Let $n\geq 3$.  %Let $u$ be the number of arrows from $\bar{n}$ to 3, and $v$ be the number of arrows from 3 to $\overline{n+1}$.
  Then
\begin{equation}\label{mainformula2}\aligned
&x_{n+1}^p x_n^q= x_1^{-A_{n-1}} x_2^{-A_{n-2}} \sum_{ \tau_0,\tau_1,\cdots,\tau_{n-2}}\aligned &\left( \prod_{i=0}^{n-2} \left[\begin{array}{c}{{A_{i+1} - rs_i    } } \\{\tau_i} \end{array}\right]   \right)x_1^ {rs_{n-2}} x_2^{r(A_{n-1}-s_{n-1})}   z_3^{ss_{n-1}-ts_{n-2}},
\endaligned
\endaligned\end{equation}
where  the summation runs over all integers $ \tau_0,...,\tau_{n-2}$ satisfying
\begin{equation}\label{cond502}
\begin{array}{ll} \bullet \quad 0\leq \tau_i \leq A_{i+1} - rs_i \, (0\leq i\leq n-3),
\\
\bullet \quad \tau_{n-2} \leq A_{n-1} - rs_{n-2} , \text{ and } \\
\bullet \quad \left(s_{n-1}-s'_{n-1} \right) A_{n-2} \geq \left(s_{n-2} -s'_{n-2} \right) A_{n-1} \text{ for any }(\tau'_0,...,\tau'_{n-2})\in\mathcal{L}_{\max}( \tau_0,\cdots,\tau_{n-2} ).
 \end{array} \end{equation}
% \begin{equation}\label{cond502}\left\{
%\begin{array}{l} 0\leq \tau_i \leq A_{i+1} - rs_i \, (0\leq i\leq n-3),  \ \tau_{n-2} \leq A_{n-1} - rs_{n-2} , \text{ and } \\
%\left(s_{n-1}-s'_{n-1} \right) A_{n-2} \geq \left(s_{n-2} -s'_{n-2} \right) A_{n-1} \text{ for any }(\tau'_0,...,\tau'_{n-2})\in\mathcal{L}_{\max}( \tau_0,\cdots,\tau_{n-2} ).
% \end{array} \right.\end{equation}
%\left(s_{n-1}-\sum_{j=0}^{n-2} c_{n-j} \tau'_j\right) A_{n-2} \geq \left(s_{n-2} -\sum_{j=0}^{n-3} c_{n-j-1} \tau'_j\right) A_{n-1} \text{ for any }(\tau'_0,...,\tau'_{n-2})\in\mathcal{L}_{\max}( \tau_0,\cdots,\tau_{n-2} ).
% \end{array} \right.\end{equation}

\end{theorem}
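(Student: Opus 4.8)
The plan is to prove \eqref{mainformula2} by induction on $n$, running the induction in parallel with the recursion $x_{n+1}=(x_n^r+\cdots)/x_{n-1}$ that defines the rank-two subpattern, exactly as in the proofs of Lemma \ref{12312011} and Corollary \ref{12312011a}. The base cases $n=3$ (and $n=4$ to anchor the recursion cleanly) are handled by direct mutation: one computes $x_3=(x_2^r+z_3^s)/x_1$, $x_4=(x_3^r+z_3^{rs-t})/x_2$, raises to the appropriate powers, expands by the binomial theorem, and checks that the resulting monomials match the right-hand side with the single index $\tau_0$ (respectively $\tau_0,\tau_1$) ranging over the stated interval, using Lemma \ref{lem cn} and Lemma \ref{negone} to identify exponents such as $A_1,A_2$ and $s_1,s_2$. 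The constraint in the third bullet of \eqref{cond502} is vacuous in these base cases because $\mathcal{L}_{\max}$ consists of the zero vector.

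For the inductive step, I would assume the formula holds for the monomial $x_{n}^{p'}x_{n-1}^{q'}$ for all admissible $(p',q')$ and all ranks, and produce the formula for $x_{n+1}^px_n^q$. The natural route is to first establish the Dyck-path formula of Theorem \ref{mainthm12052011} as already given, and then prove that the mixed formula of the present theorem is \emph{equal} to it term by term after a change of summation variables: one shows that summing a product of modified binomial coefficients $\prod_i \gchoose{A_{i+1}-rs_i}{A_{i+1}-rs_i-\tau_i}$ against the monomial with $z_3$-exponent $ss_{n-1}-ts_{n-2}$ reproduces the same Laurent polynomial as the sum over compatible pairs. Concretely, each choice of $(\tau_0,\dots,\tau_{n-2})$ should correspond to a "layer" in the iterated substitution $x_1\mapsto x_2$, $x_2\mapsto (x_2^r+z_3^t)/x_1$ (or its $t<0$ variant): applying one such substitution to the formula for $x_n^{p}x_{n-1}^{q}$ introduces exactly one new binomial factor $\gchoose{A_{n-1}-rs_{n-2}}{\cdot}$ via $(x_2^r+z_3^t)^{A_{n-1}-rs_{n-2}}$, and the weighted-partial-sum identity $s_n=rs_{n-1}-s_{n-2}+\tau_{n-1}$ of Lemma \ref{lem sn} is precisely what makes the exponents of $x_1,x_2,z_3$ transform correctly. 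The bookkeeping for the $x_1,x_2$ exponents uses $A_n=rA_{n-1}-A_{n-2}$ (Lemma \ref{negone}(a)), and one must check that $rs_{n-2}$ and $r(A_{n-1}-s_{n-1})$ are the images of $rs_{n-3}$ and $r(A_{n-2}-s_{n-2})$ under the substitution, which again reduces to Lemma \ref{lem sn}.

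The main obstacle, and the heart of the argument, is the third condition in \eqref{cond502}: the inequality $(s_{n-1}-s'_{n-1})A_{n-2}\ge (s_{n-2}-s'_{n-2})A_{n-1}$ for every $(\tau')\in\mathcal{L}_{\max}(\tau_0,\dots,\tau_{n-2})$. This is exactly the condition ensuring that the tropical $F$-polynomial contributes no negative shift, i.e.\ that $m=0$ persists, and it is the analogue for cluster monomials of the statement $m=0$ proved in Corollary \ref{12312011a} for single cluster variables. I expect the proof to require: (i) a characterization of the elements of $\mathcal{L}(\tau_0,\dots,\tau_{n-2})$ as those $(\tau')$ whose weighted partial sums $s'_{n-2},s'_{n-1}$ are proportional to $(c_{n-1},c_n)$ with proportionality constant $k\le p$ — this is the reflection of the Dyck-path type jumping from $\mathcal{D}^{c_{n-1}\times c_{n-2}}$ to $\mathcal{D}^{c_n\times c_{n-1}}$ in Theorem \ref{mainthm12052011}; (ii) an argument that the extremal (minimizing) term in the $z_3$-expansion is attained at a compatible pair of the shape $(\mathcal{D}_1,\emptyset)$ in each layer, forced by $s,t>0$ in the non-acyclic case; and (iii) translating the resulting extremality into the displayed cross-ratio inequality via Lemma \ref{lem cn}, which controls the $2\times 2$ determinants $c_{n-1}c_{n-3}-c_{n-2}^2$ and their $A$-analogues in Lemma \ref{negone}(b). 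Once the equivalence with Theorem \ref{mainthm12052011} is in place on the overlap of the summation ranges, and one checks that terms outside the stated $\tau$-range contribute monomials that would require a negative power of $z_3$ (hence do not occur), the theorem follows.
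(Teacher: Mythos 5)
Your skeleton is the paper's own: induction on $n$, substituting $x_3=(x_2^r+z_3^s)/x_1$ into the formula one level up, expanding $(x_2^r+z_3^s)^{A_n-rs_{n-1}}$ to create the new index $\tau_{n-1}$ and the new binomial factor, and using Lemma \ref{lem sn} with Lemma \ref{negone}(a) to track the exponents of $x_1,x_2,z_3$. You also correctly single out the third bullet of (\ref{cond502}) as the crux and the Dyck-path formula of Theorem \ref{mainthm12052011} as the tool that controls it; that is precisely the role of Proposition \ref{eanegineqcor} in the paper (the paper does not, however, prove a term-by-term identification of the mixed formula with the Dyck-path formula, as your second paragraph first suggests -- the Dyck-path formula enters only through that proposition).

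The genuine gap is your final step: you discard the summands whose $\tau$'s violate the range condition by asserting that they ``would require a negative power of $z_3$ and hence do not occur.'' That mechanism does not exist. Each such summand is a perfectly well-defined Laurent monomial (with $z_3$-exponent $ss_{n-1}-ts_{n-2}$, whatever its sign) times a product of modified binomial coefficients, and since $\gchoose{A}{B}$ can be nonzero even when $A<0$ (e.g.\ $\gchoose{-1}{-3}=1$), nothing vanishes on sign grounds. What the paper actually proves (Lemma \ref{lem 34}) is that under the hypotheses (\ref{cond503}) -- in particular $\tau_{n-2}\le A_{n-1}-rs_{n-2}<0$ together with the $\mathcal{L}_{\max}$ inequalities at two consecutive levels -- one has $A_n-rs_{n-1}<\tau_{n-1}$, forcing the new factor $\gchoose{A_n-rs_{n-1}}{\tau_{n-1}}$ to vanish identically; this is a substantial chain of inequalities resting on Lemma \ref{negone}(b), Lemma \ref{lem cn} and the identification $s'_{n-1}=kc_n$, $s'_{n-2}=kc_{n-1}$, and it is one of the two results the induction is explicitly ``modulo.'' A second omission: after the substitution, the induction hypothesis leaves the level-$n$ inequality $(s_{n-1}-s'_{n-1})A_{n-2}\ge(s_{n-2}-s'_{n-2})A_{n-1}$ among the summation conditions, and one must show it is implied by the others (the paper does this via $s_{n-1}A_{n-2}-s_{n-2}A_{n-1}\ge\tau_1p-\tau_0q$ and $-s'_{n-1}A_{n-2}+s'_{n-2}A_{n-1}\ge\tau_0q$) so that the final condition set collapses to the three stated bullets; your plan does not address this.
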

\begin{proof}The theorem is proved in section \ref{pfthm}
\end{proof}

\begin{example}
 Let $Q$ be the quiver $$\xymatrix{1\ar[rr]^2&&2\ar[ld]^2\\&3\ar[lu]^2}$$ and let $n=5, p=1, q=0$. Thus our formula computes the cluster variable obtained from the initial cluster by mutating in directions 1,2,1 and 2.

 First note that in this case $c_i=i-1$, $A_i=i$ and $s_i= i\tau_0+(i-1)\tau_1+\cdots +2\tau_{i-2}+\tau_{i-1}$. The first condition in (\ref{cond502}) is $0\le \tau_i\le i+1-2 s_i$. From this  we see that  $ \tau_0$ is either 0 or 1. If $\tau_0=1$, then $s_1=1$, hence $\tau_1=0$ by (\ref{cond502}), whence $s_2=2$ and $0\le \tau_2\le (2+1)-2(2)$, again by (\ref{cond502}), a contradiction. Thus $\tau_0=0$ and the conditions on $\tau_i$ in  (\ref{cond502}) become
  \[
\begin{array}{ccccccccc}
 \tau_0= 0& 0\le \tau_1 \le 2 & 0\le \tau_2\le 3-2\tau_1 & \tau_3\le 4-4\tau_1-2\tau_2 ,
\end{array}
 \]
 From this we conclude that there are the following 11 possibilities for $(\tau_0,\tau_1,\tau_2,\tau_3)$
 \[
\begin{array}{cccccccc}
(0,0,0,0)&(0,0,0,1)&(0,0,0,2)&(0,0,0,3)&(0,0,0,4)&(0,0,1,0) \\
&(0,0,1,1)&
(0,0,1,2)&(0,0,2,0)&(0,0,3,-2)&(0,1,0,0)
\end{array}
 \]
 Observe that each of these tuples satisfies the second condition in (\ref{cond502}). Indeed, the integer $k\in\{0,1\}$ in Definition \ref{20120121} must satisfy
 \[ 3k= 2\tau'_1+\tau'_2 \qquad \textup{and} \qquad 4k=3\tau'_1+2\tau'_2+\tau_3',\]
 so for example, if $(\tau_0,\tau_1,\tau_2,\tau_3)=(0,0,1,1)$ then
 $\tau_0'=\tau_1'=0$, $0\le \tau_2' \le 1$, $\tau_3'\le 1$ and
 \[3k=\tau_2'\le 1 \qquad \textup{and} \qquad 4k=2\tau'_2+\tau_3'.\]
 Thus $k=0$,  $
 \mathcal{L}(0,0,1,1)=\{(0,0,0,0)\}$, and
  the second condition in (\ref{cond502}) becomes
  \[(s_4-s_4')A_3\ge (s_3-s_3')A_4 \quad \Leftrightarrow  \quad(2+1 -0)3 \ge (1-0)4  \quad\Leftrightarrow  \quad9\ge 4.\]
On the other hand, the eleven 4-tuples above are the only ones that satisfy all conditions in (\ref{cond502}). For example, for the tuple $(0,1,1,-2)$, we get $k=1$, $\tau'=(0,1,1,-1)\in\mathcal{L}_{\textup{max}}(0,1,1,-2)$ and  the condition
\[(s_4-s_4')A_3\ge (s_3-s_3')A_4 \quad \Leftrightarrow  \quad(3+2-2-3-2+1)3 \ge (2+1-2-1)4  \quad\Leftrightarrow  \quad -3 \ge 0\]
is not satisfied.
Therefore  Theorem \ref{mainthm2} yields
\[
\begin{array}{ccccccccccl}
x_6= (x_2^8
&+& 4 x_2^6z_3^2
&+ &6 x_2^4z_3^4
&+ &4 x_2^2z_3^6
&+  &z_3^8
&+& 3 x_1^2 x_2^4 z_3^2\\
&+& 6x_1^2 x_2^2 z_3^4
&+& 3x_1^2  z_3^6
&+& 3 x_1^4 z_3^ 4
&+& x_1^6  z_3^2
&+& 2x_1^4 x_2^2 z_3^ 2    \,    ) / x_1^4x_2^3.
\end{array}
\]
\end{example}

\begin{remark}\label{20120121a} When comparing the formula of Theorem \ref{mainthm2} with the Dyck path formula of Theorem \ref{mainthm12052011}, we have the following interpretation for the integer $k$ in Definition \ref{20120121}.
 Let $\mathcal{D}_2$ be the set of all vertical edges in $\mathcal{D}^{c_n\times c_{n-1}}$, and fix a  pair $(S_1,S_2)$ as in Theorem \ref{mainthm12052011}. Then $k$ in Definition 3.15 is equal to the number of times $\mathcal{D}_2$ appears in $S_2$.
Moreover,
 if $(\tau_0',\tau_1',\ldots,\tau_{n-2}')\in \mathcal{L}_{\textup max}$ is such that $ \prod_{i=0}^{n-2} \left[\begin{array}{c}{{A_{i+1} - rs_i    } } \\{\tau_i} \end{array}\right]   \ne 0$ then $$k=\textup{min}\left(\left\lfloor \frac{s_{n-2}}{c_{n-1}}\right\rfloor, p \right).$$
\end{remark}

%\bigskip
\begin{corollary}\label{cor 26}
 \noindent Let $x_3= (x_2^r+z_3^s)/x_1 $ and let $t'=s$ and $s'=rs-t$ be the number of arrows from 1 to 3, or 3 to 2 respectively, in the quiver obtained from $Q$ by mutating in the vertex 1. Then
 \begin{equation}\label{mainformula2b0303}\aligned
&x_{n+1}^p x_n^q= x_2^{-A_{n-2}} x_3^{-A_{n-3}} \sum_{ \tau_0,\tau_1,\cdots,\tau_{n-3}}\aligned &\left( \prod_{i=0}^{n-3} \left[\begin{array}{c}{{A_{i+1} - rs_i    } } \\{\tau_i} \end{array}\right]   \right)x_2^ {rs_{n-3}} x_3^{r(A_{n-2}-s_{n-2})}   z_3^{s's_{n-2}-t's_{n-3}}, \endaligned
\endaligned\end{equation}
where  the summation runs over all integers $ \tau_0,...,\tau_{n-3}$ satisfying
\begin{equation}\label{cond502b0303}\left\{
\begin{array}{l} 0\leq \tau_i \leq A_{i+1} - rs_i \, (0\leq i\leq n-4),  \ \tau_{n-3} \leq A_{n-2} - rs_{n-3} , \text{ and } \\
\left(s_{n-2}-s'_{n-2}\right) A_{n-3} \geq \left(s_{n-3} -s_{n-3}'\right) A_{n-2} \text{ for any }(\tau'_0,...,\tau'_{n-3})\in\mathcal{L}_{\max}( \tau_0,\cdots,\tau_{n-3} ).
 \end{array} \right.\end{equation}

\end{corollary}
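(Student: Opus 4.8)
The plan is to deduce Corollary~\ref{cor 26} from Theorem~\ref{mainthm2} by a change of initial seed. The key observation is that mutating the quiver $Q$ in vertex~$1$ produces a new quiver $Q' = \mu_1(Q)$ which again has three vertices with nonzero arrows between every pair — so $\mathcal{A}(Q')$ is the same non-acyclic cluster algebra, just presented with a different initial seed. Under $\mu_1$ the initial cluster $(x_1,x_2,z_3)$ is replaced by $(x_3,x_2,z_3)$ where $x_3=(x_2^r+z_3^s)/x_1$, and one checks directly from the mutation rule \eqref{eq:matrix-mutation} that in $Q'$ there are $r$ arrows from $2$ to $3$, $t'=s$ arrows from $1(=$ the $x_3$-vertex$)$ to $3$, wait — more precisely the new quiver has the arrow $2\to 3$ of multiplicity $r$ (this is the ``$1\leftrightarrow 2$'' rank-two part, unchanged), and the arrows involving vertex~$3$ get mutated to $t'=s$ and $s'=rs-t$ as stated. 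The cluster variables $x_n$ (obtained from $(x_1,x_2,z_3)$ by mutating $1,2,1,2,\ldots$) are, after the relabeling $x_1\mapsto x_3$ induced by $\mu_1$, exactly the cluster variables obtained from $(x_3,x_2,z_3)$ by mutating $2,1,2,1,\ldots$; concretely the sequence $x_3,x_4,x_5,\ldots$ as cluster variables does not change, but the index shifts because $x_3$ is now part of the initial seed.

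First I would make the seed-change precise: apply $\mu_1$ to the seed $((x_1,x_2,z_3),Q)$ to get $((x_3,x_2,z_3),Q')$, verify using Definition~\ref{def:seed-mutation} that $Q'$ is non-acyclic with the arrow multiplicities $r$ (between the $x_3$- and $x_2$-vertices), $t'=s$ and $s'=rs-t$, and note that the cluster variable $x_{m}$ in the original labeling equals the cluster variable obtained from the new seed by $m-3$ mutations in directions $1,2,1,2,\ldots$ (so what was ``$x_n$ after $n-2$ steps'' becomes ``$x_n$ after $n-3$ steps from the new seed''). Then I would apply Theorem~\ref{mainthm2} verbatim to the cluster algebra $\mathcal{A}(Q')$ with initial seed $((x_3,x_2,z_3),Q')$, with the roles of $(x_1,x_2)$ played by $(x_3,x_2)$ — careful: I must match the orientation conventions so that in Theorem~\ref{mainthm2} the ``$x_1$'' slot is the source of the $r$ arrows; here that is the $x_3$-vertex — and with $(s,t)$ replaced by $(s',t')$, and with $n$ replaced by $n-1$ since we are one mutation step closer. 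The recursion $c_i=rc_{i-1}-c_{i-2}$ and the definitions of $s_i$, $A_i$ depend only on $r,p,q$, which are unchanged, so no re-indexing of those auxiliary sequences is needed beyond the shift $n\mapsto n-1$.

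Substituting $n\mapsto n-1$ into \eqref{mainformula2} gives the prefactor $x_3^{-A_{n-2}}x_2^{-A_{n-3}}$, the sum over $\tau_0,\ldots,\tau_{n-3}$, the exponents $x_3^{rs_{n-2}}$, $x_2^{r(A_{n-2}-s_{n-2})}$ and $z_3^{s's_{n-2}-t's_{n-3}}$, and the three conditions in \eqref{cond502} become exactly \eqref{cond502b0303}; reordering the two variables in the monomial prefactor to match the statement's $x_2^{-A_{n-2}}x_3^{-A_{n-3}}$ — here I need the rank-two symmetry built into Theorem~\ref{th:greedy-combinatorial}, namely that swapping the two cluster variables corresponds to swapping $S_1\leftrightarrow S_2$, or equivalently I simply keep track of which vertex is the ``$1$'' and which is the ``$2$'' of the rank-two subpattern — gives precisely the claimed formula. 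The main obstacle I anticipate is purely bookkeeping: getting the orientation/labeling conventions in $Q'$ right so that the substitution into Theorem~\ref{mainthm2} is literally correct (in particular making sure the $F$-polynomial/$g$-vector asymmetry between the two directions in Corollary~\ref{01022012}, which is what produces the $z_3^{ss_{n-1}-ts_{n-2}}$ exponent rather than a symmetric one, is correctly transported through $\mu_1$), and confirming that $x_3$ as defined in the corollary genuinely equals the second cluster variable of the original chain so that the index shift $n\mapsto n-1$ is the only change. Once the dictionary between the two seeds is nailed down, the proof is a one-line invocation of Theorem~\ref{mainthm2}.
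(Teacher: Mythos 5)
Your approach is exactly the paper's: the paper proves this corollary in one line by applying Theorem~\ref{mainthm2} to the seed obtained from $((x_1,x_2,z_3),Q)$ by mutating at vertex $1$, with $n$ replaced by $n-1$ and $(s,t)$ replaced by $(s',t')=(rs-t,s)$, which is precisely the seed-change argument you describe. One concrete correction to your dictionary, though: in $\mu_1(Q)$ the $r$ arrows go from vertex $2$ to vertex $1$, so the source of the $r$ arrows is the $x_2$-vertex, not the $x_3$-vertex as you wrote. The correct identification is therefore ``$x_1$-slot'' $=x_2$ and ``$x_2$-slot'' $=x_3$ (consistent with $s'=rs-t$ being the number of arrows from $3$ into the $x_2$-vertex and $t'=s$ the number out of the $x_3$-vertex), and with this dictionary the substitution $n\mapsto n-1$ into (\ref{mainformula2}) yields the prefactor $x_2^{-A_{n-2}}x_3^{-A_{n-3}}$ and all exponents of the corollary verbatim. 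Your subsequent attempt to repair the mismatched prefactor $x_3^{-A_{n-2}}x_2^{-A_{n-3}}$ by ``reordering'' and invoking the $S_1\leftrightarrow S_2$ symmetry of Theorem~\ref{th:greedy-combinatorial} would not work as stated --- reordering factors does not exchange exponents, and the $\tau$-parametrization of Theorem~\ref{mainthm2} is not symmetric in the two cluster variables --- but this step becomes unnecessary once the labels are assigned correctly.
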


\begin{proof}
 This follows directly from Theorem \ref{mainthm2}. \end{proof}

Combining
Theorem \ref{mainthm2} with Theorem~\ref{mainthm12052011}  we get the following mixed formula.
\begin{theorem}\label{mainthm12062011} Let $n\geq 3$.   Then
\begin{equation}\label{mainformula22}\aligned
x_{n+1}^p x_n^q= & \sum_{\begin{array}{c} \scriptstyle\tau_0,\tau_1,\cdots,\tau_{n-3} \\ \scriptstyle s_{n-2}\leq  A_{n-1}/r \end{array}  }\left( \prod_{i=0}^{n-3}{{A_{i+1} - rs_i    }  \choose{\tau_i} }   \right){x_3}^ {A_{n-1}-rs_{n-2}} x_2^{rs_{n-3}-A_{n-2}}   z_3^{s's_{n-2}-t's_{n-3}}
\\
&+ \sum_{\begin{array}{c} \scriptstyle (S_1,S_2)\\ \scriptstyle r|S_2|-A_{n-1}>0 \end{array}}x_1^{r|S_2|-A_{n-1}}x_2^{r|S_1|-A_{n-2}} z_3^{s(c_{n-1}-|S_1|)-t|S_2|}.
\endaligned\end{equation}
\end{theorem}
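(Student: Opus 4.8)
The plan is to read the mixed formula off the two Laurent expansions of $x_{n+1}^p x_n^q$ that are already available --- the Dyck path expansion of Theorem~\ref{mainthm12052011} in the cluster $(x_1,x_2,z_3)$ and the $\tau$-expansion of Corollary~\ref{cor 26} in the cluster $(x_2,x_3,z_3)$ --- by splitting each one according to a degree and matching the pieces. By Theorem~\ref{mainthm12052011},
\[ x_{n+1}^p x_n^q \;=\; \sum_{(S_1,S_2)} x_1^{\,r|S_2|-A_{n-1}}\,x_2^{\,r|S_1|-A_{n-2}}\,z_3^{\,s(c_{n-1}-|S_1|)-t|S_2|}, \]
the sum running over the pairs described there. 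The monomials with $r|S_2|-A_{n-1}>0$ are exactly the second sum on the right of (\ref{mainformula22}); let $R$ denote the sum of the remaining monomials, those with $r|S_2|-A_{n-1}\le 0$, so that $x_{n+1}^p x_n^q$ equals $R$ plus that second sum. It then remains to identify $R$ with the first sum of (\ref{mainformula22}).

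For that I would use Corollary~\ref{cor 26}, which expresses $x_{n+1}^p x_n^q$ as a Laurent polynomial in $x_2,x_3,z_3$ whose term indexed by $(\tau_0,\dots,\tau_{n-3})$ has $x_3$-degree $A_{n-1}-rs_{n-2}$ (using $A_{n-1}=rA_{n-2}-A_{n-3}$ together with Lemma~\ref{lem sn}) and $x_2$-degree $rs_{n-3}-A_{n-2}$. Split this expansion into the part $P_{\ge 0}$ with $A_{n-1}-rs_{n-2}\ge 0$, equivalently $s_{n-2}\le A_{n-1}/r$, and the complementary part $P_{<0}$. I claim that $P_{\ge 0}$ is precisely the first sum of (\ref{mainformula22}); checking this amounts to showing that on the range $s_{n-2}\le A_{n-1}/r$ every quantity $A_{i+1}-rs_i$ occurring is $\ge 0$, so that each modified binomial coefficient $\gchoose{A_{i+1}-rs_i}{\tau_i}$ equals the ordinary $\binom{A_{i+1}-rs_i}{\tau_i}$, and that on this same range the constraint list (\ref{cond502b0303}) --- the $\mathcal{L}_{\max}$ inequality included --- is equivalent to the single condition $s_{n-2}\le A_{n-1}/r$. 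Granting this and comparing the decomposition $x_{n+1}^p x_n^q=P_{\ge 0}+P_{<0}$ with the one found above expressing $x_{n+1}^p x_n^q$ as $R$ plus the second sum, it suffices to prove that $P_{<0}$ coincides with the second sum; then $R=P_{\ge 0}$, which equals the first sum, and the theorem follows.

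To prove that $P_{<0}$ equals the second sum I would substitute the exchange relation $x_1=(x_2^r+z_3^s)/x_3$ into that second sum. Each of its monomials has strictly positive $x_1$-degree, so the factors $(x_2^r+z_3^s)^{\,r|S_2|-A_{n-1}}$ are genuine nonnegative powers, which I expand by the binomial theorem; collecting like powers of $x_2,x_3,z_3$ and rewriting the exponents with the Chebyshev identity of Lemma~\ref{lem cn} (and Lemma~\ref{negone}) yields a Laurent polynomial in $x_2,x_3,z_3$ of strictly negative $x_3$-degree, which should agree with $P_{<0}$ term by term, the generalized binomial coefficients of Corollary~\ref{cor 26} recording precisely the multiplicities produced by this expansion. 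This is the device already used in the proof of Lemma~\ref{12312011}, where a sum over subsets is resummed into a power of a binomial and the exchange relation is substituted; the one new feature is that $p+q$ copies of Dyck paths are involved, so one must keep track of how many times the full vertical edge set appears in $S_2$ (compare the interpretation of the integer $k$ in Remark~\ref{20120121a}).

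I expect the matching of the previous paragraph to be the main obstacle: verifying that the terms of $P_{<0}$, in which some $A_{i+1}-rs_i$ is negative and the modified binomial coefficients are genuinely generalized, reassemble into exactly the monomials of positive $x_1$-degree in the Dyck path formula, with none missing and none spurious --- and, hand in hand, the collapse of the $\mathcal{L}_{\max}$ constraints on $P_{\ge 0}$ asserted in the second paragraph. Once these two combinatorial identifications are in place, the remainder is routine exponent arithmetic with the sequences $c_i$, $A_i$ and $s_i$ via Lemmas~\ref{lem cn}, \ref{negone} and \ref{lem sn}.
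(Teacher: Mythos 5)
Your proposal follows essentially the same route as the paper: expand $x_{n+1}^px_n^q$ via Corollary \ref{cor 26}, split according to the sign of the $x_3$-exponent $A_{n-1}-rs_{n-2}$ (i.e.\ the condition $s_{n-2}\le A_{n-1}/r$), identify the nonnegative part with the first sum, and match the negative part with the positive-$x_1$-degree terms of Theorem \ref{mainthm12052011} --- which is precisely the paper's (equally terse) argument, with Proposition \ref{eanegineqcor} and Lemma \ref{lem 34} supplying the collapse of the $\mathcal{L}_{\max}$ constraints and the nonnegativity of the binomial upper indices that you flag. The only thing you are overcomplicating is the matching step you call the main obstacle: no term-by-term combinatorial reassembly is needed, because the substitution $x_3=(x_2^r+z_3^s)/x_1$ sends every monomial of nonnegative $x_3$-degree to monomials of nonpositive $x_1$-degree while $x_1=(x_2^r+z_3^s)/x_3$ sends every monomial of positive $x_1$-degree to monomials of negative $x_3$-degree, so the two sign-decompositions of the single Laurent polynomial $x_{n+1}^px_n^q$ must coincide, forcing $P_{<0}$ to equal the second sum.
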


\begin{remark} The exponents of $x_3, x_1$ and $z_3$ are nonnegative, which is important for the proof of Theorem~\ref{mainthm11102011_00}.  The modified binomial coefficients can be replaced by the usual binomial coefficients, because the condition $s_{n-2}\le A_{n-1}/r$ implies that $A_{n-2}-rs_{n-3}$ is nonnegative, see Lemma \ref{lem 34} below.
\end{remark}

\begin{proof} This first sum of the statement is obtained from
Corollary \ref{cor 26} using Lemma \ref{negone}(a) for the exponent of $x_3$.  Observe that the new condition $s_{n-2}\le A_{n-1}/r$ in the summation precisely means  that the exponent of $x_3$  is   nonnegative. On the other hand, the  sum of all terms  in which the exponent of $x_3$ is negative in the expression in Corollary \ref{cor 26} is equal to the second sum in the statement of Theorem \ref{mainformula22}. This follows from  the formula  of Theorem~\ref{mainthm12052011}.
\end{proof}

In \cite{BFZ} the upper cluster algebra was defined as the intersection of the rings of Laurent polynomials in the $n+1$ clusters consisting of the initial cluster and all clusters obtained from it by one single mutation.
The following corollary gives a different ``upper bound"   for the cluster monomials in the rank 2 direction. This new upper bound is defined as the semi-ring of polynomials in the variables in the initial cluster, the first mutation of one initial variable and the inverse of another initial cluster variable.
\begin{corollary} Let $\widetilde{x}_1$ denote the cluster variable obtained from the initial seed by mutating in $x_1$. Then
 $$x_{n+1}^p x_n^q\in \mathbb{Z}_{\ge 0}[x_1,\widetilde{x}_1, z_3, x_2^{\pm 1}].$$
\end{corollary}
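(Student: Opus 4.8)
The plan is to read the claimed membership $x_{n+1}^p x_n^q\in \mathbb{Z}_{\ge 0}[x_1,\widetilde{x}_1, z_3, x_2^{\pm 1}]$ directly off the mixed formula of Theorem~\ref{mainthm12062011}. First I would recall that $\widetilde{x}_1 = x_3 = (x_2^r + z_3^s)/x_1$ is a polynomial in $x_1, x_2, z_3$ divided by $x_1$, and that conversely $x_3$ is one of the four generators we are allowed to use; so every monomial of the form $x_3^a x_2^{-b} z_3^c$ with $a,b,c\ge 0$ already visibly lies in $\mathbb{Z}_{\ge 0}[x_1,\widetilde{x}_1,z_3,x_2^{\pm1}]$. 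The whole point will therefore be that the exponents appearing in the mixed formula are of exactly this shape.

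Next I would treat the two sums in \eqref{mainformula22} separately. For the first sum, the remark following Theorem~\ref{mainthm12062011} records precisely what is needed: the exponent $A_{n-1}-rs_{n-2}$ of $x_3$ is nonnegative because the summation is restricted to $s_{n-2}\le A_{n-1}/r$; the exponent $s's_{n-2}-t's_{n-3}$ of $z_3$ is nonnegative (this is the assertion in the earlier remark that ``the exponents of $x_3, x_1$ and $z_3$ are nonnegative''); and the binomial coefficients $\binom{A_{i+1}-rs_i}{\tau_i}$ are nonnegative integers. The exponent $rs_{n-3}-A_{n-2}$ of $x_2$ may be negative, but that is harmless since $x_2^{-1}$ is among the allowed generators. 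Hence every term of the first sum is a nonnegative-integer multiple of a monomial $x_3^{a} x_2^{\pm b} z_3^{c}$ with $a,c\ge 0$, and so lies in the claimed semiring. For the second sum, each term is $x_1^{r|S_2|-A_{n-1}}x_2^{r|S_1|-A_{n-2}}z_3^{s(c_{n-1}-|S_1|)-t|S_2|}$ with the summation restricted to $r|S_2|-A_{n-1}>0$; the $x_1$-exponent is then positive, the $z_3$-exponent is nonnegative (since $|S_1|\le c_{n-1}$, $|S_2|\le c_{n-2}$ and $s,t>0$, using $s(c_{n-1}-|S_1|)-t|S_2|\ge m=0$ by Corollary~\ref{12312011a}), and $x_2^{-1}$ again absorbs any negativity in the $x_2$-exponent. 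So these terms lie in $\mathbb{Z}_{\ge0}[x_1,z_3,x_2^{\pm1}]\subseteq \mathbb{Z}_{\ge0}[x_1,\widetilde{x}_1,z_3,x_2^{\pm1}]$ as well.

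Finally I would assemble the two parts: $x_{n+1}^p x_n^q$ is a sum of terms each of which lies in $\mathbb{Z}_{\ge 0}[x_1,\widetilde{x}_1,z_3,x_2^{\pm1}]$, hence lies in that semiring. The cases $n=3,4$ (or wherever the recursion starts) should be checked directly from $x_3=(x_2^r+z_3^s)/x_1$ and $x_4 = (x_3^r+z_3^{rs-t})/x_2$, both of which plainly lie in the semiring. I expect the only real point requiring care — and the main obstacle — to be the nonnegativity of the $z_3$-exponent $s's_{n-2}-t's_{n-3}$ in the first sum: this is not literally one of the stated side conditions in \eqref{mainformula22}, so I would either invoke the already-stated remark that ``the exponents of $x_3, x_1$ and $z_3$ are nonnegative'' as the justification, or, if a self-contained argument is wanted, derive it from the structure of $\mathcal{L}_{\max}$ together with Corollary~\ref{cor 26}, where the condition $\bigl(s_{n-2}-s'_{n-2}\bigr)A_{n-3}\ge\bigl(s_{n-3}-s'_{n-3}\bigr)A_{n-2}$ and the positivity $s'=rs-t>0$, $t'=s>0$ together force $s's_{n-2}-t's_{n-3}\ge 0$. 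Everything else is bookkeeping.
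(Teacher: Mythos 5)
Your proposal is correct and is essentially the paper's argument: the paper's entire proof is the one-line observation that the claim follows from Theorem~\ref{mainthm12062011} because $\widetilde{x}_1=x_3$, and your term-by-term reading of the two sums (nonnegative exponents of $x_3$, $x_1$, $z_3$ as recorded in the remark following that theorem, with $x_2^{\pm1}$ absorbing the $x_2$-exponent) is just the unpacking of that one line.
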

\begin{proof}
 This follows from Theorem \ref{mainthm12062011} because $\widetilde{x}_1=x_3.$
\end{proof}

\subsection{Proof of Theorem~\ref{mainthm2}}\label{pfthm}
We use induction on $n$. Suppose first that $n=3$. Since $x_3=(x_2^r+z_3^s)/x_1$, we have
\[ x_3^a = x_1^{-a}\sum_{\tau_1=0}^a {a    \choose \tau_1  }  x_2^{r(a-\tau_1)}z_3^{s\tau_1},\]
and since $x_4=(x_3^r+z_3^{rs-t})/x_2$, we have
\[ \begin{array}{rcl}\displaystyle x_4^p &=&\displaystyle x_2^{-p}\sum_{\tau_0=0}^p {p    \choose \tau_0  }  x_3^{r(p-\tau_0)}z_3^{(rs-t)\tau_0}.
%\\&=&x_2^{-p}\sum_{\tau_0=0}^p {p    \choose \tau_0 } \sum_{\tau_1=0}^{r(p-\tau_0)} {r(p-\tau_0)\choose \tau_1} x_1^{-r(p-\tau_0)} x_2^{r(r({p-\tau_0})-\tau_1)} z_3^{s\tau_1}z_3^{(rs-t)\tau_0} \\
%&=&x_1^{-rp} x_2^{-p}\sum_{\tau_0=0}^p  \sum_{\tau_1=0}^{r(p-\tau_0)}  {p    \choose \tau_0  }{r(p-\tau_0)\choose \tau_1}
%x_1^{r\tau_0} x_2^{r(r({p-\tau_0})-\tau_1)} z_3^{s\tau_1+(rs-t)\tau_0} .
\end{array}
\]
Therefore
\[\begin{array}{rcl}
x_4^px_3^q &=&\displaystyle x_2^{-p}\sum_{\tau_0=0}^p {p    \choose \tau_0  }  x_3^{r(p-\tau_0)+q}z_3^{(rs-t)\tau_0}\\
&=&\displaystyle x_2^{-p}\sum_{\tau_0=0}^p {p    \choose \tau_0 } \sum_{\tau_1=0}^{r(p-\tau_0)+q} {r(p-\tau_0)+q\choose \tau_1} x_1^{-r(p-\tau_0)-q} x_2^{r(r({p-\tau_0})+q-\tau_1)} z_3^{s\tau_1}z_3^{(rs-t)\tau_0} \\
&=&\displaystyle x_1^{-rp-q} x_2^{-p}\sum_{\tau_0=0}^p  \sum_{\tau_1=0}^{r(p-\tau_0)+q}  {p    \choose \tau_0  }{r(p-\tau_0)+q\choose \tau_1}
x_1^{r\tau_0} x_2^{r(r({p-\tau_0})+q-\tau_1)} z_3^{s\tau_1+(rs-t)\tau_0} ,
\end{array}
\]
and the statement follows from $A_1= p$, $A_{2}=rp+q$,  $s_0=0$, $s_1=\tau_0$, $ s_2= r\tau_0+\tau_1$.

Suppose now that $n\geq 4$, and assume
 that the statement holds for $n$ or less. Then by the obvious shift, we have
\Small{ $$
x_{n+2}^p x_{n+1}^q= x_2^{-A_{n-1}} x_3^{-A_{n-2}}  \sum_{ \tau_0,\tau_1,\cdots,\tau_{n-2}} \left[ \left( \prod_{i=0}^{n-2} \left[\begin{array}{c}{{A_{i+1} - rs_i    } } \\{\tau_i} \end{array}\right]   \right)  x_2^ {rs_{n-2}} x_3^{r(A_{n-1} - s_{n-1})}z_3^T \right],
 $$}
 \normalsize{where the summation runs over all integers $ \tau_0,...,\tau_{n-2}$ satisfying (\ref{cond502}) and }
 $$\aligned T= {s's_{n-1}-t's_{n-2}}= (rs-t) s_{n-1} -s s_{n-2} \endaligned$$

 Using Lemma \ref{negone}(a), we see that the exponent of $x_3 $ is equal to $A_n-rs_{n-1}$. Then substituting
 $\frac{x_2^r+z_3^s}{x_1}$ into $x_3$, we get
% \Small{
$$ x_{n+2}^px_{n+1}^q =
%&= x_2^{-A_{n-1}}  \sum_{ \tau_0,\tau_1,\cdots,\tau_{n-2}} \left[ \left( \prod_{i=0}^{n-2} \left[\begin{array}{c}{{A_{i+1} - rs_i    } } \\{\tau_i} \end{array}\right]   \right)  x_2^ {rs_{n-2}} \left(\frac{x_2^r+z^{s}{x_1}\right)^{r(A_{n-1} - s_{n-1})-A_{n-2}}z^T\right]
%\\
 x_2^{-A_{n-1}}  \sum_{ \tau_0,\tau_1,\cdots,\tau_{n-2}} \left[ \left( \prod_{i=0}^{n-2} \left[\begin{array}{c}{{A_{i+1} - rs_i    } } \\{\tau_i} \end{array}\right]   \right)  x_2^ {rs_{n-2}} \left(\frac{x_2^r+z^{s}}{x_1}\right)^{A_{n} - rs_{n-1}}z_3^T\right].
$$
Expanding $(x_2^r+z^{s})^{A_{n} - rs_{n-1}}$ yields
$$
 x_1^{-A_n} x_2^{-A_{n-1}}  \sum_{ \tau_0,\cdots,\tau_{n-2}} \!\! \left( \prod_{i=0}^{n-2} \left[\begin{array}{c}{{\!\!\!\scriptstyle A_{i+1} - rs_i \! \!\!  } } \\{\scriptstyle\tau_i} \end{array}\right]   \right)x_2^ {rs_{n-2}} \!\! \sum_{\tau_{n-1}\in \mathbb{Z}}    \left[\begin{array}{c}{\!\!\!\scriptstyle{A_{n} - r s_{n-1} \!\!\!\!  } } \\{\scriptstyle\tau_{n-1}} \end{array}\right]
 x_1^{rs_{n-1} }  (x_2^r)^{A_{n} - rs_{n-1}-\tau_{n-1}}  z_3^{T+s\tau_{n-1}} .
$$
Note that ${T+s\tau_{n-1}}={s s_{n} - t s_{n-1}}$, by Lemma~\ref{lem sn}.
Combining the sums, we get
$$ x_1^{-A_n}  x_2^{-A_{n-1}}  \sum_{ \tau_0,\tau_1,\cdots,\tau_{n-2};\tau_{n-1}\in\mathbb{Z}}  \left( \prod_{i=0}^{n-1} \left[\begin{array}{c}{{A_{i+1} - rs_i    } } \\{\tau_i} \end{array}\right]   \right)x_1^{rs_{n-1}}x_2^ {rs_{n-2}}    (x_2^r)^{A_{n} - rs_{n-1}-\tau_{n-1}}   z_3^{s s_{n} - t s_{n-1}}
$$
and, by Lemma \ref{lem sn}, this is equal to
\begin{equation}
\label{3.4.1}
 x_1^{-A_n} x_2^{-A_{n-1}} \sum_{ \tau_0,\tau_1,\cdots,\tau_{n-2};\tau_{n-1}\in\mathbb{Z}}  \left( \prod_{i=0}^{n-1} \left[\begin{array}{c}{{A_{i+1} - rs_i    } } \\{\tau_i} \end{array}\right]   \right) x_1^{rs_{n-1}} x_2^{r(A_{n} - s_{n})}z_3^{s s_{n} - t s_{n-1}} .
\end{equation}

%If one worries about convergence of the sum, then we could have begun by assuming that $|x_2|<|z_3|$, but since we will eventually show that the sum is finite, the convergence should not be a problem.

Remember that $\tau_0,...,\tau_{n-2}$  satisfy (\ref{cond502}). %By identifying $a_{n+1-i}-s_{n+1-i}$ with $e_i$ ($i=1,2$),

Proposition~\ref{eanegineqcor} below implies that even if we impose the additional condition on $\tau_{n-2}$ and $\tau_{n-1}$  that
\begin{equation}\label{extra condition}
 \left(s_{n}-s_n'\right) A_{n-1} - \left(s_{n-1} -s_{n-1} '\right) A_{n}  \geq 0,\textup{ for all $(\tau'_0,...,\tau'_{n-1}) \in\mathcal{L}_{\max}( \tau_0,\cdots,\tau_{n-2} )$,}
\end{equation}
 the value of the expression for $x_{n+2}^px_{n+1}^q$ in (\ref{3.4.1}) does not change. From now on we impose the condition (\ref{extra condition}). On the other hand, in order to prove that Theorem~\ref{mainthm2} holds for $n+1$, we only need to show that $\tau_{n-2}$ is enough to run over $0\leq \tau_{n-2}\le  A_{n-1}-rs_{n-2}$. So we want to show that, for a fixed sequence $(\tau'_0,...,\tau'_{n-2},\tau'_{n-1})$,
\begin{equation}\label{zeroeq701}
 \sum_{ \tau_0,\tau_1,\cdots,\tau_{n-1}} \left[ \left( \prod_{i=0}^{n-1} \left[\begin{array}{c}{{A_{i+1} - rs_i}} \\{\tau_i} \end{array}\right]   \right) {x_1}^{rs_{n-1}} x_2^{r(A_{n} - s_{n})}    \right] =0,
\end{equation}
where the summation runs over all integers $ \tau_0,...,\tau_{n-1}$ satisfying
\begin{equation}\label{cond503}\left\{
\begin{array}{ll}
(a) & (\tau'_0,...,\tau'_{n-2})\in\mathcal{L}_{\max}( \tau_0,\cdots,\tau_{n-2} ), \\
&\\
(b) & 0\leq \tau_i \leq A_{i+1} - rs_i \, (0\leq i\leq n-3),\\
&\\
(c) & \left(s_{n-1}-s_{n-1}' \right) A_{n-2} - \left(s_{n-2} -s_{n-2}'\right) A_{n-1}\geq 0,  \\
&\\
(d) & (\tau'_0,...,\tau'_{n-1})\in\mathcal{L}_{\max}( \tau_0,\cdots,\tau_{n-1} ), \\
&\\
(e) & \tau_{n-2}\leq A_{n-1} - rs_{n-2}<0,  \text{ and } \\
%\text{ and } \tau_{n-2}<0\\
&\\
(f) & \left(s_{n}-s_{n}' \right) A_{n-1} - \left(s_{n-1} -s_{n-1}'\right) A_{n} \geq 0.
 \end{array} \right.\end{equation}

%Let  $$\aligned s'_n&=\sum_{j=0}^{n-1} c_{n+1-j} \tau'_j,\\ s'_{n-1}&=\sum_{j=0}^{n-2} c_{n-j} \tau'_j,\\ \text{and } s'_{n-2} &=\sum_{j=0}^{n-3} c_{n-j-1} \tau'_j.\endaligned$$

To do so, it is sufficient to show that
$ \left[\begin{array}{c}{{A_{n} - rs_{n-1}}} \\{\tau_{n-1}} \end{array}\right]  =0$ for every $\tau_{n-1}$, because then each summand in equation (\ref{zeroeq701}) is zero.
This purely algebraic result is proved in Lemma~\ref{lem 34} below.

Assuming Lemma \ref{lem 34} and Proposition~\ref{eanegineqcor}, we have proved that  $$x_{n+2}^p x_{n+1}^q=x_1^{-A_n} x_2^{-A_{n-1}} \sum_{ \tau_0,\tau_1,\cdots,\tau_{n-1}} \left[ \left( \prod_{i=0}^{n-1} \left[\begin{array}{c}{{A_{i+1} - rs_i    } } \\{\tau_i} \end{array}\right]   \right) {x_1}^{rs_{n-1}} x_2^{r(A_{n} - s_{n})} z_3^{s s_{n} - t s_{n-1}}   \right],$$where the summation runs over all integers $ \tau_0,...,\tau_{n-1}$ satisfying\begin{equation}\label{cond504}\left\{
\begin{array}{l} 0\leq \tau_i \leq A_{i+1} - rs_i \, (0\leq i\leq n-2),\\ \\
(s_{n-1}-s'_{n-1}) A_{n-2} -(s_{n-2} -s'_{n-2}) A_{n-1}  \geq 0,  \text{ and }\\ \\
(s_{n}-s'_{n}) A_{n-1} -(s_{n-1} -s'_{n-1}) A_{n}  \geq 0,
 \end{array} \right.\end{equation}
for all $(\tau'_0,...,\tau'_{n-1}) \in\mathcal{L}_{\max}( \tau_0,\cdots,\tau_{n-2} )$.
Therefore it only remains to show that we do not need to require the second condition in (\ref{cond504}).
Using Lemma \ref{lem sn} and Lemma \ref{negone} (a), we see that
%include $(s_{n-1}-s'_{n-1}) A_{n-2} -(s_{n-2} -s'_{n-2}) A_{n-1} \geq 0$ in (\ref{cond504}), because $ \tau_i \geq 0 \, (0\leq i\leq n-2)$ imply $$(s_{n-1}-s'_{n-1}) A_{n-2} -(s_{n-2} -s'_{n-2}) A_{n-1}  \geq %-\tau'_0 q-s'_{n-1} A_{n-2} + s'_{n-2} A_{n-1}=
%0 $$ as follows.
$$\aligned &s_{n-1} A_{n-2} -s_{n-2} A_{n-1} \stackrel{(\ref{lem sn})}{=} (rs_{n-2}-s_{n-3}+\tau_{n-2}) A_{n-2} -s_{n-2} A_{n-1}\\
& \stackrel{(\ref{negone})}{=}  (s_{n-2}A_{n-3}-s_{n-3}A_{n-2}) + \tau_{n-2}A_{n-2}.\endaligned$$
Iterating this argument, we get
$$
s_{n-1} A_{n-2} -s_{n-2} A_{n-1}=(s_{2}A_{1}-s_{1}A_{2}) +\sum_{i=2}^{n-2} \tau_i A_i ,$$
which is equal to
 $$\tau_1p-\tau_0 q+\sum_{i=1}^{n-2} \tau_i A_i\geq \tau_1p-\tau_0 q,$$
 because $s_1=\tau_0$, $s_2=r\tau_0+\tau_1$, $A_1=p$ and $A_2=rp+q$.
 Thus
 \begin{equation}
\label{eq 33}
s_{n-1} A_{n-2} -s_{n-2} A_{n-1}\geq \tau_1p-\tau_0 q.
\end{equation}
 Our next goal is to estimate $-s'_{n-1} A_{n-2} +s'_{n-2} A_{n-1} $.
  Let $k$ be as in Definition \ref{20120121}, so that $0\le k\le p$ and  $s'_{n-1}=kc_{n}$ and $s'_{n-2}=kc_{n-1}$.
Then
 \begin{eqnarray}\label{equation 262626}
-s'_{n-1} A_{n-2} +s'_{n-2} A_{n-1} &=&k(-c_n A_{n-2} +c_{n-1}A_{n-1}) \\
&=&k(-pc_nc_{n-1}-qc_nc_{n-2}+pc_nc_{n-1}+qc_{n-1}^2)\nonumber
\\
&=& kq(-c_nc_{n-2}+c_{n-1}^2)\nonumber
\\
&=&kq,\nonumber
\end{eqnarray}
where the second equality follows from the definition $A_i=pc_{i+1}+qc_i$, and the last equality holds by Lemma  \ref{lem cn}.
On the other hand, $s'_{n-2}$ is defined as $s'_{n-2}=c_{n-1}\tau_0' + \sum_{j=1}^{n-3} c_{n-1-j} \tau'_j$, which implies that $kc_{n-1}=s'_{n-2} \ge c_{n-1}\tau_0'$, and thus $k\ge\tau_0'$. Moreover, $\tau_0=\tau_0'$ by  definition of $
\mathcal{L}_{\max}( \tau_0,\tau_1,\cdots,\tau_{n-2} )$, and thus equation (\ref{equation 262626}) implies
$$
-s'_{n-1} A_{n-2} +s'_{n-2} A_{n-1} \geq \tau_0 q.
$$
  Adding the last inequality to inequality (\ref{eq 33}) we get
$$(s_{n-1}-s'_{n-1}) A_{n-2} -(s_{n-2} -s'_{n-2}) A_{n-1} \geq 0,$$
hence the second condition in  (\ref{cond504}) is always satisfied.

This completes the proof  of Theorem \ref{mainthm2} modulo Lemma \ref{lem 34} and Proposition~\ref{eanegineqcor}.
%\end{proof}

\begin{lemma} \label{lem 34} Assume conditions (\ref{cond503}). Then
% If $\tau_{n-2}\leq A_{n-1} - rs_{n-2}<0, $ and $\left(s_{n}-s_{n}' \right) A_{n-1} - \left(s_{n-1} -s_{n-1}'\right) A_{n} \geq 0$, then
$$ \left[\begin{array}{c}{{A_{n} - rs_{n-1}}} \\{\tau_{n-1}} \end{array}\right]  =0.$$
\end{lemma}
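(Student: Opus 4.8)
The plan is to reduce the vanishing to the elementary criterion for when a modified binomial coefficient is zero, and then to exploit the constraints \eqref{cond503} by means of the recursions in Lemmas~\ref{lem cn}, \ref{negone} and \ref{lem sn}.

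First I record the criterion: for integers $A,B$ one has $\gchoose{A}{B}=0$ exactly when $A<B$, or when $A\ge 0$ and $B<0$; equivalently $\gchoose{A}{B}\neq 0$ iff ($A<0$ and $B\le A$) or ($0\le B\le A$). Hence it is enough to show that the pair $(A,B)=(A_n-rs_{n-1},\,\tau_{n-1})$ lies outside this non-vanishing range under \eqref{cond503}. Computing the top entry from $A_n=rA_{n-1}-A_{n-2}$ (Lemma~\ref{negone}(a)) and $s_{n-1}=rs_{n-2}-s_{n-3}+\tau_{n-2}$ (Lemma~\ref{lem sn}) gives
\[
A_n-rs_{n-1}=r\bigl((A_{n-1}-rs_{n-2})-\tau_{n-2}\bigr)-(A_{n-2}-rs_{n-3})=ra-b,
\]
where, by conditions (b) and (e) of \eqref{cond503}, $a:=(A_{n-1}-rs_{n-2})-\tau_{n-2}\ge 0$ and $b:=A_{n-2}-rs_{n-3}\ge 0$, and $\tau_{n-2}=(A_{n-1}-rs_{n-2})-a\le -a-1$ because $A_{n-1}-rs_{n-2}\le -1$.

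Next I would translate the $\mathcal{L}_{\max}$-inequalities into a bound on $\tau_{n-1}$. From the membership conditions (a) and (d) there is an integer $k$, $0\le k\le p$, with $s'_{n-2}=kc_{n-1}$, $s'_{n-1}=kc_n$ and $s'_n=kc_{n+1}$ (the two readings of $s'_{n-1}$ determine the same $k$ because $c_n\neq 0$). Using $A_i=pc_{i+1}+qc_i$ together with Lemma~\ref{lem cn} one computes $s'_{n-1}A_{n-2}-s'_{n-2}A_{n-1}=-kq$ and $s'_nA_{n-1}-s'_{n-1}A_n=-kq$. With the telescoping identity $s_{m-1}A_{m-2}-s_{m-2}A_{m-1}=(s_{m-2}A_{m-3}-s_{m-3}A_{m-2})+\tau_{m-2}A_{m-2}$ used already in the proof of Theorem~\ref{mainthm2}, conditions (c) and (f) then read, with $f:=s_{n-1}A_{n-2}-s_{n-2}A_{n-1}=\tau_1p-\tau_0q+\sum_{i=2}^{n-2}\tau_iA_i$,
\[
f\ge -kq,\qquad f+\tau_{n-1}A_{n-1}\ge -kq,
\]
which, since $A_{n-1}>0$, is a sharp lower bound on $\tau_{n-1}$.

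Finally I would split on the sign of $A_n-rs_{n-1}=ra-b$: when $ra-b\ge 0$ one has to exclude $0\le\tau_{n-1}\le ra-b$, and when $ra-b<0$ one has to exclude $\tau_{n-1}\le ra-b$. In either case, inserting the lower bound on $\tau_{n-1}$ and the expanded value of $f$ and using $\tau_{n-2}\le -a-1$, $b\ge 0$, $0\le k\le p$ and $0\le\tau_i\le A_{i+1}-rs_i$ for $i\le n-3$ should produce a numerical contradiction, whence $\gchoose{A_n-rs_{n-1}}{\tau_{n-1}}=0$. I expect this case analysis to be the main obstacle: it is precisely the point where the innocuous-looking extra range $\tau_{n-2}<0$ permitted by \eqref{cond503} is forced to be incompatible with the $\mathcal{L}_{\max}$-inequalities, and it demands careful bookkeeping of the cutoff $k$ and of the telescoped quantity $f$; everything upstream is routine substitution with Lemmas~\ref{lem cn}, \ref{negone} and \ref{lem sn}. (If the membership condition (d) is read so that it forces $0\le\tau'_{n-2}\le\tau_{n-2}$, then it contradicts $\tau_{n-2}<0$ outright, \eqref{cond503} is vacuous, and there is nothing to prove; the argument above is what replaces that observation otherwise.)
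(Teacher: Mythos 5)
Your preparatory reductions are correct: the vanishing criterion for the modified binomial, the identity $A_n-rs_{n-1}=r\bigl((A_{n-1}-rs_{n-2})-\tau_{n-2}\bigr)-(A_{n-2}-rs_{n-3})$, and the translation of conditions (c) and (f) of (\ref{cond503}) into $f\ge -k'q$ and $f+\tau_{n-1}A_{n-1}\ge -kq$, where $f=s_{n-1}A_{n-2}-s_{n-2}A_{n-1}$, via Definition \ref{20120121}, Lemma \ref{lem cn} and the telescoping identity. (One small inaccuracy: the tuples $\tau'$ quantified in (a)/(c) and in (d)/(f) need not coincide, so you should carry two parameters $k'$ and $k$, each in $[0,p]$, rather than a single $k$.) However, the proof stops exactly where the content of the lemma begins: the sentence ``inserting the lower bound on $\tau_{n-1}$ \dots should produce a numerical contradiction'' is the whole point of the statement, and you yourself flag it as ``the main obstacle.'' Nothing in the proposal verifies that the lower bound $\tau_{n-1}\ge(-kq-f)/A_{n-1}$ actually exceeds $A_n-rs_{n-1}$, and that verification is not routine: the paper devotes two separate contradiction arguments to it, first deducing $A_n-rs_{n-1}<0$ from (c) and (e), and then deducing $s_{n-2}>A_n-s_n$ from (f), (d) and the first step --- and by Lemma \ref{lem sn} the inequality $s_{n-2}>A_n-s_n$ is precisely $\tau_{n-1}>A_n-rs_{n-1}$. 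So as written this is a plan, not a proof.

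For what it is worth, the route you set up can be completed, and more directly than the paper's (no case split on the sign of $ra-b$ is needed, since $\tau_{n-1}>A_n-rs_{n-1}$ forces vanishing in both cases). Using $rA_{n-1}=A_n+A_{n-2}$, the target inequality $(-kq-f)/A_{n-1}>A_n-rs_{n-1}$ is equivalent to $s_{n-2}A_{n-1}+s_{n-1}A_n>A_nA_{n-1}+kq$; substituting $s_{n-1}\ge(s_{n-2}A_{n-1}-k'q)/A_{n-2}$ from (c), then $rs_{n-2}\ge A_{n-1}+1$ from (e), and finally $A_{n-1}^2-A_nA_{n-2}=p^2+q^2+rpq$ from Lemma \ref{negone}(b) together with $k,k'\le p$ reduces everything to $A_{n-1}(p^2+q^2)+A_{n-1}^2>0$. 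Note that the quantities you propose to feed into the contradiction ($a\ge 0$, $b\ge 0$, $\tau_{n-2}\le -a-1$) play no role here; the work is done by (e) in the form $rs_{n-2}\ge A_{n-1}+1$ combined with (c). Until some such computation is actually carried out, the decisive step of the lemma is missing.
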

\begin{proof}
  By definition of the modified binomial coefficient, it  suffices to show that $A_n-rs_{n-1}<\tau_{n-1}.$

On the one hand, we have
\begin{equation}\label{eq070301}\aligned
&A_{n-3}(s_{n-2} -s'_{n-2}) - A_{n-2}(A_{n-1}-s_{n-1}+s'_{n-1}) \\
&=rA_{n-2}(s_{n-2}-s'_{n-2}) - A_{n-1}(s_{n-2}-s'_{n-2})- A_{n-2}(A_{n-1}-s_{n-1}+s'_{n-1})\\
&\quad(\text{by Lemma~\ref{negone}(a)})\\
&=A_{n-2}rs_{n-2}-rA_{n-2}s'_{n-2} - A_{n-1}(s_{n-2}-s'_{n-2})- A_{n-2}(A_{n-1}-s_{n-1}+s'_{n-1})\\
&>A_{n-2}A_{n-1}-r A_{n-2}s'_{n-2}- A_{n-1}(s_{n-2}-s'_{n-2}) - A_{n-2}(A_{n-1}-s_{n-1}+s'_{n-1})\\
& \quad(\text{since }A_{n-1}-rs_{n-2}<0 )\\
&=-r A_{n-2}s'_{n-2}+(s_{n-1}-s'_{n-1}) A_{n-2} -(s_{n-2} -s'_{n-2}) A_{n-1}\\
& \geq -r A_{n-2}s'_{n-2} \quad ({\text{by }(\ref{cond503})(c)}).
\endaligned\end{equation}

Thus
\begin{equation}\label{eq 3.15}(s_{n-2} -s'_{n-2})> (A_{n-2}(A_{n-1}-s_{n-1}+s'_{n-1}) -r A_{n-2}s'_{n-2} ) /A_{n-3}\end{equation}
Then \begin{equation}\label{eq070302}\aligned
& A_{n-2}(s_{n-1}-s'_{n-1})  - A_{n-1} (s_{n-2} -s'_{n-2})  \\
&\underset{\text{by }(\ref{eq 3.15})}< A_{n-2}(s_{n-1}-s'_{n-1}) - A_{n-1}\frac{A_{n-2}(A_{n-1}-s_{n-1}+s'_{n-1}) - r A_{n-2}s'_{n-2}}{A_{n-3}} \\
&=A_{n-2} \left(  s_{n-1}-s'_{n-1} -  \frac{A_{n-1}}{A_{n-3}}(A_{n-1}-s_{n-1}+s'_{n-1})\right)+\frac{rA_{n-1}A_{n-2}s'_{n-2}}{A_{n-3}} \\
&=A_{n-2}\left( A_{n-1} - \left(1+\frac{A_{n-1}}{A_{n-3}}\right)(A_{n-1}-s_{n-1}+s'_{n-1})   \right) + \frac{r A_{n-1}A_{n-2}s'_{n-2}}{A_{n-3}}\\
&=A_{n-2}\left( A_{n-1} - \left(1+\frac{A_{n-1}}{A_{n-3}}\right)\frac{A_{n-2}+A_n-rs_{n-1}+r s'_{n-1}}{r}   \right)+ \frac{r A_{n-1}A_{n-2}s'_{n-2}}{A_{n-3}}\\&\quad(\textup{by Lemma \ref{negone} (a)})
\endaligned\end{equation}
Now, aiming for contradiction, suppose that $A_n-rs_{n-1} \ge 0$. Then
 \begin{equation}\label{eq070302b}\aligned(\ref{eq070302})&\leq A_{n-2}\left( A_{n-1} - \frac{A_{n-3}+A_{n-1}}{A_{n-3}}\frac{A_{n-2}+r s'_{n-1}}{r}   \right)+ \frac{r A_{n-1}A_{n-2}s'_{n-2}}{A_{n-3}} \\% \text{ since }A_n - rs_{n-1} \geq 0\,\,\,\,\,\\
&= A_{n-2}\left( A_{n-1} - \frac{rA_{n-2}}{A_{n-3}}\frac{A_{n-2}+r s'_{n-1}}{r}   \right)+ \frac{r A_{n-1}A_{n-2}s'_{n-2}}{A_{n-3}} \quad(\textup{by Lemma \ref{negone} (a)})\\
&= A_{n-2}\left( A_{n-1} - \frac{A_{n-2}^2}{A_{n-3}} \right) - r\frac{A_{n-2}^2}{A_{n-3}}s'_{n-1}+ \frac{r A_{n-1}A_{n-2}s'_{n-2}}{A_{n-3}}\\
&= \frac{A_{n-2}}{A_{n-3}} \left(A_{n-1}A_{n-3} - A_{n-2}^2\right)- r\frac{A_{n-2}}{A_{n-3}}\left(A_{n-2}s'_{n-1}- { A_{n-1} s'_{n-2}}\right) .
\endaligned\end{equation}
Using Lemma~\ref{negone}(b) and the definition of $A_i$, this is equal to
\[
 \frac{A_{n-2}}{A_{n-3}} \left(-p^2-q^2-rpq\right)- r\frac{A_{n-2}}{A_{n-3}}\left((pc_{n-1}+qc_{n-2})s'_{n-1} - { (pc_{n}+qc_{n-1}) s'_{n-2}}\right).\]
 Let $k$ be as in Definition \ref{20120121}, then $kc_{n}=s'_{n-1}$ and $kc_{n-1}=s'_{n-2}$, and using Lemma \ref{lem cn}, we get
%&\underset{\text{ by Definition~\ref{20120121}}}= \frac{A_{n-2}}{A_{n-3}} \left(-p^2-q^2-rpq\right)- r\frac{A_{n-2}}{A_{n-3}}\left((pc_{n-1}+qc_{n-2})kc_n - { (pc_{n}+qc_{n-1}) kc_{n-1}}\right)\\
\[  \frac{A_{n-2}}{A_{n-3}} \left(-p^2-q^2-rpq\right)+ rk q\frac{A_{n-2}}{A_{n-3}},\]
and, since $k\le p$, this is less than or equal to
\[ \frac{A_{n-2}}{A_{n-3}} \left(-p^2-q^2-rpq\right)+rpq\frac{A_{n-2}}{A_{n-3}}
\leq 0,\]
which contradicts $(s_{n-1}-s'_{n-1}) A_{n-2} -(s_{n-2} -s'_{n-2}) A_{n-1} \geq 0$. Hence \begin{equation}\label{ancsnminus1leq0}A_n - rs_{n-1} <0.\end{equation}

Next we show that $s_{n-2}>A_n-s_n$. Suppose to the contrary that $s_{n-2}\leq A_n-s_n$. Then it follows from condition (\ref{cond503})(f) that
$$\aligned &A_{n-1}-rs_{n-2}  \geq A_{n-1}-r(A_n-s_n) \underset{(\ref{cond503})(f) }{\geq} A_{n-1}-r \frac{(A_{n-1}-s_{n-1}+s'_{n-1}) A_n}{A_{n-1}} +rs'_n \\
&\underset{\text{by Lemma }\ref{negone}(b)}=\frac{\left(p^2+q^2+rpq\right)}{A_{n-1}}+ \frac{A_n A_{n-2}  }{A_{n-1}}-r \frac{(A_{n-1}-s_{n-1}+s'_{n-1}) A_n}{A_{n-1}} +rs'_{n}\\
&\underset{\text{by Lemma }\ref{negone}(a)}=\frac{\left(p^2+q^2+rpq\right)}{A_{n-1}}+\frac{A_n}{A_{n-1}}\left(rs_{n-1}-A_n-rs'_{n-1}  + \frac{A_{n-1}}{A_{n}} rs'_{n}\right) \\
&=\frac{p^2+q^2}{A_{n-1}}+\frac{A_n}{A_{n-1}}\left(rs_{n-1}-A_n\right) +\frac{r}{A_{n-1}}(pq-A_n s'_{n-1}+A_{n-1}s_n')\\
&=\frac{p^2+q^2}{A_{n-1}}+\frac{A_n}{A_{n-1}}\left(rs_{n-1}-A_n\right) +\frac{r}{A_{n-1}}(pq-A_n kc_n+A_{n-1}kc_{n+1}) \quad (\textup{by Def \ref{20120121}}).
\endaligned$$\\
Now
$ kc_nA_{n} - kc_{n+1}A_{n-1}=kc_n(pc_{n+1}+qc_{n}) - kc_{n+1}(pc_n+qc_{n-1})=kq(c_n^2-c_{n+1}c_{n-1})=kq$,
where the last equation holds because of Lemma \ref{lem cn}.
Thus

$$\aligned &A_{n-1}-rs_{n-2} \geq \frac{p^2+q^2}{A_{n-1}}
+\frac{A_n}{A_{n-1}}\left(rs_{n-1}-A_n\right)
+\frac{r}{A_{n-1}}(q(p-k))
\\
& \geq \frac{p^2+q^2}{A_{n-1}}
+\frac{A_n}{A_{n-1}}\left(rs_{n-1}-A_n\right) \quad(\textup{by Def \ref{20120121}} \textup{ and } (\ref{cond503})(d) )\\
&\underset{\text{by }(\ref{ancsnminus1leq0})}>0,
\endaligned$$
which contradicts $A_{n-1}-rs_{n-2}<0$ in (\ref{cond503}). Thus $s_{n-2}>A_n-s_n$, so we have $$A_n-rs_{n-1}< s_n+s_{n-2}- rs_{n-1}\underset{}=\tau_{n-1},$$which gives $\gchoose{A_n-rs_{n-1}}{\tau_{n-1}}=0.$
\end{proof}

\begin{proposition}\label{eanegineqcor}
Let $a$ and $b$ be any two nonnegative integers satisfying $$ \sum_{ \begin{array}{c}\scriptstyle\tau_0,\tau_1,\cdots,\tau_{n-2}\\\scriptstyle s_{n-1}=a, s_{n-2}=b \end{array}}  \prod_{i=0}^{n-2} \left[\begin{array}{c}{{A_{i+1} - rs_i    } } \\{\tau_i} \end{array}\right]   \neq 0.$$
 Let $\tau_0,\cdots,\tau_{n-2}$ satisfy  $s_{n-1}=a, s_{n-2}=b$.
For any $(\tau'_0,...,\tau'_{n-2})\in\mathcal{L}_{\max}( \tau_0,\cdots,\tau_{n-2} ),$ we have
\begin{equation}\label{12312011eq1}\left(s_{n-1}-s_{n-1}'\right) A_{n-2} - \left(s_{n-2} -s_{n-2}'\right) A_{n-1}\geq 0.\end{equation}
\end{proposition}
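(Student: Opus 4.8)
The plan is to convert the stated inequality into a purely numerical estimate for the weighted partial sums $s_i,s_i'$ and then feed in two ingredients: the Fibonacci-type identities for the sequences $(c_i)$ and $(A_i)$, and the combinatorial shape of the maximal elements of $\mathcal{L}$. Throughout I will use that the case relevant to the Proposition is the one in which $(\tau_0,\dots,\tau_{n-2})$ is a nonzero summand, so that $\tau_i\ge 0$ for $0\le i\le n-3$ and $\prod_{i=0}^{n-2}\gchoose{A_{i+1}-rs_i}{\tau_i}\ne 0$.

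\emph{Step 1 (reduction to a scalar inequality).} Non-vanishing of each $\gchoose{A_{i+1}-rs_i}{\tau_i}$ together with $\tau_i\ge 0$ forces $A_{i+1}-rs_i\ge 0$ and $0\le\tau_i\le A_{i+1}-rs_i$ for $0\le i\le n-3$ (so those are ordinary binomial coefficients), as well as $\tau_{n-2}\le A_{n-1}-rs_{n-2}$ and $\tau_0\le p$. Write $s'_{n-2}=kc_{n-1}$, $s'_{n-1}=kc_n$ with $0\le k\le p$ as in Definition \ref{20120121}. By Lemma \ref{lem cn}, $c_{n-1}A_{n-1}-c_nA_{n-2}=q(c_{n-1}^2-c_nc_{n-2})=q$, so
\[
(s_{n-1}-s'_{n-1})A_{n-2}-(s_{n-2}-s'_{n-2})A_{n-1}=\big(s_{n-1}A_{n-2}-s_{n-2}A_{n-1}\big)+kq .
\]
Iterating $s_m=rs_{m-1}-s_{m-2}+\tau_{m-1}$ (Lemma \ref{lem sn}) against $A_m=rA_{m-1}-A_{m-2}$ (Lemma \ref{negone}(a)) — the telescoping already used for (\ref{eq 33}) — gives $s_{n-1}A_{n-2}-s_{n-2}A_{n-1}=(s_2A_1-s_1A_2)+\sum_{i=2}^{n-2}\tau_iA_i=p\tau_1-q\tau_0+\sum_{i=2}^{n-2}\tau_iA_i$, so the whole thing to be shown nonnegative is
\[
p\tau_1+\sum_{i=2}^{n-2}\tau_iA_i+q(k-\tau_0).
\]

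\emph{Step 2 (the easy direction and the structure of $\mathcal{L}_{\max}$).} I would first show $k\ge\tau_0$: from $s'_{n-2}=kc_{n-1}\ge c_{n-1}\tau'_0$ one gets $k\ge\tau'_0$, and a maximal $(\tau'_0,\dots,\tau'_{n-2})$ must satisfy $\tau'_0=\tau_0$ or $k=p$, since if $\tau'_0<\tau_0$ and $k<p$ then increasing $\tau'_0$ by $1$ (keeping $\tau'_1,\dots,\tau'_{n-3}$) still lies in $\mathcal{L}$, now with parameter $k+1$, contradicting maximality; as $\tau_0\le p$, either way $k\ge\tau_0$, so $q(k-\tau_0)\ge 0$. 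Since $\tau_i\ge 0$ for $i\le n-3$ and $A_i\ge 0$ for $i\ge 1$, the only term of the last display not manifestly $\ge 0$ is $\tau_{n-2}A_{n-2}$ (present only for $n\ge 4$). If $s_{n-2}\le A_{n-1}/r$ then $A_{n-1}-rs_{n-2}\ge 0$, so the top of $\gchoose{A_{n-1}-rs_{n-2}}{\tau_{n-2}}$ is nonnegative, whence $\tau_{n-2}\ge 0$, and every term is $\ge 0$. The base case $n=3$ falls here: $A_2-rs_1=r(p-\tau_0)+q\ge 0$ already forces $\tau_1\ge 0$, and there $\mathcal{L}_{\max}=\{(\tau_0,0)\}$ with $k=\tau_0$.

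\emph{Step 3 (the remaining case).} What is left is $s_{n-2}>A_{n-1}/r$, i.e. $A_{n-1}-rs_{n-2}<0$, where $\tau_{n-2}$ may be negative; this is the heart of the matter and the place I expect to spend the real effort. I plan to handle it by induction on $n$, showing this regime is incompatible with the hypothesis $\sum\ne 0$: the configuration $\tau_{n-2}\le A_{n-1}-rs_{n-2}<0$ is exactly the one from which the proof of Lemma \ref{lem 34} extracts vanishing modified binomials, and an analogous cancellation among all summands with the prescribed values $s_{n-1}=a$, $s_{n-2}=b$ forces the whole sum to vanish; hence, under the hypothesis, $s_{n-2}\le A_{n-1}/r$ always and Step 2 finishes the proof. (Alternatively, pass to the truncation $(\tau_0,\dots,\tau_{n-3})$, apply the Proposition at level $n-1$, and use the identification $k=\min(\lfloor s_{n-2}/c_{n-1}\rfloor,p)$ from Remark \ref{20120121a} to bound $kq$ from below.) The negative-weight case is the one genuine obstacle; the positive-weight case is just the identity of Step 1 together with term-by-term nonnegativity.
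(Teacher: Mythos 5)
Your Steps 1--2 reduce the claim, via the telescoping of Lemma \ref{lem sn} against Lemma \ref{negone}(a) and the identity $-s'_{n-1}A_{n-2}+s'_{n-2}A_{n-1}=kq$, to the nonnegativity of $p\tau_1+\sum_{i=2}^{n-2}\tau_iA_i+q(k-\tau_0)$; this is essentially the computation the paper performs later (to show the second condition in (\ref{cond504}) is automatic), and it closes the argument only when every $\tau_i$ with $i\geq 1$ is nonnegative. The genuine gap is Step 3: the regime $A_{n-1}-rs_{n-2}<0$ is \emph{not} incompatible with the hypothesis $\sum\neq 0$, so it cannot be discharged as vacuous. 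Concretely, take $r=2$, $p=1$, $q=0$, $n=4$, so $A_i=i$, and $(a,b)=(3,2)$: the only tuple with $s_3=3$, $s_2=2$ and nonzero product is $(\tau_0,\tau_1,\tau_2)=(0,2,-1)$, giving $\gchoose{1}{0}\gchoose{2}{2}\gchoose{-1}{-1}=1$, so the sum equals $1\neq 0$ while $A_{n-1}-rb=3-4<0$ and $\tau_{n-2}A_{n-2}=-2<0$. (This matches the term $x_1/x_2^2$ of $x_5$, coming from the compatible pair $(\emptyset,\mathcal{D}_2)$ in $\mathcal{D}^{3\times 2}$, for which $r|S_2|>A_{n-1}$.) Moreover, for $n\geq 5$ even some $\tau_i$ with $2\le i\leq n-3$ can be negative in a nonzero summand (e.g.\ when $\tau_1=A_2-rs_1$ one gets $A_3-rs_2=-p$, forcing $\tau_2\leq -p$), so the blanket assertion in Step 1 that nonvanishing forces $\tau_i\geq 0$ for $i\leq n-3$ also fails. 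Finally, the proposed route through Lemma \ref{lem 34} is circular: its hypothesis (\ref{cond503})(c) is precisely the conclusion of this Proposition, and in the paper's logical order the Proposition must be proved first.

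The paper sidesteps all of this by changing models: using Theorem \ref{mainthm12052011} it realizes the pair $(a,b)$ by a collection $\beta$ of compatible pairs in $(\mathcal{D}^{c_n\times c_{n-1}})^p\times(\mathcal{D}^{c_{n-1}\times c_{n-2}})^q$ with the maximal number $w$ of copies of $(\emptyset,\mathcal{D}_2)$, identifies $w$ with the integer $k$ of Definition \ref{20120121}, and rewrites the left-hand side of (\ref{12312011eq1}) as $\sum_i\bigl(A_{n-2}|\beta_i|_2-A_{n-1}|\beta_i|_1\bigr)$ over the remaining indices. There each summand is bounded below term by term using Lemma \ref{12312011} and Lemma \ref{lem cn}, and all quantities are cardinalities, hence automatically nonnegative -- exactly the feature your $\tau$-coordinates lack. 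To salvage your approach you would need a direct proof that $\sum_{i\ge 2}\tau_iA_i$ dominates the possible negativity coming from indices where $A_{i+1}-rs_i<0$, which is the real content of the Proposition and is not supplied.
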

\begin{proof} We use Theorem \ref{mainthm12052011}. %Recall that the maximal colored subpath in $\mathcal{F}(\mathcal{D}_{n+1})$ is denoted by $\alpha(0,c_{n-1})$ and that  $|\alpha(0,c_{n-1})|_1=c_{n-1}$ and  $|\alpha(0,c_{n-1})|_2=c_{n}$.
Let $\mathcal{D}_{i,1}$ (respectively $\mathcal{D}_{i,2}$) be the set of horizontal (resp. vertical) edges in the $i$-th Dyck path in $(\mathcal{D}^{c_{n}\times c_{n-1}})^p\times (\mathcal{D}^{c_{n-1}\times c_{n-2}})^q$.

Choose a collection of compatible pairs $$\beta=(\beta_1, ..., \beta_{p+q})=((S_{1,1},S_{1,2}), ..., (S_{p+q,1},S_{p+q,2}))\text{ in }(\mathcal{D}^{c_{n}\times c_{n-1}})^p\times (\mathcal{D}^{c_{n-1}\times c_{n-2}})^q$$ such that
$\sum_{i=1}^{p+q} |S_{i,2}|=s_{n-2}$ and $pc_n+qc_{n-1}-\sum_{i=1}^{p+q} |S_{i,1}|=s_{n-1}$, and such that $\beta$ has the maximal number, say $w$, of copies of $(\emptyset, \mathcal{D}_2)$ in $\mathcal{D}^{c_{n}\times c_{n-1}}$. Say $\beta_{i_1}=\cdots=\beta_{i_w}=(\emptyset, \mathcal{D}_2)$ for some $1\leq i_1<\cdots<i_w \leq p$.
Since  $|\mathcal{D}_2|=c_{n-1}$ we see that
\[w=\textup{min} \left(\left\lfloor \frac{s_{n-2}}{c_{n-1}}\right\rfloor,p\right).\]
On the other hand, by Definition \ref{20120121} and Remark \ref{20120121a}, $s'_{n-2}=kc_{n-1}$ and $s'_{n-1}=kc_{n}$
with $k=w$ because $(\tau_0',\tau_1',\ldots,\tau_{n-2}')\in \mathcal{L}_{\textup{max}}$.
Therefore
$$\sum_{i\in \{1,2,...,p+q\} \setminus \{i_1,...,i_w\}} |\beta_i|_2 = s_{n-1}-wc_n = s_{n-1}-s'_{n-1},$$
$$\sum_{i\in \{1,2,...,p+q\} \setminus \{i_1,...,i_w\}} |\beta_i|_1 = s_{n-2}-wc_{n-1} = s_{n-2}-s'_{n-2},$$
where $|\beta_i|_2$ denotes $|\mathcal{D}_{i,1}|-|S_{i,1}|$ and $|\beta_i|_1$ denotes $|S_{i,2}|$.
Hence
 (\ref{12312011eq1}) is equivalent to
\begin{equation}\label{eq 3.20}
\sum_{i\in \{1,2,...,p+q\} \setminus \{i_1,...,i_w\}} A_{n-2}|\beta_i|_2-A_{n-1}|\beta_i|_1 \geq 0.
\end{equation}

First we show that $$\sum_{i=p+1}^{p+q} A_{n-2}|\beta_i|_2-A_{n-1}|\beta_i|_1 \geq 0.
 $$
 Due to Lemma~\ref{12312011}, if $p<i\leq p+q$ then either $\beta_i=(\mathcal{D}_{i,1}, \emptyset)$ or $\beta_i=(\emptyset, \mathcal{D}_{i,2})$ gives the minimum of $A_{n-2}|\beta_i|_2-A_{n-1}|\beta_i|_1$. If $\beta_i=(\mathcal{D}_{i,1}, \emptyset)$ then clearly $A_{n-2}|\beta_i|_2-A_{n-1}|\beta_i|_1=0$. If  $\beta_i=(\emptyset, \mathcal{D}_{i,2})$ then
 $$\aligned
 A_{n-2}|\beta_i|_2-A_{n-1}|\beta_i|_1 &= A_{n-2}c_{n-1}-A_{n-1}c_{n-2} \\
                                                             &= \left(p c_{n-1} +q c_{n-2}\right)c_{n-1}-\left(p c_{n} +q c_{n-1}\right)c_{n-2}\\
                                                              &=p \left(c_{n-1}^2-c_n c_{n-2}\right)\\
                                                              &=p\geq 0.
 \endaligned$$

 Next we show that
  $$\sum_{i\in \{1,2,...,p\} \setminus \{i_1,...,i_w\}} A_{n-2}|\beta_i|_2-A_{n-1}|\beta_i|_1 \geq 0.
 $$

 If $p=0$ then there is nothing to show. Suppose that $p\geq 1$.
 Again using Lemma~\ref{12312011}, we see that if $1\leq i\leq p$ and $\beta_i\neq (\emptyset, \mathcal{D}_{i,2})$ then $$\aligned
 c_{n-2}|\beta_i|_2-c_{n-1}|\beta_i|_1 > c_{n-2} c_n -c_{n-1}^2=-1,
 \endaligned$$so  $c_{n-2}|\beta_i|_2-c_{n-1}|\beta_i|_1\geq 0.$
  Also $c_{n-1}|\beta_i|_2-c_{n}|\beta_i|_1 \geq  c_{n-1}c_{n}-c_n c_{n-1}=0$.
 Thus we have
$$\aligned
 A_{n-2}|\beta_i|_2-A_{n-1}|\beta_i|_1   &= \left(p c_{n-1} +q c_{n-2}\right)|\beta_i|_2-\left(p c_{n} +q c_{n-1}\right)|\beta_i|_1\\
            &=p\left(c_{n-1}|\beta_i|_2-c_{n}|\beta_i|_1\right) + q \left(c_{n-2}|\beta_i|_2-c_{n-1}|\beta_i|_1\right)\geq 0.
 \endaligned$$
 \end{proof}

\subsection  {}We end this section with the following rank 2 result which we will need in  section \ref{sect 3} for the rank 3 case.
\begin{theorem}\label{thm01312012}Let $a\geq \frac{A_n}{r}$ be an integer. Then
$$\sum_{ \begin{array}{c}\scriptstyle \tau_0,\tau_1,\cdots,\tau_{n-2}\\ \scriptstyle s_{n-1}=a \end{array} }\prod_{i=0}^{n-2} \left[\begin{array}{c}{{A_{i+1} - rs_i    } } \\{\tau_i} \end{array}\right]   x_1^ {rs_{n-2}-A_{n-1}} x_2^{r(A_{n-1}-a)-A_{n-2}}$$is divisible by $(1+x_1^r)^{ra-A_n}$ and the resulting quotient has nonnegative coefficients.
\end{theorem}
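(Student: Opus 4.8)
The plan is to interpret the left-hand side through the Dyck-path formula of Theorem~\ref{mainthm12052011} rather than wrestling directly with the modified binomial coefficients. Fix the integer $a \ge A_n/r$ and set $p,q$ so that $A_i = pc_{i+1}+qc_i$. By Theorem~\ref{mainthm2} (and the equivalence noted after Theorem~\ref{mainthm12052011}), the sum over all $\tau_0,\ldots,\tau_{n-2}$ with $s_{n-1}=a$ of $\prod_i \gchoose{A_{i+1}-rs_i}{\tau_i}\, x_1^{rs_{n-2}} x_2^{r(A_{n-1}-a)}$ equals the part of $x_1^{A_{n-1}} x_2^{A_{n-2}}\, x_{n+1}^p x_n^q$ coming from those collections $\beta=(\beta_1,\ldots,\beta_{p+q})$ of compatible pairs with $\sum_i |\beta_i|_2 = A_{n-1}-a$ fixed (equivalently $\sum_i(|\mathcal{D}_{i,1}|-|S_{i,1}|)=a$). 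So after multiplying by $x_1^{-A_{n-1}}x_2^{-A_{n-2}}$ the displayed expression is precisely $\sum_\beta x_1^{r|S_2(\beta)|} x_2^{r|S_1(\beta)|}$, the sum restricted to the ``slice'' where the total number of horizontal edges omitted, weighted appropriately, forces the $x_2$-exponent down to $r(A_{n-1}-a)$.

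First I would record the combinatorial meaning of the hypothesis $a \ge A_n/r$. Since each $\beta_i$ lives in $\mathcal{D}^{c_n\times c_{n-1}}$ (for $i\le p$) or $\mathcal{D}^{c_{n-1}\times c_{n-2}}$ (for $q+1\le i\le p+q$, after the harmless reindexing), the quantity $\sum_i(|\mathcal{D}_{i,1}|-|S_{i,1}|) = pc_n+qc_{n-1} - \sum|S_{i,1}| = A_{n-1}-\sum|S_{i,1}|$... wait — the relevant identity is that $a = s_{n-1}$ ranges up to $A_n$, and $a \ge A_n/r$ means we are in the ``upper half'' of the range, where by Lemma~\ref{12312011} the only way to realize such a large value of the omitted-horizontal-edge count is for a controlled number, say $w = ra - A_n \ge 0$ (using $A_n = rA_{n-1}-A_{n-2}$ and Lemma~\ref{negone}(a)), of the pairs $\beta_i$ with $i\le p$ to equal the extreme configuration $(\emptyset,\mathcal{D}_{i,2})$. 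This is exactly the rigidity extracted in the proof of Proposition~\ref{eanegineqcor}, where $w=\min(\lfloor s_{n-2}/c_{n-1}\rfloor, p)$; the condition $a\ge A_n/r$ is what pins $w$ to $p$ on the relevant slice and forces $ra-A_n$ of the remaining pairs into a subordinate extreme state as well. Each such forced copy of $(\emptyset,\mathcal{D}_{i,2})$ in $\mathcal{D}^{c_n\times c_{n-1}}$ contributes a factor $x_1^{rc_{n-1}}$... but more to the point, I claim the slice sum factors as $\big(\sum_{\text{one Dyck path, all }S_1\subseteq\mathcal{D}_1, S_2=\emptyset} x_2^{r|S_1|}\big)^{(\text{leftover count})} \cdot (\text{stuff})$, and the first factor on a $\mathcal{D}^{1\times 0}$-type or $\mathcal{D}^{c_{n-1}\times c_{n-2}}$-type path gives $(1+x_2^r)^{\bullet}$ — and by the $x_1\leftrightarrow x_2$ symmetry built into Theorem~\ref{th:greedy-combinatorial} the complementary extreme gives the $(1+x_1^r)$ factors. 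Concretely: on the slice $s_{n-1}=a$, each of the $ra-A_n$ forced pairs that are \emph{not} yet fully determined ranges over $\{(\emptyset,\mathcal{D}_{i,2})\} \cup \{\text{configurations with exactly one fewer vertical edge}\}$, and by Lemma~\ref{12312011} such a pair is either $(\emptyset,\mathcal{D}_{i,2})$ (contributing $x_1^{rc_{n-1}}$) or $(\mathcal{D}_{i,1},\emptyset)$-deformations; summing the one free ``top'' edge over present/absent produces exactly the binomial $(1+x_1^r)$. Iterating over the $ra - A_n$ such slots yields the factor $(1+x_1^r)^{ra-A_n}$, and the residual sum over the genuinely free pairs is a sum of monomials with nonnegative coefficients — hence the quotient has nonnegative coefficients.

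The main obstacle, and the step I would spend the most care on, is making the ``factorization over the forced extreme slots'' rigorous: one must show that on the slice $s_{n-1}=a$ with $a\ge A_n/r$, the set of contributing collections $\beta$ is in bijection with (choices of a subset of size $\le ra-A_n$ of ``extra present top edges'') $\times$ (a residual family of collections), with the $x_1$-exponent splitting additively as $r\cdot(\text{size of the subset}) + (\text{residual }x_1\text{-exponent})$ and the residual family being \emph{independent} of that subset choice. Establishing this independence is where Lemma~\ref{12312011} (uniqueness of the minimizing compatible pair, and the trichotomy $(\mathcal{D}_1,\emptyset)$, $(\emptyset,\mathcal{D}_2)$, $(\emptyset,\emptyset)$) does the heavy lifting: it guarantees that once a pair $\beta_i$ with $i\le p$ is forced away from $(\mathcal{D}_{i,1},\emptyset)$, its only ``nearby'' competitor is the fully-vertical pair, so the local contribution genuinely is a two-term sum $1 + x_1^r$ (after normalization) decoupled from the other slots. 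A secondary bookkeeping point is checking the exponent arithmetic — that $rs_{n-2}-A_{n-1}\ge 0$ on this slice and that the total power of $x_1$ extracted is exactly $ra-A_n = r(ra - A_n)/r$ copies — using Lemma~\ref{negone}(a),(b) and Lemma~\ref{lem cn} repeatedly, exactly as in equations~(\ref{equation 262626}) and the chain leading to~(\ref{eq 33}) in the proof of Theorem~\ref{mainthm2}. Once the decoupling is in place, positivity of the quotient is immediate because what remains is a subsum of $\sum_\beta x_1^{r|S_2|}x_2^{r|S_1|}$, visibly a polynomial with coefficients in $\mathbb{Z}_{\ge 0}$.
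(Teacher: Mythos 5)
Your reduction of the left-hand side to the Dyck-path slice $\sum_{(S_1,S_2):\,|S_1|=A_{n-1}-a} x_1^{r|S_2|}$ is fine, but the heart of your argument --- factoring this slice as $(1+x_1^r)^{ra-A_n}$ times a nonnegative polynomial via ``forced extreme slots'' among the $p+q$ compatible pairs $\beta_i$ --- is not established, and the mechanism you describe cannot work. The exponent $ra-A_n$ is not a count of pairs $\beta_i$ forced into $(\emptyset,\mathcal{D}_{i,2})$: it ranges up to $rA_{n-1}-A_n=A_{n-2}$, which in general far exceeds $p+q$, and the divisibility already occurs when there is only a single Dyck path. Concretely, take $r=2$, $p=1$, $q=0$, $n=5$, so $A_i=i$ and the compatible pairs live in one copy of $\mathcal{D}^{4\times 3}$; for $a=3$ the slice sum is $4+6x_1^2+2x_1^4=2(1+x_1^2)(2+x_1^2)$, divisible by $(1+x_1^2)^{ra-A_n}=(1+x_1^2)^{1}$ with nonnegative quotient, yet there are no slots among the $\beta_i$ to decouple, and for $a=4$ the required exponent is $3$ while $p+q=1$. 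Relatedly, the quantity $w=\min(\lfloor s_{n-2}/c_{n-1}\rfloor,p)$ from Proposition~\ref{eanegineqcor} depends on $s_{n-2}$, which varies over the slice $s_{n-1}=a$, so the hypothesis $a\ge A_n/r$ does not ``pin $w$ to $p$''; and Lemma~\ref{12312011} only identifies the unique minimizer of a linear functional on compatible pairs --- it says nothing about the set of pairs on a fixed slice decomposing into independent binary choices. Since compatibility (Definition~\ref{df:compatible}) is a global condition involving $(AF)_2\cap S_2$ and $(EA)_1\cap S_1$ rather than a product of local conditions, the decoupling you need is itself essentially the full content of the theorem and would require a genuinely new combinatorial argument.

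The paper's proof avoids all of this and is two lines long: expand the same monomial $x_{n+1}^px_n^q$ via Theorem~\ref{mainthm2} once in the cluster $(x_1,x_2)$ and once in the adjacent cluster $(x_0,x_1)$ with $x_0=(x_1^r+1)/x_2$. In the second expansion the terms with $s_{n-1}=a$ carry the factor $x_0^{\,ra-A_n}=(1+x_1^r)^{ra-A_n}x_2^{A_n-ra}$ exactly, and the coefficients of the $(x_0,x_1)$-expansion are nonnegative because positivity is known in rank $2$. Matching the two expansions slice by slice in $s_{n-1}$ then yields both the divisibility and the nonnegativity of the quotient. If you want a purely combinatorial proof along your lines, you would have to prove directly that each slice of the greedy-element support factors as claimed; as the example above shows, that statement is not a consequence of the extremal-pair analysis in Lemma~\ref{12312011} or Proposition~\ref{eanegineqcor}.
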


\begin{proof} Using Theorem \ref{mainthm2} with $z_3=1$, we get $x_{n+1}^px_n^q$ is equal to
$$
\sum_{ \tau_0,\tau_1,\cdots,\tau_{n-2}}\aligned &\left( \prod_{i=0}^{n-2} \left[\begin{array}{c}{{A_{i+1} - rs_i    } } \\{\tau_i} \end{array}\right]   \right)x_1^ {rs_{n-2}-A_{n-1}} x_2^{A_n-rs_{n-1}}.\endaligned$$
On the other hand, using Theorem \ref{mainthm2} to express $x_{n+1}^px_n^q$ in the cluster $(x_0=\frac{x_1^r+1}{x_2},x_1)$, we get
$$\sum_{ \tau_0,\tau_1,\cdots,\tau_{n-2},\tau_{n-1}}\aligned &\left( \prod_{i=0}^{n-1} \left[\begin{array}{c}{{A_{i+1} - rs_i    } } \\{\tau_i} \end{array}\right]   \right)\left(\frac{x_1^r+1}{x_2}\right)^ {rs_{n-1}-A_{n}} x_1^{A_{n+1}-rs_{n}}\endaligned
$$
Since  the positivity conjecture is known for rank 2, it follows that all the sums of products of modified binomial coefficients in this expression are positive.
Now the result follows by fixing  $s_{n-1}=a$.
\end{proof}

\section{Rank 3}\label{sect 3}

\subsection{Non-acyclic mutation classes of rank 3}
In this subsection we collect some basic results on quivers of rank 3 which are not mutation equivalent to an acyclic quiver. First let us recall how mutations act on a rank 3 quiver. Given a quiver
\[\xymatrix{1\ar[rr]^r&&2\ar[ld]^t\\&3\ar[lu]^s}\]
 where $r,s,t\ge 1$ denote the number of arrows, then its mutation in 1 is the quiver
 \[\xymatrix{1\ar[rd]_s&&2\ar[ll]_r \\ & 3\ar[ru]_{rs-t}}\]
 where we agree that if $rs-t<0$ then there are $|rs-t|$ arrows from 2 to 3.
\begin{lemma}
 \label{lem nonacyclic}
Let $\mathcal{A}$ be a non-acyclic cluster algebra of rank 3 with initial quiver $Q$ equal to
 \[\xymatrix{1\ar[rr]^r&&2\ar[ld]^t\\&3\ar[lu]^s}\]
 where $r,s,t\ge 1$ denote the number of arrows.
 Then
 \begin{enumerate}
\item $r,s,t \ge 2$;
\item applying to $Q$  a mutation sequence in the vertices $1,2,1,2,1,2,\ldots$  consisting of $n$ mutations yields  the  quiver
 \[\xymatrix{1\ar[rr]^r&&2\ar[ld]^{\bar t(n)}\\&3\ar[lu]^{\bar s(n)}} \textup{ \quad if $n$ is even;}\qquad
 \xymatrix{1\ar[rd]_{\bar t(n)}&&2\ar[ll]_{r}\\&3\ar[ru]_{\bar s(n)}} \textup{ \quad if $n$ is odd;  where }\]
\[\bar s(n)= {c_{n+2}^{[r]}s-c_{n+1}^{[r]}t}
 \qquad \textup{and} \qquad \bar t(n)={c_{n+1}^{[r]}s-c_{n}^{[r]}t}.\]
 \item ${c_{n+1}^{[r]}s-c_{n}^{[r]}t}\ge 2$ for all $n\ge 1.$
\end{enumerate}

\end{lemma}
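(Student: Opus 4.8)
The three parts are naturally proved in order, and part (3) is really just the inequality that makes the induction in part (2) meaningful, so I would fold it into the same argument. First, for part (1): by a theorem of Beineke–Brüstle–Hille (or direct case analysis on rank 3 cyclic quivers), a cyclic rank 3 quiver with multiplicities $(r,s,t)$ is mutation equivalent to an acyclic one if and only if $\min(r,s,t) \le 1$; equivalently, non-acyclicity forces $r,s,t \ge 2$. Since the excerpt is free to invoke this classification, I would state it and cite it, then observe that our standing hypothesis (the cluster algebra is non-acyclic) immediately gives $r,s,t \ge 2$. Alternatively one can argue directly: if say $t = 1$, then $\mu_2$ applied to $Q$ removes the arrow between $1$ and $3$ after cancellation, yielding an acyclic quiver; similarly $r=1$ or $s=1$ each give an acyclic mutation; so non-acyclic $\Rightarrow$ $r,s,t\ge 2$. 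I would spell out this direct version since it is short and self-contained.

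For part (2), I would proceed by induction on $n$, using the explicit mutation rule for rank 3 quivers recalled just before the lemma. The base case $n=0$ is the quiver $Q$ itself, and one checks $\bar s(0) = c_2 s - c_1 t = s$, $\bar t(0) = c_1 s - c_0 t = t$ using $c_1 = 0$, $c_2 = 1$, $c_0 = -1$. For the inductive step, suppose after $n$ mutations we have the stated quiver; apply $\mu_1$ if $n$ is even (mutating at vertex $1$) or $\mu_2$ if $n$ is odd. In the even case, the mutation rule turns $\xymatrix{1\ar[rr]^r&&2\ar[ld]^{\bar t(n)}\\&3\ar[lu]^{\bar s(n)}}$ into a quiver with $r$ arrows $2\to 1$, $\bar s(n)$ arrows $1\to 3$, and $r\bar s(n) - \bar t(n)$ arrows $3\to 2$; I then verify $r\bar s(n) - \bar t(n) = \bar t(n+1)$ and $\bar s(n) = \bar s(n+1)$, which is exactly the Chebyshev recursion $c_{m} = r c_{m-1} - c_{m-2}$ applied with $m = n+2$: indeed $r\bar s(n) - \bar t(n) = r(c_{n+2}s - c_{n+1}t) - (c_{n+1}s - c_n t) = (rc_{n+2}-c_{n+1})s - (rc_{n+1}-c_n)t = c_{n+3}s - c_{n+2}t = \bar t(n+1)$, and $\bar s(n) = c_{n+2}s - c_{n+1}t$ which I must match against $\bar s(n+1)$; here one has to be slightly careful about the relabeling of which pair of vertices carries the "$r$" arrow versus the "$\bar s, \bar t$" arrows, and I would track the vertex labels explicitly so the formulas $\bar s(n+1) = c_{n+3}s - c_{n+2}t$, $\bar t(n+1) = c_{n+2}s - c_{n+1}t$ come out matching the picture for odd $n+1$. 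The odd-to-even step is symmetric.

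For part (3), I need ${c_{n+1}s - c_n t} \ge 2$ for all $n \ge 1$; note this is $\bar t(n)$ (up to the index bookkeeping) and also covers $\bar s(n)$ since $\bar s(n) = \bar t(n+1)$. Since $r \ge 2$ the sequence $c_m$ is strictly increasing for $m \ge 2$ with $c_2 = 1$, $c_3 = r$, $c_4 = r^2 - 1 \ge 3$, and in general $c_{m+1}/c_m \to$ the larger root of $\lambda^2 - r\lambda + 1$, which is $\ge 1$; more usefully, $c_{m+1} \ge c_m$ for $m\ge 2$ and one has the exact identity $c_{n+1}^2 - c_{n+2}c_n = 1$ from Lemma \ref{lem cn}. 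The cleanest route is induction: for $n = 1$, $c_2 s - c_1 t = s \ge 2$; for $n=2$, $c_3 s - c_2 t = rs - t$, and I claim $rs - t \ge 2$ — this is where I would use that the quiver after $\mu_1$ is again non-acyclic (every seed in a non-acyclic mutation class has all multiplicities $\ge 2$ by part (1) applied to that seed), giving $rs - t \ge 2$ directly. For the inductive step, $\bar t(n+1) = r\bar t(n) - \bar t(n-1)$ by the recursion established in part (2), and since $\bar t(n), \bar t(n-1) \ge 2$ with $r \ge 2$ we get $\bar t(n+1) \ge 2\bar t(n) - \bar t(n-1) \ge \bar t(n) + (\bar t(n) - \bar t(n-1))$; so I also need monotonicity $\bar t(n) \ge \bar t(n-1)$, which I would carry along as part of the induction hypothesis (it holds at the base since $\bar t(1) = s$, $\bar t(2) = rs - t$ and $rs - t \ge s$ iff $(r-1)s \ge t$, which follows from $s,t\ge 2$, $r\ge 2$ together with $rs - t \ge 2$... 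I'd verify $(r-1)s \ge t$: since $rs - t \ge 2 > 0$ we have $t < rs$, but I need $t \le (r-1)s$; this does hold because in the non-acyclic case $t \le rs - 2 \le rs - s = (r-1)s$ as $s \ge 2$). The main obstacle is thus keeping the vertex-relabeling in part (2) consistent and threading the monotonicity hypothesis through the induction in part (3); the algebra itself is just the Chebyshev recursion and Lemma \ref{lem cn}.
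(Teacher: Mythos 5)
Your overall strategy coincides with the paper's: part (1) via the Beineke--Br\"ustle--Hille classification or a one-mutation argument, part (2) by induction using the recursion $c_m=rc_{m-1}-c_{m-2}$, and part (3) as a consequence. Two points of substance. First, the self-contained argument you commit to for (1) is wrong as stated: if $t=1$ it is \emph{not} true that $\mu_2(Q)$ is acyclic. Take $(r,s,t)=(5,2,1)$: mutating at $2$ produces $3$ arrows $1\to 3$, $1$ arrow $3\to 2$ and $5$ arrows $2\to 1$, which is again a $3$-cycle (the arrows between $1$ and $3$ are not ``removed''; their number is $|rt-s|$, which vanishes only when $rt=s$). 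The paper's version first normalizes by the dihedral symmetry of the cyclic quiver so that the multiplicity equal to $1$ is $r$ \emph{and} $s\le t$, then mutates at vertex $1$; the new $2$--$3$ multiplicity is $t-s\ge 0$, so vertex $3$ becomes a sink and the quiver is acyclic. Without that normalization, or an appropriately chosen mutation vertex, a single mutation can land on another cyclic quiver, so you must either add the WLOG or fall back on the citation. Second, in part (2) the identities you announce are swapped: $r\bar s(n)-\bar t(n)$ equals $\bar s(n+1)$, not $\bar t(n+1)$, and $\bar s(n)=\bar t(n+1)$, not $\bar s(n+1)$. Your closing formulas are the correct ones, so this is only the bookkeeping slip you warned yourself about, but it should be fixed before the induction reads correctly.

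For part (3) your monotone induction is correct but does more work than the paper. The paper's proof is one line: by (2), $c_{n+1}^{[r]}s-c_n^{[r]}t$ is an arrow multiplicity of some quiver in the mutation class; every such quiver is cyclic because the algebra is non-acyclic; and (1), applied to that quiver viewed as an initial seed, gives the bound $\ge 2$. You already invoke exactly this observation to get $rs-t\ge 2$ in your base case $n=2$; applied uniformly it finishes (3) with no induction and no monotonicity hypothesis. Your route is a legitimate alternative, but threading $\bar t(n)\ge\bar t(n-1)$ through the induction is an avoidable cost.
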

\begin{proof}
(1) This has been shown in \cite{BBH} \footnote{In loc. cit. it is shown that $A$ is non-acyclic if and only if  $r,s,t\ge 2 $ and $r^2+s^2+t^2-rst\le 4 $}, but we include a proof for convenience. Suppose that one of $r,s,t$ is less than 2. Without loss of generality, we may suppose $r<2$ and $s\le t$. Since $Q$ is not acyclic, $r$ cannot be zero, whence $r=1$. But then mutating $Q$ in the vertex 1 would produce the following acyclic quiver:
  $$\xymatrix{ 1\ar[rd]_{s}&&2\ar[ld]^{t-s} \ar[ll]_{1} \\
  &3\\}$$
(2) We proceed by induction on $n$.
For $n=1$, the quiver obtained from $Q$ by mutation in 1 is the following:
\[ \xymatrix{1\ar[rd]_{s}&&2\ar[ll]_{r}\\&3\ar[ru]_{rs- t}}\]
 and for $n=2$, the quiver obtained from $Q$ by mutation in 1 and 2 is the following:
 \[\xymatrix{1\ar[rr]^r&&2\ar[ld]^{rs-t}\\&3\ar[lu]^{(r^2-1)s-rt}} \]
In both cases, the result follows from
$c_1^{[r]}=0, c_2^{[r]}=1, c_3^{[r]}=r, c_4^{[r]}=r^2-1$.

Suppose that $n>2$. If $n $ is odd then by induction we know that the quiver we are considering is obtained by mutating the following quiver in  vertex 1:
\[\xymatrix{1\ar[rr]^r&&2\ar[ld]^{\bar t(n-1)}\\&3\ar[lu]^{\bar s(n-1)}}\]
and the result follows from $\bar t(n)=\bar s(n-1) $ and
\[r \bar s(n-1)-\bar t(n-1)=
{rc_{n+1}^{[r]}s-rc_{n}^{[r]}t -c_{n}^{[r]}s+c_{n-1}^{[r]}t}
={c_{n+2}^{[r]}s-c_{n+1}^{[r]}t}
=
\bar s(n) .\]
If $n$ is even then the proof is similar. (3) follows from (1) and (2).
\end{proof}

\subsection{Positivity}
In this section, we prove the positivity conjecture in rank 3.

\begin{theorem}\label{11192011thm}
Let $\mathcal{A}(Q)$ be a skew-symmetric coefficient-free  cluster algebra of rank 3 with initial cluster $\mathbf{x}$ and let $\mathbf{x}_{t_0}$ be any cluster. Then the Laurent expansion of any cluster variable in $\mathbf{x}_{t_0}$  with respect to the cluster $\mathbf{x}$ has nonnegative coefficients.
\end{theorem}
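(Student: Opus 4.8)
\noindent\emph{Setup and reduction.} If $Q$ is mutation-equivalent to an acyclic quiver, the statement is already known by \cite{KQ} (or by \cite{Ef} when $Q$ itself is acyclic), so I may assume $Q$ is non-acyclic. By Lemma~\ref{lem nonacyclic} every quiver in the mutation class is then an oriented $3$-cycle with all three multiplicities at least $2$, so the rank-$2$-inside-rank-$3$ results of Section~\ref{sect rank 2} — in particular Theorems~\ref{thm 33}, \ref{mainthm12052011}, \ref{mainthm12062011} and \ref{thm01312012} — apply at \emph{every} seed of $\mathcal A(Q)$. The plan is to prove, by induction on $N\ge 0$, the statement $\mathcal P(N)$: for every cluster $\mathbf x_t$ whose seed lies at distance $\le N$ from the initial seed in the exchange graph, every cluster variable of $\mathbf x_t$ is a Laurent polynomial with nonnegative coefficients in the initial cluster $\mathbf x$. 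Theorem~\ref{11192011thm} is the union of the $\mathcal P(N)$.

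\noindent\emph{Base cases and the two-vertex case.} The cases $N=0,1$ are immediate, a single exchange relation $x_k'=\big(\prod x_i^{[b_{ik}]_+}+\prod x_i^{[-b_{ik}]_+}\big)/x_k$ having coefficients $1,1$. More generally, whenever a cluster variable $x$ is obtained from $\mathbf x$ by a mutation sequence using only two of the three vertices, I relabel so these are $1,2$ and the initial quiver is $1\to2\to3\to1$; then Theorem~\ref{thm 33} gives
\[
x = x_1^{-c_{n-1}}x_2^{-c_{n-2}}\sum_{(S_1,S_2)}x_1^{r|S_2|}x_2^{r|S_1|}\,z_3^{\,s(c_{n-1}-|S_1|)-t|S_2|},
\]
which, since every exponent of $z_3$ appearing is $\ge0$, is visibly a Laurent polynomial in $\mathbf x$ with nonnegative coefficients (the reversed two-vertex sequences being handled by the companion $x_{3-n}$ formula). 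This disposes of every cluster variable whose distance from $\mathbf x$ is realised by a two-vertex mutation word.

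\noindent\emph{Inductive step: reduction to a nearer seed.} Assume $\mathcal P(N)$ with $N\ge1$ and let $x$ belong to a cluster $\mathbf x_t$ at distance $N+1$. Along a geodesic from the initial seed to $\mathbf x_t$, the only variable of $\mathbf x_t$ absent from the previous seed is the one created by the last mutation, so I may assume $x$ is that variable. I then peel off the longest terminal segment of the geodesic that uses only two fixed vertices; since any two consecutive mutations involve exactly two vertices, this segment has length $\ge2$. Let $\Sigma'$ be the seed at which it begins; then $\Sigma'$ lies at distance $\le N-1$ from the initial seed, its quiver is again an oriented $3$-cycle, and after relabelling $x=x_m$ for some $m\ge4$ in the rank-$2$ sequence issuing from $\Sigma'$ in directions $1,2,1,\dots$. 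Theorem~\ref{mainthm12062011} with $(p,q)=(1,0)$, together with the corollary following it, then yields
\[
x\in\mathbb{Z}_{\ge0}\big[\,v_1,\ \widetilde v_1,\ v_3,\ v_2^{\pm1}\,\big],
\]
where $(v_1,v_2,v_3)$ is the cluster of $\Sigma'$ and $\widetilde v_1$ is the variable obtained from $\Sigma'$ by mutating in direction~$1$. By the inductive hypothesis $v_1,v_2,v_3$ (at distance $\le N-1$) and $\widetilde v_1$ (at distance $\le N$) are positive Laurent polynomials in $\mathbf x$.

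\noindent\emph{The crux: substitution.} It remains to substitute these expansions into the displayed expression for $x$ and check that nonnegativity of the coefficients is preserved. Substituting for $v_1,\widetilde v_1,v_3$ is harmless, as they occur only to nonnegative powers and nonnegative-coefficient Laurent polynomials are closed under sums and products; the real obstruction is the negative powers of $v_2$, since such a polynomial raised to a negative power is not a Laurent polynomial, and divisibility of one nonnegative-coefficient polynomial by another need not produce a nonnegative-coefficient quotient. The resolution — which is exactly where the fine structure of Section~\ref{sect rank 2} is indispensable — is to regroup the terms of the mixed formula by their $v_2$-degree and, for each fixed degree, re-express the relevant partial sum in the twin cluster $\big((v_1^{\,r}+1)/v_2,\ v_1\big)$; Theorem~\ref{thm01312012} asserts that this partial sum is divisible by an appropriate power $(1+v_1^{\,r})^{\,ra-A_n}$ with a \emph{nonnegative}-coefficient quotient. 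After substitution $1+v_1^{\,r}$ becomes a positive Laurent polynomial in $\mathbf x$, and this divisibility is precisely what absorbs the apparent negative powers of $v_2$ into genuine positive Laurent polynomials; summing over $v_2$-degrees then establishes $\mathcal P(N+1)$. Getting Theorem~\ref{thm01312012} to interlock correctly with the mixed formula and with the inductive hypothesis is the step I expect to be the main obstacle; everything else is routine bookkeeping on the exchange graph.
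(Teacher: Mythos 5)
Your reduction to the non-acyclic case, the treatment of two-vertex mutation words via Theorem~\ref{thm 33}, and the idea of peeling off the terminal rank-2 block and invoking Theorem~\ref{mainthm12062011} all match the paper's strategy. But your inductive framework has a genuine gap at exactly the point you flag as ``the crux,'' and the sketch you offer there does not close it. Your hypothesis $\mathcal P(N)$ records only that each variable at distance $\le N$ is a positive Laurent polynomial in $\mathbf x$. That is too weak: the mixed formula leaves you with negative powers of $v_2$, a \emph{non-initial} cluster variable, and positivity of $v_2$ gives no control whatsoever over $v_2^{-k}$. Your proposed fix --- regrouping by $v_2$-degree and passing to the ``twin cluster'' $\bigl((v_1^r+1)/v_2,\,v_1\bigr)$ --- does not produce an expression in actual cluster variables of $\mathcal A(Q)$: in the non-acyclic case every quiver in the mutation class is a full oriented $3$-cycle, so the genuine mutation of $v_2$ at $\Sigma'$ is $(v_1^{r}+v_3^{t})/v_2$, not $(v_1^r+1)/v_2$; the latter is a rank-2 artifact, and Theorem~\ref{thm01312012} (a pure rank-2 statement, obtained by setting $z_3=1$) cannot be applied verbatim to absorb the $v_2^{-k}$ into anything meaningful in $\mathcal A(Q)$.

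What the paper does instead is replace $\mathcal P(N)$ by a much stronger structural induction (Theorem~\ref{mainthm11102011_00}): it fixes a minimal mutation sequence from $t_0$ to the initial seed, decomposes it into maximal two-vertex blocks $t_0,t_1,\dots,t_m$, and proves by induction on $j$ that the expansion of the monomial from $t_i$ in the cluster $\mathbf x_{t_j}$ has a precise \emph{shape}: three sums in which only the third variable $x_{f;t_j}$ may carry a negative exponent, all other factors being nonnegative powers of $x_{d;t_j},x_{e;t_j},\widetilde{x_{d;t_j}},\widetilde{\widetilde{x_{e;t_j}}}$. The negative powers are absorbed not by $(1+v_1^r)$ but by showing (Lemma~\ref{mainthm11102011_01}, which is where Theorem~\ref{thm01312012} is actually deployed, via a two-block computation across $t_{i+1}$ and $t_{i+2}$) that the sum of all terms with $x_f$-exponent $-\theta$ is divisible by $\bigl(x_{e}^{\omega_2}+\widetilde{x_{1}}^{\xi_2}\bigr)^{\theta}$, hence equals $\widetilde{\widetilde{x_{f}}}^{\,\theta}$ times a nonnegative sum --- i.e.\ the obstruction is repackaged as a \emph{nonnegative} power of a genuine cluster variable, which the next inductive step can handle; a separate vanishing result (Lemma~\ref{lem theta3}) kills the remaining bad terms. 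Only at the last block does the variable in the denominator become an initial variable, at which point negative powers are harmless. Without this strengthened invariant (or an equivalent device), the substitution step in your argument cannot be completed, so the proposal as written does not constitute a proof.
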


The remainder of this section is devoted to the proof of this theorem.

%\begin{proof}
If $\mathcal{A}$ is acyclic then the theorem has been proved by Kimura and Qin \cite{KQ}. We therefore
assume  that $\mathcal{A}$ is non-acyclic, but we point out that this is not a necessary  assumption for our argument. Our methods would work also in the acyclic case, but restricting to the non-acyclic case considerably simplifies the exposition.

Since $\mathcal{A}$ is non-acyclic, every quiver $Q'$ which is mutation equivalent to the initial quiver $Q$ has at least one oriented cycle. Since $Q'$ has 3 vertices and no 2-cycles, this implies that every arrow in $Q'$ lies in at least one oriented cycle.

Let $\mathbf{x}_{t_0}$ be an arbitrary cluster. Choose a finite sequence  of mutations $\mu$ which transforms $\mathbf{x}_{t_0}$ into the initial cluster $\mathbf{x}$ and which is of minimal length among all such sequences. Each mutation in this sequence $\mu$ is a mutation at one of the three vertices 1,2 or 3 of the quiver, thus the sequence $\mu$ induces a finite sequence of vertices  $(V_1,V_2,\ldots)$, where each $V_i$ equals 1,2 or 3. Since the sequence $\mu$ is of minimal length, it follows that $V_i\ne V_{i+1}$.

Let $e_{0,1}=V_1$ and $e_{0,2}=V_2$. Let ${k_1}>1$
be the least integer such that $V_{k_1}\ne e_{0,1}, e_{0,2}$, and denote by $t_1$ the seed obtained from $t_0$ after mutating at $V_1,V_2,\ldots,V_{k_1-1}$.
Then let $e_{1,1}=V_{k_1-1}$ and $e_{1,2}=V_{k_1}$.
Let ${k_2}>k_1$
be the least integer such that $V_{k_2}\ne e_{1,1}, e_{1,2}$, and denote by $t_2$ the seed obtained from $t_0$ after mutating at $V_1,V_2,\ldots,V_{k_2-1}$.
Recursively, we define a sequence of seeds $\Sigma_{t_0},\Sigma_{t_1},\Sigma_{t_2},\ldots,\Sigma_{t_m}$ along our sequence  $\mu$ such that $\mathbf{x}_{t_m}=\mathbf{x}$ is the initial cluster and, for each $i$, the subsequence of $\mu$ between $t_i$ and $t_{i+1}$ is a sequence of mutations in two vertices.

The sequence of mutations can be visualized in the following diagram.
\[ \xymatrix@C35pt{t_0\ar@{~>}[r]^{V_1}&\scriptstyle \bullet\ar@{.}[r]&\scriptstyle\bullet \ar@{~>}[r]^{V_{k_1-1}} &t_1 \ar@{~>}[r]^{V_{k_1}} &\scriptstyle\bullet\ar@{.}[r]&\scriptstyle\bullet \ar@{~>}[r]^{V_{k_i-1}}& t_i \ar@{~>}[r]^{V_{k_i} }&\scriptstyle\bullet\ar@{.}[r]  & \scriptstyle\bullet \ar@{~>}[r]^{V_{k_m-1}}& t_m
}
\]

The main idea of our proof is to use our rank 2 formula from the previous section, to compute the Laurent expansion of a cluster variable of $t_0$ in the cluster at $t_1$, then replace the cluster variables of $t_1$ in this expression by their Laurent expansions in the cluster $t_2$, which we can compute again because of our rank 2 formulas. Continuing this procedure we obtain, at least theoretically, a Laurent expansion in the initial cluster.

\medskip

Fix an arbitrary $j\ge 0$, let $d=e_{j,1}$, $e=e_{j,2}$, and let $f$ be the integer such that $\{d,e,f\}=\{1,2,3\}$. Thus the sequence of mutations $\mu$ around the node $t_j$ is of the following form.
\[ \xymatrix@C35pt{ \ \ar@{.}[r]&\scriptstyle\bullet \ar@{~>}[r]^{f}&\scriptstyle\bullet \ar@{~>}[r]^{d}& t_j \ar@{~>}[r]^{e }&\scriptstyle\bullet \ar@{~>}[r]^{d}&\scriptstyle\bullet \ar@{~>}[r]^{e}&\scriptstyle\bullet\ar@{.}[r]  &
}
\]

Let
 $$\xymatrix{d\ar[dr]_{r}&&e\ar[ll]_{\xi}\\&f\ar[ru]_{\omega}}$$ be the quiver at $t_j$,
and use the notation
$\{x_{1;t_j},x_{2;t_j},x_{3;t_j}\}$ for the cluster $\mathbf{x}_{t_j}$ at $t_j$.
Let $$\widetilde{x_{d;t_j}}= \frac{x_{e;t_j}^{\xi}+ x_{f;t_j}^r}{x_{d;t_j}},$$
and
 $$\widetilde{\widetilde{x_{e;t_j}}}=\frac{\widetilde{x_{d;t_j}}^{\xi} + x_{f;t_j}^{r\xi -\omega} }{x_{e;t_j}}.$$
Thus $\widetilde{x_{d;t_j}}=\mu_d({x_{d;t_j}})$ is the new cluster variable obtained by mutation  of the cluster at $t_j$ in direction $d$ and
$\widetilde{\widetilde{x_{e;t_j}}} =\mu_e\mu_d (
{{x_{e;t_j}}}) $ is the new cluster variable obtained by mutation  of the cluster at $t_j$ in direction $d$ and then $e$.

%\end{proof}
Theorem \ref{11192011thm} follows easily from the following result.
 \begin{theorem}\label{mainthm11102011_00}
For any $t_i,t_j$ with $i<j$, the Laurent expansion of the cluster monomial
$x_{e_{i,1};t_i}^p x_{e_{i,2};t_i}^q$ in the cluster $\mathbf{x}_{t_j}$ is of the form
$$\aligned
&\sum_{u_1\in\mathbb{Z}, p_1,q_1\geq 0} C_{1; \, i\rightarrow j}\, x_{f;t_j}^{u_1} \widetilde{x_{d;t_j}}^{p_1}\widetilde{\widetilde{x_{e;t_j}}}^{q_1}\\
& +   \sum_{u_2\in\mathbb{Z}, p_2,q_2\geq 0} C_{2; \, i\rightarrow j}\, x_{f;t_j}^{u_2} \widetilde{x_{d;t_j}}^{p_2} x_{e;t_j}^{q_2} \\
&+ \sum_{u_3\in\mathbb{Z}, p_3,q_3\geq 0} C_{3; \, i\rightarrow j}\, x_{f;t_j}^{u_3}  x_{d;t_j}^{p_3} x_{e;t_j}^{q_3},
%\\
%&+ \sum_{y_2,...,\widehat{y_e},...,y_n\in\mathbb{Z}, p_4,q_4\geq 0} C_{4; \, i\rightarrow j}\, x_{f;t_j}^{u_4}  x_{1;t_j}^{p_4} \widetilde{x_{e;t_j}}^{q_4},
\endaligned$$
where $$\aligned C_{1; \, i\rightarrow j}&=C_{1}(Q_{t_{j}}, V_{k_i}\cdots V_{k_j-1}, p,q; u_1,p_1,q_1),\\ C_{2; \, i\rightarrow j}&=C_{2}(Q_{t_{j}}, V_{k_i}\cdots V_{k_j-1}, p,q;u_2,p_2,q_2),\text{ and}\\ C_{3; \, i\rightarrow j}&=C_{3}(Q_{t_{j}}, V_{k_i}\cdots V_{k_j-1}, p,q;u_3,p_3,q_3)\\
\endaligned$$ are nonnegative integers which depend on $Q_{t_{j}}, V_{k_i}\cdots V_{k_j-1},  p,q,u_1,u_2,u_3,p_1,p_2,p_3,q_1,q_2,q_3.$
\end{theorem}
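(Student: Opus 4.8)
## Proof proposal

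The plan is to prove Theorem~\ref{mainthm11102011_00} by induction on $j-i$, using the rank 2 formulas of Section~\ref{sect rank 2} to pass from one node $t_\ell$ to the next $t_{\ell+1}$ along the mutation sequence $\mu$. The base case $j=i$ is immediate: the cluster monomial $x_{e_{i,1};t_i}^p x_{e_{i,2};t_i}^q$ lives in the cluster $\mathbf{x}_{t_i}$ itself, so it appears as a single term in the third sum with $f$ playing the role of the spectator vertex, $p_3=p$, $q_3=q$, $u_3=0$, and all coefficients equal to $0$ or $1$. For the inductive step, suppose the expansion of $x_{e_{i,1};t_i}^p x_{e_{i,2};t_i}^q$ in the cluster $\mathbf{x}_{t_{j-1}}$ has the asserted three-part shape. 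The key observation is that the mutation subsequence of $\mu$ linking $t_{j-1}$ to $t_j$ consists of mutations in only two vertices, namely $e_{j-1,1}$ and $e_{j-1,2}$; and by the choice of the nodes $t_\ell$, the variables $\widetilde{x_{d;t_{j-1}}}$, $\widetilde{\widetilde{x_{e;t_{j-1}}}}$, $x_{e;t_{j-1}}$, $x_{d;t_{j-1}}$, $x_{f;t_{j-1}}$ appearing in the inductive expansion are precisely (or are rank 2 mutations of) the cluster variables of the rank 2 subalgebra sitting inside $\mathbf{x}_{t_{j-1}}$ in those two directions. Concretely, each monomial $x_{f;t_{j-1}}^{u}\,(\text{two-variable part})$ in the inductive expansion is, up to the monomial factor $x_{f;t_{j-1}}^{u}$, a cluster monomial of the form $x_{n+1}^p x_n^q$ (or $x_n^q$, or $x_1^p x_2^q$) for the rank 2 cluster algebra generated by $e$ and $d$ at $t_{j-1}$, and we may apply the mixed formula of Theorem~\ref{mainthm12062011} (and Theorem~\ref{mainthm12052011}, Theorem~\ref{thm 33}, Corollary~\ref{cor 26}) to rewrite it in the cluster $\mathbf{x}_{t_j}$.

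More precisely, I would organize the inductive step as follows. First, I fix the node $t_{j-1}$ with its two active directions $d=e_{j-1,1}$, $e=e_{j-1,2}$ and spectator $f$, and I record the quiver at $t_{j-1}$ as in the excerpt, $\xymatrix{d\ar[dr]_{r}&&e\ar[ll]_{\xi}\\&f\ar[ru]_{\omega}}$, which is non-acyclic by Lemma~\ref{lem nonacyclic} (so $r,\xi,\omega\ge 2$ and all the positivity of the rank 2 formulas applies). Second, I take each of the three sums in the inductive hypothesis and substitute: for a term $x_{f;t_{j-1}}^{u}\,(\text{rank 2 monomial in directions }e,d)$, I apply Theorem~\ref{mainthm12062011} to expand that rank 2 monomial in the cluster of $t_j$; since the exponent $u$ of $x_{f;t_{j-1}}$ may be negative while $x_{f;t_j}=x_{f;t_{j-1}}$ is unchanged under mutations in $d$ and $e$, I must also track how the spectator variable $x_{f}$ interacts with the mixed formula — but it simply multiplies through as a monomial factor $x_{f;t_j}^u$, and the point of the remark following Theorem~\ref{mainthm12062011} is precisely that the exponents of $x_3$ (here the ``first mutation'' variable $\widetilde{x_{d;t_j}}$) and of $x_1$ (here $x_{d;t_j}$) that appear are nonnegative, so that after substitution the result again falls into the three prescribed buckets: terms involving $\widetilde{\widetilde{x_{e;t_j}}}$ and $\widetilde{x_{d;t_j}}$; terms involving $\widetilde{x_{d;t_j}}$ and $x_{e;t_j}$; and terms involving $x_{d;t_j}$ and $x_{e;t_j}$. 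Third, since the coefficients in Theorem~\ref{mainthm12062011} are products of binomial coefficients (nonnegative) and the coefficients $C_{\bullet;\,i\to j-1}$ are nonnegative by the inductive hypothesis, the new coefficients $C_{\bullet;\,i\to j}$ are nonnegative integers, obtained by the evident convolution; one also checks they depend only on the claimed data by inspecting the formulas. Finally, collecting terms with equal exponents of $(x_{f;t_j},\widetilde{x_{d;t_j}},\widetilde{\widetilde{x_{e;t_j}}})$ etc. and renaming $p_\bullet,q_\bullet,u_\bullet$ yields exactly the asserted form for $t_j$.

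Two bookkeeping points deserve care and will be the technical heart of the argument. The first is \emph{orientation/sign matching}: when we cross the node $t_{j-1}$, the mutation that leaves $t_j$ is in direction $e$, so the rank 2 subalgebra ``below'' $t_j$ in directions $e,d$ is oriented consistently with the hypotheses of Theorems~\ref{thm 33}--\ref{mainthm12062011}; one must verify that the arrow counts $(r,\xi,\omega)$ transform under the single mutation $\mu_d$ (resp. the relevant mutations) exactly as in the statements, using Lemma~\ref{lem nonacyclic}(2) and the quiver mutation rule recalled at the start of Section~\ref{sect 3}, so that $\widetilde{x_{d;t_j}}$ really is the ``$x_3$'' of Corollary~\ref{cor 26} and $\widetilde{\widetilde{x_{e;t_j}}}$ the next one. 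The second is that one of the three input sums (the one already expressed in $x_{d;t_j}, x_{e;t_j}$) needs no substitution when $d,e$ are the two active directions — but \emph{which} pair is active changes with $j$, so in general all three sums must be re-expanded; matching the ``old'' spectator $f$ at $t_{j-1}$ against the ``new'' spectator at $t_j$ is where the case analysis lives. \textbf{The main obstacle} I anticipate is precisely this: ensuring that after substitution every term lands in one of the three prescribed forms with the correct nonnegativity — in particular that no term with a negative exponent on $x_{d;t_j}$ or $\widetilde{x_{d;t_j}}$ or $\widetilde{\widetilde{x_{e;t_j}}}$ survives — which is exactly the content guaranteed by the nonnegativity-of-exponents remark after Theorem~\ref{mainthm12062011} together with Theorem~\ref{thm01312012}; marshalling these inputs cleanly across the change of active pair is the delicate part, while the positivity of coefficients is then essentially automatic.
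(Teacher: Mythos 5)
Your overall strategy --- induction along the nodes $t_i,t_{i+1},\ldots$, substituting the rank~2 expansions of Theorem~\ref{mainthm12062011} at each node and convolving nonnegative coefficients --- is the same as the paper's, and your treatment of what become the second and third sums is essentially right. But there is a genuine gap at the one place where the real work happens: the terms in which the old spectator variable appears with a \emph{negative} exponent. At level $j$ the exponent $u$ of $x_{f;t_j}$ may be negative (it is the spectator there), and after substituting the mixed formula the exponent of $x_{f;t_{j+1}}$ in a given term is $u+q_h^{(i)}$, which can still be negative; but at the node $t_{j+1}$ the vertex $f=e_{j+1,2}$ becomes active, so such terms fit into none of the three buckets as they stand. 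You write that $x_f$ ``simply multiplies through as a monomial factor,'' and you locate the danger in possible negative exponents of $x_{d;t_j}$, $\widetilde{x_{d;t_j}}$, $\widetilde{\widetilde{x_{e;t_j}}}$ --- but those are automatically nonnegative by the formulas; the problematic variable is $x_f$, precisely because its role changes.

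The paper resolves this by collecting, for each $\theta>0$, the sum $\mathcal{P}_\theta=\mathcal{P}_{\theta,2}+\mathcal{P}_{\theta,3}$ of all terms whose exponent of $x_{f;t_{j+1}}$ equals $-\theta$, and proving (i) $\mathcal{P}_{\theta,3}=0$ (Lemma~\ref{lem theta3}) and (ii) $\mathcal{P}_{\theta,2}$ is divisible by $\widetilde{\widetilde{x_{f;t_{j+1}}}}^{\,\theta}$ with nonnegative quotient (Lemma~\ref{mainthm11102011_01}). This regrouping is the \emph{only} source of the first sum in the statement: Theorem~\ref{mainthm12062011} by itself never produces a doubly mutated variable, so your claim that the substitution ``again falls into the three prescribed buckets'' cannot be right as written. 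Step (ii) is the technical heart of the paper: one must recompute $\mathcal{P}_{\theta,2}$ via Corollary~\ref{cor 26}, invoke the divisibility statement of Theorem~\ref{thm01312012} (which you cite but do not actually deploy), and use the binomial manipulations of Lemmas~\ref{0302lem1}, \ref{0303lem944}, \ref{0303lem945}, \ref{0423lem1}, \ref{0423lem2} to conclude divisibility by a power of $1+\widetilde{x_{1}}^{\xi_2}/x_{e}^{\omega_2}$, which converts into the required power of $\widetilde{\widetilde{x_{f;t_{j+1}}}}$ via the exchange relation $\widetilde{\widetilde{x_{f}}}=(x_{e}^{\omega_2}+\widetilde{x_{1}}^{\xi_2})/x_{f}$. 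Without this argument the induction does not close, and positivity is not established.
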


%\bigskip
\subsection{Proof of positivity}
Before proving Theorem \ref{mainthm11102011_00}, let us show that it implies Theorem~\ref{11192011thm}.

Let $x$ be a cluster variable in $\mathbf{x}_{t_0}$.
Using Theorem \ref{mainthm11102011_00} with $i=0$ and $j=m-1$, we get an expression $LP(x,t_j)$ for $x$ as a Laurent polynomial with nonnegative coefficients in which each summand is a quotient of a certain monomial   in the variables $ x_{f;t_j},  \widetilde{x_{d;t_j}}, \widetilde{\widetilde{x_{e;t_j}}}, {x_{e;t_j}}$ by a power of $x_{f;t_j}$.  In other words, only the variable $x_{f;t_j}$ appears in the denominator of $LP(x,t_j)$.

Since the seed $t_j$ is obtained from the seed $t_m$ by a sequence of mutations at the vertices $d$ and $e$, we see  that $x_{f;t_j}=x_{f;t_m}$ is one of the initial cluster variables and that the variables $x_{d;t_j}, x_{e;t_j} \widetilde{x_{d;t_j}}, \widetilde{\widetilde{x_{e;t_j}}} $ are obtained from the initial cluster $\mathbf{x}_{t_{m}}$ by a sequence of mutations at the vertices $d$ and $e$.
Therefore, in order to write $x$ as a Laurent polynomial in the initial cluster $\mathbf{x}_{t_m}$, we only need to compute the expansions for the variables $ {x_{d;t_j}}, {x_{e;t_j}} , \widetilde{x_{d;t_j}}, \widetilde{\widetilde{x_{e;t_j}}} $ in the cluster $\mathbf{x}_{t_m}$ and substitute these expansions into   $LP(x,t_j)$.
But since the exponents $p_i,q_i$ of these variables are nonnegative, we know from Theorem \ref{thm 33} that these expansions are Laurent polynomials with nonnegative coefficients, and hence, after substitution into $LP(x,t_j)$, we get an expansion for $x$ as a Laurent polynomial with nonnegative coefficients in the initial cluster $\mathbf{x}_{t_m}$.\qed

\subsection{Proof of Theorem \ref{mainthm11102011_00}}
We use induction on $j$. Suppose that the statement holds true for $j$, and we prove it for $j+1$. Note that $f=e_{j+1,2}$, since the rank is 3. Without loss of generality, let $d=1$, so the sequence of mutations $\mu$ around the node $t_{j+1}$ is of the following form.
\[ \xymatrix@C35pt{ \ \ar@{.}[r]&\scriptstyle\bullet \ar@{~>}[r]^{e}&\scriptstyle\bullet \ar@{~>}[r]^{1}& t_{j+1} \ar@{~>}[r]^{f }&\scriptstyle\bullet \ar@{~>}[r]^{1}&\scriptstyle\bullet \ar@{~>}[r]^{f}&\scriptstyle\bullet\ar@{.}[r]  &
}
\]

 We analyze the three sums in the statement of Theorem \ref{mainthm11102011_00} separately. For the first sum, thanks to Theorem~\ref{mainthm12062011} with $x_1=x_{1;t_{j+1}}$,  $x_2=x_{e;t_{j+1}}$,    $x_3=\widetilde{x_{1;t_{j+1}}}$,
$z_3=x_{f;t_{j+1}}$,  there exist nonnegative coefficients $\overline{\overline{C_{2; \, j\rightarrow j+1}}} $ and $ \overline{\overline{C_{3; \, j\rightarrow j+1}}} $ such that

\begin{equation}\label{1217201101}\aligned & \sum_{u_1\in\mathbb{Z}, p_1,q_1\geq 0} C_{1; \, i\rightarrow j} \, x_{f;t_j}^{u_1} \widetilde{x_{1;t_j}}^{p_1}\widetilde{\widetilde{x_{e;t_j}}}^{q_1}\\
& = \sum_{u_1\in\mathbb{Z}, p_1,q_1\geq 0} C_{1; \, i\rightarrow j}\, x_{f;t_j}^{u_1} \\
&\,\,\,\,\,\,\,\,\,\,\,\,\,\, \times \left(\aligned&\sum_{v_e^{(1)}\in\mathbb{Z}, p_2^{(1)},q_2^{(1)}\geq 0} \overline{\overline{C_{2; \, j\rightarrow j+1}}} \,x_{e;t_{j+1}}^{v_e^{(1)}}\widetilde{x_{1;t_{j+1}}}^{p_2^{(1)}} x_{f;t_{j+1}}^{q_2^{(1)}} \\
&+ \sum_{w_e^{(1)}\in\mathbb{Z}, p_3^{(1)},q_3^{(1)}\geq 0} \overline{\overline{C_{3; \, j\rightarrow j+1}}}\,x_{e;t_{j+1}}^{w_e^{(1)}} x_{1;t_{j+1}}^{p_3^{(1)}} x_{f;t_{j+1}}^{q_3^{(1)}}\endaligned\right),\endaligned\end{equation}
where
$\overline{\overline{C_{2; \, j\rightarrow j+1}}} $ depends on $v_e^{(1)},p_2^{(1)},q_2^{(1)}$
and
$\overline{\overline{C_{3; \, j\rightarrow j+1}}} $ depends on $w_e^{(1)},p_3^{(1)},q_3^{(1)}.$
%$$\overline{\overline{C_{2; \, j\rightarrow j+1}}} =C_{2}\left(Q_{t_{j+1}}, S_{k_j-2}\cdots S_{k_{j+1}-1},p_1,q_1;v_e^{(1)},p_2^{(1)},q_2^{(1)}\right)$$and
%$$\overline{\overline{C_{3; \, j\rightarrow j+1}}}=C_{3}\left(Q_{t_{j+1}}, S_{k_j-2}\cdots S_{k_{j+1}-1},p_1,q_1;w_e^{(1)},p_3^{(1)},q_3^{(1)}\right).$$

We can rewrite (\ref{1217201101}) as
\begin{equation}\label{1217201102}\aligned
& \sum   C_{1; \, i\rightarrow j} \overline{\overline{C_{2; \, j\rightarrow j+1}}} \, x_{e;t_{j+1}}^{v_e^{(1)}} \widetilde{x_{1;t_{j+1}}}^{p_2^{(1)}} x_{f;t_{j+1}}^{u_1+q_2^{(1)}} \\
&+ \sum C_{1; \, i\rightarrow j} \overline{\overline{C_{3; \, j\rightarrow j+1}}}\, x_{e;t_{j+1}}^{w_e^{(1)}}x_{1;t_{j+1}}^{p_3^{(1)}} x_{f;t_{j+1}}^{u_1+q_3^{(1)}}.\endaligned\end{equation}

For the second sum, we have

\begin{equation}\label{1217201103}\aligned & \sum_{u_2\in\mathbb{Z}, p_2,q_2\geq 0} C_{2; \, i\rightarrow j}  \,x_{f;t_{j}}^{u_2}\widetilde{x_{1;t_j}}^{p_2} x_{e;t_j}^{q_2}\\
& =  \sum_{u_2\in\mathbb{Z}, p_2,q_2\geq 0} C_{2; \, i\rightarrow j}  \,x_{f;t_{j}}^{u_2}\\
&\,\,\,\,\,\,\,\,\,\,\,\,\,\, \times \left(\aligned&\sum_{v_e^{(2)}\in\mathbb{Z}, p_2^{(2)},q_2^{(2)}\geq 0} \overline{C_{2; \, j\rightarrow j+1}} \,x_{e;t_{j+1}}^{v_e^{(2)}}\widetilde{x_{1;t_{j+1}}}^{p_2^{(2)}} x_{f;t_{j+1}}^{q_2^{(2)}} \\
&+ \sum_{w_e^{(2)}\in\mathbb{Z}, p_3^{(2)},q_3^{(2)}\geq 0} \overline{C_{3; \, j\rightarrow j+1}}\,x_{e;t_{j+1}}^{w_e^{(2)}}
 x_{1;t_{j+1}}^{p_3^{(2)}} x_{f;t_{j+1}}^{q_3^{(2)}}\endaligned\right).\endaligned\end{equation}
where
${\overline{C_{2; \, j\rightarrow j+1}}} $ depends on $v_e^{(2)},p_2^{(2)},q_2^{(2)}$ and
${\overline{C_{3; \, j\rightarrow j+1}}} $ depends on $w_e^{(2)},p_3^{(2)},q_3^{(2)}.$

We can rewrite (\ref{1217201103}) as
\begin{equation}\label{1217201104}\aligned
& \sum   C_{2; \, i\rightarrow j} {\overline{C_{2; \, j\rightarrow j+1}}}  \,x_{e;t_{j+1}}^{v_e^{(2)}} \widetilde{x_{1;t_{j+1}}}^{p_2^{(2)}} x_{f;t_{j+1}}^{u_2+q_2^{(2)}} \\
&+ \sum C_{2; \, i\rightarrow j} {\overline{C_{3; \, j\rightarrow j+1}}}\,x_{e;t_{j+1}}^{w_e^{(2)}}x_{1;t_{j+1}}^{p_3^{(2)}} x_{f;t_{j+1}}^{u_2+q_3^{(2)}}.\endaligned\end{equation}

Similarly, for the third sum

\begin{equation}\label{1217201105}\aligned & \sum_{u_3\in\mathbb{Z}, p_3,q_3\geq 0} C_{3; \, i\rightarrow j}  x_{f;t_j}^{u_3} x_{1;t_j}^{p_3} x_{e;t_j}^{q_3}\\
& =  \sum_{u_3\in\mathbb{Z}, p_3,q_3\geq 0} C_{3; \, i\rightarrow j}x_{f;t_j}^{u_3} \\
&\times \left(\aligned&\sum_{v_e^{(3)}\in\mathbb{Z}, p_2^{(3)},q_2^{(3)}\geq 0} C_{2; \, j\rightarrow j+1}\,x_{e;t_{j+1}}^{v_e^{(3)}}\widetilde{x_{1;t_{j+1}}}^{p_2^{(3)}} x_{f;t_{j+1}}^{q_2^{(3)}} \\
&+ \sum_{w_e^{(3)}\in\mathbb{Z}, p_3^{(3)},q_3^{(3)}\geq 0} C_{3; \, j\rightarrow j+1} \,x_{e;t_{j+1}}^{w_e^{(3)}} x_{1;t_{j+1}}^{p_3^{(3)}} x_{f;t_{j+1}}^{q_3^{(3)}}\endaligned\right),\endaligned\end{equation}
where
${C_{2; \, j\rightarrow j+1}} $ depends on $v_e^{(3)},p_2^{(3)},q_2^{(3)}$ and
${C_{3; \, j\rightarrow j+1}}$ depends on $w_e^{(3)},p_3^{(3)},q_3^{(3)}.$

We can rewrite (\ref{1217201105}) as
\begin{equation}\label{1217201106}\aligned
& \sum   C_{3; \, i\rightarrow j} {C_{2; \, j\rightarrow j+1}}  \,x_{e;t_{j+1}}^{v_e^{(3)}} \widetilde{x_{1;t_{j+1}}}^{p_2^{(3)}} x_{f;t_{j+1}}^{u_3+q_2^{(3)}} \\
&+ \sum C_{3; \, i\rightarrow j} {C_{3; \, j\rightarrow j+1}} \, x_{e;t_{j+1}}^{w_e^{(3)}} x_{1;t_{j+1}}^{p_3^{(3)}} x_{f;t_{j+1}}^{u_3+q_3^{(3)}}.\endaligned\end{equation}

So by induction we have an expression of $x_{e_{i,1};t_i}^p x_{e_{i,2};t_i}^q$ with respect to the cluster $t_{j+1}$, that is,
\begin{equation}\label{expression}x_{1;t_i}^p x_{e_{i+1};t_i}^q=(\ref{1217201102})+(\ref{1217201104})+(\ref{1217201106}).\end{equation}

This expression allows us to compute the new coefficients $C_{2;\, i\rightarrow j+1}$ and $C_{3;\, i\rightarrow j+1}$ by collecting terms with nonnegative exponents on $x_{f;t_{j+1}}$; we have
 $$C_{2;\, i\rightarrow j+1}=\sum C_{1; \, i\rightarrow j} \overline{\overline{C_{2; \, j\rightarrow j+1}}}
+\sum   C_{2; \, i\rightarrow j} {\overline{C_{2; \, j\rightarrow j+1}}}
+ \sum   C_{3; \, i\rightarrow j} {C_{2; \, j\rightarrow j+1}},
 $$where the three sums are over all possible variables satisfying $u_1 + q_2^{(1)}\geq 0$, $u_2+q_2^{(2)}\geq 0$, and $u_3+q_2^{(3)}\geq 0$ respectively, so that the exponent of $x_{f;t_{j+1}}$ is nonnegative.

 Similarly,
 $$C_{3;\, i\rightarrow j+1}=\sum C_{1; \, i\rightarrow j} \overline{\overline{C_{3; \, j\rightarrow j+1}}}
+\sum   C_{2; \, i\rightarrow j} {\overline{C_{3; \, j\rightarrow j+1}}}
+ \sum   C_{3; \, i\rightarrow j} {C_{3; \, j\rightarrow j+1}},
 $$where the three sums are over all possible variables satisfying  $u_1 + q_3^{(1)}\geq 0$, $u_2+q_3^{(2)}\geq 0$, and  $u_3+q_3^{(3)}\geq 0$ respectively. In particular, this shows that $C_{2;\, i\rightarrow j+1}$ and $C_{3;\, i\rightarrow j+1}$ are nonnegative integers.

 Since $u_1, u_2$ or $u_3$ can be negative, $x_{f;t_{j+1}}$ can have negative exponents. Now we analyze the terms in which $x_{f;t_{j+1}}$  appears with a negative exponent.
For every positive integer $\theta$, let $\mathcal{P}_\theta$ be the sum of all terms in (\ref{expression}) whose exponent of $x_{f,t_{j+1}}$ is equal to $-\theta$, that is, $\mathcal{P}_\theta=\mathcal{P}_{\theta,2}+\mathcal{P}_{\theta,3}$, where
$$\aligned \mathcal{P}_{\theta,2} &=
\sum
C_{1; \, i\rightarrow j} \overline{\overline{C_{2; \, j\rightarrow j+1}}} \,x_{e;t_{j+1}}^{v_e^{(1)}}  \widetilde{x_{1;t_{j+1}}}^{p_2^{(1)}} x_{f;t_{j+1}}^{u_1+q_2^{(1)}}  \\
&\\
&+  \sum C_{2; \, i\rightarrow j} {\overline{C_{2; \, j\rightarrow j+1}}}\,x_{e;t_{j+1}}^{v_e^{(2)}}  \widetilde{x_{1;t_{j+1}}}^{p_2^{(2)}} x_{f;t_{j+1}}^{u_2+q_2^{(2)}} \\
&\\
&+ \sum C_{3; \, i\rightarrow j} {C_{2; \, j\rightarrow j+1}} \, x_{e;t_{j+1}}^{v_e^{(3)}} \widetilde{x_{1;t_{j+1}}}^{p_2^{(3)}} x_{f;t_{j+1}}^{u_3+q_2^{(3)}},
\endaligned$$

 $$\aligned \mathcal{P}_{\theta,3} &=
 \sum C_{1; \, i\rightarrow j} \overline{\overline{C_{3; \, j\rightarrow j+1}}}\,x_{e;t_{j+1}}^{w_e^{(1)}} x_{1;t_{j+1}}^{p_3^{(1)}} x_{f;t_{j+1}}^{u_1+q_3^{(1)}} \\
&\\
&+   \sum   C_{2; \, i\rightarrow j} {\overline{C_{3; \, j\rightarrow j+1}}}\,x_{e;t_{j+1}}^{w_e^{(2)}} x_{1;t_{j+1}}^{p_3^{(2)}} x_{f;t_{j+1}}^{u_2+q_3^{(2)}} \\
&\\
&+ \sum C_{3; \, i\rightarrow j} {C_{3; \, j\rightarrow j+1}} \,x_{e;t_{j+1}}^{w_e^{(3)}} x_{1;t_{j+1}}^{p_3^{(3)}} x_{f;t_{j+1}}^{u_3+q_3^{(3)}},
\endaligned$$
where the first sum in the expression for $\mathcal{P}_{\theta,h}$ ($h=2,3$) is over all
$$
u_1,v_e^{(1)},w_e^{(1)}\in\mathbb{Z}, p_1,q_1,p_h^{(1)},q_h^{(1)}\geq 0\text{ satisfying } u_1+ q_h^{(1)}=-\theta,
$$ the second sum is over all  $$
u_2,v_e^{(2)},w_e^{(2)}\in\mathbb{Z}, p_2,q_2,p_h^{(2)},q_h^{(2)}\geq 0\text{ satisfying } u_2+q_h^{(2)}=-\theta, $$
the third sum is over all  $$
u_3,  v_e^{(3)},w_e^{(3)} \in\mathbb{Z}, p_3,q_3,p_h^{(3)},q_h^{(3)}\geq 0\text{ satisfying } u_3+q_h^{(3)}=-\theta.$$

To complete the proof, we shall compute
$\mathcal{P}_{\theta,2}$ and
$\mathcal{P}_{\theta,3}$ separately.
We show that
$\mathcal{P}_{\theta,3}=0$ in Lemma \ref{lem theta3}, and thus to complete the proof it suffices to show the following result on
$\mathcal{P}_{\theta,2}$.

\begin{lemma}\label{mainthm11102011_01}
 $\mathcal{P}_{\theta,2}$ is of the form
$$\aligned
%& \left(\frac{\widetilde{x_{1;t_{j+1}}}^{[b'_{1f}]_+}\prod_{\ell\neq 1} x_{\ell;t_{j+1}}^{[b'_{\ell f}]_+} +  \widetilde{x_{1;t_{j+1}}}^{[-b'_{1f}]_+}\prod_{\ell\neq 1} x_{\ell;t_{j+1}}^{[-b'_{\ell f}]_+} }{x_{f;t_{j+1}}}\right)^\theta\\
 \widetilde{\widetilde{x_{f;t_{j+1}}}}^\theta \sum_{u_e\in\mathbb{Z}, p_1\geq 0} C_{1;i\rightarrow j+1}\,x_{e;t_{j+1}}^{u_e} \widetilde{x_{1;t_{j+1}}}^{p_1},
\endaligned$$
where
$$C_{1;i\rightarrow j+1}=C_{1}\left(B_{t_{j+1}}, V_{k_i}\cdots V_{k_{j+1}-1}, p,q; u_e,p_1,\theta\right)$$ are nonnegative integers.
\end{lemma}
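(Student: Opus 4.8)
The statement concerns $\mathcal{P}_{\theta,2}$, which is the sum of all terms in the expansion (\ref{expression}) of $x_{1;t_i}^p x_{e_{i+1};t_i}^q$ in the cluster $\mathbf{x}_{t_{j+1}}$ in which the variable $x_{f;t_{j+1}}$ occurs with exponent exactly $-\theta$, and which involve the ``partially mutated'' variable $\widetilde{x_{1;t_{j+1}}}$ (i.e. the type-2 terms). The plan is to show that all the negative powers of $x_{f;t_{j+1}}$ appearing in $\mathcal{P}_{\theta,2}$ can be cleared by factoring out exactly $\widetilde{\widetilde{x_{f;t_{j+1}}}}^\theta$, where $\widetilde{\widetilde{x_{f;t_{j+1}}}} = \mu_f \mu_1 (x_{f;t_{j+1}})$ is the cluster variable obtained from the seed at $t_{j+1}$ by mutating first in $1$ and then in $f$; note $\widetilde{\widetilde{x_{f;t_{j+1}}}}$ is, up to the factor $x_{f;t_{j+1}}^{-\text{something}}$, a polynomial in $x_{f;t_{j+1}}$, $x_{e;t_{j+1}}$, $\widetilde{x_{1;t_{j+1}}}$ with a constant term, so the claimed form says precisely that $\mathcal{P}_{\theta,2}$ is divisible (in the polynomial ring after clearing denominators) by $\widetilde{\widetilde{x_{f;t_{j+1}}}}^\theta$ and that the quotient is a nonnegative Laurent polynomial in $x_{e;t_{j+1}}$ and $\widetilde{x_{1;t_{j+1}}}$ only, with no further occurrence of $x_{f;t_{j+1}}$ in its denominator.

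\textbf{Key steps.} First I would isolate the structure of $\mathcal{P}_{\theta,2}$: each of its three constituent sums is a product $C_{\ast;i\to j}$ (nonnegative, known by induction) times a type-2 summand $\overline{\overline{C_2}}$ or $\overline{C_2}$ or $C_2$ (nonnegative, coming from Theorem~\ref{mainthm12062011}) times a monomial in $x_{e;t_{j+1}}, \widetilde{x_{1;t_{j+1}}}, x_{f;t_{j+1}}$ with $x_{f;t_{j+1}}$-exponent equal to $-\theta$. The engine is Theorem~\ref{thm01312012}: when a cluster monomial in the rank-$2$ subalgebra spanned by $x_{1;t_{j+1}}$ and $x_{f;t_{j+1}}$ (equivalently, by $\widetilde{x_{1;t_{j+1}}}$ and $x_{f;t_{j+1}}$, since those two clusters are related by a single mutation) is re-expanded, the terms with a fixed large power $a$ in one direction are divisible by $(1+x_{f;t_{j+1}}^r)^{ra - A_n}$ with nonnegative quotient. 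I would apply this, in the form obtained by mutating once (Corollary~\ref{cor 26} / Theorem~\ref{mainthm12062011}), to the variable $x_{e;t_{j+1}}$, which sits opposite the rank-$2$ direction $\{1,f\}$ at $t_{j+1}$: every time $x_{e;t_{j+1}}$ had to be mutated in the passage from $t_j$ to $t_{j+1}$, a denominator factor of $x_{f;t_{j+1}}$ is introduced, and it is introduced only in the combination $\widetilde{\widetilde{x_{f;t_{j+1}}}} = (\widetilde{x_{1;t_{j+1}}}^{\xi'} + x_{f;t_{j+1}}^{r'})/x_{e;t_{j+1}}$ (with $\xi',r'$ the arrow numbers in the mutated quiver). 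So the source of the power $x_{f;t_{j+1}}^{-\theta}$ is exactly $\theta$ applications of the mixed rank-$2$ formula, and collecting those shows $x_{f;t_{j+1}}^{-\theta}$ is always accompanied by a full power $\widetilde{\widetilde{x_{f;t_{j+1}}}}^\theta$ in the numerator sense. Concretely: group the summands of $\mathcal{P}_{\theta,2}$ by the data $(p_1, u_e)$ (the exponent of $\widetilde{x_{1;t_{j+1}}}$ and the net exponent of $x_{e;t_{j+1}}$); for each fixed $(p_1,u_e)$ the sum over the remaining free parameters of the products of nonnegative structure constants is, by Theorem~\ref{thm01312012} applied to the rank-$2$ re-expansion, equal to $\widetilde{\widetilde{x_{f;t_{j+1}}}}^\theta$ times a nonnegative integer $C_{1;i\to j+1}$. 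Finally I would check that no other denominators appear — $x_{e;t_{j+1}}$ and $\widetilde{x_{1;t_{j+1}}}$ occur only with nonnegative exponents after extracting $\widetilde{\widetilde{x_{f;t_{j+1}}}}^\theta$ — which follows from the nonnegativity of the exponents $u_1\!+\!q_2^{(h)}$ etc. in the intermediate formulas combined with the explicit shape of $\widetilde{\widetilde{x_{f;t_{j+1}}}}$, and record that the resulting $C_{1;i\to j+1}$ depends only on $B_{t_{j+1}}$, the mutation word $V_{k_i}\cdots V_{k_{j+1}-1}$, $p,q$ and the exponents $u_e, p_1, \theta$, as claimed.

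\textbf{Main obstacle.} The crux is the bookkeeping that identifies the $x_{f;t_{j+1}}$-denominators of $\mathcal{P}_{\theta,2}$ with \emph{exactly} the combination $\widetilde{\widetilde{x_{f;t_{j+1}}}}^\theta$ — i.e. proving the divisibility statement, not merely that some negative power of $x_{f;t_{j+1}}$ occurs. This is where Theorem~\ref{thm01312012} does the real work: the divisibility by $(1+x_1^r)^{ra - A_n}$ there is precisely the rank-$2$ shadow of divisibility by a power of the ``doubly-mutated'' variable, and one must match the exponent $\theta$ with the number $ra - A_n$ of missing mutations. A secondary difficulty is purely organizational: the three sums making up $\mathcal{P}_{\theta,2}$ come from three different source terms (type-1, type-2, type-3) of the inductive expression at stage $j$, each fed through Theorem~\ref{mainthm12062011} with slightly different parameters, and one has to verify that after reindexing they all contribute to the \emph{same} quotient $C_{1;i\to j+1}$ attached to the \emph{same} monomial $x_{e;t_{j+1}}^{u_e}\widetilde{x_{1;t_{j+1}}}^{p_1}$ — this is a matching of the combinatorial parametrizations that I expect to be straightforward but tedious, relying on Lemma~\ref{lem cn}, Lemma~\ref{negone}, and Lemma~\ref{lem sn} for the arithmetic of the Chebyshev-type sequences.
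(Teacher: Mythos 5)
You have correctly located the heart of the lemma (a divisibility statement: $\mathcal{P}_{\theta,2}$ is $\widetilde{\widetilde{x_{f;t_{j+1}}}}^{\theta}$ times a nonnegative Laurent polynomial in $x_{e;t_{j+1}}$ and $\widetilde{x_{1;t_{j+1}}}$ only) and the right engine (Theorem~\ref{thm01312012}). But your proposed grouping of the summands by the \emph{final} exponent data $(p_1,u_e)$ cannot produce the divisibility. Once you fix $(p_1,u_e)$ together with the $x_{f;t_{j+1}}$-exponent $-\theta$, you have fixed the entire Laurent monomial, so ``the sum over the remaining free parameters of the products of nonnegative structure constants'' is a single nonnegative integer; it cannot ``equal $\widetilde{\widetilde{x_{f;t_{j+1}}}}^{\theta}$ times an integer,'' because the factorization you need is a relation \emph{among the coefficients of different monomials}. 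The grouping that actually works (and is the one the paper uses, after abandoning the three inductive sums and recomputing $P_\theta$ from scratch by applying Corollary~\ref{cor 26} twice, once across $t_i\to t_{i+1}$ and once across $t_{i+1}\to t_{i+2}$) fixes the second-stage parameters $(\varsigma,\tau_{1;2},\dots,\tau_{n_2-3;2})$ — which fixes $\theta$ but \emph{not} the exponents of $\widetilde{x_{1;t_{i+2}}}$ and $x_{e;t_{i+2}}$ — so that the remaining free sum over $m=s_{n_1-2;1}$ takes the shape $\sum_m q(m)p(m)X^{b-m}$ with $X=\widetilde{x_{1;t_{i+2}}}^{\xi_2}/x_{e;t_{i+2}}^{\omega_2}$, to which Theorem~\ref{thm01312012} applies.

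Even with the correct grouping, three further steps are essential and absent from your plan. First, the second product of binomial coefficients $\prod_{w=1}^{n_2-3}\gchooseround{A_{w+1;2}-r_2 s_{w;2}}{\tau_{w;2}}$ depends on $m$, so it cannot be pulled out of the sum; one must expand it as a nonnegative combination $\sum_i d_i\gchooseround{b-m}{i}$ (Lemmas~\ref{0303lem945}, \ref{0423lem1}, \ref{0423lem2}, which in turn require showing the quantities $\tilde C(w)$ and $\theta(w)$ are nonnegative) and then use the derivative trick of Lemma~\ref{0303lem944} to see that multiplying by such a factor only lowers the divisibility exponent from $g=r_1(A_{n_1-1;1}-\varsigma)-A_{n_1;1}$ to $g-h$, which must still be checked to be at least $\theta$. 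Second, one needs Lemma~\ref{0302lem1} to show the exponent $\lfloor (A_{n_1-1;1}-\varsigma)A_{n_1-1;1}/A_{n_1;1}\rfloor - s_{n_1-2;1}$ is nonnegative, so that the quotient by $(1+X)^{\theta}$ is a genuine polynomial and no spurious denominators in $x_{e;t_{i+2}}$ or $\widetilde{x_{1;t_{i+2}}}$ survive. Third, the conversion from divisibility by a power of $(1+X)$ to divisibility by $\widetilde{\widetilde{x_{f;t_{i+2}}}}^{\theta}$ uses the identity $\widetilde{\widetilde{x_{f;t_{i+2}}}}=(x_{e;t_{i+2}}^{\omega_2}+\widetilde{x_{1;t_{i+2}}}^{\xi_2})/x_{f;t_{i+2}}$ together with the mutation rule (\ref{eq 4.8.1}) identifying $\xi_2,\omega_2$ with the Chebyshev combinations $c_{n_2}^{[\xi_1]}r_1-c_{n_2-1}^{[\xi_1]}\omega_1$ and $c_{n_2-1}^{[\xi_1]}r_1-c_{n_2-2}^{[\xi_1]}\omega_1$; this identification is where the ``matching of $\theta$ with the missing mutations'' that you flag as the crux actually happens, and it is a computation, not an expectation.
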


\begin{proof}

$P_\theta$ is the sum of all terms in the Laurent expansion of $x_{1;t_i}^p x_{f;t_i}^q$ in the cluster $\mathbf{x}_{t_{j+1}}$ whose exponent of $x_{f;t_{j+1}}$ is equal to $-\theta$. Clearly $ P_{\theta,2}=0$, for $j=i.$ We shall start over and compute $P_\theta$ using Corollary \ref{cor 26}.  In trying to keep the notation simple, we give a detailed proof for the case $j=i+1$. The case $j>i+1$ uses the same argument.
%Suppose that the mutation sequence looks like
%$$
%t_i\overset{\cdots f1f1f1}{\text{--------}}t_{i+1}\overset{e1}{\text{-----}}t_{i+2}.
%$$
Let $$Q_1=\xymatrix{1\ar[dr]_{r_1}&&e\ar[ll]_{\xi_1}\\&f\ar[ru]_{\omega_1}}$$ be the quiver at the seed $\mu_1(t_{i+1})$, where $r_1$ (respectively $\omega_1$ and $\xi_1$) is the number of arrows from 1 to $f$ (respectively from $f$ to $e$, and from  $e$ to $1$).

Applying Corollary \ref{cor 26} to $x_{1;t_i}^p x_{f;t_i}^q$, with $x_2=x_{f;t_{i+1}}$, $x_3=\widetilde{x_{1;t_{i+1}}}$, and $z_3= x_{e;t_{i+1}}$, we obtain
\begin{equation}\label{0303eq5}
 \sum_{\tau_{0;1},\tau_{1;1},\cdots,\tau_{n_1-3;1}}  \!\!\!\! \left( \prod_{w=0}^{n_1-3}\!\gchoose{\!\!A_{w+1;1} - r_1s_{w;1}\!\!}{\tau_{w;1}} \!\!  \right)
\widetilde{x_{1;t_{i+1}}}^ {A_{n_1-1;1}-r_1s_{n_1-2;1}}x_{f;t_{i+1}}^ {r_1s_{n_1-3;1}-A_{n_1-2;1}   }  x_{e;t_{i+1}}^{\omega_1 s_{n_1-2;1} - \xi_1 s_{n_1-3;1}   }.
\end{equation}

Let $n_2$ be the number of seeds between $\mu_1(t_{i+1})$ and $t_{i+2}$ inclusive. Suppose that $n_2$ is an even integer. The case that $n_2$ is odd  is similar, except that the roles of $x_{1;t_i+1}$ and $x_{e;t+1}$ are interchanged. Let $$Q_2=\xymatrix{1\ar[dr]_{\xi_2}&&e\ar[ll]_{r_2}\\&f\ar[ru]_{\omega_2}}$$ be the quiver at the seed $\mu_1(t_{i+2})$, where $r_2$ (respectively $\omega_2$ and $\xi_2$) is the number of arrows from $e$ to $1$ (respectively from $f$ to $e$, and from  $1$ to $f$). Since the mutation sequence relating the quivers $Q_1$ and $Q_2$ consists of mutations in the vertices 1 and $e$, we see from Lemma \ref{lem nonacyclic}

\begin{eqnarray} \label{eq 4.8.1}
 r_2&=&\xi_1,\nonumber\\
\omega_2&=&c_{n_2-1}^{[r_2]}r_1- c_{n_2-2}^{[r_2]}\omega_1,\\
 \xi_2&=&c_{n_2}^{[r_2]}r_1- c_{n_2-1}^{[r_2]}\omega_1\nonumber.
\end{eqnarray}

Now let
$p_2 ={A_{n_1-1;1}-r_1s_{n_1-2;1}}$, $q_2={\omega_1 s_{n_1-2;1} - \xi_1 s_{n_1-3;1}   }$ be the exponents of $\widetilde{x_{1;t_{i+1}}}$ and $x_{e;t_{i+1}}$ in (\ref{0303eq5}) respectively.
 Applying Corollary~\ref{cor 26} to $\widetilde{x_{1;t_{i+1}}}^{p_2}x_{e;t_{i+1}}^{q_2}$
 %$$\widetilde{x_{1;t_{i+1}}}^ {A_{n_1-1;1}-r_1s_{n_1-2;1}}   x_{e;t_{i+1}}^{\omega_1 s_{n_1-2;1} - \xi_1 s_{n_1-3;1}   },$$
  we see that (\ref{0303eq5}) is equal to
\begin{equation}\label{0421eq1}
\aligned
&\sum_{\tau_{0;1},\tau_{1;1},\cdots,\tau_{n_1-3;1}}   \left( \prod_{w=0}^{n_{1}-3}\gchoose{A_{w+1;1} - r_1s_{w;1}}{\tau_{w;1}}   \right) x_{f;t_{i+2}}^ {r_1s_{n_1-3;1}-A_{n_1-2;1} }  \\
&\sum_{\tau_{0;2},\tau_{1;2},\cdots,\tau_{n_{2}-3;2}}   \left( \prod_{w=0}^{n_{2}-3}\gchoose{A_{w+1;2} - r_{2}s_{w;2}}{\tau_{w;2}}   \right) \\
&\times  \widetilde{x_{1;t_{i+2}}}^ {A_{n_2-1;2}-r_{2} s_{n_{2}-2;2} }
 x_{e;t_{i+2}}^{ r_2s_{n_2-3;2}-A_{n_2-2;2}}x_{f;t_{i+2}}^{\omega_2s_{n_2-2;2}-\xi_2s_{n_2-3;2}}\\
  \\
 =
&\sum_{\tau_{0;1},\tau_{1;1},\cdots,\tau_{n_1-3;1}}   \left( \prod_{w=0}^{n_{1}-3}\gchoose{A_{w+1;1} - r_1s_{w;1}}{\tau_{w;1}}   \right)  \\
&\sum_{\tau_{0;2},\tau_{1;2},\cdots,\tau_{n_{2}-3;2}}   \left( \prod_{w=0}^{n_{2}-3}\gchoose{A_{w+1;2} - r_{2}s_{w;2}}{\tau_{w;2}}   \right) \\
&\times  \widetilde{x_{1;t_{i+2}}}^ {A_{n_2-1;2}-r_{2} s_{n_{2}-2;2} }
 x_{e;t_{i+2}}^{ r_2s_{n_2-3;2}-A_{n_2-2;2}}
 x_{f;t_{i+2}}^{\omega_2s_{n_2-2;2}-\xi_2s_{n_2-3;2}+r_1s_{n_1-3;1}-A_{n_1-2;1} }
 \endaligned\end{equation}
where $A_{i;2}$ and $s_{i;2}$ are as defined before Lemma~\ref{negone} and Lemma \ref{lem sn} but in terms of $p_2, q_2,$ and $r_2$, thus $A_{i;2}=p_2c_{i+1}^{[r_2]}+q_2c_i^{[r_2]}$ and $s_{i;2}=\sum_{j=0}^{i-1}c_{i-j+1}^{[r_2] }\tau_{j;2}$.

Let $\theta$ be a positive integer. We want to compute $P_\theta$, which is the sum of all the terms in the sum above for which  the exponent of $x_{f;t_{i+2}}$ is equal to $-\theta$, and show that it is divisible by $ \widetilde{\widetilde{x_{f;t_{j+1}}}}^\theta$. Thus $-\theta $ is equal to
\[{\omega_2s_{n_2-2;2}-\xi_2s_{n_2-3;2}+r_1s_{n_1-3;1}-A_{n_1-2;1} }.
\]

It is convenient to introduce $\varsigma$ such that $\tau_{0;2}=\varsigma -s_{n_1-3;1}$. Then
$$\aligned
s_{n_2-2;2}&=c_{n_2-1}^{[r_2]} (\varsigma -s_{n_1-3;1})+ \sum_{j=1}^{n_2-3} c_{n_2-1-j}^{[r_2]} \tau_{j;2},\text{ and }\\
s_{n_2-3;2}&=c_{n_2-2}^{[r_2]} (\varsigma -s_{n_1-3;1})+ \sum_{j=1}^{n_2-4} c_{n_2-2-j}^{[r_2]}\tau_{j;2}.
\endaligned$$

%Then \[-\theta\,=\,{\omega_2s_{n_2-2;2}-\xi_2s_{n_2-3;2}+r_1s_{n_1-3;1}-A_{n_1-2;1} }.\]
Using equation (\ref{eq 4.8.1}), the expressions for $s_{n_2-2;2}$ and $s_{n_2-3;2}$ and the fact that $c_1^{[\xi]}=0$, we have

\begin{equation*}
\aligned & \omega_2s_{n_2-2;2}-\xi_2s_{n_2-3;2 } \\
=&\ (c_{n_2-1}^{[\xi_1]}r_1-c_{n_2-2}^{[\xi_1]} \omega_1)
\left[c_{n_2-1}^{[\xi_1]}(\varsigma -s_{n_1-3;1}) +  \sum_{j=1}^{n_2-3} c_{n_2-1-j}^{[\xi_1]}\tau_{j;2} \right] \\
&-(c_{n_2}^{[\xi_1]}r_1-c_{n_2-1}^{[\xi_1]} \omega_1)
\left[c_{n_2-2}^{[\xi_1]}(\varsigma -s_{n_1-3;1}) +  \left(\sum_{j=1}^{n_2-3} c_{n_2-2-j}^{[\xi_1]}\tau_{j;2}\right) - c_1^{[\xi_1]}\tau_{n_2-3;2}\right] \\
=&\ (\varsigma -s_{n_1-3;1})\,r_1\left( (c_{n_2-1}^{[\xi_1]})^2-c_{n_2}^{[\xi_1]}c_{n_2-2}^{[\xi_1]}\right) \\
&+\sum_{j=1}^{n_2-3} \tau_{j;2} \left[ r_1\left(c_{n_2-1}^{[\xi_1]}c_{n_2-1-j}^{[\xi_1]} -c_{n_2}^{[\xi_1]}c_{n_2-2-j}^{[\xi_1]}\right)
+\omega_1\left(-c_{n_2-2}^{[\xi_1]}c_{n_2-1-j}^{[\xi_1]}+c_{n_2-1}^{[\xi_1]}c_{n_2-2-j}^{[\xi_1]}
\right)
\right] \\
{=}
&\ (\varsigma -s_{n_1-3;1})\,r_1
+\sum_{j=1}^{n_2-3} \tau_{j;2} \left[ r_1\left(-c_{-j}^{[\xi_1]}\right)
+\omega_1c_{1-j}^{[\xi_1]}\right] \qquad \textup{(by Lemma \ref{lem cn})}
\\
\endaligned
\end{equation*}
And since $-c_{-j}^{[\xi_1]}=c_{j+2}^{[\xi_1]}$, we get

\begin{equation}\label{theta}\aligned-\theta
&=r_1(\varsigma -s_{n_1-3;1}) + \sum_{j=1}^{n_2-3} \tau_{j;2}(c_{j+2}^{[\xi_1]}r_1 - c_{j+1}^{[\xi_1]}\omega_1)+ r_1s_{n_1-3;1}-A_{n_1-2;1} \\
&=-A_{n_1-2;1} +r_1\varsigma + \sum_{j=1}^{n_2-3} \tau_{j;2}(c_{j+2}^{[\xi_1]}r_1 - c_{j+1}^{[\xi_1]}\omega_1).
\endaligned\end{equation}
Also, the exponents of $\widetilde{x_{1;t_{i+2}}}$ and  $x_{e;t_{i+2}}$ in (\ref{0421eq1}) can be expressed as follows:
$$\aligned
A_{n_2-1;2}-r_{2} s_{n_{2}-2;2} = \ & c_{n_2}^{[\xi_1]}p_2 + c_{n_2-1}^{[\xi_1]}q_2
- \xi_1\left(c_{n_2-1}^{[\xi_1]} (\varsigma -s_{n_1-3;1})+ \sum_{j=1}^{n_2-3} c_{n_2-1-j}^{[\xi_1]} \tau_{j;2}\right)\\
= \ & c_{n_2}^{[\xi_1]}(A_{n_1-1;1}-r_1s_{n_1-2;1} ) + c_{n_2-1}^{[\xi_1]}(\omega_1 s_{n_1-2;1} - \xi_1 s_{n_1-3;1}) \\
& - \xi_1\left(c_{n_2-1}^{[\xi_1]} (\varsigma -s_{n_1-3;1})+ \sum_{j=1}^{n_2-3} c_{n_2-1-j}^{[\xi_1]} \tau_{j;2}\right)\\
 =\ & c_{n_2}^{[\xi_1]}(A_{n_1-1;1}-r_1s_{n_1-2;1} ) + c_{n_2-1}^{[\xi_1]}\omega_1 s_{n_1-2;1}\\
&- \xi_1\left(c_{n_2-1}^{[\xi_1]} \varsigma+ \sum_{j=1}^{n_2-3} c_{n_2-1-j}^{[\xi_1]} \tau_{j;2}\right),\endaligned$$
and similarly
\[\aligned \xi_1s_{n_2-3;2}-A_{n_2-2;2} %= \ & \xi_1\left(c_{n_2-2}^{[\xi_1]} (\varsigma -s_{n_1-3;1})+ \sum_{j=1}^{n_2-4} c_{n_2-2-j}^{[\xi_1]}\tau_{j;2}\right) \\
%& - \left( c_{n_2-1}^{[\xi_1]}(A_{n_1-1;1}-r_1s_{n_1-2;1} ) + c_{n_2-2}^{[\xi_1]}(\omega_1 s_{n_1-2;1} - \xi_1 s_{n_1-3;1}) \right)\\
= \ & \xi_1\left(c_{n_2-2}^{[\xi_1]} \varsigma + \sum_{j=1}^{n_2-4} c_{n_2-2-j}^{[\xi_1]}\tau_{j;2}\right) \\
& - \left( c_{n_2-1}^{[\xi_1]}(A_{n_1-1;1}-r_1s_{n_1-2;1} ) + c_{n_2-2}^{[\xi_1]}\omega_1 s_{n_1-2;1} \right).
\endaligned\]
By fixing $\varsigma,\tau_{1;2},\cdots,\tau_{n_{2}-3;2}$ in (\ref{0421eq1}), we have
$$\aligned
&\sum_{\tau_{0;1},\tau_{1;1},\cdots,\tau_{n_1-3;1}}   \left( \prod_{w=0}^{n_{1}-3}\gchoose{A_{w+1;1} - r_1s_{w;1}}{\tau_{w;1}}   \right)  \left( \prod_{w=0}^{n_{2}-3}\gchoose{A_{w+1;2} - r_{2}s_{w;2}}{\tau_{w;2}}   \right) \\
&\times  \widetilde{x_{1;t_{i+2}}}^ {c_{n_2}^{[\xi_1]}(A_{n_1-1;1}-r_1s_{n_1-2;1} ) + c_{n_2-1}^{[\xi_1]}\omega_1 s_{n_1-2;1}- \xi_1\left(c_{n_2-1}^{[\xi_1]} \varsigma+ \sum_{j=1}^{n_2-3} c_{n_2-1-j}^{[\xi_1]} \tau_{j;2}\right) }\\
&\times   x_{e;t_{i+2}}^{ \xi_1\left(c_{n_2-2}^{[\xi_1]} \varsigma + \sum_{j=1}^{n_2-4} c_{n_2-2-j}^{[\xi_1]}\tau_{j;2}\right) - \left( c_{n_2-1}^{[\xi_1]}(A_{n_1-1;1}-r_1s_{n_1-2;1} ) + c_{n_2-2}^{[\xi_1]}\omega_1 s_{n_1-2;1} \right)} \\
&\times   x_{f;t_{i+2}}^{-A_{n_1-2;1} +r_1\varsigma + \sum_{j=1}^{n_2-3} (c_{j+2}^{[t_1]}r_1 - c_{j+1}^{[t_1]}\omega_1)\tau_{j;2} }
 \endaligned$$
and this is equal to a product $\phi \varphi$ where $\phi$ is a Laurent monomial in $ \widetilde{x_{1;t_{i+2}}}, x_{e;t_{i+2}},x_{f;t_{i+2}}$ and $\varphi$ is equal to
\begin{equation*}\aligned
\sum_{\tau_{0;1},\tau_{1;1},\cdots,\tau_{n_1-3;1}}   \left( \prod_{w=0}^{n_{1}-3}\gchoose{A_{w+1;1} - r_1s_{w;1}}{\tau_{w;1}}   \right)   \left( \prod_{w=0}^{n_{2}-3}\gchoose{A_{w+1;2} - r_{2}s_{w;2}}{\tau_{w;2}}   \right) \\
\times \left(\frac{\widetilde{x_{1;t_{i+2}}}^{c_{n_2}^{[\xi_1]}r_1- c_{n_2-1}^{[\xi_1]}\omega_1} }{ x_{e;t_{i+2}}^{c_{n_2-1}^{[\xi_1]}r_1- c_{n_2-2}^{[\xi_1]}\omega_1}}\right)^{ \left\lfloor (A_{n_1-1;1}-\varsigma) \frac{A_{n_1-1;1}}{A_{n_1;1}}\right\rfloor -s_{n_1-2;1}}
\endaligned
\end{equation*} and transferring the 0-th term of the second product to a $(n_1-2)$-nd term in the first product, we get
\begin{equation}\label{eq goal2}\aligned
\sum_{\tau_{0;1},\tau_{1;1},\cdots,\tau_{n_1-2;1}}   \left( \prod_{w=0}^{n_{1}-2}\gchoose{A_{w+1;1} - r_1s_{w;1}}{\tau_{w;1}}   \right)   \left( \prod_{w=1}^{n_{2}-3}{A_{w+1;2} - r_{2}s_{w;2}\choose\tau_{w;2}}   \right)
\\
  \times \left(\frac{\widetilde{x_{1;t_{i+2}}}^{c_{n_2}^{[\xi_1]}r_1- c_{n_2-1}^{[\xi_1]}\omega_1} }{ x_{e;t_{i+2}}^{c_{n_2-1}^{[\xi_1]}r_1- c_{n_2-2}^{[\xi_1]}\omega_1}}\right)^{ \left\lfloor (A_{n_1-1;1}-\varsigma) \frac{A_{n_1-1;1}}{A_{n_1;1}}\right\rfloor -s_{n_1-2;1}},
 \endaligned
 \end{equation}
where $\tau_{n_1-2;1}=A_{n_1-1;1}-r_1s_{n_1-2;1}-\tau_{0;2}=A_{n_1-1;1}-r_1s_{n_1-2;1}-\varsigma +s_{n_1-3;1}$.
Moreover, using Lemma~\ref{lem sn}, we observe that
\begin{equation}\label{eq 4.10a}
 s_{n_1-1;1}=r_1s_{n_1-2;1} - s_{n_1-3;1} + \tau_{n_1-2;1} =A_{n_1-1;1}-\varsigma,
\end{equation}
and by Theorem~\ref{thm01312012},
$$
\sum_{\begin{array}{c}\scriptstyle \tau_{0;1},\tau_{1;1},\cdots,\tau_{n_1-2;1}\\ \scriptstyle s_{n_1-1;1}=A_{n_1-1;1}-\varsigma \end{array}}   \left( \prod_{w=0}^{n_1-2}\gchoose{A_{w+1;1} - r_1s_{w;1}}{\tau_{w;1}}   \right){x_{1;t_{i+1}}}^ {A_{n_1-1;1}-r_1s_{n_1-2;1}}
$$
is   divisible by $(1+{x_{1;t_{i+1}}}^{r_1})^{r_1(A_{n_1-1;1}-\varsigma) -A_{n_1;1}}$  in $\mathbb{Z}[{x_{1;t_{i+1}}}^{\pm 1}]$, and the resulting quotient has nonnegative coefficients.
Multiplying the sum with ${x_{1;t_{i+1}}}^{r_1  \left\lfloor (A_{n_1-1;1}-\varsigma) \frac{A_{n_1-1;1}}{A_{n_1;1}}\right\rfloor -A_{n_1-1;1}}$ shows that
\begin{equation}\label{double star}
\sum_{\begin{array}{c} \scriptstyle \tau_{0;1},\tau_{1;1},\cdots,\tau_{n_1-2;1}\\\scriptstyle  s_{n_1-1;1}=A_{n_1-1;1}-\varsigma\end{array}}   \left( \prod_{w=0}^{n_1-2}\gchoose{A_{w+1;1} - r_1s_{w;1}}{\tau_{w;1}}   \right)({x_{1;t_{i+1}}}^{r_1})^{ \left\lfloor (A_{n_1-1;1}-\varsigma) \frac{A_{n_1-1;1}}{A_{n_1;1}}\right\rfloor -s_{n_1-2;1}}
\end{equation}
is also divisible by $(1+{x_{1;t_{i+1}}}^{r_1})^{r_1(A_{n_1-1;1}-\varsigma) -A_{n_1;1}}$, and the resulting quotient has nonnegative coefficients.
Moreover, we shall show in Lemma~\ref{0302lem1} below that the exponents in the expression (\ref{double star}) are nonnegative,
which implies that the quotient is a polynomial.

Note that the statement about the divisibility of (\ref{double star}) also holds when we replace $({x_{1;t_{i+1}}}^r)$ with any other expression $X$.
We can write (\ref{eq goal2}) as follows:
\[ (\ref{eq goal2}) = \sum\sum_mq(m)p(m)X^{b-m},\]
where
\[
\begin{array}
 {rcl}
q(m) &=& \displaystyle\prod_{w=0}^{n_{1}-2}\gchoose{A_{w+1;1} - r_1s_{w;1}}{\tau_{w;1}} \\
& \\
p(m)&=&\displaystyle  \prod_{w=1}^{n_{2}-3}{{A_{w+1;2} - r_{2}s_{w;2}}\choose{\tau_{w;2}}}  \\
\\
b &=& \left\lfloor (A_{n_1-1;1}-\varsigma) \frac{A_{n_1-1;1}}{A_{n_1;1}}\right\rfloor \\
& \\
m&=& s_{n_1-2;1} \\
\\
X&=& \left(\frac{\widetilde{x_{1;t_{i+2}}}^{c_{n_2}^{[\xi_1]}r_1- c_{n_2-1}^{[\xi_1]}\omega_1} }{ x_{e;t_{i+2}}^{c_{n_2-1}^{[\xi_1]}r_1- c_{n_2-2}^{[\xi_1]}\omega_1}}\right)
\end{array}\]
Then Lemma \ref{0303lem945} below yields
\[p(m) =
\sum_{i=0}^{\sum_{w=1}^{n_2-3}\tau_{w;2}} d_i {\left\lfloor (A_{n_1-1;1}-\varsigma) \frac{A_{n_1-1;1}}{A_{n_1;1}}\right\rfloor-s_{n_1-2;1}\choose i}\]
and using Lemma~\ref{0303lem944}
with
$g=r_1(A_{n_1-1;1}-\varsigma) -A_{n_1;1}$ and
$h=\sum_{j=1}^{n_2-3} \tau_{j;2}$,
we get that the expression  in (\ref{eq goal2})
is divisible by
\begin{equation}\label{divisor}
\aligned
&\left(1+\left(\frac{\widetilde{x_{1;t_{i+2}}}^{c_{n_2}^{[\xi_1]}r_1- c_{n_2-1}^{[\xi_1]}\omega_1} }{ x_{e;t_{i+2}}^{c_{n_2-1}^{[\xi_1]}r_1- c_{n_2-2}^{[\xi_1]}\omega_1}}\right)\right)^{r_1(A_{n_1-1;1}-\varsigma) -A_{n_1;1}-\sum_{j=1}^{n_2-3} \tau_{j;2}}\\
&=\left(1+\left(\frac{\widetilde{x_{1;t_{i+2}}}^{\xi_2} }{ x_{e;t_{i+2}}^{\omega_2}}\right)\right)^{A_{n_1-2;1} -r_1\varsigma -\sum_{j=1}^{n_2-3} \tau_{j;2}},\endaligned\end{equation} and the resulting quotient has nonnegative coefficients. Finally, dividing (\ref{divisor}) by $x_{f;t_{i+2}}^\theta$ and using the fact that
\[ \widetilde{\widetilde{x_{f;t_{i+2}}}} =(x_{e;t_{i+2}}^{\omega_2}+\widetilde{x_{1;t_{i+2}}}^{\xi_2} )/x_{f;t_{i+2}}\]
we see that $\mathcal{P}_{\theta,2}$ is divisible by
 $\widetilde{\widetilde{x_{f;t_{i+2}}}}^\theta$.
\end{proof}

\begin{lemma}\label{0302lem1}
% $$r_1\left((A_{n_1-1;1}-\varsigma) \frac{A_{n_1-1;1}}{A_{n_1;1}}-s_{n_1-2;1}\right)\ge 0.$$
% In particular
 $$\left\lfloor (A_{n_1-1;1}-\varsigma) \frac{A_{n_1-1;1}}{A_{n_1;1}}\right\rfloor -s_{n_1-2;1}\ge 0.$$
\end{lemma}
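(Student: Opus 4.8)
The plan is to reduce the floor inequality to an ordinary inequality between integers, prove the latter by a telescoping identity in the Chebyshev recursions, and then verify the nonnegativity of the summation indices $\tau_{i;1}$ that occur.

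First I would invoke (\ref{eq 4.10a}), which says $s_{n_1-1;1}=A_{n_1-1;1}-\varsigma$, so the claim becomes $\left\lfloor s_{n_1-1;1}\,A_{n_1-1;1}/A_{n_1;1}\right\rfloor\ge s_{n_1-2;1}$. We may assume $(p,q)\neq(0,0)$, for otherwise the cluster monomial is $1$; then $A_{n_1;1}=p\,c^{[r_1]}_{n_1+1}+q\,c^{[r_1]}_{n_1}>0$, and since $s_{n_1-2;1}\in\mathbb{Z}$ the inequality is equivalent to
\[ s_{n_1-1;1}A_{n_1-1;1}-s_{n_1-2;1}A_{n_1;1}\ \ge\ 0. \]

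Next I would prove, by the same telescoping as in the derivation of (\ref{eq 33}), the identity
\[ s_{n_1-1;1}A_{n_1-1;1}-s_{n_1-2;1}A_{n_1;1}\ =\ \sum_{i=0}^{n_1-2}\tau_{i;1}\,A_{i+1;1}. \]
Indeed, substituting $A_{m+1;1}=r_1A_{m;1}-A_{m-1;1}$ (Lemma \ref{negone}(a)) and $s_{m;1}=r_1s_{m-1;1}-s_{m-2;1}+\tau_{m-1;1}$ (Lemma \ref{lem sn}) gives $s_{m;1}A_{m;1}-s_{m-1;1}A_{m+1;1}=\tau_{m-1;1}A_{m;1}+\bigl(s_{m-1;1}A_{m-1;1}-s_{m-2;1}A_{m;1}\bigr)$, and iterating down to $s_{1;1}A_{1;1}-s_{0;1}A_{2;1}=\tau_{0;1}A_{1;1}$ yields the formula. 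Every factor $A_{i+1;1}=p\,c^{[r_1]}_{i+2}+q\,c^{[r_1]}_{i+1}$ with $0\le i\le n_1-2$ is nonnegative (the Chebyshev values $c^{[r_1]}_j$, $j\ge 1$, are nonnegative and $p,q\ge 0$), so it suffices to check that, in every term of (\ref{double star}) whose coefficient $\prod_{w=0}^{n_1-2}\gchoose{A_{w+1;1}-r_1s_{w;1}}{\tau_{w;1}}$ is nonzero, each $\tau_{i;1}$ with $0\le i\le n_1-2$ is nonnegative. For $0\le i\le n_1-4$ this is part of the constraints (\ref{cond502b0303}) governing (\ref{0303eq5}). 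For $i=n_1-2$, write $\tau_{n_1-2;1}=p_2-\tau_{0;2}$ with $p_2=A_{n_1-1;1}-r_1s_{n_1-2;1}$; in a surviving term the second application of Corollary \ref{cor 26} is to a genuine cluster monomial, so $0\le\tau_{0;2}\le p_2$ and hence $\tau_{n_1-2;1}\ge 0$. For $i=n_1-3$ there are two cases: if $A_{n_1-2;1}-r_1s_{n_1-3;1}\ge 0$, then nonvanishing of $\gchoose{A_{n_1-2;1}-r_1s_{n_1-3;1}}{\tau_{n_1-3;1}}$ forces $0\le\tau_{n_1-3;1}\le A_{n_1-2;1}-r_1s_{n_1-3;1}$ directly from the definition of the modified binomial coefficient; if $A_{n_1-2;1}-r_1s_{n_1-3;1}<0$, then Lemma \ref{lem 34}, applied with indices shifted down by one, gives $\gchoose{A_{n_1-1;1}-r_1s_{n_1-2;1}}{\tau_{n_1-2;1}}=0$, so no such term occurs. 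Thus $s_{n_1-1;1}A_{n_1-1;1}-s_{n_1-2;1}A_{n_1;1}=\sum_{i=0}^{n_1-2}\tau_{i;1}A_{i+1;1}\ge 0$, which is what we wanted.

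I expect the last subcase to be the main obstacle: applying Lemma \ref{lem 34} with shifted indices requires its hypotheses (\ref{cond503}) in the shifted form. Conditions (\ref{cond503})(b), (c), (e) are immediate from (\ref{cond502b0303}); conditions (\ref{cond503})(d), (f) should follow from Proposition \ref{eanegineqcor} applied to the relevant sub-sum, but assembling these cleanly — and making sure they are compatible with the ``extension by one step'' carried out in passing from (\ref{0303eq5}) to (\ref{eq goal2}) — is the delicate bookkeeping. Should that route prove awkward, an alternative is to argue directly that a nonzero term already forces $s_{n_1-3;1}\le A_{n_1-2;1}/r_1$, making the second subcase vacuous; I would try this first.
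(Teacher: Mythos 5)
Your reduction to the integer inequality $s_{n_1-1;1}A_{n_1-1;1}-s_{n_1-2;1}A_{n_1;1}\ge 0$ (via (\ref{eq 4.10a}) and integrality of $s_{n_1-2;1}$) matches the paper, and your telescoping identity $s_{n_1-1;1}A_{n_1-1;1}-s_{n_1-2;1}A_{n_1;1}=\sum_{i=0}^{n_1-2}\tau_{i;1}A_{i+1;1}$ is correct; this is a genuinely different, and in principle more elementary, route than the one the paper takes. The paper instead splits off the $\mathcal{L}_{\max}$-contribution: it starts from the third condition in (\ref{cond502}), namely $(s_{n_1-1;1}-s'_{n_1-1;1})A_{n_1-2;1}\ge(s_{n_1-2;1}-s'_{n_1-2;1})A_{n_1-1;1}$, upgrades the ratio $A_{n_1-1;1}/A_{n_1-2;1}$ to $A_{n_1;1}/A_{n_1-1;1}$ using Lemma \ref{negone}(b), proves $s'_{n_1-1;1}A_{n_1-1;1}\ge s'_{n_1-2;1}A_{n_1;1}$ separately from $s'_{n_1-2;1}=kc^{[r_1]}_{n_1-1}$, $s'_{n_1-1;1}=kc^{[r_1]}_{n_1}$ and Lemma \ref{lem cn}, and adds. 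So the single deep input there is the compatibility inequality guaranteed by Proposition \ref{eanegineqcor}, used once and directly.

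The gap in your version is exactly where you suspect it: the nonnegativity of $\tau_{n_1-3;1}$ in the subcase $A_{n_1-2;1}-r_1s_{n_1-3;1}<0$ is asserted but not proved, and it is the only thing standing between your identity and the conclusion — a single negative $\tau_{n_1-3;1}$ would make $\sum_i\tau_{i;1}A_{i+1;1}$ potentially negative. Neither of your two proposed completions is carried out, and neither is a formality: invoking Lemma \ref{lem 34} with indices shifted down by one requires re-establishing conditions (\ref{cond503})(c), (d) and (f) at the shifted index, which in turn requires Proposition \ref{eanegineqcor} for the truncated tuple $(\tau_{0;1},\dots,\tau_{n_1-3;1})$ arising from a different decomposition of the cluster monomial — i.e., the same compatibility machinery the paper uses, but in a setting where its hypotheses must be checked anew. (The remark after Theorem \ref{mainthm12062011} does assert the equivalent fact that $s_{n-2}\le A_{n-1}/r$ forces $A_{n-2}-rs_{n-3}\ge 0$, citing Lemma \ref{lem 34}, which suggests your "alternative" route can be made to work; but as written your argument stops precisely at the step that carries all the difficulty, so the lemma is not yet proved.)
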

\begin{proof}
We have
\begin{equation*}
{s_{n_1-1;1}-s'_{n_1-1;1}} \geq \left(s_{n_1-2;1} -s'_{n_1-2;1}\right) \frac{A_{n_1-1;1}}{A_{n_1-2;1}}\geq \left(s_{n_1-2;1} -s'_{n_1-2;1}\right) \frac{A_{n_1;1}}{A_{n_1-1;1}},
\end{equation*}
where the first inequality follows from (\ref{cond502}) and  the second one follows from Lemma~\ref{negone}(b). Hence
\begin{equation}\label{0302eq1}
\left(s_{n_1-1;1}-s'_{n_1-1;1}\right)A_{n_1-1;1}\geq \left(s_{n_1-2;1} -s'_{n_1-2;1}\right) A_{n_1;1}.
\end{equation}

On the other hand, since $s'_{n_1-2;1}=kc^{[r_1]}_{n_1-1;1}$, $s'_{n_1-1;1}=kc^{[r_1]}_{n;1}$, by Definition \ref{20120121}, and $A_{i;1}=pc_{i+1;1}^{[r_1]}+qc^{[r_1]}_{i;1}$, we have

\begin{equation}\label{0302eq2}
s'_{n_1-1;1}A_{n_1-1;1}= kc_{n;1}^{[r_1]}(pc_{n;1}^{[r_1]}+qc_{n_1-1;1}^{[r_1]})\geq kc^{[r_1]}_{n_1-1;1}(pc^{[r_1]}_{n+1;1}+qc^{[r_1]}_{n;1})=s'_{n_1-2;1}A_{n_1;1},
\end{equation}
where the inequality follows from Lemma \ref{lem cn}.
Adding (\ref{0302eq1}) and (\ref{0302eq2}) we get
$$
s_{n_1-1;1}A_{n_1-1;1}\geq s_{n_1-2;1}A_{n_1;1}.
$$
Therefore equation (\ref{eq 4.10a}) yields
$$ (A_{n_1-1;1} -\varsigma)A_{n_1-1;1}\ge
s_{n_1-2;1}A_{n_1;1},
$$ and
we get
%$$A_{n_1-1;1}-rs_{n_1-2;1} \geq  A_{n_1-1;1}-r( A_{n_1-1;1}-\varsigma )\frac{A_{n_1-1;1}}{A_{n_1;1}}=-A_{n_1-1;1}\frac{A_{n_1-2;1}}{A_{n_1;1}}+r\varsigma \frac{A_{n_1-1;1}}{A_{n_1;1}},$$
%where the last equality follows from Lemma \ref{negone}(a).
%Thus $$
%0\leq A_{n_1-1;1}-rs_{n_1-2;1}+A_{n_1-1;1}\frac{A_{n_1-2;1}}{A_{n_1;1}}-r\varsigma \frac{A_{n_1-1;1}}{A_{n_1;1}} =
\[ (A_{n_1-1;1}-\varsigma) \frac{A_{n_1-1;1}}{A_{n_1;1}}-s_{n_1-2;1}
\ge s_{n_1-2;1} -s_{n_1-2;1}
 =0.\]
\end{proof}

\begin{lemma}\label{0303lem944}
Suppose that a polynomial of $x$ of the form $$\sum_{m\in I} q(m)x^{b-m}$$is divisible by $(1+x)^g$ and its quotient has nonnegative coefficients. Let
$$
p(m)=\sum_{i=0}^h d_i {b-m\choose i}
$$ be a polynomial of $m$ with $d_i\geq 0$.
Then  $$\sum_{m\in I} p(m)q(m)x^{b-m}$$is divisible by $(1+x)^{g-h}$ and its quotient has nonnegative coefficients.
\end{lemma}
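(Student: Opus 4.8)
The plan is to reduce the statement to the case $h=1$ and then iterate. The key algebraic observation is the identity relating $(1+x)$ and the operator "multiply by $(b-m)$" on the index $m$ appearing in a sum of the form $P(x)=\sum_{m\in I}q(m)x^{b-m}$. Concretely, I would first record the elementary fact that if $P(x)=\sum_m q(m)x^{b-m}$ is divisible by $(1+x)$, say $P(x)=(1+x)R(x)$ with $R(x)=\sum_m r(m)x^{b-m}$ having nonnegative coefficients, then the "index-shifted" sum $\sum_m (b-m)q(m)x^{b-m}$ is divisible by $(1+x)^{0}$ — more precisely, I claim $\sum_m (b-m)q(m)x^{b-m} = xP'(x)$, and one checks $xP'(x)=x(1+x)R'(x)+xR(x)$, so it is \emph{not} in general divisible by $(1+x)$. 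So the naive "differentiate" approach needs care: the correct statement to prove is that multiplying by the \emph{binomial} $\binom{b-m}{i}$ costs exactly $i$ factors of $(1+x)$, and this is what the hypothesis $p(m)=\sum_{i=0}^h d_i\binom{b-m}{i}$ with $d_i\ge 0$ is tailored for.

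Thus the heart of the argument is the single-binomial case: if $P(x)=\sum_{m\in I}q(m)x^{b-m}=(1+x)^g S(x)$ with $S$ having nonnegative coefficients, then $\sum_{m\in I}\binom{b-m}{i}q(m)x^{b-m}$ is divisible by $(1+x)^{g-i}$ with nonnegative quotient. I would prove this by induction on $i$, the case $i=0$ being trivial. For the inductive step it suffices to treat $i=1$ applied to a sum already known to be $(1+x)^{g'}$ times a nonnegative-coefficient polynomial. Here the clean way is to use the operator $\Theta := x\frac{d}{dx}$ composed suitably, but since $\Theta(x^{b-m})=(b-m)x^{b-m}$ I should instead use the "shifted" operator: note $x^{b-m+1}\frac{d}{dx}x^{-1}\cdot(\,\cdot\,)$, or more simply observe that on $\sum q(m)x^{b-m}$ one has $\sum (b-m)q(m)x^{b-m}=x\frac{d}{dx}\big(\sum q(m)x^{b-m}\big)$. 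Writing $P=(1+x)^{g}S$, Leibniz gives $xP'=g x(1+x)^{g-1}S+x(1+x)^gS'=(1+x)^{g-1}\big(gxS+x(1+x)S'\big)$, which shows divisibility by $(1+x)^{g-1}$; the quotient $gxS+x(1+x)S'$ has nonnegative coefficients provided $S$ does — and here is the subtle point, $S'$ can have negative... no: $x(1+x)S'+gxS$ — I need $S$ to have nonnegative coefficients, and then $x(1+x)S' = \Theta S\cdot$-type expressions, so I should instead write things in terms of $\binom{b-m}{1}=b-m$ directly and argue combinatorially on coefficients rather than via derivatives. The cleanest route: prove by induction on $|I|$ (or on $g$) using the substitution $x\mapsto x$ and peeling off one factor $(1+x)$, tracking how $\binom{b-m}{i}$ behaves under the telescoping $q(m)\leftrightarrow$ partial sums.

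The \textbf{main obstacle} I anticipate is exactly this bookkeeping: showing that the quotient stays nonnegative after multiplying by $\binom{b-m}{i}$. The resolution I would pursue is the standard "finite-difference" identity: for a sum $\sum_{m} q(m)x^{b-m}$ divisible by $(1+x)^g$, divisibility is equivalent to the vanishing of the first $g$ finite differences of the coefficient sequence at the endpoint, and nonnegativity of the quotient is equivalent to all $g$-th finite differences being nonnegative. Then the claim reduces to the purely combinatorial fact that if $(\Delta^g q)(\cdot)\ge 0$ then $(\Delta^{g-i}(\binom{b-\cdot}{i}q))(\cdot)\ge 0$, which follows from the binomial-coefficient Vandermonde/Chu–Vandermonde convolution since $\binom{b-m}{i}$ is itself a nonnegative-coefficient "polynomial in the shift" of degree $i$, and convolving a nonnegative sequence of finite differences against the nonnegative sequence $\big(\binom{i}{i},\binom{i}{i-1},\dots\big)$ preserves nonnegativity. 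Summing over $i=0,\dots,h$ with weights $d_i\ge 0$ and taking $g-h$ as the guaranteed number of surviving factors then gives the statement; the bound $g-h$ (rather than $g-i$ term by term) is forced because the $i=h$ term only guarantees $g-h$ factors, and all lower terms are a fortiori divisible by $(1+x)^{g-h}$.
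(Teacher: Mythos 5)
You actually wrote down the paper's proof for the case $i=1$ --- the identity $xP' = (1+x)^{g-1}\bigl(gxS + x(1+x)S'\bigr)$, whose quotient is manifestly nonnegative because differentiation preserves nonnegativity of coefficients --- and then abandoned it for no reason; there is no ``subtle point'' there. The genuine gaps are in what you propose instead. First, the reduction ``for the inductive step it suffices to treat $i=1$'' is false: applying the $i=1$ statement to $\sum_m\binom{b-m}{i-1}q(m)x^{b-m}$ produces $\sum_m(b-m)\binom{b-m}{i-1}q(m)x^{b-m}$, and since $(b-m)\binom{b-m}{i-1}=i\binom{b-m}{i}+(i-1)\binom{b-m}{i-1}$, isolating the $\binom{b-m}{i}$ piece requires a subtraction, which destroys exactly the nonnegativity you need to track. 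Second, the fallback finite-difference route is only gestured at, and its central equivalence is wrong as stated: divisibility by $(1+x)^g$ and nonnegativity of the quotient are controlled by differences of the \emph{signed} sequence $(-1)^k a_k$ (the unsigned version is the statement for $(1-x)^g$), and the asserted ``convolution preserves nonnegativity'' step is precisely the content of the lemma, not a standard fact one can invoke.

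The repair is to return to derivatives but take all $i$ of them at once: the operator $\frac{x^i}{i!}\frac{d^i}{dx^i}$ sends $x^{b-m}$ to $\binom{b-m}{i}x^{b-m}$ exactly (it produces the falling factorial, not the $i$-th power of $b-m$), and Leibniz gives
$$\frac{d^i}{dx^i}\bigl[(1+x)^gS\bigr]=\sum_{k=0}^{i}\binom{i}{k}\,g(g-1)\cdots(g-k+1)\,(1+x)^{g-k}S^{(i-k)}(x),$$
in which every scalar factor is nonnegative, every $S^{(i-k)}$ has nonnegative coefficients, and every term is divisible by $(1+x)^{g-i}$ with nonnegative quotient. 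Summing over $0\le i\le h$ with weights $d_i\ge 0$ yields the lemma; this is exactly the paper's one-line proof.
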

\begin{proof}
This is because $x^i\frac{d^i}{dx^i}\sum_{m\in I} q(m)x^{b-m}$ is divisible by $(1+x)^{g-i}$ and its quotient has nonnegative coefficients.
\end{proof}

\begin{lemma}\label{0303lem945}
With assumptions in Proof of Lemma~\ref{mainthm11102011_01}, we have
$$
 \prod_{w=1}^{n_{2}-3}{{A_{w+1;2} - r_{2}s_{w;2}}\choose{\tau_{w;2}}}
=
\sum_{i=0}^{\sum_{w=1}^{n_2-3}\tau_{w;2}} d_i {\left\lfloor (A_{n_1-1;1}-\varsigma) \frac{A_{n_1-1;1}}{A_{n_1;1}}\right\rfloor-s_{n_1-2;1}\choose i}
$$ for some $d_i\in \mathbb{N}$, which are independent of $s_{n_1-2;1}$.
\end{lemma}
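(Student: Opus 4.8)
The plan is to reduce the assertion to a purely combinatorial statement about ordinary binomial coefficients. Keeping $\varsigma$ and $\tau_{1;2},\dots,\tau_{n_2-3;2}$ fixed and treating $m:=s_{n_1-2;1}$ as the only free parameter, I would first substitute $p_2=A_{n_1-1;1}-r_1m$, $q_2=\omega_1 m-\xi_1 s_{n_1-3;1}$, $\tau_{0;2}=\varsigma-s_{n_1-3;1}$ and $r_2=\xi_1$ into $A_{w+1;2}-r_2s_{w;2}=p_2c_{w+2}^{[\xi_1]}+q_2c_{w+1}^{[\xi_1]}-\xi_1 s_{w;2}$. The two occurrences of $s_{n_1-3;1}$ cancel, precisely because $r_2=\xi_1$, leaving
$$A_{w+1;2}-r_2s_{w;2}=\Bigl(A_{n_1-1;1}c_{w+2}^{[\xi_1]}-\xi_1\varsigma\, c_{w+1}^{[\xi_1]}-\xi_1\sum_{j=1}^{w-1}c_{w-j+1}^{[\xi_1]}\tau_{j;2}\Bigr)-\gamma_w\,m,\qquad \gamma_w:=r_1c_{w+2}^{[\xi_1]}-\omega_1c_{w+1}^{[\xi_1]},$$
an affine function of $m$ whose slope and intercept do not involve $s_{n_1-3;1}$. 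Writing $\ell:=\lfloor (A_{n_1-1;1}-\varsigma)\tfrac{A_{n_1-1;1}}{A_{n_1;1}}\rfloor-m$, which is $\ge0$ by Lemma~\ref{0302lem1}, this becomes $A_{w+1;2}-r_2s_{w;2}=a_w+\gamma_w\ell$, where $a_w$ denotes the value of the argument at $m=\lfloor (A_{n_1-1;1}-\varsigma)\tfrac{A_{n_1-1;1}}{A_{n_1;1}}\rfloor$.

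Next I would establish two nonnegativity facts. For $\gamma_w$: one recognizes $(\gamma_w)$ as the sequence of arrow multiplicities produced by mutating the quiver $Q_1$ in its vertices $1$ and $e$ — indeed $\gamma_{n_2-3}=\omega_2$ and $\gamma_{n_2-2}=\xi_2$ by (\ref{eq 4.8.1}), and $(\gamma_w)$ satisfies the Chebyshev recursion $\gamma_w=\xi_1\gamma_{w-1}-\gamma_{w-2}$ — so $\gamma_w\ge2>0$ by Lemma~\ref{lem nonacyclic}(1),(3) applied to $Q_1$, which is non-acyclic since it is mutation equivalent to $Q$. For $a_w\ge0$: here $a_w$ is the argument $A_{w+1;2}-r_2s_{w;2}$ at the extreme value of $m$, and one must feed in Lemma~\ref{0302lem1} together with the defining inequalities (\ref{cond502}), (\ref{cond503}) and Lemma~\ref{negone}. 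I expect pinning down this boundary estimate on $a_w$ to be the main technical obstacle of the proof; the rest is formal.

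Finally I would prove the combinatorial core: if $a_w\in\mathbb{Z}_{\ge0}$, $\gamma_w\in\mathbb{Z}_{>0}$, $\tau_w\in\mathbb{Z}_{\ge0}$, then $\prod_{w}\binom{a_w+\gamma_w\ell}{\tau_w}$, as a polynomial in the integer variable $\ell\ge0$, equals $\sum_{i\ge0}d_i\binom{\ell}{i}$ with $d_i\in\mathbb{Z}_{\ge0}$ and $d_i=0$ for $i>\sum_w\tau_w$. For a single factor this follows by induction on $\tau$ from the telescoped Pascal identity $\binom{a+\gamma\ell+\gamma}{\tau}-\binom{a+\gamma\ell}{\tau}=\sum_{j=0}^{\gamma-1}\binom{a+\gamma\ell+j}{\tau-1}$: each iterated forward difference $\Delta_\ell^i\binom{a+\gamma\ell}{\tau}$ is a nonnegative integer combination of binomials $\binom{a+\gamma\ell+(\text{nonnegative integer})}{\tau-i}$, which at $\ell=0$ is $\ge0$ because $a\ge0$ and vanishes once $i>\tau$, so the coefficients $d_i$ — the iterated forward differences of $\binom{a+\gamma\ell}{\tau}$ at $\ell=0$ — are nonnegative integers. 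To pass to the product I would invoke the classical identity writing $\binom{\ell}{i}\binom{\ell}{j}$ as a nonnegative integer combination of the $\binom{\ell}{k}$ (count, for a fixed $\ell$-set, the pairs consisting of an $i$-subset and a $j$-subset according to their union), which shows the nonnegative-integer span of $\{\binom{\ell}{i}\}_{i\ge0}$ is closed under multiplication; the degree bound holds because $\gamma_w\ge1$ makes the $w$-th factor have degree exactly $\tau_w$ in $\ell$. Applying this with the $a_w$, $\gamma_w$ found above and $\ell=\lfloor(A_{n_1-1;1}-\varsigma)\tfrac{A_{n_1-1;1}}{A_{n_1;1}}\rfloor-s_{n_1-2;1}$ yields the claimed formula; and since $a_w,\gamma_w,\tau_{w;2}$ were assembled without any reference to $s_{n_1-2;1}$, the coefficients $d_i$ are independent of $s_{n_1-2;1}$, completing the proof.
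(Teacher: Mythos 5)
Your reduction is set up correctly and follows essentially the same route as the paper: you write $A_{w+1;2}-r_2s_{w;2}$ as an affine function $a_w+\gamma_w\ell$ of $\ell=\left\lfloor (A_{n_1-1;1}-\varsigma)\frac{A_{n_1-1;1}}{A_{n_1;1}}\right\rfloor-s_{n_1-2;1}$ with slope $\gamma_w=c_{w+2}^{[\xi_1]}r_1-c_{w+1}^{[\xi_1]}\omega_1$ (the cancellation of $s_{n_1-3;1}$ via $r_2=\xi_1$ is exactly what the paper exploits), you prove $\gamma_w>0$ by identifying it with an arrow multiplicity through Lemma~\ref{lem nonacyclic}, and your combinatorial core --- that $\prod_w\binom{a_w+\gamma_w\ell}{\tau_w}$ lies in the nonnegative-integer span of $\{\binom{\ell}{i}\}$ with the stated degree bound, via forward differences for a single factor and closure of that span under products --- is correct and is precisely what the paper packages as Lemmas~\ref{0423lem1} and~\ref{0423lem2}.

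However, there is a genuine gap: you never prove $a_w\ge 0$. You explicitly defer it, saying one ``must feed in'' Lemma~\ref{0302lem1} and the defining inequalities and that you ``expect'' this boundary estimate to be the main obstacle. It is --- and it is the bulk of the paper's proof of this lemma. The paper's $C(w)$ is exactly your $a_w$, and it is shown nonnegative by decomposing
$C(w)=\gamma_w\bigl((A_{n_1-1;1}-\varsigma)\tfrac{A_{n_1-1;1}}{A_{n_1;1}}-\lfloor (A_{n_1-1;1}-\varsigma)\tfrac{A_{n_1-1;1}}{A_{n_1;1}}\rfloor\bigr)+\tilde C(w)\,\theta(w)$
and proving separately that $\tilde C(w)\ge 0$ for $w\ge 1$ (using the Chebyshev recursion $\tilde C(w)=\xi_1\tilde C(w-1)-\tilde C(w-2)$, the sign change $\tilde C(1)>0\ge\tilde C(0)$, and the arrow-count inequality $\omega_1A_{n_1-1;1}-\xi_1A_{n_1-2;1}>0$ of (\ref{eq 44.1})) and that $\theta(w)\ge 0$ (using the positivity of $\theta$ from (\ref{theta}) together with two further inequalities established by induction on $w$). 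Without $a_w\ge 0$ your argument breaks exactly where it matters: the coefficients $d_i$ are iterated forward differences evaluated at $\ell=0$, i.e.\ nonnegative combinations of values $\binom{a_w+c}{\tau_w-i}$ with $c\ge 0$, and these need not be nonnegative integers when $a_w<0$. So the nonnegativity of the $d_i$ --- the whole point of the lemma --- is not established by your proposal as written.
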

\begin{proof}
Once we know that there are nonnegative integers $a$ and $b$ such that $A_{w+1;2}-r_{2} s_{w;2}=a\left( \left\lfloor (A_{n_1-1;1}-\varsigma) \frac{A_{n_1-1;1}}{A_{n_1;1}}\right\rfloor-s_{n_1-2;1}\right) +b$, then it is clear, by Lemma~\ref{0423lem1}, that
$$
{{A_{w+1;2} - r_{2}s_{w;2}}\choose{\tau_{w;2}}} =
\sum_{i=0}^{\tau_{w;2}} d_i' {\left\lfloor (A_{n_1-1;1}-\varsigma) \frac{A_{n_1-1;1}}{A_{n_1;1}}\right\rfloor-s_{n_1-2;1}\choose i}
$$ for some $d_i'\in \mathbb{N}$, and by Lemma~\ref{0423lem2}, for any nonnegative integers $j$ and $k$,
$$\aligned
&{\left\lfloor (A_{n_1-1;1}-\varsigma) \frac{A_{n_1-1;1}}{A_{n_1;1}}\right\rfloor-s_{n_1-2;1}\choose j}
{\left\lfloor (A_{n_1-1;1}-\varsigma) \frac{A_{n_1-1;1}}{A_{n_1;1}}\right\rfloor-s_{n_1-2;1}\choose k}\\
&=\sum_{i=0}^{j+k} d_i'' {\left\lfloor (A_{n_1-1;1}-\varsigma) \frac{A_{n_1-1;1}}{A_{n_1;1}}\right\rfloor-s_{n_1-2;1}\choose i}
\endaligned$$ for some $d_i''\in \mathbb{N}$.
Then the desired statement easily follows.

Thus we need to show the existence of the nonnegative integers $a$ and $b$.
Using the definitions of $A_{w+1;2}$ and $\varsigma$  as well as  the fact that $r_2=\xi_1$, we get
$$\aligned
A_{w+1;2}-r_{2} s_{w;2}
 = c_{w+2}^{[\xi_1]}(A_{n_1-1;1}-r_1s_{n_1-2;1} ) + c_{w+1}^{[\xi_1]}\omega_1 s_{n_1-2;1}- \xi_1\left(c_{w+1}^{[\xi_1]} \varsigma+ \sum_{j=1}^{w-1} c_{w+1-j}^{[\xi_1]} \tau_{j;2}\right)\endaligned$$ which can be written as
$$ A_{w+1;2}-r_{2} s_{w;2}
 = (c_{w+2}^{[\xi_1]}r_1-c_{w+1}^{[\xi_1]}\omega_1)\left(\left\lfloor (A_{n_1-1;1}-\varsigma) \frac{A_{n_1-1;1}}{A_{n_1;1}}\right\rfloor-s_{n_1-2;1} \right)+C(w),
$$ where $C(w)$ is some function of $w$, which is independent of $s_{n_1-2;1}$. Note that $$c_{w+2}^{[\xi_1]}r_1-c_{w+1}^{[\xi_1]}\omega_1 >0,$$ because, by Lemma \ref{lem nonacyclic}, this is the number of arrows between some pair of vertices in some seed between $t_{i+1}$ and $t_{i+2}$. Thus is suffices to show that $C(w)$ is nonnegative.

$$\aligned
C(w)=&(c_{w+2}^{[\xi_1]}r_1-c_{w+1}^{[\xi_1]}\omega_1)\left( (A_{n_1-1;1}-\varsigma) \frac{A_{n_1-1;1}}{A_{n_1;1}}-\left\lfloor (A_{n_1-1;1}-\varsigma) \frac{A_{n_1-1;1}}{A_{n_1;1}}\right\rfloor\right)+ \tilde{C}(w) \theta(w), \endaligned$$
where
$$\aligned
 \tilde{C}(w)&=c_{w+2}^{[\xi_1]} - (c_{w+2}^{[\xi_1]}r_1-c_{w+1}^{[\xi_1]}\omega_1)\frac{A_{n_1-1;1}}{A_{n_1;1}} \text{ and}  \\
\theta(w) &= A_{n_1-1;1} - \frac{\xi_1c_{w+1}^{[\xi_1]} - (c_{w+2}^{[\xi_1]}r_1-c_{w+1}^{[\xi_1]}\omega_1)\frac{A_{n_1-1;1}}{A_{n_1;1}}}{c_{w+2}^{[\xi_1]} - (c_{w+2}^{[\xi_1]}r_1-c_{w+1}^{[\xi_1]}\omega_1)\frac{A_{n_1-1;1}}{A_{n_1;1}}}\varsigma- \sum_{j=1}^{w-1}  \frac{\xi_1c_{w+1-j}^{[\xi_1]}}{c_{w+2}^{[\xi_1]} - (c_{w+2}^{[\xi_1]}r_1-c_{w+1}^{[\xi_1]}\omega_1)\frac{A_{n_1-1;1}}{A_{n_1;1}}} \tau_{j;2}.
  \endaligned$$
We want to show that $C(w)$ is nonnegative for $w\geq 1$, for which it suffices to show that $\tilde{C}(w)$ and $\theta(w)$ are nonnegative for $w\geq 1$.

First we show that $\tilde{C}(w)$ are nonnegative for $w\geq 1$. Note that $\tilde{C}(w)=\xi_1\tilde{C}(w-1)-\tilde{C}(w-2)$. Moreover $\xi_1\ge 2$, by Lemma \ref{lem nonacyclic}. Hence
if we show $\tilde{C}(1)>0\geq \tilde{C}(0)$ then the induction on $w$ will show that $\tilde{C}(w)$ is increasing with $w$. It is easy to see that $\tilde{C}(0)=1-r_1\frac{A_{n_1-1;1}}{A_{n_1;1}} \leq 0$. On the other hand,
$$\begin{array}{rcccl}\tilde{C}(1)&=&
\xi_1-(\xi_1r_1-\omega_1)\frac{A_{n_1-1;1}}{A_{n_1;1}}&=&
\xi_1(\frac{{A_{n_1;1}}-r_1A_{n_1-1;1}}{A_{n_1;1}}) + \omega_1\frac{A_{n_1-1;1}}{A_{n_1;1}}\\
&=&
\xi_1(\frac{-{A_{n_1-2;1}}}{A_{n_1;1}}) + \omega_1\frac{A_{n_1-1;1}}{A_{n_1;1}}\end{array} $$
which is positive because
\begin{eqnarray} \label{eq 44.1}
\omega_1A_{n_1-1;1} -\xi_1A_{n_1-2;1}&=&\omega_1(pc_{n_1}+qc_{n_1-1}) -\xi_1(pc_{n_1-1} + qc_{n_1-2})\\
&=& p(\omega_1 c_{n_1} -\xi_1c_{n_1-1}) + q (\omega_1 c_{n_1-1} -\xi_1 c_{n_1-2}))>0,\nonumber
\end{eqnarray}
where the last inequality follows from
$$\aligned
&\{\omega_1 c_{n_1} -\xi_1c_{n_1-1}, \omega_1 c_{n_1-1} -\xi_1 c_{n_1-2}\}\\&=\{(\text{the number of arrows between } e\text{ and } 1\text{ in the seed }t_i ), \\
&\,\,\,\,\,\,\,\,\,\,\,(\text{the number of arrows between }e \text{ and } f\text{ in the seed }t_i)\}.
\endaligned$$

Next we show that $\theta(w)$ are nonnegative for all $w$ such that $1\le w\le n_2-3$.   Recall from (\ref{theta}) that
\begin{equation*}\label{02292012eq2}\theta=A_{n_1-2;1} -r_1\varsigma - \sum_{j=1}^{n_2-3} (c_{j+2}^{[t_1]}r_1 - c_{j+1}^{[t_1]}\omega_1)\tau_{j;2}  >0,\end{equation*}which implies that
\begin{equation*}\label{04222012eq5}A_{n_1-1;1} -\frac{r_1 A_{n_1-1;1} }{A_{n_1-2;1} }\varsigma - \sum_{j=1}^{w-1} \frac{(c_{j+2}^{[t_1]}r_1 - c_{j+1}^{[t_1]}\omega_1) A_{n_1-1;1} }{A_{n_1-2;1} }\tau_{j;2}  >0.\end{equation*}
So it is enough to show
$$\frac{r_1 A_{n_1-1;1} }{A_{n_1-2;1} } >  \frac{\xi_1c_{w+1}^{[\xi_1]} - (c_{w+2}^{[\xi_1]}r_1-c_{w+1}^{[\xi_1]}\omega_1)\frac{A_{n_1-1;1}}{A_{n_1;1}}}{c_{w+2}^{[\xi_1]} - (c_{w+2}^{[\xi_1]}r_1-c_{w+1}^{[\xi_1]}\omega_1)\frac{A_{n_1-1;1}}{A_{n_1;1}}}$$and
$$\frac{(c_{j+2}^{[t_1]}r_1 - c_{j+1}^{[t_1]}\omega_1) A_{n_1-1;1} }{A_{n_1-2;1} } > \frac{\xi_1c_{w+1-j}^{[\xi_1]}}{c_{w+2}^{[\xi_1]} - (c_{w+2}^{[\xi_1]}r_1-c_{w+1}^{[\xi_1]}\omega_1)\frac{A_{n_1-1;1}}{A_{n_1;1}}},$$ but these inequalities can be proved by induction on $w$.
\end{proof}

\begin{lemma}\label{0423lem1}
Let $a,b,c$ be any nonnegative integers. Then there are nonnegative integers $d_0,...,d_c$ such that
$$
{aX+b\choose c} = \sum_{i=0}^c d_i{X\choose i}
$$for all nonnegative integers $X$.
\end{lemma}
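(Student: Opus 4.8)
The plan is to read off the $d_i$ as the coordinates of $\binom{aX+b}{c}$ in the basis of binomial polynomials, and then give those coordinates a combinatorial meaning that makes nonnegativity transparent. As a polynomial in $X$, $\binom{aX+b}{c}$ has degree exactly $c$ (leading term $a^cX^c/c!$), hence lies in the $\mathbb{Q}$-span of $\binom{X}{0},\binom{X}{1},\dots,\binom{X}{c}$, which is a basis of the space of polynomials of degree at most $c$. So there are unique rationals $d_0,\dots,d_c$ with $\binom{aX+b}{c}=\sum_{i=0}^c d_i\binom{X}{i}$ as polynomials (hence for all nonnegative integers $X$), and applying the forward-difference operator $\Delta h(X)=h(X+1)-h(X)$ repeatedly and evaluating at $X=0$ (using $\Delta^i\binom{X}{k}\big|_{X=0}=\delta_{ik}$) gives $d_i=\sum_{j=0}^i(-1)^{i-j}\binom{i}{j}\binom{aj+b}{c}$; in particular every $d_i$ is an integer. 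What remains is nonnegativity.

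For nonnegativity I would interpret $\binom{am+b}{c}$, for a nonnegative integer $m$, as the number of $c$-element subsets of a ground set $G_m$ consisting of $b$ distinguished ``marked'' points together with $m$ pairwise disjoint ``blocks'' $B_1,\dots,B_m$, each of size $a$. To a $c$-subset $S\subseteq G_m$ attach its support $T(S)=\{\ell: S\cap B_\ell\neq\emptyset\}$. Write $n_u$ for the number of $c$-subsets of $G_i$ whose support equals a prescribed $u$-element subset of $\{1,\dots,i\}$; by the symmetry permuting blocks this depends only on $u$, not on the chosen $u$-set nor on the total number of blocks present. Then for each $j$ with $0\le j\le i$, the $c$-subsets of $G_j$ are exactly the $c$-subsets of $G_i$ whose support is contained in $\{1,\dots,j\}$ (delete $B_{j+1},\dots,B_i$ to pass from $G_i$ to $G_j$), and classifying them by the size of their support gives $\binom{aj+b}{c}=\sum_{u=0}^j\binom{j}{u}n_u$. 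Binomial inversion of this triangular system over $j=0,\dots,i$ yields $n_i=\sum_{j=0}^i(-1)^{i-j}\binom{i}{j}\binom{aj+b}{c}=d_i$, and since $n_i$ is a cardinality, $d_i\ge 0$.

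The argument is essentially bookkeeping and I do not expect a genuine obstacle; the one point to state carefully is the inclusion--exclusion/binomial-inversion step, i.e.\ genuinely matching the finite-difference expression for $d_i$ with the manifestly nonnegative count $n_i$. An alternative purely algebraic route of comparable length is induction on $a$: the cases $a\le 1$ are Vandermonde's identity $\binom{X+b}{c}=\sum_i\binom{b}{c-i}\binom{X}{i}$, and the step from $a$ to $a+1$ writes $\binom{(a+1)X+b}{c}=\sum_j\binom{aX+b}{c-j}\binom{X}{j}$ (Vandermonde again), expands each $\binom{aX+b}{c-j}$ by the inductive hypothesis, and re-expands each product $\binom{X}{k}\binom{X}{j}=\sum_i\binom{i}{k}\binom{k}{i-j}\binom{X}{i}$ as a nonnegative integer combination of the $\binom{X}{i}$ (that product formula being proved by the same ``count a pair of subsets, split by the size of their union'' idea). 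Either route establishes the claim; I would present the combinatorial one as the main proof, since it is self-contained and directly exhibits each $d_i$ as a nonnegative integer.
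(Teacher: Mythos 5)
Your main argument is correct, and it takes a genuinely different route from the paper. The paper's proof is a two-line reduction: it applies the multivariate Vandermonde identity
\[
{aX+b\choose c} \;=\; \sum_{\substack{w_0+w_1+\cdots+w_a=c\\ w_0,\dots,w_a\in\mathbb{N}}} {b\choose w_0}\prod_{v=1}^a {X\choose w_v},
\]
and then invokes Lemma~\ref{0423lem2} repeatedly to rewrite each product $\prod_v\binom{X}{w_v}$ as a nonnegative integer combination of the $\binom{X}{i}$ — essentially your ``alternative route,'' except done in one shot rather than by induction on $a$. Your primary proof instead identifies the coefficients directly: you pin down $d_i=\sum_{j=0}^i(-1)^{i-j}\binom{i}{j}\binom{aj+b}{c}$ by finite differences in the basis $\binom{X}{0},\dots,\binom{X}{c}$, and then exhibit this alternating sum as the manifestly nonnegative count $n_i$ of $c$-subsets with full support via binomial inversion of $\binom{aj+b}{c}=\sum_{u=0}^j\binom{j}{u}n_u$. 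All the steps check out (the symmetry argument that $n_u$ is independent of the ambient number of blocks, the classification by support, and $\Delta^i\binom{X}{k}\big|_{X=0}=\delta_{ik}$ are all correct; the only blemish is that the degree is exactly $c$ only when $a\ge 1$, but degree at most $c$ is all you use). What your approach buys is self-containedness — it does not need Lemma~\ref{0423lem2} — and an explicit combinatorial formula for each $d_i$; what the paper's approach buys is brevity, at the cost of outsourcing the positivity of the product expansion to the companion lemma.
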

\begin{proof}
The Vandermonde identity shows
$$
{aX+b\choose c} = \sum_{w_0+w_1+\cdots+w_a=c \atop w_0,...,w_a\in \mathbb{N} } {b\choose w_0}\prod_{v=1}^a {X\choose w_v},
$$and then the statement follows from Lemma~\ref{0423lem2}.
\end{proof}

\begin{lemma}\label{0423lem2}
Let $a,b$ be any nonnegative integers. Then there are nonnegative integers $e_0,...,e_{a+b}$ such that
$$
{X\choose a} {X\choose b} = \sum_{i=0}^{a+b} e_i{X\choose i}
$$for all nonnegative  integers $X$.
\end{lemma}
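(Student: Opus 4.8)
The plan is to prove this by a direct combinatorial double count, which simultaneously produces an explicit nonnegative formula for the coefficients $e_i$. Fix a nonnegative integer $X$ and a set $S$ with $|S|=X$. The left-hand side $\binom{X}{a}\binom{X}{b}$ counts the ordered pairs $(A,B)$ of subsets of $S$ with $|A|=a$ and $|B|=b$. I would partition these pairs according to the size $i$ of their union $U=A\cup B$; since $U\subseteq S$ and $|U|\le |A|+|B|$, the index $i$ ranges over $0\le i\le a+b$ (the terms with $i<\max(a,b)$ will turn out to have coefficient $0$).

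The key observation is that for a fixed subset $U\subseteq S$ with $|U|=i$, the number of pairs $(A,B)$ with $A\cup B=U$, $|A|=a$, $|B|=b$ depends only on $i,a,b$, and not on $X$ or on the ambient set $S$: one first chooses $A\subseteq U$ with $|A|=a$ in $\binom{i}{a}$ ways, and then $B$ must contain $U\setminus A$ (of size $i-a$) together with $a+b-i$ further elements chosen from the $a$-set $A$, giving $\binom{a}{a+b-i}$ choices. Hence this number is the nonnegative integer $e_i:=\binom{i}{a}\binom{a}{a+b-i}$. Summing over the $\binom{X}{i}$ possible unions $U$ of each size $i$ yields
\[
\binom{X}{a}\binom{X}{b}=\sum_{i=0}^{a+b} e_i\binom{X}{i},
\]
with $e_i\in\mathbb{N}$ independent of $X$, which is exactly the claim. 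Equivalently, one can obtain the same identity algebraically: expand $\binom{X}{b}=\sum_{j}\binom{a}{j}\binom{X-a}{b-j}$ by the Vandermonde identity, apply the subset-of-a-subset identity $\binom{X}{a}\binom{X-a}{b-j}=\binom{X}{a+b-j}\binom{a+b-j}{a}$, and re-index by $i=a+b-j$ to recover $e_i=\binom{i}{a}\binom{a}{a+b-i}$.

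There is no serious obstacle here; the one point worth stating carefully is precisely that the number of decompositions of a fixed union $U$ is independent of the ambient set $S$, so that the coefficients $e_i$ can be pulled out of the sum uniformly in $X$. Once that is noted the identity is immediate, and both routes make the nonnegativity $e_i\ge 0$ transparent.
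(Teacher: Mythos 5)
Your proof is correct and uses the same idea as the paper's: count the pairs $(A,B)$ with $|A|=a$, $|B|=b$ according to the size $i$ of their union, so that $e_i$ is the number of ways a fixed $i$-set can be written as such a union. You go further than the paper by supplying the explicit value $e_i=\binom{i}{a}\binom{a}{a+b-i}$, but the underlying argument is the same.
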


\begin{proof} There are many proofs. This proof is due to Qiaochu Yuan and Brendan McKay.
It is enough to prove for large enough integers $X$, because both sides can be regarded as polynomials of $X$.
$
{X\choose a} {X\choose b}$ is the number of ways to choose a subset of size $a$ and a subset of size $b$ from a set of size $X$. The union of these two subsets is a subset of size anywhere from $\max(a,b)$ to $a+b$, so $e_i$ is the number of different ways a subset of size $i$ can be realized as the union of a subset of size $a$ and a subset of size $b$.
\end{proof}

This completes the proof of Lemma \ref{mainthm11102011_01}.

\bigskip

\begin{lemma}\label{lem theta3}
$\mathcal{P}_{\theta,3}=0$.\end{lemma}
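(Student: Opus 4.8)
The proof parallels that of Lemma~\ref{mainthm11102011_01}; the plan is to isolate exactly which monomials can enter $\mathcal{P}_{\theta,3}$ and then to show that, after the same bookkeeping used for $\mathcal{P}_{\theta,2}$, they sum to $0$.

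\emph{Step 1: locating the type-3 monomials.} Inspecting (\ref{1217201101}), (\ref{1217201103}) and (\ref{1217201105}), a monomial of $\mathcal{P}_{\theta,3}$ — one in $x_{1;t_{j+1}}, x_{e;t_{j+1}}, x_{f;t_{j+1}}$ with no tilded factor and with $x_{f;t_{j+1}}$-exponent $-\theta$ — is produced either as the second sum of the mixed formula of Theorem~\ref{mainthm12062011} applied to a type-1 or a type-2 term of $t_j$, or as the expansion of a type-3 term of $t_j$; for a type-3 term no type-1 or type-2 piece arises at all, since $\mathbf{x}_{t_j}$ and $\mathbf{x}_{t_{j+1}}$ are joined by a single rank-2 mutation sequence (in the vertices $1$ and $e$), so Theorem~\ref{thm 33} gives a genuine Laurent polynomial in the cluster $\mathbf{x}_{t_{j+1}}$. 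Here the non-acyclicity of $\mathcal{A}(Q)$ is the crucial input: by Corollary~\ref{12312011a} one has $m = 0$, so in Theorem~\ref{thm 33} — and hence, by the Remark following Theorem~\ref{mainthm12062011}, in the second sum of the mixed formula as well — the exponent of the coefficient variable $z_3 = x_{f;t_{j+1}} = x_{f;t_j}$ is always nonnegative. Therefore, whenever a term $C\, x_{f;t_j}^{\,u}\,(\cdots)$ of the expansion at $t_j$ produces a type-3 monomial at $t_{j+1}$, that monomial carries $x_{f;t_{j+1}}$ to the power $u + (\text{something} \ge 0)$; so a contribution to $\mathcal{P}_{\theta,3}$ can come only from a parent term with $u \le -\theta < 0$.

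\emph{Step 2: only type-1 parents remain, and they cancel.} By the inductive hypothesis — Theorem~\ref{mainthm11102011_00} at stage $i \to j$, where the coefficients $C_{2; i\to j}$ and $C_{3; i\to j}$ were, by construction, obtained by collecting only the terms of nonnegative $x_{f;t_j}$-degree — the type-2 and type-3 terms of $t_j$ all have $u_2, u_3 \ge 0$. Hence $\mathcal{P}_{\theta,3}$ reduces to the sum, over the type-1 terms of $t_j$, of the second-sum (type-3) part of the mixed-formula expansion of each, truncated to $x_{f;t_{j+1}}$-exponent $-\theta$. Now exploit the packet structure of the type-1 terms: by Lemma~\ref{mainthm11102011_01} applied one stage earlier, for each fixed $q_1 = \sigma$ the type-1 terms of $t_j$ with that value of $q_1$ assemble into $\widetilde{\widetilde{x_{e;t_j}}}^{\,\sigma}\, G_\sigma$, where $\widetilde{x_{1;t_j}}$ and $\widetilde{\widetilde{x_{e;t_j}}}$ are cluster variables reached from $t_j$ by mutations in the vertices $1$ and $e$ only, and $G_\sigma$ is a nonnegative combination of monomials $x_{f;t_j}^{\,u_1}\widetilde{x_{1;t_j}}^{\,p_1}$. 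Expanding this whole packet in $\mathbf{x}_{t_{j+1}}$ via Corollary~\ref{cor 26} and keeping the type-3 part, I would run the manipulation already used for $\mathcal{P}_{\theta,2}$ in Lemma~\ref{mainthm11102011_01}: transfer one factor between the two products of modified binomial coefficients and invoke the divisibility of Theorem~\ref{thm01312012}. This yields a divisibility statement, of the same shape as the one obtained there, which forces the $x_{f;t_{j+1}}$-degree of the type-3 part of each packet expansion to stay nonnegative; in particular no monomial of $x_{f;t_{j+1}}$-degree $-\theta$ survives. Summing over $\sigma$ gives $\mathcal{P}_{\theta,3} = 0$.

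\emph{Main obstacle.} All the work sits in Step 2: carrying the type-1 packets through Corollary~\ref{cor 26}, keeping exact track of the exponent of $x_{f;t_{j+1}}$, and aligning the outcome with the divisibility of Theorem~\ref{thm01312012}, exactly as in the long but routine computation in the proof of Lemma~\ref{mainthm11102011_01}. The conceptual ingredients — non-acyclicity $\Rightarrow m = 0 \Rightarrow$ nonnegative coefficient-variable exponents, together with the type-1 packet structure provided by Lemma~\ref{mainthm11102011_01} — are what make the cancellation possible.
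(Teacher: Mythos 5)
Your Step 2 rests on a claim that is not true, and this is where the proposal breaks down. You assert that, by the inductive construction, the type-2 and type-3 terms at $t_j$ all have $u_2,u_3\ge 0$, so that only type-1 parents can feed $\mathcal{P}_{\theta,3}$. But the construction of $C_{2;\,i\to j}$ and $C_{3;\,i\to j}$ at the previous node collects terms with nonnegative exponent of $x_{e_{j,2};t_j}$ (the variable that played the role of ``$f$'' at $t_{j-1}$); in the statement of Theorem~\ref{mainthm11102011_00} that is the constraint $q_2,q_3\ge 0$, not a constraint on $u_2,u_3$. The exponents $u_2,u_3$ of $x_{f;t_j}$ (the \emph{current} third vertex) come from the unconstrained integers $v_e^{(h)},w_e^{(h)}$ in \textup{(\ref{1217201103})}--\textup{(\ref{1217201105})} and can perfectly well be negative: in the mixed formula the exponent of $x_2$ is $rs_{n-3}-A_{n-2}$, whose sign is not controlled (the remark after Theorem~\ref{mainthm12062011} guarantees nonnegativity only for the exponents of $x_3$, $x_1$ and $z_3$). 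So type-2 parents with $u_2<0$ genuinely occur, they do produce candidate type-3 children of negative $x_{f;t_{j+1}}$-degree a priori, and ruling these out is exactly the content of the lemma — the paper's proof is devoted precisely to the term $C_{2;\,i\to j}\,\overline{C_{3;\,j\to j+1}}$.

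The mechanism you propose for the remaining (type-1) packets also does not do the job: divisibility by a power of $\widetilde{\widetilde{x_{f;t_{j+1}}}}$, as delivered by Theorem~\ref{thm01312012} and Lemmas~\ref{0303lem944}--\ref{0303lem945}, says nothing about the sign of the $x_{f;t_{j+1}}$-exponent; in Lemma~\ref{mainthm11102011_01} that divisibility is used precisely to \emph{repackage} genuinely negative powers of $x_{f;t_{j+1}}$ into type-1 terms, not to show they are absent. What the paper actually does for $\mathcal{P}_{\theta,3}$ is an exponent inequality, not a divisibility argument: a type-3 child has a positive exponent of the untilded $x_{1;t_{j+1}}$, which forces $A_{n_2-1;2}<r_2|\beta|_1$; combining this with the compatible-pair estimate $\xi_2|\beta|_2-(\xi_2 r_2-\omega_2)|\beta|_1\ge r_1|\beta|_1/c^{[r_2]}_{n_2-1}$ from \cite[Lemma 4.10]{LS3arXiv} and \cite[Proposition 4.1]{LLZ}, together with the non-acyclicity bound $\omega_1 A_{n_1-1;1}-\xi_1 A_{n_1-2;1}>0$ of \textup{(\ref{eq 44.1})}, one shows the exponent of $x_{f;t_{j+1}}$ on any such term is strictly positive, so no term of $x_{f;t_{j+1}}$-degree $-\theta$ exists at all. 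None of these ingredients appears in your argument, so the proof has a genuine gap rather than being an alternative route.
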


\begin{proof}
In trying to keep the notation simple, we give a detailed proof for the case $j=i+1$. The case $j>i+1$ uses the same argument.

Let $$Q_1=\xymatrix{1\ar[dr]_{r_1}&&e\ar[ll]_{\xi_1}\\&f\ar[ru]_{\omega_1}}$$ be the quiver at the seed $\mu_1(t_{i+1})$, where $r_1$ (respectively $\omega_1$ and $\xi_1$) is the number of arrows from 1 to $f$ (respectively from $f$ to $e$, and from  $e$ to $1$). For a compatible pair $\beta=(S_1,S_2)$, let
$|\beta|_2$ denote $|\mathcal{D}_{1}|-|S_{1}|$ and $|\beta|_1$ denote $|S_{2}|$.

Applying Theorem \ref{mainthm12062011} to $x_{1;t_i}^p x_{f;t_i}^q$, we obtain
\begin{equation}\label{0303eq5a}
 \sum_{\begin{array}{c}\scriptstyle\tau_{0;1},\cdots,\tau_{n_1-3;1}\\ \scriptstyle A_{n_1-1;1}-r_1s_{n_1-2;1}\geq 0\end{array}}  \!\!\!\!\!\!\!\!\!\!\!\! \left( \prod_{w=0}^{n_1-3}\gchoose{\!\!\!A_{w+1;1} - r_1s_{w;1}\!\!\!}{\tau_{w;1}} \!\!  \right){\widetilde{x_{1;t_{i+1}}}}^ {A_{n_1-1;1}-r_1s_{n_1-2:1}}x_{f;t_{i+1}}^ {r_1s_{n_1-3;1}-A_{n_1-2;1}   }  x_{e;t_{i+1}}^{\omega_1 s_{n_1-2;1} - \xi_1  s_{n_1-3;1}   }
 \end{equation}
\begin{equation}\label{0303eq5b}
+ \,\, x_{1;t_{i+1}}^{-A_{n_1-1;1}} x_{f;t_{i+1}}^{-A_{n_1-2;1}}\sum_{\scriptstyle \beta : r_1|\beta|_1-A_{n_1-1;1}> 0} x_{1;t_{i+1}}^{r_1|\mathbf{\beta}|_1}x_{f;t_{i+1}}^{r_1(A_{n_1-1;1}-|\mathbf{\beta}|_2)} x_{e;t_{i+1}}^{\xi_1|\beta|_2-(\xi_1r_1-\omega_1)|\beta|_1},
\end{equation}
where the second sum is over all $\beta=(S_1=\cup_{i=1}^{p+q}\, S_{1}^i,S_2=\cup_{i=1}^{p+q}\, S_{2}^i)$ such that  
$$(S_{1}^i,S_{2}^i) \textup{ is a compatible pair in }\left\{\begin{array}{ll}\mathcal{D}^{c_{n_1-1;1}\times c_{{n_1-2;1}}}  &\textup{ if $1\le i \le q$}; \\  
 \mathcal{D}^{c_{n_1;1}\times c_{n_1-1;1}} &\textup{ if $q+1\le i\le p+q$.}
\end{array}\right.$$
	%compatible pairs $\beta=(S_1,S_2)$ in $\mathcal{D}^{A_{n_1-1;1}\times A_{n_1-2;1}}$.
Let $n_2$ be the number of seeds between $\mu_1(t_{i+1})$ and $t_{i+2}$ inclusive. Suppose that $n_2$ is an even integer. The case that $n_2$ is odd  is similar, except that the roles of $x_{1;t_i+1}$ and $x_{e;t+1}$ are interchanged. Let $$Q_2=\xymatrix{1\ar[dr]_{\xi_2}&&e\ar[ll]_{r_2}\\&f\ar[ru]_{\omega_2}}$$ be the quiver at the seed $\mu_1(t_{i+2})$, where $r_2$ (respectively $\omega_2$ and $\xi_2$) is the number of arrows from $e$ to $1$ (respectively from $f$ to $e$, and from  $1$ to $f$).

Here we show that if $C_{2; \, i\rightarrow j} {\overline{C_{3; \, j\rightarrow j+1}}}\neq 0$ then $u_2+q_3^{(2)}$ (the exponent of $x_{f})$ can never be negative, so that the second sum in $\mathcal{P}_{\theta,3}$ is equal to 0. A similar argument can be applied to show that the other sums are 0.

Let $p_2= {A_{n_1-1;1}-r_1s_{n_1-2;1}}$
and $q_2= {\omega_1 s_{n_1-2;1} - \xi_1  s_{n_1-3;1}   }$ be the exponents of
$\widetilde{x_{1;t_{i+1}}} $
and
$x_{e;t_{i+1}} $
in (\ref{0303eq5a}), respectively.
Applying Theorem~\ref{mainthm12052011} to $\widetilde{x_{1;t_{i+1}}}^ {p_2} x_{e;t_{i+1}}^{q_2}
$ in (\ref{0303eq5a}), we have
$$
\aligned
& \sum_{\begin{array}{c}\scriptstyle\tau_{0;1},\cdots,\tau_{n_1-3;1}\\ \scriptstyle A_{n_1-1;1}-r_1s_{n_1-2;1}\geq 0\end{array}}   \left( \prod_{w=0}^{n_1-3}\gchoose{A_{w+1;1} - r_1s_{w;1}}{\tau_{w;1}}   \right) x_{f;t_{i+1}}^ {r_1s_{n_1-3;1}-A_{n_1-2;1}   }    \\
&\times \sum_{\mathbf{\beta}} x_{1;t_{i+2}}^{r_2|\mathbf{\beta}|_1-A_{n_2-1;2}}x_{e;t_{i+2}}^{A_{n_2;2}-r_2|\mathbf{\beta}|_2} x_{f;t_{i+2}}^{\xi_2|\beta|_2-(\xi_2 r_2 - \omega_2)|\beta|_1}\\
&= \sum_{\begin{array}{c}\scriptstyle\tau_{0;1},\cdots,\tau_{n_1-3;1}\\ \scriptstyle A_{n_1-1;1}-r_1s_{n_1-2;1}\geq 0\end{array}}   \left( \prod_{w=0}^{n_1-3}\gchoose{A_{w+1;1} - r_1s_{w;1}}{\tau_{w;1}}   \right) \\
&\times \sum_{\mathbf{\beta}} x_{1;t_{i+2}}^{r_2|\mathbf{\beta}|_1-A_{n_2-1;2}}x_{e;t_{i+2}}^{A_{n_2;2}-r_2|\mathbf{\beta}|_2}
 x_{f;t_{i+2}}^{\xi_2|\beta|_2-(\xi_2 r_2 - \omega_2)|\beta|_1 + r_1s_{n_1-3;1}-A_{n_1-2;1}},
\endaligned$$
where each second sum is over all $\beta= (S_1=\cup_{i=1}^{p_2+q_2}\, S_{1}^i,S_2=\cup_{i=1}^{p_2+q_2}\, S_{2}^i)$ such that  
$$(S_{1}^i,S_{2}^i) \textup{ is a compatible pair in }\left\{\begin{array}{ll}\mathcal{D}^{c_{n_2-1;2}\times c_{{n_2-2;2}}}  &\textup{ if $1\le i \le q_2$}; \\  
 \mathcal{D}^{c_{n_2;2}\times c_{n_2-1;2}} &\textup{ if $q_2+1\le i\le p_2+q_2$.}
\end{array}\right.$$
%compatible pairs $\beta=(S_1,S_2)$ in $\mathcal{D}^{A_{n_2-1;2}\times A_{n_2-2;2}}$.

The exponent of $x_1$ is positive by definition of $\mathcal{P}_{\theta,3}$. Therefore
$A_{n_2-1;2} <r_2|\mathbf{\beta}|_1$ hence
$A_{n_2-1;2} / r_2|\mathbf{\beta}|_1 <1$ and thus
\[ \frac{r_1A_{n_2-1;2}}{c_{n_2-1}^{[r_2]} \,r_2} =
 \frac{r_1|\mathbf{\beta}|_1}{c_{n_2-1}^{[r_2]} } \frac{\,A_{n_2-1;2}}{r_2|\mathbf{\beta}|_1}
<
\frac{r_1|\mathbf{\beta}|_1}{c_{n_2-1}^{[r_2]}}
\le \xi_2|\beta|_2-(\xi_2 r_2 - \omega_2)|\beta|_1,\]
where the last inequality is proved in \cite[Lemma 4.10]{LS3arXiv} and \cite[Proposition 4.1]{LLZ}.
Using $r_2=\xi_1$ and the definition of $A_{n_2-1;2}$, we get
$$\aligned
& r_1s_{n_1-3;1}-A_{n_1-2;1} + \xi_2|\beta|_2-(\xi_2 r_2 - \omega_2)|\beta|_1 \\
&\geq  r_1s_{n_1-3;1}-A_{n_1-2;1} + \frac{r_1\left( c_{n_2}^{[\xi_1]}(A_{n_1-1;1}-r_1s_{n_1-2;1} ) + c_{n_2-1}^{[\xi_1]}(\omega_1 s_{n_1-2;1} - \xi_1 s_{n_1-3;1})\right)}{c_{n_2-1}^{[\xi_1]} \xi_1}\\
&= -A_{n_1-2;1} + \frac{r_1\left( c_{n_2}^{[\xi_1]}(A_{n_1-1;1}-r_1s_{n_1-2;1} ) + c_{n_2-1}^{[\xi_1]}\omega_1 s_{n_1-2;1}\right)}{c_{n_2-1}^{[\xi_1]} \xi_1}\\
&= -A_{n_1-2;1} + \frac{r_1\left( c_{n_2}^{[\xi_1]}A_{n_1-1;1}-(c_{n_2}^{[\xi_1]}r_1- c_{n_2-1}^{[\xi_1]}\omega_1) s_{n_1-2;1}\right)}{c_{n_2-1}^{[\xi_1]} \xi_1}\\
&\underset{(A_{n_1-1;1}-r_1s_{n_1-2;1}\geq 0)}{\geq}  -A_{n_1-2;1} + \frac{r_1\left( c_{n_2}^{[\xi_1]}A_{n_1-1;1}-(c_{n_2}^{[\xi_1]}r_1- c_{n_2-1}^{[\xi_1]}\omega_1) \frac{A_{n_1-1;1}}{r_1}\right)}{c_{n_2-1}^{[\xi_1]} \xi_1} \\
&= \frac{1}{\xi_1}(\omega_1A_{n_1-1;1} -\xi_1A_{n_1-2;1})\\
%&=\frac{1}{\xi_1}( \omega_1(pc_n+qc_{n-1}) -\xi_1(pc_{n-1} + qc_{n-2}))\\
%&=\frac{1}{\xi_1}( p(\omega_1 c_n -\xi_1c_{n-1}) + q (\omega_1 c_{n-1} -\xi_1 c_{n-2}))\\
&>0,
\endaligned$$where the last inequality follows from (\ref{eq 44.1}). Thus the exponent of $x_{f;t_{i+2}}$ in the expansion of (\ref{0303eq5a}) is positive. The proof for the expansion of (\ref{0303eq5b}) uses a similar argument.
\end{proof}

%\begin{lemma}\label{0413lem}Let ${\beta}\in\mathcal{F}(\mathcal{D}_{n_2+1})^{A_{n_1-1;1}-r_1s_{n_1-2;1}} \times \mathcal{F}(\mathcal{D}_{n_2})^{\omega_1 s_{n_1-2;1} - \xi_1  s_{n_1-3;1} }$. Then
%$$
%\xi_2|\beta|_2-(\xi_2 r_2 - \omega_2)|\beta|_1  \geq  \frac{r_1}{c_{n_2-1}^{[r_2]}}|\beta|_1.$$
%\end{lemma}

\section{Example}\label{sect example}
\begin{example}  Let $\Sigma_{t_0}$ be a seed connected to the initial seed $\Sigma_{t_3}$ by the following sequence of mutations.
\[ \xymatrix@C35pt{t_0\ar@{~>}[r]^{2}&t_1' \ar@{~>}[r]^{{1}} &t_1 \ar@{~>}[r]^{3} &t_2' \ar@{~>}[r]^{{1}}& t_2 \ar@{~>}[r]^{2 }&\scriptstyle\bullet\ar@{~>}[r]^{1}&\scriptstyle\bullet\ar@{.}[r]  & \scriptstyle\bullet \ar@{~>}[r]^{{2}}& \scriptstyle\bullet \ar@{~>}[r]^{{1}}& t_3
}
\]
Suppose  that the  quivers corresponding to the first 5 seeds are as follows
$$
\xymatrix@R10pt@C10pt{1\ar[rr]^3&&2\ar[ddl]^3  &&
1\ar[ddr]_2&&2\ar[ll]_3 &&
1\ar[rr]^3&&2\ar[ddl]^3  &&
1\ar[ddr]_2&&2\ar[ll]_3&&
1\ar[rr]^3&&2\ar[ddl]^3
\\
\\&3\ar[uul]^7 &&& &3\ar[uur]_3 &&&&3\ar[uul]^2&&&&3\ar[uur]_3 &&&&3\ar[uul]^2
\\&
%Q_{t_0} &&&&
%Q_{t_1'}  &&&&
%Q_{t_1}  &&&&
%Q_{t_2'}  &&&&
%Q_{t_2} }$$
Q_{t_0} \ar@{~>}[rrrr]^2&&&&
Q_{t_1'}\ar@{~>}[rrrr]^1 &&&&
Q_{t_1} \ar@{~>}[rrrr]^3&&&&
Q_{t_2'}\ar@{~>}[rrrr]^1 &&&&
Q_{t_2} }$$

We want to illustrate the proof of  positivity for $x_{2;t_0}$.

First, we compute its expansion in the cluster $\mathbf{x}_{t_1}$ using Theorem~\ref{mainthm12062011}. The mutation sequence from $t_0$ to $t_1$ is in the vertices 1 and 2. Thus we have $r=3$, $n=3$, $p=1$, $q=0$. Moreover,
$$c_1=0,\, c_2=1,\, c_3=3,\qquad
A_1=1,\, A_2=3, \qquad
s_1=\tau_0,$$
and the summation is on $\tau_0=0,1$.
The condition  $s_{n-2}\le A_{n-1}/r$ in the first sum of Theorem~\ref{mainthm12062011} becomes $\tau_0\le 1$ which is always satisfied, so the second sum in the theorem is empty. Finally, the variables in the theorem are $x_2=x_{2;t_1}$, $z_3=x_{3;t_1}$ and $x_3= x_{1;t_1'}=\frac{x_{3;t_1}^{2}+x_{2;t_1}^3}{x_{1;t_1}}$. Thus
 $x_{2;t_0}$ is equal to
\begin{equation}\label{122020110001}
x_{2;t_1}^{-1}\gchooseround{1}{0}\left(\frac{x_{3;t_1}^{2}+x_{2;t_1}^3}{x_{1;t_1}}\right)^{3}=x_{2;t_1}^{-1}\gchooseround{1}{0} x_{1;t_1'}^{3}
\end{equation}
\begin{equation}\label{122020110002}
+x_{2;t_1}^{-1}\gchooseround{1}{1} x_{3;t_1}^3.
\end{equation}

Now we compute the expansion of this expression in the cluster   $\mathbf{x}_{t_2}$ again using Theorem~\ref{mainthm12062011}. We treat the two terms (\ref{122020110001}) and (\ref{122020110002}) separately. For the first term, we need to expand
$ x_{1;t_1'}^{3}$ which lies in the cluster $\mathbf{x}_{t_1'}$. The mutation sequence from $t_1'$ to $t_2$ is in the vertices 1 and 3. Thus we have $r=2$, $n=4$, $p=3$, $q=0$. Moreover,
$$c_1=0,\, c_2=1,\, c_3=2, \, c_4=3,\qquad
A_1=3,\, A_2=6,\, A_3=9, $$
$$s_1=\tau_0,\, s_2=2\tau_0+\tau_1,\,s_3=3\tau_0+2\tau_1+\tau_2.$$
The two binomial coefficients in the first sum are ${3\choose \tau_0}$ and $ {6-2\tau_0\choose \tau_1} $, so their product is zero unless
$$ 0\le\tau_0\le 3,\qquad 0\le \tau_1\le 6-2\tau_0.$$
Finally, the condition $s_{n-2}\le A_{n-1}/r$ in the first sum of Theorem~\ref{mainthm12062011} implies that $\tau_0 < 3$, and that  $\tau_1\le 4 $ if $\tau_0=0$, $\tau_1\le 2 $ if $\tau_0=1$ and $\tau_1=0 $ if $\tau_0=2$.
Therefore, the first sum is over the following pairs $(\tau_0,\tau_1)$
\[ (0,0)\ (0,1)\ (0,2)\ (0,3)\ (0,4)\  (1,0)\ (1,1)\ (1,2)\  (2,0), \]
and the corresponding terms are

\begin{equation}\label{122020110003}
x_{2;t_2}^{-1}\gchooseround{1}{0}x_{3;t_2}^{-6}\gchooseround{3}{0}\gchooseround{6}{0}\exoneprime^{9}
\end{equation}
\begin{equation}\label{122020110004}
+x_{2;t_2}^{-1}\gchooseround{1}{0}x_{3;t_2}^{-6}\gchooseround{3}{0}\gchooseround{6}{1}\exoneprime^{7}x_{2;t_2}^3
\end{equation}
\begin{equation}\label{122020110005}
+x_{2;t_2}^{-1}\gchooseround{1}{0}x_{3;t_2}^{-6}\gchooseround{3}{0}\gchooseround{6}{2}\exoneprime^{5}x_{2;t_2}^6
\end{equation}
\begin{equation}\label{122020110006}
+x_{2;t_2}^{-1}\gchooseround{1}{0}x_{3;t_2}^{-6}\gchooseround{3}{0}\gchooseround{6}{3}\exoneprime^{3}x_{2;t_2}^9
\end{equation}
\begin{equation}\label{122020110007}
+x_{2;t_2}^{-1}\gchooseround{1}{0}x_{3;t_2}^{-6}\gchooseround{3}{0}\gchooseround{6}{4}\exoneprime^{1}x_{2;t_2}^{12}
\end{equation}
\begin{equation}\label{122020110008}
+x_{2;t_2}^{-1}\gchooseround{1}{0}x_{3;t_2}^{-4}\gchooseround{3}{1}\gchooseround{4}{0}\exoneprime^{5}x_{2;t_2}^{3}
\end{equation}
\begin{equation}\label{122020110009}
+x_{2;t_2}^{-1}\gchooseround{1}{0}x_{3;t_2}^{-4}\gchooseround{3}{1}\gchooseround{4}{1}\exoneprime^{3}x_{2;t_2}^{6}
\end{equation}
\begin{equation}\label{122020110010}
+x_{2;t_2}^{-1}\gchooseround{1}{0}x_{3;t_2}^{-4}\gchooseround{3}{1}\gchooseround{4}{2}\exoneprime^{1}x_{2;t_2}^{9}
\end{equation}
\begin{equation}\label{122020110011}
+x_{2;t_2}^{-1}\gchooseround{1}{0}x_{3;t_2}^{-2}\gchooseround{3}{2}\gchooseround{2}{0}\exoneprime^{1}x_{2;t_2}^{6}
\end{equation}

The second sum in Theorem~\ref{mainthm12062011} is over all compatible pairs $(S_1,S_2)$ in $\mathcal{D}^{9 \times 6}$ such that $|S_2|>9/2$.  %Since $r=2$, $\mathcal{D}_5$ is a rectangle of width 1 and height 2.
%It has the following three colored subpaths:
%\begin{itemize}
%\item $\alpha(0,2)$ is blue, consists of one horizontal and two vertical edges, and $|\alpha(0,2)|_1=2$, $|\alpha(0,2)|_2=3$.
%\item$\alpha(0,1)$ is blue, consists of one horizontal and one vertical edge, and $|\alpha(0,1)|_1=1$, $|\alpha(0,1)|_2=2$.
%\item $\alpha(1,2) $ is red, consists of two vertical edges, and $|\alpha(1,2)|_1=1$, $|\alpha(1,2)|_2=2$.
%\end{itemize}
The condition $|S_2|>9/2$ implies that $S_2$ must be equal to $\mathcal{D}_2$ or $\mathcal{D}_2\setminus\{$any single vertical edge$\}$. If $S_2=\mathcal{D}_2$ then $S_1$ must be the empty set. If $S_2=\mathcal{D}_2\setminus\{v_{2i-1}\}$ for $i=1,2,3$, then $S_1=\{u_{3i-2}\}$ or $\emptyset$. If $S_2=\mathcal{D}_2\setminus\{v_{2i}\}$ for $i=1,2,3$, then $S_1=\{u_{3i}\}$ or $\emptyset$. Therefore the second sum  in Theorem~\ref{mainthm12062011} is equal to

\begin{equation}\label{122020110012}
+x_{2;t_2}^{-1}\gchooseround{1}{0}6x_{3;t_2}^{-6}{x_{1;t_2}}x_{2;t_2}^{12}
\end{equation}
\begin{equation}\label{122020110013}
+x_{2;t_2}^{-1}\gchooseround{1}{0}x_{3;t_2}^{-6}x_{1;t_2}^3 x_{2;t_2}^{9}
\end{equation}
\begin{equation}\label{122020110014}
+x_{2;t_2}^{-1}\gchooseround{1}{0}6x_{3;t_2}^{-4}{x_{1;t_2}} x_{2;t_2}^{9}.
\end{equation}

This shows that (\ref{122020110001}) is equal to the sum of all terms (\ref{122020110003})-(\ref{122020110014})

Applying Theorem~\ref{mainthm12062011} to the expression (\ref{122020110002}) and using a similar analysis, we see that  (\ref{122020110002}) is equal to
\begin{equation}\label{122020110015}
x_{2;t_2}^{-1}\gchooseround{1}{1}  x_{3;t_2}^{-3}\gchooseround{3}{0}\exoneprime^{6}
\end{equation}
\begin{equation}\label{122020110016}
+x_{2;t_2}^{-1}\gchooseround{1}{1}  x_{3;t_2}^{-3}\gchooseround{3}{1}\exoneprime^{4}x_{2;t_2}^{3}
\end{equation}
\begin{equation}\label{122020110017}
+x_{2;t_2}^{-1}\gchooseround{1}{1}  x_{3;t_2}^{-3}\gchooseround{3}{2}\exoneprime^{2}x_{2;t_2}^{6}
\end{equation}
\begin{equation}\label{122020110018}
+x_{2;t_2}^{-1}\gchooseround{1}{1}x_{3;t_2}^{-3}\gchooseround{3}{3}  x_{2;t_2}^{9}.
\end{equation}

So $x_{2;t_0}$ is equal to the sum of all terms (\ref{122020110003})-(\ref{122020110018}).
Observe that the powers of the variables $\exoneprime, x_{1;t_2}$ in all terms are positive
and that the powers of the variable $x_{2;t_2}$ are positive in all terms except for
(\ref{122020110003}) and (\ref{122020110015}).

On the other hand,
$$(\ref{122020110003})+(\ref{122020110015}) = x_{3;t_2}^{-6}\exoneprime^{6}\left(\frac{\exoneprime ^{3} +  x_{3;t_2}^{3}}{x_{2;t_2}} \right) =  x_{3;t_2}^{-6}\exoneprime^{6} x_{2;t_2''}$$
where $\mathbf{x}_{t_2''}=\mu_2(\mathbf{x}_{t_2'})$ denotes the cluster  obtained from $\mathbf{x}_{t_2'}$ by mutation in $2$.

Thus we obtain an expression for  $x_{2;t_0}$ as a Laurent polynomial in the variables
$x_{2;t_2''}$, $\exoneprime$, $x_{1;t_2}$, $x_{2;t_2}$, $x_{3;t_2}$ with nonnegative coefficients and in which only the variable $x_{3;t_2}$ appears with negative powers. Note that $x_{2;t_2''}=\widetilde{\widetilde{x_{2;t_2}}}$ and $\exoneprime=\widetilde{x_{1;t_2}}$, thus the sum (\ref{122020110003})+(\ref{122020110015}) is of the form of the first sum in Theorem~\ref{mainthm11102011_00}, the sum of (\ref{122020110004})-(\ref{122020110011}) and  (\ref{122020110016})-(\ref{122020110018})   is of the form of the second sum, and the sum of (\ref{122020110012})-(\ref{122020110014})  is of the form of the third sum in Theorem~\ref{mainthm11102011_00}.

Since the mutation sequence linking the seeds $\Sigma_{t_2''},\Sigma_{t_2'}$ and $\Sigma_{t_2}$ to the seed $\Sigma_{t_3}$ consists of mutations in the vertices 1 and 2 only, we see that $x_{3;t_2}=x_{3;t_3}$ and replacing the other  variables with their expansions in the seed $\Sigma_{t_3}$ (which have nonnegative coefficients by the rank 2 case) produces again a Laurent polynomial with nonnegative coefficients in the cluster $\mathbf{x}_{t_3}$.
\end{example}

\end{document}